\documentclass[12pt]{article}
\usepackage{color}
\usepackage{amsmath}
\usepackage[colorlinks=true]{hyperref}
\usepackage{microtype}
\usepackage{amscd,amsthm,amsfonts,amssymb,esint}
\usepackage{fullpage}
\usepackage[all]{xy}
\usepackage{mathtools}
\usepackage{xcolor}
\usepackage{mathrsfs}
\usepackage{amsmath}
\usepackage[all]{xy}
\usepackage{geometry}
\usepackage{array}

\usepackage{colonequals}
\usepackage{tikz-cd}
\usepackage{comment}
\usepackage{setspace}
\usepackage{enumerate}

\begin{document}

\title{%
  \makebox[\textwidth][c]{%
    \parbox{1.15\textwidth}{%
      \centering
       Exceptional maps and abelian points in backward orbits
    }%
  }%
}

\author{Zhuchao Ji, Jiarui Song, Junyi Xie}

\maketitle

\begin{abstract}
Consider a number field $K$, a polarized endomorphism $f:X\to X$ on a normal
projective variety, and a non-exceptional point $\alpha$ for $f$.
We show that the extension $K(f^{-\infty}(\alpha))/K$ generated by the backward orbit of $\alpha$ is virtually abelian if and only if $f$ is an exceptional map with CM and $\alpha$ is $f$-preperiodic.
This answers a conjecture of Andrews and Petsche and extends it to higher dimensions. To this end, we develop the theory of exceptional maps on normal
projective varieties over $\mathbb{C}$ and prove that commuting polarized
endomorphisms of multiplicatively independent degrees are exceptional.
\end{abstract}

\theoremstyle{plain}
\newtheorem{thm}{Theorem}[section]
\newtheorem{theorem}[thm]{Theorem}
\newtheorem{cor}[thm]{Corollary}
\newtheorem{corollary}[thm]{Corollary}
\newtheorem{lem}[thm]{Lemma}
\newtheorem{lemma}[thm]{Lemma}
\newtheorem{pro}[thm]{Proposition}
\newtheorem{proposition}[thm]{Proposition}
\newtheorem{prop}[thm]{Proposition}

\newtheorem{assumption}[thm]{Assumption}
\newtheorem{conjecture}[thm]{Conjecture}
\newtheorem{sublemma}[thm]{Sub-lemma}

\theoremstyle{definition}
\newtheorem{definition}[thm]{Definition}

\theoremstyle{remark}
\newtheorem{remark}[thm]{Remark}
\newtheorem{example}[thm]{Example}
\newtheorem{remarks}[thm]{Remarks}
\newtheorem{problem}[thm]{Problem}
\newtheorem{exercise}[thm]{Exercise}
\newtheorem{situation}[thm]{Situation}
\newtheorem{Question}[thm]{Question}

\newcommand{\Zhuchao}[1]{ \sf $\clubsuit\clubsuit\clubsuit$ Zhuchao : [#1]}
\newcommand{\SH}{\rm SH}
\newcommand{\Cov}{\rm Cov}
\newcommand{\Tan}{\rm Tan}
\newcommand{\res}{\rm res}
\newcommand{\Om}{\Omega}
\newcommand{\om}{\omega}
\newcommand{\La}{\Lambda}
\newcommand{\la}{\lambda}

\newcommand{\surj}{\twoheadrightarrow}
\newcommand{\inj}{\hookrightarrow}
\newcommand{\zar}{{\rm zar}}
\newcommand{\Exc}{{\rm Exc}}
\newcommand{\Mod}{{\rm Mod}}

\newcommand{\codim}{{\rm codim}}

\newcommand{\Leb}{{\rm Leb}}
\newcommand{\rank}{{\rm rank}}
\newcommand{\geom}{{\rm geom}}
\newcommand{\Ker}{{\rm Ker \ }}

\newcommand{\Der}{{\rm Der}}
\newcommand{\Div}{{\rm Div}}

\newcommand{\Corr}{{\rm Corr}}

\newcommand{\Spec}{{\rm Spec \,}}
\newcommand{\Nef}{{\rm Nef \,}}
\newcommand{\Frac}{{\rm Frac \,}}
\newcommand{\Sing}{{\rm Sing}}
\newcommand{\sing}{{\rm sing}}
\newcommand{\reg}{{\rm reg}}
\newcommand{\Char}{{\rm char\,}}
\newcommand{\Tr}{{\rm Tr}}

\newcommand{\bif}{{\rm bif}}
\newcommand{\AS}{{\rm AS}}
\newcommand{\loc}{{\rm loc}}
\newcommand{\FS}{{\rm FS}}
\newcommand{\CE}{{\rm CE}}
\newcommand{\PCE}{{\rm PCE}}
\newcommand{\WR}{{\rm WR}}
\newcommand{\PR}{{\rm PR}}
\newcommand{\TCE}{{\rm TCE}}
\newcommand{\diam}{{\rm diam\,}}
\newcommand{\id}{{\rm id}}
\newcommand{\NE}{{\rm NE}}

\newcommand{\Min}{{\rm Min \ }}
\newcommand{\Hol}{{\rm Hol \ }}
\newcommand{\Rat}{{\rm Rat}}
\newcommand{\Hdim}{{\rm Hdim}}
\newcommand{\dist}{{\rm dist}}
\newcommand{\FL}{{\rm FL}}
\newcommand{\fm}{{\rm fm}}
\newcommand{\vol}{{\rm vol}}

\newcommand{\Max}{{\rm Max \ }}
\newcommand{\Alb}{{\rm Alb}\,}
\newcommand{\Aff}{{\rm Aff}\,}
\newcommand{\GL}{{\rm GL}\,}        
\newcommand{\PGL}{{\rm PGL}\,}
\newcommand{\Bir}{{\rm Bir}}
\newcommand{\Bif}{{\rm Bif}}
\newcommand{\Aut}{{\rm Aut}}
\newcommand{\topo}{{\rm top}}
\newcommand{\End}{{\rm End}}
\newcommand{\Per}{{\rm Per}\,}
\newcommand{\Preper}{{\rm Preper}\,}
\newcommand{\Preim}{{\rm Preim}\,}
\newcommand{\ie}{{\it i.e.\/},\ }
\newcommand{\niso}{\not\cong}
\newcommand{\nin}{\not\in}
\newcommand{\soplus}[1]{\stackrel{#1}{\oplus}}
\newcommand{\by}[1]{\stackrel{#1}{\rightarrow}}
\newcommand{\longby}[1]{\stackrel{#1}{\longrightarrow}}
\newcommand{\vlongby}[1]{\stackrel{#1}{\mbox{\large{$\longrightarrow$}}}}
\newcommand{\ldownarrow}{\mbox{\Large{\Large{$\downarrow$}}}}
\newcommand{\lsearrow}{\mbox{\Large{$\searrow$}}}
\renewcommand{\d}{\stackrel{\mbox{\scriptsize{$\bullet$}}}{}}
\newcommand{\dlog}{{\rm dlog}\,}    
\newcommand{\longto}{\longrightarrow}
\newcommand{\vlongto}{\mbox{{\Large{$\longto$}}}}
\newcommand{\limdir}[1]{{\displaystyle{\mathop{\rm lim}_{\buildrel\longrightarrow\over{#1}}}}\,}
\newcommand{\liminv}[1]{{\displaystyle{\mathop{\rm lim}_{\buildrel\longleftarrow\over{#1}}}}\,}
\newcommand{\norm}[1]{\mbox{$\parallel{#1}\parallel$}}
\newcommand{\boxtensor}{{\Box\kern-9.03pt\raise1.42pt\hbox{$\times$}}}
\newcommand{\into}{\hookrightarrow}
\newcommand{\image}{{\rm image}\,}
\newcommand{\Lie}{{\rm Lie}\,}      
\newcommand{\CM}{\rm CM}
\newcommand{\Ma}{\mathbf{M}}
\newcommand{\Teich}{\rm Teich\;}
\newcommand{\genus}{{\rm genus}}
\newcommand{\gonality}{{\rm gonal}}
\newcommand{\sext}{\mbox{${\mathcal E}xt\,$}}  
\newcommand{\shom}{\mbox{${\mathcal H}om\,$}}  
\newcommand{\coker}{{\rm coker}\,}  
\newcommand{\sm}{{\rm sm}}
\newcommand{\pgcd}{\text{pgcd}}
\newcommand{\trd}{\text{tr.d.}}
\newcommand{\tensor}{\otimes}
\newcommand{\hotimes}{\hat{\otimes}}

\newcommand{\CH}{{\rm CH}}
\newcommand{\tr}{{\rm tr}}
\newcommand{\e}{\rm SH}

\renewcommand{\iff}{\mbox{ $\Longleftrightarrow$ }}
\newcommand{\supp}{{\rm supp}\,}
\newcommand{\esssup}{{\rm ess\,sup}}
\newcommand{\ext}[1]{\stackrel{#1}{\wedge}}
\newcommand{\onto}{\mbox{$\,\>>>\hspace{-.5cm}\to\hspace{.15cm}$}}
\newcommand{\propsubset}
{\mbox{$\textstyle{
			\subseteq_{\kern-5pt\raise-1pt\hbox{\mbox{\tiny{$/$}}}}}$}}
\newcommand{\sA}{{\mathcal A}}
\newcommand{\sB}{{\mathcal B}}
\newcommand{\sC}{{\mathcal C}}
\newcommand{\sD}{{\mathcal D}}
\newcommand{\sE}{{\mathcal E}}
\newcommand{\sF}{{\mathcal F}}
\newcommand{\sG}{{\mathcal G}}
\newcommand{\sH}{{\mathcal H}}
\newcommand{\sI}{{\mathcal I}}
\newcommand{\sJ}{{\mathcal J}}
\newcommand{\sK}{{\mathcal K}}
\newcommand{\sL}{{\mathcal L}}
\newcommand{\sM}{{\mathcal M}}
\newcommand{\sN}{{\mathcal N}}
\newcommand{\sO}{{\mathcal O}}
\newcommand{\sP}{{\mathcal P}}
\newcommand{\sQ}{{\mathcal Q}}
\newcommand{\sR}{{\mathcal R}}
\newcommand{\sS}{{\mathcal S}}
\newcommand{\sT}{{\mathcal T}}
\newcommand{\sU}{{\mathcal U}}
\newcommand{\sV}{{\mathcal V}}
\newcommand{\sW}{{\mathcal W}}
\newcommand{\sX}{{\mathcal X}}
\newcommand{\sY}{{\mathcal Y}}
\newcommand{\sZ}{{\mathcal Z}}
\newcommand{\A}{{\mathbb A}}
\newcommand{\B}{{\mathbb B}}
\newcommand{\C}{{\mathbb C}}
\newcommand{\D}{{\mathbb D}}
\newcommand{\E}{{\mathbb E}}
\newcommand{\F}{{\mathbb F}}
\newcommand{\G}{{\mathbb G}}
\newcommand{\HH}{{\mathbb H}}
\newcommand{\LL}{{\mathbb L}}
\newcommand{\J}{{\mathbb J}}
\newcommand{\M}{{\mathbb M}}
\newcommand{\N}{{\mathbb N}}
\renewcommand{\P}{{\mathbb P}}
\newcommand{\Q}{{\mathbb Q}}
\newcommand{\R}{{\mathbb R}}
\newcommand{\T}{{\mathbb T}}
\newcommand{\U}{{\mathbb U}}
\newcommand{\V}{{\mathbb V}}
\newcommand{\W}{{\mathbb W}}
\newcommand{\X}{{\mathbb X}}
\newcommand{\Y}{{\mathbb Y}}
\newcommand{\Z}{{\mathbb Z}}
\newcommand{\ch}{{\mathbbm {1}}}
\newcommand{\bk}{{\mathbf{k}}}

\newcommand{\bp}{{\mathbf{p}}}
\newcommand{\ep}{\varepsilon}
\newcommand{\bbk}{{\overline{\mathbf{k}}}}
\newcommand{\Fix}{\mathrm{Fix}}

\newcommand{\tor}{{\mathrm{tor}}}
\renewcommand{\div}{{\mathrm{div}}}

\newcommand{\trdeg}{{\mathrm{trdeg}}}
\newcommand{\Stab}{{\mathrm{Stab}}}

\newcommand{\OK}{{\overline{K}}}
\newcommand{\ok}{{\overline{k}}}

\newcommand{\cf}{[c.f. ?]}
\newcommand{\jy}{jy:}

\numberwithin{equation}{section}
\newcommand{\Gm}{\mathbb G_m}

\newcommand\mO{\mathcal{O}}
\newcommand{\mf}[1]{\mathfrak{#1}}
\newcommand{\ms}[1]{\mathscr{#1}}
\newcommand{\mb}[1]{\mathbb{#1}}
\newcommand{\mr}[1]{\mathrm{#1}}
\newcommand{\mc}[1]{\mathcal{#1}}
\newcommand{\ov}{\overline}
\newcommand{\Zar}[1]{\overline{#1}^{\mathrm{Zar}}}
\newcommand\ra{\rightarrow}
\newcommand{\ora}[1]{\stackrel{#1}\longrightarrow}
\newcommand{\cra}{\stackrel{\sim}\rightarrow}
\newcommand\mheight{\mathrm{height\,}}
\newcommand\wt{\widetilde}
\newcommand\im{\mathrm{im\,}}
\newcommand\trans{{}^{\top}}
\newcommand\Hom{\mathrm{Hom}}
\newcommand\Spe{\mathrm{Spec\,}}
\newcommand\di{\mathrm{div}}
\newcommand\red{\mathrm{red}}
\newcommand\Pro{\mathrm{Proj\,}}
\newcommand\sHom{\mathscr{H}om}
\newcommand\Supp{\mathrm{Supp\,}}
\newcommand\Ann{\mathrm{Ann\,}}
\newcommand\Ext{\mathrm{Ext}}
\newcommand\sExt{\mathscr{E}xt}
\newcommand\cdim{\mathrm{codim}}
\newcommand\Cl{\mathrm{Cl\,}}
\newcommand\Pic{\mathrm{Pic\,}}
\newcommand\ord{\mathrm{ord}}
\newcommand\Gal{\mathrm{Gal}}
\newcommand\ab{\mathrm{ab}}
\newcommand\an{\mathrm{an}}
\newcommand\dra{\dashrightarrow}
\newcommand\reldeg{\mathrm{reldeg}}
\newcommand\Crit{\mathrm{Crit}}
\newcommand{\mib}[1]{\textit{\textbf{#1}}}
\newcommand\bqed{\hfill $\blacksquare$}

\numberwithin{equation}{subsection}

\tableofcontents

\section{Introduction}\label{sec_intro}
Let $K$ be a number field, let $X$ be a normal projective variety over
$K$, and let $f\colon X\to X$ be a polarized endomorphism. Thus
$f^*L\simeq L^{\otimes d}$ for an ample line bundle $L$ and an integer
$d\geq 2$. Fix an algebraic closure $\overline K$ of $K$. For
$\alpha\in X(K)$ and $n\geq 1$, we set
\[f^{-n}(\alpha):=\left\{\beta\in X(\ov{K})\mid f^n(\beta)=\alpha\right\}\subset X(\ov{K}),\]
and $f^{-\infty}(\alpha)=\bigcup\limits_{n=1}^{\infty}f^{-n}(\alpha)$. Let $K_n(f,\alpha)$ be the field generated over $K$ by the points of $f^{-n}(\alpha)$. These are finite Galois extensions satisfying
\[
 K\subseteq K_1(f,\alpha)\subseteq K_2(f,\alpha)\subseteq\cdots.
\]
We write
\[
 K_\infty(f,\alpha):=\bigcup_{n\geq 1}K_n(f,\alpha),
\]
The abelian case asks for a classification of the pairs $(f,\alpha)$
for which $f^{-\infty}(\alpha)\subseteq X(K^{\mathrm{ab}})$, where $K^{\mathrm{ab}}$ is the maximal abelian extension of $K$ in $\overline K$. Equivalently, we ask when
$\Gal(K_\infty(f,\alpha)/K)$ is abelian. The study of these Galois groups goes back to Odoni's work on polynomial iterates \cite{Odo85} and is now part of the theory of arboreal Galois representations; see \cite{Jon13}.

Our main theorem gives a complete characterization of the pairs
$(f,\alpha)$ for which the extension $K_{\infty}(f,\alpha)/K$
is virtually abelian; see Theorem \ref{thm_vaeq1} for the precise
statement. Specializing to dimension one, we obtain the following
conjecture of Andrews and Petsche \cite{AP20} as a direct consequence;
see Theorem \ref{thm_onedim}. This conjecture characterizes the pairs
$(f,\alpha)$ for which $\Gal(K_{\infty}(f,\alpha)/K)$ is abelian
when $f$ is a polynomial map on the projective line.

To state the conjecture, recall that a point is \emph{exceptional}
for a rational map on $\mathbb P^1$ if its backward orbit is finite.
For an extension $F/K$, two pairs $(f,\alpha)$ and $(g,\beta)$
are \emph{$F$-conjugate} if there exists
$\varphi\in\operatorname{PGL}_2(F)$ such that
$g=\varphi^{-1}\circ f\circ\varphi$ and $\alpha=\varphi(\beta)$.

\begin{conjecture}[Andrews--Petsche; {\cite{AP20,FOZ24}}]\label{conj_P1}
Let $K$ be a number field and $f\in K[x]$ be a polynomial map of degree $d\geq 2$. Let $\alpha\in K$ be a non-exceptional point for $f$. Then $\Gal(K_{\infty}(f,\alpha)/K)$ is abelian if and only if the pair $(f,\alpha)$ is $K^{\mr{ab}}$-conjugate to a pair $(g,\beta)$ of the following two cases:
\begin{enumerate}
\renewcommand{\labelenumi}{(\theenumi)}
    \item $g(x)=x^{d}$ and $\beta$ is a root of unity in $\ov{K}$;
    \item $g(x)=\pm T_d(x)$ is the $d$-th Chebyshev polynomial and $\beta=\zeta+\zeta^{-1}$, where $\zeta$ is a root of unity in $\ov{K}$.
\end{enumerate}
\end{conjecture}
These examples arise from the multiplicative group. The signed Chebyshev maps
are obtained from $z\mapsto\pm z^d$ using $z\mapsto z+z^{-1}$. The backward orbits in these examples consist of roots of unity or points $\zeta+\zeta^{-1}$, where $\zeta$ is a root of unity. The following cases of Conjecture \ref{conj_P1} have been established:
\begin{enumerate}
\renewcommand{\labelenumi}{(\theenumi)}
    \item Andrews and Petsche \cite{AP20} proved the conjecture
    for maps geometrically conjugate to $x^d$ or $T_d$.

    \item For quadratic polynomials over $\mathbb Q$,
    Andrews and Petsche \cite{AP20} proved the conjecture
    under the stability assumption that $f^n(x)-\alpha$
    is irreducible for every $n\geq 1$.
    Ferraguti and Pagano \cite{FP20} subsequently removed
    this assumption.

    \item For arbitrary polynomials over $\mathbb Q$, the conjecture follows from Ostafe's work \cite{Ost17} and the Kronecker--Weber theorem, as established in
    \cite[Corollary~3.7]{FOZ24}.

    \item Over number fields, Ferraguti and Pagano \cite{FP23} proved further cases, including unicritical polynomials whose finite critical point is periodic.
\end{enumerate}
We refer to \cite{Fer22} for a survey of the earlier approaches.

For rational maps $f\in K(x)$ of degree at least $2$ over a
number field $K$, two results give necessary conditions for
$\Gal(K_\infty(f,\alpha)/K)$ to be abelian:
\begin{enumerate}
\renewcommand{\labelenumi}{(\theenumi)}
    \item 
    Ferraguti, Ostafe, and Zannier \cite{FOZ24} proved that,
    if $\alpha\in\mathbb P^1(K)$ is non-exceptional and
    $\Gal(K_\infty(f,\alpha)/K)$ is abelian, then $f$ is
    postcritically finite, meaning that every critical point
    is preperiodic.

    \item Leung and Petsche \cite{LP25b} proved that, if $f$ is
    postcritically finite and $\alpha\in\mathbb P^1(K)$ is
    not preperiodic, then $\Gal(K_\infty(f,\alpha)/K)$
    is nonabelian.
\end{enumerate}
Together, these results show that if $\alpha$ is non-exceptional and
$\Gal(K_\infty(f,\alpha)/K)$ is abelian, then $f$ is postcritically
finite and $\alpha$ is preperiodic. The rational setting also includes Latt\`es maps arising from
elliptic curves with complex multiplication. Their abelian
backward orbits are characterized in
\cite[Theorem~D]{FOZ24}. Related results include Leung's work
\cite{Leu25} on rational maps with nonreal Julia sets and
Leung--Petsche's work \cite{LP25a} on the Minkowski dimension
of arboreal Galois groups.

A related rigidity phenomenon concerns preperiodic points in the
cyclotomic closure $K^c:=K(\mu_\infty)$, where $\mu_\infty$ denotes
the group of roots of unity in $\overline K$. Dvornicich and Zannier
\cite[Theorem~2]{DZ07} proved that a polynomial of degree $d\geq 2$
with infinitely many preperiodic points in $K^c$ is geometrically
conjugate to $x^d$ or $\pm T_d$. In higher dimension, Ji, Xie, and
Zhang \cite{JXZ25} established analogous results for regular
polynomial endomorphisms of $\mathbb A^N$, that is, polynomial maps
extending to endomorphisms of $\mathbb P^N$. If either the cyclotomic
preperiodic points or the cyclotomic points in a single backward
orbit are Zariski dense, then some iterate is semiconjugate to a
surjective group endomorphism of $\mathbb G_m^N$ through a dominant
morphism $\mathbb G_m^N\to\mathbb A^N$ over $\overline K$.

\subsection{Main results}

Our main theorem classifies the pairs $(f,\alpha)$ for which
$\Gal(K_{\infty}(f,\alpha)/K)$ is virtually abelian, where $f$ is a
polarized endomorphism of a normal projective variety and $\alpha$
is non-exceptional. To state the result, we first introduce a notion
of exceptionality for such endomorphisms.

\subsubsection*{Exceptional maps}
A rational map of degree $d\geq 2$ on $\mb{P}^1_{\mb{C}}$ is called
\emph{exceptional} if it is conjugate over $\mb{C}$ to a power map
$x\mapsto x^{\pm d}$, a signed Chebyshev map $x\mapsto \pm T_d(x)$,
or a Latt\`es map. Such maps come from group structure.

\medskip

In higher dimensions, there is currently no generally accepted generalization of the notion of exceptional maps.
In \cite{DS02},Dinh and Sibony 
introduced a notion of exceptionality for endomorphisms of
$\mb{P}^n_{\mb{C}}$ and proved that commuting endomorphisms of
multiplicatively independent degrees are exceptional in this sense. 
However, their definition of exceptionality is analytic.
In this paper,
we develop a theory of exceptional maps on arbitrary normal
projective varieties.
Our definition is purely algebraic, and it coincides with the definition of Dinh–Sibony in $\mb{P}^n_{\mb{C}}$.

Our definition uses torsors under tori over
abelian varieties.

\begin{definition}
Let $\mathbf{k}$ be a field, let $A$ be an abelian variety over
$\mathbf{k}$, and let $T$ be a torus over $\mathbf{k}$. If
$\pi:Y\to A$ is a $T$-torsor, we call the triple $(Y,A,\pi)$ an
\emph{AT-variety}. Here, we allow $A$ to be a point.
\end{definition}

\begin{definition}\label{introdefidlift}
	Let $d\geq 2$ be an integer, and let $(Y,A,\pi)$ be an AT-variety with associated torus $T$. A self-isogeny $\psi : Y\to Y$ is called a \emph{$d$-lift} if the induced morphism $\psi_A:A\to A$ is a $d$-polarized isogeny and $\psi_T=[d]_T$, where
	$$
	[d]_T :T\longrightarrow T,\qquad t\longmapsto t^d
	$$
	is the multiplication-by-$d$ isogeny of $T$.
\end{definition}

We define exceptional maps to be the maps come from $d$-lifts of AT-varieties.
\begin{definition}\label{def:exceptional-intro}
Let $\mathbf{k}$ be a field of characteristic zero, let $X$ be a
normal projective variety over $\mathbf{k}$, and let $f:X\to X$ be
a $d$-polarized endomorphism. We call $f$ a \emph{$\mathbf{k}$-exceptional map} if there exist an AT-variety $(Y,A,\pi)$ over $\mathbf{k}$, a dominant morphism $h:Y\to X$, and
a $d$-lift $\psi:Y\to Y$ such that $h$ is finite onto a Zariski open dense subset $X^{\circ}$ of $X$ and
\[
h\circ\psi=f^l\circ h
\]
for some $l\geq 1$. If these data can be chosen so that $A$ has
complex multiplication over $\mathbf{k}$, we call $f$ a
\emph{$\mathbf{k}$-exceptional map with CM}. The CM condition is automatically satisfied when $A$ is a point. 

We call $f$ an \emph{exceptional map}
(resp.\ an \emph{exceptional map with CM}) if its base change $f_{\ov{\mathbf{k}}}:X_{\ov{\mathbf{k}}}\to X_{\ov{\mathbf{k}}}$ is a $\ov{\mathbf{k}}$-exceptional map
(resp.\ a $\ov{\mathbf{k}}$-exceptional map with CM).
\end{definition}

Thus, an iterate of an exceptional map descends from a $d$-lift
of an AT-variety. In general, an AT-variety of dimensional at least $2$ does \emph{not} carry any algebraic group structure. This is different from the one dimensional case.
We establish several basic properties of exceptional maps, including
stability under semiconjugacy.

\medskip

In \cite{DS02}, Dinh and Sibony proved the following result:
For commuting endomorphisms $f,g$ of $\P^k$ with multiplicatively
independent degrees, Dinh and Sibony used Poincar\'e maps to obtain
a strong affine rigidity theorem: the maps admit a common
Poincar\'e map semiconjugating them to affine maps, and a discrete
affine group acts transitively on its fibers; see
Proposition~\ref{prop:gq-transitive} for the form used here.
In particular, they proved that $f$ and $g$ are post-critically
finite (PCF).
Our Theorem~\ref{thm_commuting1} extends the theorem of Dinh and Sibony
\cite{DS02} from endomorphisms of $\P^k$ to polarized endomorphisms
of normal projective varieties and strengthens its conclusion.

Building on their work, we prove that $f$ and $g$ are exceptional
in the sense of Definition~\ref{def:exceptional-intro}.
Using the Poincar\'e map and Dinh--Sibony's rigidity theorem, we first
construct a holomorphic principal bundle under a complex algebraic
torus over a compact complex torus, together with a holomorphic
endomorphism and a holomorphic map semiconjugating it to an iterate
of $f$. We then prove that the base is an abelian variety and that
the principal bundle, the endomorphism, and the semiconjugacy are
algebraic. In Section~\ref{sec:Dinh-Sibony}, we follow Dinh and
Sibony's method to extend their rigidity theorem from endomorphisms
of $\P^k$ to polarized endomorphisms of normal projective varieties.
In Section~\ref{sec:commuting maps}, we carry out the construction
and algebraization to prove Theorem~\ref{thm_commuting1}.

\begin{theorem}\label{thm_commuting1}
Let $\mathbf{k}$ be an algebraically closed field of characteristic zero. Let $X$ be a normal projective variety over $\mathbf{k}$, and let
$f,g:X\to X$ be polarized endomorphisms with polarization degrees
$d_f$ and $d_g$, respectively. Assume that $f\circ g=g\circ f$ and $d_f^n\neq d_g^m$
for all positive integers $n,m$. Then $f$ and $g$ are exceptional maps.
\end{theorem}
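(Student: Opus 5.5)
We plan to follow the strategy outlined in the introduction, in three stages: reduction to $\mathbf{k}=\C$, analytic rigidity à la Dinh--Sibony, and algebraization.

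\textbf{Reduction to $\mathbf{k}=\C$.} Everything ($X$, $f$, $g$, an ample $L$ with $f^*L\simeq L^{d_f}$ and $g^*L\simeq L^{d_g}$) is defined over a finitely generated subfield of $\mathbf{k}$. That subfield embeds into $\C$, and exceptionality descends once the data are shown to be algebraic and defined over $\ov{\Q}$-type fields of definition by a spreading-out/specialization argument. So we may take $\mathbf{k}=\C$. We may also replace $f,g$ by iterates, since Definition~\ref{def:exceptional-intro} only asks for some $f^l$.

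\textbf{Analytic rigidity.} Using the extension of Dinh--Sibony's theorem to polarized endomorphisms of normal projective varieties (Section~\ref{sec:Dinh-Sibony}, Proposition~\ref{prop:gq-transitive}), we obtain a common Poincaré map $\Phi:\C^k\to X$ with $k=\dim X$. It is locally biholomorphic off a proper analytic set and satisfies
\[
f\circ\Phi=\Phi\circ F,\qquad g\circ\Phi=\Phi\circ G,
\]
with $F,G$ affine. Moreover a discrete group $\Gamma$ of affine transformations acts transitively on the fibers of $\Phi$. The linear parts of $F$ and $G$ normalize $\Gamma$, and after iterating they become scalar-like: the eigenvalues have modulus $\sqrt{d_f}$ with $d_f$ the polarization degree. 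From the structure of $\Gamma$ we extract the following.
\begin{enumerate}
\item The translation subgroup $\Lambda=\Gamma\cap\C^k$ has finite index in $\Gamma$; we pass to it, replacing $X$ by a finite cover.
\item The maximal linear subspace $V$ on which $\Lambda$ is not cocompact gives a splitting. Modulo $\Lambda\cap W$ for a complementary subspace $W$, the quotient should be a complex torus $A=W/(\Lambda\cap W)$ fibered by $(\C^*)^r$.
\end{enumerate}
Concretely, $Y:=\C^k/\Lambda$ is a holomorphic principal $T=(\C^*)^r$-bundle over the compact torus $A$. Here $\Lambda$ has rank $2\dim A+r$ and spans the $A$-directions over $\R$ and the $T$-directions only over $\Z$ in real rank $r$. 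The map $F$ descends to $\psi:Y\to Y$, acting as $[d]$ on $T$, and $\Phi$ descends to $h:Y\to X$ with $h\circ\psi=f^l\circ h$.

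\textbf{Algebraization (main obstacle).} We expect this to be the hardest step. We must show that:
\begin{enumerate}
\item $A$ is an abelian variety;
\item $Y\to A$ is an algebraic torsor;
\item $h$ is algebraic and finite onto a Zariski open subset.
\end{enumerate}
For (1), pull back the polarization: $h^*L$ is a line bundle on $Y$ with $\psi^*h^*L\simeq (h^*L)^{d}$. Averaging its curvature along $T$-orbits should give a positive $(1,1)$-class on $A$, forcing $A$ to be projective. For (2), a $(\C^*)^r$-torsor over an abelian variety is given by $r$ elements of $\Pic^0$, which are algebraic by GAGA. For (3), the hard part is controlling the behaviour of $h$ near the boundary of $Y$, i.e.\ toward the toric compactification directions. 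We would take a $T$-equivariant toric partial compactification $\ov Y\to A$ and show that $h$ extends meromorphically. For this we use growth estimates: $h^*$ of sections of $L$ are $T$-eigenfunction expansions whose growth is controlled by the equivariance under $\psi$, which scales $T$ by $d$ and hence forces polynomial growth. Algebraicity then follows from GAGA on $\ov Y$. Finiteness onto the open subset $X^\circ=h(Y)$ follows from the transitivity of $\Gamma$ on fibers and properness of $h$ over $X^\circ$. Finally, the isogeny $\psi_A$ is $d$-polarized since $F$ has linear part of eigenvalue moduli $\sqrt d$. Hence $\psi$ is a $d$-lift, and $f$ (and symmetrically $g$) is exceptional.
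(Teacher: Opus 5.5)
\paragraph{The main gap: the deck group is not virtually a translation group.}
There is a genuine gap at your structural step. You claim that the translation subgroup $\Lambda=\Gamma\cap\C^k$ has finite index in $\Gamma$, and that a $(\C^*)^r$-torsor over $A$ is given by $r$ elements of $\operatorname{Pic}^0(A)$. Both claims are wrong in general.

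The second is a misstatement of the classification. Such torsors correspond to arbitrary tuples $(L_1,\dots,L_r)\in\operatorname{Pic}(A)^r$. The ones with some $L_i\notin\operatorname{Pic}^0(A)$ are exactly the AT-varieties without group structure, and the theorem must be able to produce them.

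Here is a concrete example. Let $L$ be a symmetric ample line bundle on an elliptic curve $E$, and put $X=\P(\mathcal O_E\oplus L)$. Let $f,g$ be lifts of $[2]$ and $[3]$ to $L^\times$, acting by $t\mapsto t^4$ and $t\mapsto t^9$ on fibres, extended to $X$. They can be normalized to commute, and they are polarized of degrees $4$ and $9$. The exceptional structure is $Y=L^\times\to E$.
\begin{itemize}
\item Since $\deg L\neq 0$, $\pi_1(Y)$ is a non-abelian Heisenberg group, and so is every finite-index subgroup of it.
\item Hence the deck group of $\C^2\to Y$ has no finite-index subgroup of translations.
\item Any $\C^k/\Lambda$ with $\Lambda$ a translation lattice only ever yields commutative, semiabelian-type torsors.
\end{itemize}

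The missing idea is the one the paper takes from Dinh--Sibony. In coordinates where $J^*=\{\operatorname{Im}w=\Theta(z,z)\}$, every deck transformation has the form $T_{a,b}\circ S$, with $S$ in a compact group and
\[
T_{a,b}(z,w)=\bigl(z+a,\;w+2i\Theta(z,a)+b+i\Theta(a,a)\bigr).
\]
The maps $T_{a,b}$ form a two-step nilpotent group $\mathscr N$ with commutator $4\operatorname{Im}\Theta(a,a')$. This is non-abelian as soon as there is an abelian part, because $\Theta(v,v)=0$ forces $v=0$. Two further results are then needed:
\begin{itemize}
\item the generalized Bieberbach theorem for $\mathscr N\rtimes K_{\max}$ (Dekimpe), to see that $\Delta=\mathcal A\cap\mathscr N$ is a uniform lattice of finite index;
\item Eberlein's result that $\Delta$ meets the center of $\mathscr N$ in a lattice, which gives the principal $\C^r/\Sigma$-bundle $\C^{n+r}/\Delta\to\C^n/\Pi$.
\end{itemize}

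\paragraph{Two further steps are asserted rather than proved.}

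\emph{The action on $T$ being exactly $[d]$.} Your spectral description is off. Only the block on $E_c$ has eigenvalues of modulus $\sqrt d$. The central (torus) block $B$ has eigenvalues of modulus $d$, and they need not equal $d$: the paper's example $(x,y)\mapsto(y^{-2},x^2)$ has torus eigenvalues $\pm 2i$. A priori one only knows that $B/d$ is conjugate to an orthogonal matrix preserving the lattice $\Sigma$. That does not force finite order; for instance $B=\begin{pmatrix}3&-4\\4&3\end{pmatrix}$ with $d=5$ has infinite order after scaling. So ``$\psi$ acts as $[d]$ on $T$'' needs an argument. The paper gets $B^m=d^mI$ as follows:
\begin{itemize}
\item expand $w\mapsto\Psi(0,w)$ in a Fourier series over $\Sigma$;
\item use $\log^+\|\Psi\|=U+O(1)$ to show that only finitely many frequencies occur;
\item conclude that $\Phi$ is the support function of a polytope whose vertices $B^{\mathsf T}/d$ must permute, so $B/d$ has finite order.
\end{itemize}

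\emph{Algebraization.} Averaging curvature along the non-compact $T$-orbits does not by itself produce a positive class on $A$. The paper proceeds differently:
\begin{itemize}
\item it shows each torus-orbit curve has finite area;
\item it extends $h$ meromorphically to the $(\P^1)^s$-bundle $\overline Y$ using Siu's slice extension theorem;
\item it concludes that $\overline Y$ is Moishezon and K\"ahler (by Blanchard), hence projective;
\item by GAGA, $A$, the line bundles $E_j$, $h$ and $\psi$ are then algebraic.
\end{itemize}
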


\subsubsection*{Abelian backward orbits}

Let $f:X\to X$ be a polarized endomorphism of a normal projective
variety. Dinh and Sibony \cite{DS03} proved that there is a proper
Zariski closed subset $\mc{E}_f\subset X$ satisfying
\[
f^{-1}(\mc{E}_f)=f(\mc{E}_f)=\mc{E}_f
\]
such that, for every $x\in X\setminus\mc{E}_f$,
\[
\Zar{\bigcup_{n\geq 0}f^{-n}(x)}=X.
\]
We call $\mc{E}_f$ the \emph{exceptional set} of $f$, and a point
$x\in X$ is called \emph{exceptional} if $x\in\mc{E}_f$. Thus,
a point is non-exceptional precisely when its backward orbit is
Zariski dense in $X$.

A group is \emph{virtually abelian} if it contains an abelian
subgroup of finite index. We call a Galois extension $L/K$
\emph{virtually abelian} if $\Gal(L/K)$ is virtually abelian.
Our main result gives the following characterization.

\begin{theorem}[Theorem \ref{thm_vaeq}]\label{thm_vaeq1}
Let $K$ be a number field, let $X$ be a normal projective variety
over $K$, and let $f:X\to X$ be a polarized endomorphism.
Assume that $\alpha\in X(K)$ is non-exceptional. Then
$K_{\infty}(f,\alpha)/K$ is virtually abelian if and only if
$f$ is an exceptional map with CM and $\alpha\in\mr{Prep}(f)$.
\end{theorem}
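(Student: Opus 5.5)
The proof splits into the two implications. The ``if'' direction is essentially CM theory, and the ``only if'' direction reduces the problem to Theorem~\ref{thm_commuting1} by manufacturing a commuting endomorphism out of Galois elements.

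\emph{If.} Suppose $f$ is exceptional with CM and $\alpha\in\mr{Prep}(f)$. Fix the data $(Y,A,\pi)$, $h$, $\psi$ with $h\circ\psi=f^l\circ h$, all defined over a finite extension $K'$ of $K$. Enlarging $K'$, I would also assume that $A$ has CM defined over $K'$, that the torus $T$ is split, and that $\alpha$ together with its finitely many forward images lies in $h(Y)$. This last point needs care: $X^\circ$ is totally invariant up to the exceptional set, and a non-exceptional preperiodic point should lie in $X^\circ$.

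Since $[K_\infty(f,\alpha):K_\infty(f^l,\alpha)]$-type issues are finite, it suffices to treat $f^l$. Points of $f^{-ln}(\alpha)$ are images under $h$ of points of $\psi^{-n}(h^{-1}(\alpha))$. A point of $Y$ that is preperiodic for the $d$-lift $\psi$ maps to a torsion point of $A$, and its fibre coordinate is torsion in $T$ after choosing a $\psi$-fixed base section. So every point of $\psi^{-\infty}(h^{-1}(\alpha))$ is defined over the field generated by $K'$, the torsion of $A$, and the roots of unity. By CM theory (Shimura--Taniyama) together with Kronecker--Weber for $\mu_\infty$, this field is abelian over $K'$. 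Hence $K_\infty(f,\alpha)K'/K'$ is abelian, and $\Gal(K_\infty(f,\alpha)/K)$ is virtually abelian.

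\emph{Only if.} Let $G=\Gal(K_\infty/K)$ and let $H\le G$ be abelian of finite index, with fixed field $K'$. First I would show that $\alpha$ is preperiodic. If it is not, I would adapt the Leung--Petsche / height arguments: abelian extensions of $K'$ have bounded ``local degree growth'' at a place of good reduction, which conflicts with backward orbits of a wandering point whose ramification grows. I expect to prove this via a Bogomolov-type property of $K'^{\mr{ab}}$ for canonical heights, following Amoroso--Zannier: points of $K'^{\mr{ab}}$ have canonical height bounded below unless preperiodic, whereas $\hat h_f(f^{-n}\alpha)=d^{-n}\hat h_f(\alpha)\to0$.

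Next comes the core step: producing a commuting map. Pick a place $\mathfrak p$ of $K'$ of good reduction and consider its Frobenius $\sigma\in H$, which is well defined on $K_\infty$ because $H$ is abelian and the tower is unramified at $\mathfrak p$ outside finitely many places (this needs checking via PCF/discriminant control). Because $H$ is abelian, $\sigma$ commutes with all of $H$. It also commutes with $f$ as a map on the backward tree, since $f$ is defined over $K$. Reducing mod $\mathfrak p$, $\sigma$ acts on the Zariski dense set $f^{-\infty}(\alpha)$ as the $q$-Frobenius. Lifting this, in the spirit of the Dvornicich--Zannier argument, I would aim to show that there is an endomorphism $g$ of $X_{\ov K}$ with $g=\sigma$ on a Zariski dense subset of the backward orbit. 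Then $g$ is $q$-polarized and commutes with an iterate of $f$ on a dense set, hence everywhere.

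Choosing $\mathfrak p$ with $q$ multiplicatively independent of $d$, Theorem~\ref{thm_commuting1} then gives that $f$ is exceptional. This construction of $g$ is the main obstacle. My first attempt would be to interpolate $\sigma$ as an algebraic correspondence, using the Zariski density of the orbit and a degree bound coming from the congruence $\sigma(\beta)\equiv\beta^{q}\pmod{\mathfrak p}$, and then to show that the correspondence is the graph of a morphism.

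Finally, for CM: once $f$ is exceptional, the backward orbit of the preperiodic point $\alpha$ generates the torsion field of $A$ over $K'$, up to finite index. If $A$ had a non-CM factor, Serre/Bogomolov open-image results would make this Galois group non-virtually-abelian. Hence $A$ has CM.
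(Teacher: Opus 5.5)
Your ``only if'' direction has the paper's architecture: a Frobenius element produces a $q$-polarized map commuting with $f$, and then Theorem~\ref{thm_commuting1} applies. But the step you flag as the main obstacle has no mechanism in your sketch, and several inputs you rely on are not available.

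\textbf{Controlling the closure.} Nothing you propose controls $\overline{\{(\beta,\sigma(\beta))\}}^{\mathrm{Zar}}$. A priori it can have components of dimension $>n$ or with non-finite projections, and the congruence $\sigma(\beta)\equiv\beta^q$ is only pointwise, with no uniformity. The paper's key idea is arithmetic equidistribution:
\begin{itemize}
\item backward-orbit points have canonical height tending to $0$;
\item abelianity gives $\tau(\beta,\sigma\beta)=(\tau\beta,\sigma\tau\beta)$ for all Galois conjugates;
\item a uniform ramification bound at $v$ then puts all conjugates in a closed Frobenius tube $T_X^{(1)}(\delta)$ with $\delta<1$;
\item by Yuan's theorem, the Chambert--Loir measure of each component is supported in that tube. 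At a model place this measure is a positive combination of Dirac masses at the Shilov points of the special fibre. This forces the special fibre onto $\Gamma_{\Phi_q}$ and gives bi-finiteness (Theorem~\ref{thm_constructZ}).
\end{itemize}
The ramification bound does not come from unramifiedness of the whole tower. That would need PCF-type information, which is not available a priori in dimension $>1$. Instead it comes from a Zariski-dense, almost unramified subset of the backward orbit, obtained by counting reductions that avoid the critical locus (Lemma~\ref{lem_unramifiedset}); $\sigma$ is any Frobenius lift in the decomposition group.

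\textbf{Graph versus quotient.} You give no argument that the correspondence is the graph of a morphism of $X$, and the paper does not prove this either. It saturates $\mf{C}_X$ into the equivalence relation $\mf{R}_X$ (Theorem~\ref{thm_relation}) and descends both $\mf{C}_X$ and $f$ to commuting polarized endomorphisms of $X/\mf{R}_X$. It applies Theorem~\ref{thm_commuting1} there and transfers exceptionality back through Theorem~\ref{thm_semiconj}, which is a substantial result in its own right.

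\textbf{Preperiodicity.} Your preliminary preperiodicity step rests on a Bogomolov property for $\hat h_f$ on $X((K')^{\mathrm{ab}})$. This is known for the multiplicative group (Amoroso--Zannier) and abelian varieties (Baker--Silverman), but not for an arbitrary polarized endomorphism. In the paper preperiodicity comes for free: the images of the chosen backward-orbit points are periodic for the descended Frobenius map $g$, and $\mr{Prep}(f_Y)=\mr{Prep}(g)$ for the commuting pair by \cite[Theorem~1.6]{YZ17}.

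\textbf{The ``if'' direction.} There is also a gap here. A ``$\psi$-fixed base section'' of $Y\to A$ exists only when the torsor is trivial, since a section of a $T$-torsor trivializes it. Nontrivial torsors genuinely occur. For a CM elliptic curve $E\subset\mathbb{P}^2$ and a homogeneous lift $F$ of $[m]$, the map $[x:t]\mapsto[F(x):t^{m^2}]$ on the projective cone over $E$ is exceptional with CM via $Y=\mathcal{O}_E(-1)^\times$. In general, the $\psi$-preperiodic points over a torsion point form a $T_{\mathrm{tors}}$-torsor. A base point of that torsor involves roots of values of rational sections at torsion points (division-value type quantities). Nothing you cite places these in $K'(A_{\mathrm{tors}},\mu_\infty)$, or in any abelian extension.

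The paper uses a different mechanism:
\begin{itemize}
\item the CM Frobenius lifts $\phi^{(v)}$ on $A$ satisfy $(\phi^{(v)})^*L_i\simeq L_i^{\otimes q_v}$ for the line bundles defining $Y$ (Lemma~\ref{lem_Frob_linebundle});
\item they therefore lift to $Y$, commuting with $\psi$, fixing $\alpha$, and reducing to Frobenius;
\item hence Frobenius elements act centrally on $\psi^{-n}(\alpha)$, and Chebotarev gives abelianity (Lemma~\ref{lem_Chebo}, Proposition~\ref{prop_VA_CM1}).
\end{itemize}

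\textbf{The CM step.} The backward orbit only sees $\bigcup_n\ker\psi_A^n$, which can be a proper part of the torsion. An open-image or Faltings argument would have to be run on this partial Tate module. The paper instead shows this set is Zariski dense (Lemma~\ref{lem_invar_empty}), so each simple factor has infinitely many torsion points over $K^{\mathrm{ab}}$. It then concludes with Zarhin's theorem \cite{Zar87}.
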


Here $\mr{Prep}(f)$ denotes the set of preperiodic points of $f$.
In particular, Conjecture~\ref{conj_P1} holds. An intermediate step in the proof of Theorem~\ref{thm_vaeq1} is the following result for $K$-exceptional maps, which
generalizes \cite[Theorem~D]{FOZ24}.

\begin{theorem}[Theorem \ref{thm_Kexp}]\label{thm_Kexp1}
Let $K$ be a number field, let $X$ be a normal projective variety
over $K$, and let $f:X\to X$ be a $K$-exceptional map.
Suppose that $\alpha\in X(K)$ is preperiodic and non-exceptional
for $f$. Then $K_{\infty}(f,\alpha)/K$ is virtually abelian
if and only if $f$ is an exceptional map with CM.
\end{theorem}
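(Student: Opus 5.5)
We reduce the computation of $K_\infty(f,\alpha)$ to torsion-type fields on the AT-variety $Y$ via the $K$-exceptional structure, and then invoke classical CM theory together with the open-image results for non-CM abelian varieties. Fix the data of Definition~\ref{def:exceptional-intro} over $K$: an AT-variety $\pi:Y\to A$ with torus $T$, a $d$-lift $\psi$, and $h:Y\to X$ finite onto a dense open $X^\circ$ with $h\circ\psi=f^l\circ h$. After a finite extension of $K$, which does not affect virtual abelianity, we may assume $T$ is split and all relevant endomorphisms and torsion data are defined over $K$. Replacing $f$ by $f^l$ and $\alpha$ by a point of its forward orbit changes $K_\infty$ only up to finite extension, since $K_\infty(f,\alpha)$ and $K_\infty(f^l,\alpha)$ agree and forward shifts add finitely many generators. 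Since $\alpha$ is preperiodic, we may take $\alpha$ periodic, and after a further iterate, fixed. Because $\alpha$ is non-exceptional, its backward orbit is Zariski dense. We first arrange that $\alpha\in X^\circ$ and that its backward orbit meets $X^\circ$; this needs $X\setminus X^\circ$ to be controlled, e.g. contained in $\mc{E}_f$ or $f$-totally invariant, which should follow from properties of exceptional maps established in the paper. Then $f^{-n}(\alpha)=h(\psi^{-n}(h^{-1}(\alpha)))$, so
\[
K_\infty(f,\alpha)\subseteq K(h^{-1}(\alpha))\cdot K_\infty(\psi,h^{-1}(\alpha)),
\]
where the first factor is finite. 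Conversely, $K_\infty(\psi,\beta)$ for $\beta\in h^{-1}(\alpha)$ is contained in a finite extension of $K_\infty(f,\alpha)$, since $h$ is finite.

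It therefore suffices to analyse $\psi$-backward orbits of a $\psi$-preperiodic point $\beta\in Y$. Projecting to $A$, the point $\pi(\beta)$ is preperiodic for the $d$-polarized isogeny $\psi_A$. Since $\psi_A$ has no eigenvalue that is a root of unity, its preperiodic points are torsion after translating by a fixed point, so $\psi_A^{-\infty}(\pi(\beta))$ is, up to translation, contained in $A_{\tor}$ and equals it up to finite index, because $\psi_A^n$ kills a cofinal family of torsion subgroups. In the fibre direction, $\psi$ acts as $t\mapsto t^d$ composed with a translation. Choosing a local trivialization and a $\psi$-fixed point reduces the fibre part to extracting $d^n$-th roots, twisted by values determined by the extension class of the torsor, which is a point of $\widehat{A}\otimes X^*(T)$. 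So $K_\infty(\psi,\beta)$ is, up to finite extension, generated by $K(A_{\tor})$ together with $\mu_\infty$ and the field generated by the torsion of the $T$-extension. The fibre over a torsion point of a preperiodic point is again torsion in the corresponding semiabelian-type group, so we obtain $K(G_{\tor})$, where $G$ is the commutative algebraic group (an extension of $A$ by $T$) underlying the torsor after choosing a base point. Here $G$ exists because translation by torsion of $A$ lifts.

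If $A$ has CM, then $K(A_{\tor})$ is abelian over a finite extension of $K$ by Shimura--Taniyama, and $\mu_\infty$ is abelian. The torsion of the extension $G$ is generated by division points of the classifying points in $\widehat A$, which is again CM. These are torsion because $\beta$ is preperiodic, and this is exactly where preperiodicity enters. So everything is virtually abelian.

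Conversely, suppose $A$ is non-CM, meaning some simple factor lacks CM. Then $\Gal(K(A_{\tor})/K)$ contains an open subgroup of the Mumford--Tate group's $\hat{\Z}$-points, by Serre/Bogomolov homothety and the open-image results of Serre and Faltings type; the Lie algebra is nonabelian for a non-CM factor. Hence $K(A_{\tor})$, and with it $K_\infty(f,\alpha)$, is not virtually abelian. This uses that $K(A_{\tor})\subseteq K_\infty(f,\alpha)$ up to finite extension, which follows from the density and surjectivity of the projection: the backward orbit projects onto all $\psi_A^{-n}$-preimages. The remaining step is to show that the non-CM property is intrinsic, i.e. independent of the chosen AT-model, since ``exceptional with CM'' allows any choice. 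Here we would use the rigidity and semiconjugacy-stability results of the paper to compare two AT-models via isogeny. For the converse we only need that the given model is non-CM and that every model is isogenous to it.

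The main obstacle is the fibre/torus part: showing that the backward orbit field is, up to finite index, exactly the torsion field of a commutative group extension of $A$ by $T$. This requires care because $Y$ need not be a group. The plan is to use the $d$-lift structure, with $\psi_T=[d]_T$, to build a group law compatible with $\psi$ after a finite étale base change, and then to apply Bogomolov's theorem and Ribet's results on Kummer theory for semiabelian varieties to control the torsion field.
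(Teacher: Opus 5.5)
Your ``CM $\Rightarrow$ virtually abelian'' argument has a genuine gap. In general there is no commutative extension $G$ of $A$ by $T$ underlying $Y$, so $K_\infty(\psi,\beta)$ cannot be reduced to $K(G_{\tor})$.

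For split $T\simeq\mathbb{G}_m^r$, the torsor $Y$ is classified by $(L_1,\dots,L_r)\in\mathrm{Pic}(A)^r$, not by a point of $A^\vee\otimes X^*(T)$. It underlies such an extension only if all $L_i\in\mathrm{Pic}^0(A)$. A translation $t_a$ lifts to a $T$-equivariant automorphism of $Y$ only for $a\in\bigcap_i K(L_i)$. This is a proper subgroup as soon as some $L_i\notin\mathrm{Pic}^0(A)$, and finite if some $L_i$ is ample. So torsion translations do not lift.

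Worse, the case your method handles is degenerate. If $L_i^{\otimes m}\in\mathrm{Pic}^0(A)$, the $d$-lift condition $\psi_A^*L_i\simeq L_i^{\otimes d}$ puts $L_i^{\otimes m}$ in the finite group $\ker(\psi_A^\vee-[d])$, so $L_i$ is torsion and the torsor becomes trivial after pullback by an isogeny.

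A genuine case is $Y=\mathcal{O}_E(0)^\times$ over a CM elliptic curve $E$, with $\psi_A=[n]$, $d=n^2$ and $\psi$ from Lemma \ref{lem_ATlift}. It extends to a $d$-polarized endomorphism of $\mathbb{P}(\mathcal{O}_E\oplus\mathcal{O}_E(0))$, so the theorem applies to it. By Proposition \ref{prop_etaleAT}, every finite \'etale cover of this $Y$ is again an AT-variety whose line bundle has nonzero degree. So the group law ``after finite \'etale base change'' proposed in your last paragraph does not exist.

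The fibre coordinates of backward-orbit points are values at torsion points of the trivializations $(\psi_A^n)^*L_i\simeq L_i^{\otimes d^n}$. These are theta-type values, not Kummer data. A fibrewise analysis only shows a priori that the Galois image is metabelian: abelian on the base, extended by root-of-unity multiplications in the fibres.

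The missing idea is to put no group law on $Y$ and instead build, for almost all $v$, a $K$-endomorphism $\phi^{(v)}:Y\to Y$ lifting Frobenius, as the paper does.
\begin{enumerate}
\item Take the CM Frobenius endomorphism $\phi_A^{(v)}$ (Lemma \ref{lem_CMfrob}).
\item Show $(\phi_A^{(v)})^*L_i\simeq L_i^{\otimes q_v}$: the discrepancy lies in the finite group $\ker(\psi_A^\vee-[d])$ and has trivial reduction (Lemma \ref{lem_Frob_linebundle}).
\item Lift via Lemma \ref{lem_ATlift} and translate by $T(K)$ to fix $\alpha$.
\item Get commutation with $\psi$ and reduction to Frobenius from the rigidity Lemma \ref{lem_ATcomp}.
\end{enumerate}
Each Frobenius element then acts on $\psi^{-n}(\alpha)$ as the $K$-morphism $\phi^{(v)}$, hence is central in $\Gal(K(\psi^{-n}(\alpha))/K)$. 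Chebotarev then forces this group to be abelian (Lemma \ref{lem_Chebo}).

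Your converse uses Faltings and Bogomolov rather than the paper's Zarhin argument, and it is repairable, but two claims are wrong as stated.
\begin{itemize}
\item $\bigcup_n\ker(\psi_A^n)$ lies in $A[d^\infty]$ and can be a proper Galois-stable part of the $\ell$-primary torsion. So it is not $A_{\tor}$ up to finite index, and $K(A_{\tor})$ need not lie in a finite extension of $K_\infty(f,\alpha)$.
\item Openness of the image in the $\hat{\mathbb{Z}}$-points of the Mumford--Tate group is conjectural.
\end{itemize}
What suffices is the Faltings-type argument behind Theorem \ref{thm_CM}, applied to the subspace of $V_\ell(A)$ cut out by $\bigcup_n\ker(\psi_A^n)[\ell^\infty]$, with an extra bicommutant step because that subspace may be proper. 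The paper instead obtains Zariski density of $\bigcup_n\ker(\psi_A^n)\subseteq A(K^{\ab})_{\tor}$ from Lemma \ref{lem_invar_empty} and applies Zarhin's theorem to each simple factor.

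Finally, no comparison of AT-models is needed. Showing the given model is CM already witnesses ``exceptional with CM''. For the other direction, you may pass to a finite extension over which a CM model is defined.
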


Our theorems also generalize the following characterization of abelian varieties with complex multiplication, which was previously deduced from Faltings' theorem \cite[Theorem 4]{Fal83}.
\begin{theorem}\label{thm_CM}
Let $A$ be an abelian variety over a number field $K$, and let $\ell$ be a prime. Consider the associated $\ell$-adic representation
\[
\rho_{\ell}\colon \Gal(\ov{K}/K)\longrightarrow \mr{GL}(V_\ell(A)),
\]
where $V_{\ell}(A):=T_\ell(A)\otimes_{\mb{Z}_{\ell}}\mb{Q}_{\ell}$.  Then $\rho_{\ell}\big(\Gal(\ov{K}/K)\big)$ is virtually abelian if and only if $A$ has complex multiplication over $\ov{K}$.
\end{theorem}
Indeed, Theorem \ref{thm_CM} follows by applying Theorem \ref{thm_Kexp1} with $X=A$, $f=[\ell]$, and $\alpha=0_A$.

\medskip

\subsubsection*{Rigidity of abelian sequences}

The same methods yield a rigidity theorem for generic sequences
of small points over abelian extensions. To formulate the
arithmetic hypotheses, we introduce the following terminology.

\begin{definition}
Let $(x_m)_{m\geq 1}$ be an infinite sequence in $X(\ov{K})$.
We say that $(x_m)_{m\geq 1}$ is \emph{abelian} if
$\Gal\bigl(K(x_m)/K\bigr)$ is abelian for every $m\geq 1$.
For a finite place $v$ of $K$, we say that $(x_m)_{m\geq 1}$ is
\emph{almost unramified at $v$} if the set of ramification indices
\[
\left\{
e(w/v)\;\middle|\;
m\geq 1,\;
w \text{ is a place of } K(x_m) \text{ above } v
\right\}
\]
is bounded.
\end{definition}

The ramification bound is uniform along the sequence at each fixed
place, but may depend on the place. Under this condition outside
a finite set of places, the existence of a generic sequence of
small abelian points has the following geometric consequence.

\begin{theorem}[Theorem \ref{thm_main}]\label{thm_main1}
Let $K$ be a number field, and let $X$ be a normal projective
variety over $K$. Let $\overline{L}$ be a nef adelic line bundle
on $X$ with $L$ ample. Assume that there exist a finite set $S$
of places of $K$ and a generic sequence $(x_m)_{m\geq 1}$ in
$X(\overline{K})$ such that $(x_m)_{m\geq 1}$ is abelian,
almost unramified at every $v\notin S$, and satisfies
\[
\lim_{m\to\infty}h_{\overline{L}}(x_m)=0.
\]
Then there exist a normal projective variety $Y$ over $K$, a finite
surjective morphism $\pi:X\to Y$, a polarized endomorphism
$f:Y\to Y$, and an $f$-admissible adelic line bundle
$\overline{M}_f\in\widehat{\mathrm{Pic}}(Y)_{\mathbb{Q}}$ such that
\[
\overline{L}\leq\pi^*\overline{M}_f.
\]
Moreover, $f$ is an exceptional map.
\end{theorem}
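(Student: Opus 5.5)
The plan is to run the standard equidistribution/arithmetic-rigidity strategy. From the small abelian sequence $(x_m)$ we build a large semigroup of finite self-correspondences of $X$ that preserve the height. From that semigroup we extract a pair of polarized endomorphisms of multiplicatively independent degrees on a finite quotient $Y$ of $X$. Theorem~\ref{thm_commuting1} then makes these endomorphisms exceptional.

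\textbf{Step 1: Galois conjugates and Frobenius.} Fix a finite place $v\notin S$ with residue characteristic $p$, and let $e_v$ be the uniform bound on the ramification indices along the sequence. Since each $K(x_m)/K$ is abelian and almost unramified at $v$, the arithmetic Frobenius (or the Artin symbol of $\mathfrak p_v^{e}$ for a suitable $e$ depending only on $e_v$) acts on all $x_m$ through a single element $\sigma_v$ of the abelian Galois group. Its action on $x_m$ reduces modulo $v$ to the $q_v$-power Frobenius, up to a uniformly bounded inertia defect. Running the Dvornicich--Zannier/Ji--Xie--Zhang argument, we consider the pairs $(x_m,\sigma_v(x_m))$ in $X\times X$. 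Their Zariski closure $\Gamma_v$ should be a correspondence whose reduction at $v$ contains the graph of Frobenius. In particular $\Gamma_v$ is finite of bidegree comparable to $(1,q_v)$ relative to $L$ and is not the diagonal. Bounding its degree in terms of $q_v$ and $e_v$ uses a Faltings/Zhang-type height estimate on $X\times X$: the points $(x_m,\sigma_v x_m)$ are small, so by Zhang's successive minima their closure has controlled degree.

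\textbf{Step 2: From correspondences to endomorphisms.} By equidistribution of small generic points (Yuan's theorem for the nef adelic $\overline L$), the canonical measure $\mu_{\overline L,w}$ is invariant under every $\Gamma_v$. The $\Gamma_v$ for different $v$ commute because the Galois group is abelian. We then pass to a normal variety $Y=X/G$, where $G$ is the finite group generated by the automorphism parts. On $Y$ the correspondences become single-valued, giving commuting finite endomorphisms $f_v$ of degree $q_v^{\dim}$-type. Since they preserve the height class, they are polarized: $f_v^*M\simeq M^{\otimes q_v}$ up to numerical equivalence. Choosing two places with $q_v, q_{v'}$ multiplicatively independent gives commuting polarized $f,g$ on $Y$. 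The adelic bundle $\overline M_f$ is the canonical $f$-invariant metrization. The inequality $\overline L\le\pi^*\overline M_f$ should follow from $h_{\overline L}(x_m)\to0$ together with the uniqueness of invariant metrics and a Zhang-type comparison of the two metrics at the equilibrium measure.

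\textbf{Step 3: Conclusion.} Theorem~\ref{thm_commuting1} applied to the commuting pair $f,g$ on $Y$ shows that $f$ is exceptional.

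The main obstacle is Step 2: showing that the Galois-generated correspondences descend to genuine morphisms on a finite quotient, rather than remaining multivalued. My first attempt would be to use the invariance of the canonical measure together with the rigidity of measure-preserving correspondences, following Dinh--Sibony's Poincaré map approach, to force the correspondence to be a graph after quotienting by a finite group of automorphisms.
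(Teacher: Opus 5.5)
Your outline has the right shape: pairs $(x_m,\sigma(x_m))$ for a Frobenius lift $\sigma$, a finite quotient, two places of coprime residue characteristic, then Theorem~\ref{thm_commuting1}. But the two steps that carry the proof lack a working mechanism.

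\textbf{Step 1 (bi-finiteness).} A reduction that merely \emph{contains} the Frobenius graph is vacuous, since the reduction of $X\times X$ contains it too. Zhang's successive minima give only height zero for the closure, with no control on its dimension. What is needed is not invariance of a canonical measure but Yuan's equidistribution at the place $v$ itself, chosen to be a model place.
\begin{itemize}
\item Almost-unramifiedness gives one $\delta<1$ with $|\sigma(z)-z^{q}|_w\leq\delta$ for every $z\in K(x_m)$ with $|z|_w\leq 1$ and every $m$.
\item Abelianity gives $\tau(x_m,\sigma(x_m))=(\tau(x_m),\sigma(\tau(x_m)))$. So the full Galois orbits of the pairs lie in a closed Berkovich tube around the Frobenius graph.
\item Equidistribution on each component $W$ of the closure puts the Chambert--Loir measure of $W$ in that tube. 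At a model place this measure charges the Shilov point of \emph{every} component of the special fiber (Lemma~\ref{lem_measure}).
\item Hence the special fiber of $W$ is supported exactly on the Frobenius graph. This yields $\dim W=\dim X$, finiteness of both projections, and numerical $q$-polarization.
\end{itemize}

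\textbf{Step 2 (descent).} ``$X/G$ for the group generated by the automorphism parts'' has no meaning. A bi-finite correspondence does not split into an endomorphism plus automorphisms, and its multivaluedness typically comes from a non-Galois finite equivalence relation. Poincar\'e maps play no role here. The paper instead uses the relation $\mf{R}_X$ generated by the words $\mf{C}_X^{\varepsilon_m}\circ\cdots\circ\mf{C}_X^{\varepsilon_1}$ with $\sum\varepsilon_i=0$ and all partial sums $\leq 0$.
\begin{itemize}
\item A height comparison along $\mf{C}_X$ shows their components have essential minimum zero.
\item The composition rules for tubes (Lemma~\ref{lem_comp}) put them in a tube around the diagonal.
\item The same Shilov-point argument then makes $\mf{R}_X$ bi-finite.
\item So $Y=X/\mf{R}_X$ exists, and $\Supp(\mf{C}_X\circ\mf{R}_X\circ\trans\mf{C}_X)\subseteq\Supp(\mf{R}_X)$ lets $\mf{C}_X$ descend to a morphism (Lemma~\ref{lem_descent}).
\end{itemize}

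\textbf{Commuting pair.} Abelianity of the Galois group does not make two Zariski-closed correspondences $\Gamma_v,\Gamma_{v'}$ commute. They also descend to different quotients, so you do not get a commuting pair on one $Y$. The paper proceeds in two stages.
\begin{itemize}
\item It observes $f(\pi(x_m))=\sigma(\pi(x_m))$, so $\pi(x_m)$ is $f$-periodic.
\item It reruns the construction at $v'$ on $Y$, using the $f$-invariant set of Galois orbits of these periodic points.
\item This makes $f\times f$ preserve $\mf{C}_Y^{(v')}$ and $\mf{R}_Y^{(v')}$, so $f$ descends to $f_{Y'}$ on $Y'=Y/\mf{R}_Y^{(v')}$ commuting with the $q'$-polarized $g$.
\end{itemize}
Theorem~\ref{thm_commuting1} then gives exceptionality of $f_{Y'}$, not of $f$. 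You need invariance of exceptionality under finite semiconjugacy (Theorem~\ref{thm_semiconj}) to conclude, and this step is missing from your plan.

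\textbf{Adelic inequality.} ``Uniqueness of invariant metrics'' cannot produce $\ov{L}\leq\pi^*\ov{M}_f$. $\ov{L}$ is invariant under nothing, and $L$ need not be a pullback from $Y$, so you must first say which $M$ on $Y$ you metrize. The paper takes the Deligne pairing $\ov{M}:=\langle\ov{L}\rangle_{X/Y}$, for which $\pi^*\ov{M}-\ov{L}$ is nef (Lemma~\ref{lem_dpnef}). A height comparison along $\mf{R}_X$ gives $h_{\ov{M}}(\pi(x_m))\to 0$. Zhang's inequality plus the arithmetic Hodge index theorem then show $\ov{M}_f-\ov{M}$ is a constant of degree zero, hence nef.
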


Here $\overline{L}\leq\pi^*\overline{M}_f$ means that
$\pi^*\overline{M}_f-\overline{L}$ is a nef adelic line bundle.
A sequence $(x_m)_{m\geq 1}$ is called \emph{generic} if every
infinite subsequence is Zariski dense in $X$.

\subsection{Sketch of the proof}

Our approach to Theorems~\ref{thm_vaeq1} and Theorem \ref{thm_main1}
has three main ideas.

First, let $(x_m)_{m\geq 1}$ be a generic small sequence of
abelian points that is almost unramified at a suitable place $v$. We choose a suitable Frobenius lift $\sigma\in\Gal(\ov{K}/K)$ and use the Zariski closure of the pairs $(x_m,\sigma(x_m))$ to construct a correspondence $\mf{C}_X^{(v)}\subset X\times X$. Yuan's non-archimedean equidistribution theorem shows that this correspondence is bi-finite and its reduction is the graph of Frobenius.

Second, by iterating $\mf{C}_X^{(v)}$ and its transpose,
we construct a bi-finite equivalence relation
$\mf{R}_X^{(v)}$.
On the geometric quotient $Y=X/\mf{R}_X^{(v)}$, the
correspondence $\mf{C}_X^{(v)}$ descends to an endomorphism $g:Y\to Y$.
The proof of Theorem~\ref{thm_main1} is based on these two constructions.

Third, we develop a theory of exceptional endomorphisms in
higher dimension and establish their basic properties,
including the commuting criterion in
Theorem~\ref{thm_commuting1} and stability under semiconjugacy.
These results allow us to deduce the exceptionality of the
original map from the endomorphisms constructed on the quotient.

\medskip

We now explain how these ideas are used in the proof of
Theorem~\ref{thm_vaeq1}.
\medskip

\noindent\textbf{Step 1.} We first construct a suitable small sequence. Let $f:X\to X$ be a $d$-polarized endomorphism, and let $\alpha\in X(K)$ be non-exceptional.
Assume that $K_\infty(f,\alpha)/K$ is virtually abelian.
After a finite extension of $K$, we may assume that this
extension is abelian. Replacing $\alpha$ by a point in its backward orbit if
necessary, we show that there is a suitable finite place
$v$ of $K$ for which
\[
\mc{P}^{(v)}
:=
\left\{
\beta\in f^{-\infty}(\alpha)
\;\middle|\;
K(\beta)/K \text{ is unramified above } v
\right\}
\]
is Zariski dense in $X$. Using this set, we construct a generic sequence of small
abelian points that is almost unramified at $v$.

\medskip
\noindent\textbf{Step 2. }
Applying the first two constructions to this sequence, we
obtain a normal projective variety $Y$, a finite surjective
morphism $\pi:X\to Y$, and a polarized endomorphism
$g:Y\to Y$ such that $\gcd(\deg(g),d)=1$.
The map $f$ also descends to an endomorphism $f_Y:Y\to Y$
such that
\[
\pi\circ f=f_Y\circ\pi
\qquad\text{and}\qquad
f_Y\circ g=g\circ f_Y.
\]
By Theorem~\ref{thm_commuting1}, $f_Y$ is exceptional.
Since exceptionality is stable under semiconjugacy,
$f$ is also exceptional. Moreover, Theorem~\ref{thm_main1} shows that
$\alpha\in\mr{Prep}(f)$.

\medskip
\noindent\textbf{Step 3. }
Finally, we treat the exceptional case and prove
Theorem~\ref{thm_Kexp1}.
We first prove the corresponding statement for
$d$-lifts of AT-varieties. For preperiodic base points, the implication from CM to
virtual abelianity follows from Frobenius lifts and the
Chebotarev density theorem. We use Zarhin's result \cite[Theorem~1]{Zar87} to prove
that CM is necessary. This completes the proof of Theorem~\ref{thm_vaeq1}.

\begin{remark}
The first idea, constructing a correspondence in $X\times X$
using Yuan's non-archimedean equidistribution theorem, is
also used by the third author and Ziquan Yang in their
recent work \cite{XY26}. In that work, they give new proofs of Faltings' isogeny
theorem and Mordell's conjecture.
\end{remark}

\subsection*{Notations and Terminology}
By a \emph{variety}, we mean a separated, integral scheme of finite type over a base field. A \emph{curve} is defined as a variety of dimension one.

By a \emph{line bundle} on a scheme, we mean an invertible sheaf. For any line bundles $L, M$ and integers $a, b$, the notation $aL - bM$ denotes the tensor product $L^{\otimes a} \otimes M^{\otimes (-b)}$. We use both additive and multiplicative notation, as appropriate
to the context.

For line bundles $L\in \Pic(X)$, we write $L\equiv 0$ if $L$ is numerically trivial. Denote $N^1(X):=\mr{Pic}(X)/\equiv$ and $N^1(X)_{\mb{R}}:=N^1(X)\otimes_{\mb{Z}}\mb{R}$. 

For the theory of \emph{adelic line bundles}, we refer to \cite{YZ26}. Denote by $\widehat{\Pic}(X)$ the group of adelic line bundles on $X$, and set
\[
\widehat{\Pic}(X)_{\mathbb{Q}} := \widehat{\Pic}(X)\otimes_{\mathbb{Z}} \mathbb{Q}.
\]
For adelic line bundles $\overline{L}, \overline{M} \in \widehat{\Pic}(X)_{\mathbb{Q}}$, we write $\overline{L} \geq \overline{M}$ if $\overline{L} - \overline{M}$ is a nef adelic line bundle.

For a field $k$, we denote by
\[
\mathrm{PGL}_2(k) := \left\{\frac{ax+b}{cx+d} \in \mathrm{Aut}(\mathbb{P}^1_k) \;\middle|\; a,b,c,d \in k,\ ad-bc \neq 0 \right\}
\]
the group of M\"obius transformations.

For an integrable adelic line bundle $\overline{L}$ on a projective variety $X$ over a number field $K$, we define the \emph{essential minimum} by
\[
\mathrm{ess}(X,\overline{L}) := \sup_{U \subset X} \inf_{x \in U(\overline{K})} h_{\overline{L}}(x),
\]
where the supremum is taken over all Zariski open subschemes $U$ of $X$.

\subsection*{Organization of the paper}
Section~\ref{sec_pre} reviews Berkovich spaces, adelic line bundles, and bi-finite correspondences. Section~\ref{sec_AT} recalls the theory of AT-varieties and establishes the basic properties of exceptional maps. Sections~\ref{sec:Dinh-Sibony} and~\ref{sec:commuting maps} are devoted to the proof of Theorem~\ref{thm_commuting1}. We prove Theorem~\ref{thm_main1} in Section~\ref{sec_main} and complete the proof of Theorem~\ref{thm_vaeq1} in Section~\ref{sec_backward_orbit}. At the end of Section~\ref{sec_backward_orbit}, we will explain how to prove Conjecture \ref{conj_P1} using our method.

\subsection*{Acknowledgements}
{\emergencystretch=2em
The main results of this paper were obtained in December 2025 and subsequently presented by Junyi Xie on several occasions, including the \emph{Jingrun Star Awards and Young Researchers’ Forum on Number Theory} at Xiamen University in May 2026. Following an extended period of writing and revision, the manuscript was completed in September 2026. We thank the organizers of these events and Xiamen University for their hospitality.

Zhuchao Ji is supported by the National Key R\&D Program of China (No.\ 2025YFA1018300), NSFC Grant (No.12401106), and ZPNSF grant (No.XHD24A0201). Junyi Xie is supported by NSFC Grant (No.12271007) and by the Xplorer Prize from the New Cornerstone Science Foundation.\par}

\medskip

\noindent\textbf{AI usage.} The authors used ChatGPT 5.6 Sol to assist with the study of AT-varieties and to polish the language. Specifically, AI suggested the ideas for Propositions \ref{prop_etaleAT} and Proposition \ref{prop_semiconj1} on the basic properties of AT-varieties, and Lemma \ref{lem_Chebo}, Lemma \ref{lem_CMfrob}, Lemma \ref{lem_Frob_linebundle} on AT-varieties with CM. All mathematical arguments and the final text were written by the authors, who are responsible for their correctness.

\section{Preliminary}\label{sec_pre}

\subsection{Berkovich spaces and the Chambert-Loir measure}

\subsubsection*{Berkovich analytification}

Let $\mathbf{k}$ be a complete non-archimedean field with absolute value $|\cdot|_{\mathbf{k}}$, and let $X$ be a variety over $k$. The \emph{Berkovich analytification} $X^{\an}$ is defined as follows.

Assume first that $X=\Spe A$ is affine of finite type over $\mathbf{k}$, where $A$ is a finitely generated $\mathbf{k}$-algebra. Then $X^{\an}$ is the set of all multiplicative seminorms $\rho:A\to \mathbb{R}_{\ge 0}$ such that
\begin{enumerate}
\renewcommand{\labelenumi}{(\theenumi)}
    \item $\rho|_{\mathbf{k}}=|\cdot|_{\mathbf{k}}$,
    \item $\rho(a+b)\leq \rho(a)+\rho(b)$ for all $a,b\in A$,
    \item $\rho(ab)=\rho(a)\rho(b)$ for all $a,b\in A$.
\end{enumerate}
For each $f\in A$, define a function
\[
|f|:X^{\an}\to \mathbb{R}_{\ge 0},\qquad \rho\mapsto \rho(f).
\]
We endow $X^{\an}$ with the weakest topology for which all functions $|f|$ are continuous.

For a general variety $X$, choose an affine open cover and define $X^{\an}$ by gluing the analytifications of the affine pieces.

\subsubsection*{Shilov boundary}

Let $X$ be a projective variety over $\mathbf{k}$, and let $\mc{X}$ be a normal integral model of $X$ over $\Spe {\mathbf{k}}^\circ$, i.e. $\mc{X}$ is an integral scheme, projective and flat over $\Spe {\mathbf{k}}^\circ$, with generic fiber $\mc{X}_{\mathbf{k}}\simeq X$. Let $\mathbf{k}^{\circ\circ}$ denote the maximal ideal of $\mathbf{k}^\circ$, and let $\wt{\mathbf{k}}=\mathbf{k}^\circ/\mathbf{k}^{\circ\circ}$ be the residue field. Denote by
\[
\wt{X}:=\mc{X}\times_{\Spe \mathbf{k}^\circ}\Spe \wt{\mathbf{k}}
\]
the special fiber.

There is a natural reduction map
\[
\red_{\mc{X}}:X^{\an}\to \wt{X}.
\]

Let $\xi\in X^{\an}$, and let $x:=\ker(\xi)\in X$. By the valuative criterion of properness, the morphism $\Spe \mathbf{k}(x)\to X$ extends uniquely to a morphism $\Spe \mathbf{k}(x)^\circ\to \mc{X}$, where $\mathbf{k}(x)^\circ$ is the valuation ring of $\mathbf{k}(x)$.
We have a diagram,
\[
\begin{tikzcd}
\Spe \mathbf{k}(x) \arrow[d] \arrow[r]      & \mc{X} \arrow[d] \\
\Spe \mathbf{k}(x)^{\circ} \arrow[r] \arrow[ru] & \Spe \mathbf{k}^{\circ}                                                  
\end{tikzcd}
\]
The reduction $\red_{\mc{X}}(\xi)$ is defined as the image of the closed point of $\Spe \mathbf{k}(x)^\circ$ in $\wt{X}$.

Let $\eta$ be an irreducible component of $\wt{X}$. Since $\eta$ defines a prime divisor on $\mc{X}$, it induces a discrete valuation
\[
v_\eta:\mathbf{k}(X)^\times=\mathbf{k}(\mc{X})^{\times}\to \mathbb{Z}.
\]
Define a seminorm
\[
\rho_\eta(f):=\exp(-a\, v_\eta(f)), \qquad f\in \mathbf{k}(X)^\times,
\]
where $a>0$ is the unique constant such that $\rho_\eta|_{\mathbf{k}}=|\cdot|_{\mathbf{k}}$. This uniquely determine a point $\rho_\eta\in X^{\an}$, called the \emph{Shilov point} associated to $\eta$. Moreover, $\rho_\eta$ is the unique point in $X^{\an}$ mapping to the generic point of $\eta$ under $\red_{\mc{X}}$.

\subsubsection*{The Chambert--Loir measure}
Let $X$ be a projective variety over $\mathbf{k}$ of dimension $n$, and let $(L,\|\cdot\|)$ be a line bundle endowed with a semipositive metric. The associated \emph{Chambert--Loir measure} on $X^{\an}$ is defined as follows.

First assume that $\|\cdot\|$ is induced by a normal integral model $(\mc{X},\mathcal{M})$ of $(X,L^{\otimes e})$ over $\Spe \mathbf{k}^\circ$. Let $|\wt{X}|_g$ denote the set of irreducible components of $\wt{X}$. Then one defines
\[
c_1(L,\|\cdot\|)^n
:=\frac{1}{e^n}\sum_{\eta\in |\wt{X}|_g}
\deg_{\mathcal{M}|_\eta}(\eta)\,\delta_{\rho_\eta},
\]
where $\delta_{\rho_\eta}$ is the Dirac measure at $\rho_\eta$.

In general, any semipositive metric can be approximated uniformly by semipositive model metrics $\|\cdot\|_m$. One defines
\[
c_1(L,\|\cdot\|)^n
:=\lim_{m\to\infty} c_1(L,\|\cdot\|_m)^n.
\]
Here, the sequence converges weakly due to \cite{CL06,Gub10}.

\subsection{Adelic line bundles}

Let $K$ be a number field and $X$ be a projective variety over $K$ of dimension $n$. We refer \cite{YZ26} for the theory of adelic line bundles on $X$.

\subsubsection*{Admissible adelic line bundles}
Let $f: X\to X$ be a polarized endomorphism of $X$. Namely, there exists an ample line bundle $L\in \Pic(X)$ and an integer $q>1$ such that $f^*L\simeq qL$.  Yuan and Zhang show the following.
\begin{theorem}[{\cite[Theorem 6.4.2]{YZ26}}]\label{thm_admiss}
Let $(X,f, L)$ be a polarized algebraic dynamical system over a number field $K$. Assume that X is normal. The projection
$\widehat{\Pic}(X)_{\mb{Q}}\to \Pic(X)_{\mb{Q}}$ has a unique section
\[M\mapsto \ov{M}_f\]
as $f^*$-modules. The image ${\ov{M}_f}$ is always integrable. If $M\in\Pic(X)_{\mb{Q}}$ is ample, then $\ov{M}_f$ is nef.
\end{theorem}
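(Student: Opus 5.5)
The natural approach is Zhang's construction via Tate's limiting argument, carried out in the adelic framework of \cite{YZ26}. First fix $M\in\Pic(X)_{\mb{Q}}$. Since $L$ is ample, $N^1$-considerations are not enough here: we need $f^*$ to act on $\Pic(X)_{\mb{Q}}$ with all eigenvalues (on the relevant finite-dimensional $f^*$-stable subspace containing $M$) of absolute value different from $1$. The first step is to show this. The subspace $\Pic^0(X)_{\mb{Q}}$ is acted on with eigenvalues of modulus $q^{1/2}$, because $f^*$ multiplies the Néron–Tate type pairing by $q$ via $f^*L=qL$. On $N^1(X)_{\mb{Q}}$ the eigenvalues have modulus $q$ by the standard polarized-endomorphism argument: $f^*$ preserves the ample cone and $(f^*)$ scales $L^{n}$-intersections. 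Normality of $X$ is used so that $\Pic$ is well behaved, with $\Pic^0$ an abelian variety's group up to finite index, and so that pullbacks of adelic line bundles behave.

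Given this spectral gap, I would construct the section by a contraction argument. Choose any adelic lift $\ov{M}_0\in\widehat{\Pic}(X)_{\mb{Q}}$ of $M$. Write $M$ in terms of the $f^*$-generalized eigenspace decomposition, and consider the operator $\Phi=f^*$ on the fiber $\{\ov{N}:N=M\}$-type affine spaces. Concretely, the fibers of $\widehat{\Pic}(X)_{\mb{Q}}\to\Pic(X)_{\mb{Q}}$ are (after passing to the vector space of lifts) modules over bounded continuous functions on $\prod_v X_v^{\an}$, with the boundary topology. There, $f^*$ acts with norm $\le 1$ on the metric differences. Since the eigenvalues $\lambda$ satisfy $|\lambda|>1$, the map $\ov{N}\mapsto \lambda^{-1}f^*\ov{N}$ (suitably combined over a Jordan block) is a contraction on the space of lifts. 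Its unique fixed point gives $\ov{M}_f$. Linearity and $f^*$-equivariance follow from uniqueness of fixed points, and uniqueness of the section follows because any equivariant section differs by an $f^*$-invariant element of the kernel, which the contraction forces to be $0$.

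For integrability, write $M$ as a difference of ample classes. It then suffices to treat ample $M$ and show that $\ov{M}_f$ is nef. For ample $M$ with $f^*M=qM$, iterate $q^{-n}(f^n)^*\ov{M}_0$ starting from a nef model lift $\ov{M}_0$. Each iterate is nef, and they converge in the boundary topology to $\ov{M}_f$, and nefness is closed. General $M$ reduces to this case via a Jordan decomposition and by adding a large multiple of $L$.

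The main obstacle is the non-eigenvector and $\Pic^0$ parts: eigenvalues of modulus $q^{1/2}$ and possible Jordan blocks. There we still need convergence with respect to the boundary topology, and we also need the limit to lie in the integrable subgroup rather than merely in the completion. I would handle this by bounding $\|(f^n)^*\|$ on kernel elements by $1$ and summing geometric series with ratio $q^{-1/2}$, which absorbs polynomial Jordan factors.
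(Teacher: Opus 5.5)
Your existence-and-uniqueness argument is essentially the standard Zhang/Yuan--Zhang one; the paper gives no proof of this statement and only cites \cite{YZ26}. That part is fine in substance. Over a number field $\mathrm{Pic}(X)_{\mathbb Q}$ is finite-dimensional. The eigenvalues of $f^*$ have modulus $q$ on the part mapping isomorphically to $N^1(X)_{\mathbb Q}$, and modulus $\sqrt q$ on $\mathrm{Pic}^0(X)_{\mathbb Q}$. So for the characteristic polynomial $P\in\mathbb Q[T]$, the operator $P(f^*)$ is invertible on the kernel of $\widehat{\mathrm{Pic}}(X)_{\mathbb Q}\to\mathrm{Pic}(X)_{\mathbb Q}$, since $f^*$ does not increase sup norms of metric differences. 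You should invert $P(f^*)$ rather than the individual, possibly non-real, operators $\lambda^{-1}f^*$.

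The gap is the positivity step, and it has two parts.
\begin{itemize}
\item The reduction ``general $M$ reduces to $f^*M=qM$ via a Jordan decomposition and by adding a large multiple of $L$'' proves nothing. Eigencomponents of an ample class are not ample, and in general not even real: $f=([2+i],[2-i])$ on $E\times E$, with $E$ having CM by $\mathbb Z[i]$, has eigenvalues $3\pm4i$ on $N^1$ with $q=5$. Nefness does not pass to components, and $M+cL$ is no closer to being an eigenvector than $M$.
\item Your last paragraph conflates convergence with integrability. Geometric-series bounds only place the limit in the boundary completion, and a boundary limit of integrable bundles need not be integrable: a general continuous function on $X_v^{\mathrm{an}}$ is not a difference of semipositive potentials. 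Integrability has to come from writing $\overline M_f$ as a difference of limits of nef bundles, which is exactly the missing positivity step.
\end{itemize}

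That step cannot be carried out as stated, because the nefness clause (nef in the arithmetic sense used in the paper, so heights $\geq 0$) fails for ample $M$ with nonzero component in $\mathrm{Pic}^0(X)_{\mathbb Q}$. Here is a counterexample.
\begin{itemize}
\item Take an elliptic curve $E/K$ with a non-torsion $Q\in E(K)$, $f=[2]$, $L=\mathcal O_E(O)$ (so $q=4$), and $M=\mathcal O_E(Q)=L\otimes P$ with $P=\mathcal O_E((Q)-(O))$. Then $f^*P\simeq P^{\otimes 2}$, so $\overline M_f=\overline L_f+\overline P_f$ with $[2]^*\overline P_f=2\overline P_f$.
\item Let $\hat h:=h_{\overline L_f}$ be the N\'eron--Tate height and $\langle x,y\rangle:=\hat h(x+y)-\hat h(x)-\hat h(y)$. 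Since $(Q)=t_{-Q}^*(O)$, we have $h_{(Q)}-h_{(O)}=\hat h(x-Q)-\hat h(x)+O(1)$. The height $h_{\overline P_f}$ is the unique function with $h(2x)=2h(x)$ at bounded distance from this, so $h_{\overline P_f}(x)=-\langle x,Q\rangle$.
\item Therefore
\[
h_{\overline M_f}(x)=\hat h(x-Q)-\hat h(Q),
\]
which is negative at $x=Q$. Hence $\overline M_f$ is not nef.
\end{itemize}

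What the Tate-limit method does prove is weaker. Let $V_q\subset\mathrm{Pic}(X)_{\mathbb Q}$ be the modulus-$q$ generalized eigenspace, so $V_q\simeq N^1(X)_{\mathbb Q}$.
\begin{itemize}
\item For ample $M\in V_q$, the classes $M_n:=q^n(f^*)^{-n}M$ stay in a compact subset of the ample cone. This is because $q^{-1}f^*$ has bounded positive and negative powers and preserves the ample cone \cite{MZ18}.
\item Write $M_n=\sum_i c_i^{(n)}N_i$ with bounded $c_i^{(n)}\geq 0$, for finitely many ample $N_i\in V_q$ with nef lifts $\widetilde N_i$, and fix a linear section $s$. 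The nef bundles $q^{-n}(f^n)^*\sum_i c_i^{(n)}\widetilde N_i$ differ by $O(q^{-n})$ in sup norm from $(f^*)^n s\bigl((f^*)^{-n}M\bigr)$.
\item The latter converge to $\overline M_f$ by the same contraction estimate you use, since the limit is an $f^*$-equivariant section.
\end{itemize}
So $\overline M_f$ is nef for ample $M\in V_q$, and integrable on $V_q$. The $\mathrm{Pic}^0$-part needs a separate argument. For general ample $M$ the most one can hope for is nefness after adding the pullback of some $\overline N\in\widehat{\mathrm{Pic}}(K)$; in the example, adding a constant of degree $\hat h(Q)$ suffices.
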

We call $\ov{M}_f$ the $f$-admissible extension of $M$ in $\widehat{\Pic}(X)_{\mb{Q}}$. An adelic line
bundle in $\widehat{\Pic}(X)_{\mb{Q}}$ which is isomorphic to some $\ov{M}_f$ is called \emph{$f$-admissible}. 

\begin{corollary}[{\cite[Corollary 6.4.3]{YZ26}}]\label{cor_admiss}
For $M\in \Pic(X)_{\mb{Q}}$, the following are true:
\begin{enumerate}
\renewcommand{\labelenumi}{(\theenumi)}
    \item If $f^*M=\lambda M$ for some $\lambda \in \mb{Q}$, then $f^*\ov{M}_f=\lambda \ov{M}_f$ in $\widehat{\Pic}(X)_{\mb{Q}}$.
    \item For any $x\in \mr{Prep}(f)$, one has $\ov{M}_f|_{x'}=0$ in $\widehat{\Pic}(x')_{\mb{Q}}$. Here $x'$ is the closed point of $X$ corresponding to $x$.
\end{enumerate}
\end{corollary}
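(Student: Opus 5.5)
The plan is to deduce both parts formally from the uniqueness statement in Theorem~\ref{thm_admiss}, namely that $M\mapsto\ov M_f$ is the unique $\mb{Q}$-linear section commuting with $f^*$.

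\textbf{Part (1).} Because the section is $f^*$-equivariant and $\mb{Q}$-linear,
\[
f^*\ov M_f=\ov{(f^*M)}_f=\ov{(\lambda M)}_f=\lambda\,\ov M_f .
\]
Nothing more is needed here.

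\textbf{Part (2), reduction to a periodic point.} Let $x$ be preperiodic, so $y:=f^m(x)$ is periodic for some $m\ge0$. Since pullback of adelic line bundles commutes with restriction to closed points,
\[
\ov M_f|_{x'}=\ov{\bigl((f^m)^*M\bigr)}_f\big|_{x'}=\bigl(f^m|_{x'}\bigr)^*\bigl(\ov M_f|_{y'}\bigr).
\]
So it suffices to prove, for every $M$, the vanishing at the periodic point $y$. Let $f^n(y)=y$. Then $f^n$ restricts to a morphism $\sigma\colon y'\to y'$. This is an automorphism of $\Spec K(y')$, so it has finite order, and after replacing $n$ by a multiple we may assume $\sigma=\id$.

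\textbf{The linear map $\phi$.} Consider
\[
\phi\colon \Pic(X)_{\mb{Q}}\to\widehat{\Pic}(y')_{\mb{Q}},\qquad M\mapsto \ov M_f|_{y'} .
\]
It is $\mb{Q}$-linear. Applying equivariance to $F:=(f^n)^*$ gives $\phi(FM)=\sigma^*\phi(M)=\phi(M)$, that is, $\phi\circ(F-\id)=0$. It therefore suffices to show that $F-\id$ is surjective on a suitable $F$-stable subspace containing $M$.

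\textbf{Key step: $F-\id$ is invertible on $V$.} Let $V$ be the span of $\{F^kM\}_{k\ge0}$. I expect this to be finite-dimensional. There is an exact sequence $0\to\Pic^0(X)_{\mb{Q}}\to\Pic(X)_{\mb{Q}}\to NS(X)_{\mb{Q}}$, in which $NS$ has finite rank. On $\Pic^0$, $F$ is induced by an endomorphism of the Picard variety, so it satisfies a monic integer polynomial (its characteristic polynomial on the Tate module). Hence $F$ satisfies a polynomial on $\Pic(X)_{\mb{Q}}$, and $V$ is finite-dimensional. On $V$ I then show $1$ is not an eigenvalue of $F$:
\begin{enumerate}
\renewcommand{\labelenumi}{(\theenumi)}
\item On the $NS$ part, since $F^*L=q^nL$ with $L$ ample, the standard fact applies that all eigenvalues of $F$ on $N^1$ have modulus $q^{n}$ or $q^{n/2}$; in any case they have modulus $>1$.
\item On $\Pic^0$, the eigenvalues have modulus $q^{n/2}>1$, by the Rosati/Weil argument for the dual isogeny of the Albanese.
\end{enumerate}
Thus $F-\id$ is invertible on $V$, so $M=(F-\id)M_1$ with $M_1\in V$, and $\phi(M)=0$.

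\textbf{Main obstacle.} The delicate point is this eigenvalue bound on $V$ when $M$ is not an eigenvector. On numerical classes it is the usual polarization argument. For the $\Pic^0$ part I would use that $f$ induces on $\mr{Alb}(X)$ an isogeny whose analytic representation $\rho$ satisfies $\rho\bar\rho^{t}$ compatible with $q$, making $\rho/\sqrt q$ unitary. If this proves awkward, the fallback is to apply the argument of part (1) directly on the generalized eigenspaces of $F|_V$.
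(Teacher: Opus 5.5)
The paper gives no proof of this statement; it is quoted from Yuan--Zhang. Your part (1) is correct, and it uses only that $M\mapsto\ov M_f$ is $\mb Q$-linear and $f^*$-equivariant, not uniqueness. Your periodic case is also correct: $\phi\circ(F-\mathrm{id})=0$, combined with invertibility of $F-\mathrm{id}$ on the finite-dimensional $F$-stable span $V$, is the standard coprimality mechanism.

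The gap is in the reduction from preperiodic to periodic points. The first equality in
\[
\ov M_f|_{x'}=\ov{\bigl((f^m)^*M\bigr)}_f\big|_{x'}
\]
is unjustified and false in general. For $M=L$ it reads $\ov L_f|_{x'}=q^m\,\ov L_f|_{x'}$, which is equivalent to the vanishing you are trying to prove. Write $\phi_z(M):=\ov M_f|_{z'}$. Equivariance only gives $\phi_x\circ(f^m)^*=(f^m|_{x'})^*\circ\phi_y$. So $\phi_y=0$ shows only that $\phi_x$ vanishes on the image of $(f^m)^*$ in $\Pic(X)_{\mb Q}$, and you still need that image to be everything.

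This is easy to repair with tools you already have. On every finite-dimensional $f^*$-stable subspace of $\Pic(X)_{\mb Q}$, all eigenvalues of $f^*$ have modulus $>1$ by the same $N^1$ and $\Pic^0$ bounds you use for $F$. In particular they are nonzero, so $(f^m)^*$ is bijective on $\Pic(X)_{\mb Q}$, and your reduction goes through once this is stated.

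Alternatively, you can skip the reduction. Choose $N$, a multiple of the period of $y$, with $f^N|_{y'}=\mathrm{id}$. Then $f^{m+N}|_{x'}=f^m|_{x'}$ as morphisms $x'\to y'$, so
\[
\phi_x\circ(f^*)^m\bigl((f^*)^N-\mathrm{id}\bigr)=0 .
\]
The operator $(f^*)^m\bigl((f^*)^N-\mathrm{id}\bigr)$ is invertible on the $f^*$-stable span of $M$, since no eigenvalue of $f^*$ is $0$ or a root of unity. Hence $\phi_x(M)=0$ directly.
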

By Corollary \ref{cor_admiss}, $\ov{L}_f$ coincides with the invariant adelic line bundle constructed in \cite[Section 6.1]{YZ26}.

\subsubsection*{Model places}
Recall, an adelic line bundle $\overline{L}\in \widehat{\Pic}(X)$ can be represented by a model $(\mc{X}_{\mc{V}},\mc{L}_{\mc{V}})$ over an open subscheme of $\mc{V}\subset \Spe O_K$. A finite place $v$ is said to be a \emph{model place} with respect to $\overline{L}$ if 
there is a model $(\mc{X},\mathcal{L})$ of $(X,L)$ over $\Spe O_K$, and a neighborhood $\mc{V}$ of $v$, such that $\ov{L}$ is induced by a single model $(\mc{X}_{\mc{V}},\mathcal{L}_{\mc{V}})$ over $\mc{V}$, and $\mathcal{L}|_{\mc{X}_v}$ is ample. There are some basic facts about model places.

\begin{itemize}
\item If $v$ is a model place with respect to both $\ov{L}$ and $\ov{M}\in \widehat{\Pic}(X)$ , then $v$ is a model place with respect to $\ov{L}+\ov{M}$.
\item If $Y\subset X$ is a subvariety and $v$ is a model place with respect to $\ov{L}\in \widehat{\Pic}(X)$, then $v$ is a model place with respect to $\ov{L}|_Y$.
\item If $L$ is ample, then all but finitely many finite places of $K$ are model places with respect to $\overline{L}$.
\end{itemize}

At model places, the Chambert-Loir measure is a finite sum of Dirac measures.
\begin{lemma}\label{lem_measure}
Let $\overline{L}$ be a nef adelic line bundle on $X$ with $L$ ample. If $v$ is a model place, then the Chambert-Loir measure satisfies
\[
c_1(\overline{L})_v^n
=\sum_{\eta}
a_{v,\eta}\,\delta_{\rho_\eta},
\]
where $\eta$ runs over the irreducible components of the special fiber $\mc{X}_v$, $\rho_\eta$ is the associated Shilov point, and $a_{v,\eta}>0$.
\end{lemma}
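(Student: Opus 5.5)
Since $v$ is a model place for $\overline{L}$, I would fix a neighborhood $\mc{V}$ of $v$ in $\Spe O_K$ and a model $(\mc{X}_{\mc{V}},\mathcal{L}_{\mc{V}})$ inducing $\overline{L}$ over $\mc{V}$, with $\mathcal{L}|_{\mc{X}_v}$ ample. The $v$-adic component of $\overline{L}$ is then the model metric on $L^{\an}_v$ defined by the base change of this model to $\Spe \mathcal{O}_{K_v}$. If $\mc{X}$ is not normal, I would pass to its normalization $\nu\colon\mc{X}'\to\mc{X}$. This is finite because $O_K$ is excellent, it is an isomorphism on generic fibers because $X$ is normal, and $\nu^*\mathcal{L}$ defines the same metric and is still ample on the special fiber, since pullback of an ample bundle under a finite morphism is ample. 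So I may assume the model is normal and integral, and we are in the situation of the definition of the Chambert--Loir measure with $e=1$ (or $e$ the denominator if $\overline{L}$ is only in the $\mathbb{Q}$-span; the argument is identical).

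By the definition recalled above, $c_1(\overline{L})_v^n=\frac{1}{e^n}\sum_{\eta}\deg_{\mathcal{L}|_\eta}(\eta)\,\delta_{\rho_\eta}$, with $\eta$ running over the irreducible components of the special fiber $\mc{X}_v$ (after base change to $\mathcal{O}_{K_v}$). I must check that this formula applies, i.e. that the model metric is semipositive. This holds because $\mathcal{L}|_{\mc{X}_v}$ is ample, hence nef on the special fiber, which is the definition of a semipositive model metric. Alternatively, it follows from nefness of $\overline{L}$. Either way, the approximating sequence in the general definition can be taken constant, so no limit is needed.

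It then remains to show $a_{v,\eta}:=e^{-n}\deg_{\mathcal{L}|_\eta}(\eta)>0$. By flatness, each irreducible component $\eta$ of $\mc{X}_v$ has dimension $n$. Since $\mathcal{L}|_\eta$ is ample, its top self-intersection on the $n$-dimensional projective variety $\eta$ is positive, for instance by asymptotic Riemann--Roch or Nakai--Moishezon. The components are taken with their reduced structure; multiplicities only rescale the coefficients by positive integers, which does not affect positivity. The distinct Shilov points $\rho_\eta$ are as constructed earlier.

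The main subtlety is the normalization step: the Shilov points and the formula were defined for normal models. The components of the normalized special fiber map finitely onto components of the original special fiber, so the measure is still a positive combination of Dirac masses at Shilov points, now of the normal model. This is exactly what the statement asserts, with $\eta$ read as the components of $\mc{X}'_v$.
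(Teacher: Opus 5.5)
Your proposal is correct and is essentially the paper's argument: since $v$ is a model place, the $v$-adic metric is the model metric of $(\mc{X}_{\mc{V}},\mathcal{L}_{\mc{V}})$, so the Chambert--Loir measure is the finite Dirac sum $\sum_\eta \deg_{\mathcal{L}|_\eta}(\eta)\,\delta_{\rho_\eta}$. Ampleness of $\mathcal{L}|_{\mc{X}_v}$ on the $n$-dimensional components then makes every coefficient positive.

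Your extra checks (passing to the normalization of the model, semipositivity, equidimensionality of the special fiber, multiplicities) are points the paper leaves implicit. Note that your normalization step, as written, uses that $X$ itself is normal. The paper's definition of Shilov points via normal integral models tacitly assumes this as well.
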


\begin{proof}
By assumption, over a neighburhood $\mc{V}\subset \Spe O_K$ of $v$, the adelic line bundle $\overline{L}$ is induced by a single model $(\mc{X}_{\mc{V}},\mathcal{L}_{\mc{V}})$. The description of the Chambert-Loir measure for model metrics yields
\[
c_1(\overline{L})_v^n
=\sum_{\eta}
\deg_{\mathcal{L}|_\eta}(\eta)\,\delta_{\rho_\eta}.
\]
Since $\mathcal{L}|_{\mc{X}_v}$ is ample, all coefficients $a_{v,\eta}=\deg_{\mathcal{L}|_\eta}(\eta)$ are positive.
\end{proof}

\subsubsection*{Lemmas about adelic line bundles}
Here are some technical lemmas about adelic line bundles, which will be used in Section \ref{sec_main}. The following lemma is a variant of \cite[Theorem 5.3.7]{YZ26}
\begin{lemma}\label{lem_htcomp1}
Let $\overline{L}_1,\overline{L}_2$ be nef adelic line bundles on $X$ with $L_1$ and $L_2$ ample. Assume that $h_{\overline{L}_1+\overline{L}_2}(X)=0$. Then there exists a constant $C>0$, depending only on $L_1$ and $L_2$, such that for any $\varepsilon>0$, there exists a nonempty open subscheme $U_{\varepsilon}\subset X$ satisfying
\[
h_{\overline{L}_1}(x)\le C\,h_{\overline{L}_2}(x)+\varepsilon
\quad \forall x\in U_{\varepsilon}(\overline{K}).
\]
\end{lemma}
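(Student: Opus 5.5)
The plan is to follow the proof of \cite[Theorem 5.3.7]{YZ26}, which is Siu's inequality combined with the arithmetic Hilbert--Samuel theorem (arithmetic volume estimates). Write $\overline{M}:=\overline{L}_1+\overline{L}_2$. By hypothesis $\overline{M}$ is nef, $M$ is ample, and $h_{\overline{M}}(X)=0$.

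First choose an integer $C$, depending only on $L_1,L_2$, such that $CL_2-L_1$ is ample. By Siu's inequality applied geometrically, $CL_2-L_1$ is big with $\mathrm{vol}(CL_2-L_1)\ge C^nL_2^n-nC^{n-1}L_2^{n-1}L_1>0$. The aim is to show that for every $\varepsilon>0$ the adelic line bundle
\[
\overline{N}_\varepsilon:=C\overline{L}_2-\overline{L}_1+\varepsilon\,\overline{\mathcal O}
\]
is arithmetically big, or at least has small sections. Here $\varepsilon\overline{\mathcal O}$ denotes the trivial bundle with metric scaled by $e^{-\varepsilon}$ at one archimedean place, normalized so that heights shift by $\varepsilon$.

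The key step is the arithmetic Siu inequality of Yuan (extended to adelic line bundles in \cite{YZ26}). For nef $\overline{A},\overline{B}$ it states
\[
\widehat{\mathrm{vol}}(\overline{A}-\overline{B})\ge \overline{A}^{n+1}-(n+1)\,\overline{A}^{n}\cdot\overline{B}.
\]
Take $\overline{A}=C\overline{L}_2+\varepsilon\overline{\mathcal O}$ and $\overline{B}=\overline{L}_1$. The intersection numbers $\overline{L}_2^{a}\overline{L}_1^{b}$ with $a+b=n+1$ are bounded above by multiples of $\overline{M}^{n+1}=0$, since both bundles are nef and $\overline{L}_i\le\overline{M}$; they are also $\ge0$ by nefness, so they all vanish. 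Expanding, the terms involving $\varepsilon$ contribute
\[
(n+1)\,\varepsilon\, C^nL_2^n-(n+1)\,n\,\varepsilon\, C^{n-1}L_2^{n-1}L_1+O(\varepsilon^2),
\]
which is positive for small $\varepsilon$ by the choice of $C$. Adding $\varepsilon\overline{\mathcal O}$ simply adds $\varepsilon$ times geometric degrees. Hence $\widehat{\mathrm{vol}}(\overline{N}_\varepsilon)>0$, and we may shrink $\varepsilon$ and rescale as needed.

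Arithmetic bigness then gives, for large $k$, a nonzero strictly small section $s$ of $k\overline{N}_\varepsilon$. Set $U_\varepsilon:=X\setminus\mathrm{div}(s)$. For $x\in U_\varepsilon(\overline K)$ we get $h_{\overline{N}_\varepsilon}(x)\ge0$, that is, $h_{\overline{L}_1}(x)\le C\,h_{\overline{L}_2}(x)+\varepsilon$.

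The main obstacle is justifying the arithmetic Siu inequality and the vanishing of the mixed intersection numbers in the adelic (non-model) setting; I would cite \cite{YZ26} for both. The vanishing follows from $\overline{M}^{n+1}=0$ by multilinearity and nefness: each term is nonnegative and their positive combination equals $0$.
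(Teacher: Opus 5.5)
Your proposal is correct and follows the paper's proof: perturb by $\varepsilon$ times a bundle pulled back from $\operatorname{Spec} O_K$, use nefness and $\overline{M}^{n+1}=0$ to kill every $\overline{L}_1^{j}\cdot\overline{L}_2^{n+1-j}$, apply Yuan's inequality to get $\widehat{\mathrm{vol}}>0$, and take $U_\varepsilon$ to be the complement of an effective section (the paper perturbs both $\overline{L}_i$ and gets $(C-1)\varepsilon$, while you perturb only $C\overline{L}_2$ and get $\varepsilon$ directly). One small fix: ampleness of $CL_2-L_1$ only gives $C\,L_2^n>L_2^{n-1}\cdot L_1$, but positivity of your $\varepsilon$-coefficient needs $C\,L_2^n>n\,(L_2^{n-1}\cdot L_1)$, so choose $C$ by that inequality (as your ``$>0$'' tacitly assumes); also, your $O(\varepsilon^2)$ terms vanish identically, so no shrinking of $\varepsilon$ is needed.
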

\begin{proof}
Choose a constant $C>0$ such that
\[
C\,L_2^n > n\,(L_2^{n-1}\cdot L_1)+L_2^n.
\]
Since $h_{\overline{L}_1+\overline{L}_2}(X)=0$, we have $\overline{L}_1^j\cdot \overline{L}_2^{n+1-j}=0$ for all $0\le j\le n+1$.

Let $\overline{N}\in \widehat{\Pic}(O_K)$ be a hermitian line bundle with $\widehat{\deg}(\overline{N})=1$, and set
\[
\overline{H}_i:=\overline{L}_i+\varepsilon\,\pi^*\overline{N},
\quad i=1,2,
\]
where $\pi:X\to \Spe K$ is the structure morphism. Then $\overline{H}_i$ is nef. A direct computation shows that
\[
\overline{H}_2^{n+1}
=(n+1)\varepsilon\, L_2^n>0,
\]
and
\[
\overline{H}_2^n\cdot \overline{H}_1
=\varepsilon\big(n\,L_2^{n-1}\cdot L_1+L_2^n\big)>0.
\]

By Yuan's inequality \cite[Theorem 5.2.2]{YZ26}, we obtain
\[
\widehat{\mathrm{vol}}(C\overline{H}_2-\overline{H}_1)
\ge C^{n+1}\overline{H}_2^{n+1}
-(n+1)C^n\,\overline{H}_2^n\cdot \overline{H}_1>0,
\]
Hence $C\overline{H}_2-\overline{H}_1$ is arithmetically big. It follows that there exists a positive integer $r$, and an effective section $s$ of $r(C\ov{H}_2-\ov{H}_1)$ such that 
\[h_{C\ov{H}_2-\ov{H}_1}(x)\geq 0,\quad \forall x\in X(\ov{K}), s(x)\neq 0.\]

Let $U_{(C-1)\varepsilon}:=X\setminus \Supp(s)$. Then for all $x\in U_{(C-1)\varepsilon}(\overline{K})$, we have
\[
0\le h_{C\overline{H}_2-\overline{H}_1}(x)
= C\,h_{\overline{H}_2}(x)-h_{\overline{H}_1}(x)=C\,h_{\overline{L}_2}(x)-h_{\overline{L}_1}(x)+(C-1)\varepsilon.
\]
Hence
\[
h_{\overline{L}_1}(x)\le C\,h_{\overline{L}_2}(x)+(C-1)\varepsilon
\]
for all $x\in U_{(C-1)\varepsilon}(\overline{K})$, as desired.
\end{proof}

Recall, for a finite surjective morphism $f:X\to Y $ between normal projective varieties over $K$, the Deligne pairing $\langle\,\cdot\,\rangle_{X/Y}: \widehat{\Pic}(X)\to \widehat{\Pic}(Y)$ is well-defined. See \cite[Section 4]{YZ26}, \cite{Li24} for details. We have the following lemma, which will be used in the proof of Theorem \ref{thm_main1}.

\begin{lemma}\label{lem_dpnef}
Let $X, Y$ be normal projective varieties over a number field $K$ and $\pi: X\to Y$ is a finite surjective morphism. Let $\ov{L}\in \widehat{\Pic}(X)$ be a nef adelic line bundle on $X$, then $\pi^*\langle \ov{L}\rangle_{X/Y}-\ov{L}$ is nef.
\end{lemma}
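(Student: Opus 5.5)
We need to show that $\pi^*\langle \ov{L}\rangle_{X/Y}-\ov{L}$ is nef. The plan is to first identify the Deligne pairing for a finite map as a norm, then reduce to model (or hermitian) adelic line bundles, verify the claim there by a pointwise metric comparison plus a positivity argument, and finally pass to the limit.

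\textbf{The pairing as a norm.} For a finite surjective $\pi:X\to Y$ of relative dimension zero, the Deligne pairing $\langle \ov{L}\rangle_{X/Y}$ is the norm $N_{X/Y}(\ov{L})$. Its underlying line bundle is $N_{X/Y}(L)$. At each place $v$, the metric is characterized by
\[
\|N(s)\|(y)=\prod_{x\in\pi^{-1}(y)}\|s(x)\|^{e_x},
\]
counted with multiplicity, for local sections $s$. Consequently $\pi^*\langle\ov{L}\rangle_{X/Y}-\ov{L}$ has underlying bundle $\pi^*N(L)\otimes L^{-1}$. Its metric at $x_0$ is computed via the product of $\|s\|$ over the points of the fiber through $x_0$ other than $x_0$ itself (with multiplicities).

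\textbf{Reduction to models.} Nefness of an adelic line bundle is preserved under limits in the boundary topology. Both $\pi^*$ and the Deligne pairing are continuous for this topology by \cite[Section 4]{YZ26}, \cite{Li24}. It therefore suffices to treat $\ov{L}$ given by nef model line bundles $\mathcal{L}$ on a projective model $\mathcal{X}$ over $O_K$, with semipositive hermitian metrics at the archimedean places. We may also take a model $\mathcal{Y}$ with a finite morphism $\mathcal{X}\to\mathcal{Y}$, after normalizing and enlarging models. There the norm $N_{\mathcal{X}/\mathcal{Y}}(\mathcal{L})$ is defined on $\mathcal{Y}$, using flatness over the regular locus and normality to extend.

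\textbf{Nefness test on curves.} An adelic line bundle is nef if it is relatively nef (semipositive metric at each place, nef on each special fiber) and $h(x)\ge 0$ for all $x\in X(\ov K)$. The height inequality follows from additivity of heights under norms:
\[
h_{\pi^*N\ov{L}}(x)=h_{N\ov{L}}(\pi x)=\sum_{x'\in\pi^{-1}\pi x}h_{\ov{L}}(x')\ge h_{\ov{L}}(x),
\]
since all terms are nonnegative by nefness of $\ov{L}$. Here heights are normalized per point, and the sum is weighted by multiplicities.

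For relative nefness on a special fiber, take a curve $C\subset\mathcal{X}_v$ and compare degrees. One has $\deg_C\pi^*N(\mathcal{L})=\deg(\pi|_C)\cdot\deg_{\pi(C)}N(\mathcal{L})$. Moreover $\deg_{\pi(C)}N(\mathcal{L})=\deg_{\pi^{-1}\pi(C)}\mathcal{L}$, summed over the components of the full preimage with multiplicities. This quantity dominates $\deg_C\mathcal{L}/\deg(\pi|_C)$ because the other components contribute nonnegatively.

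At archimedean places, the metric of the difference is locally $\prod_{x'\ne x_0}\|s(x')\|$ over the other sheets. Its curvature form is the pushforward-pullback of $c_1(\ov{L})$ minus $c_1(\ov{L})$, which equals the sum of the pullbacks along the other local sheets and is hence semipositive. At branch points this must be handled as a limit, or by noting that the current $\pi^*\pi_*\omega-\omega$ is positive.

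\textbf{Main obstacle.} The delicate step is making the local computation rigorous where $\pi$ ramifies or fails to be flat. On the archimedean side this means a positive-current argument. On the non-archimedean side it means choosing models for which the norm is defined and behaves functorially. I would first try to argue via heights of subvarieties: nefness of an adelic line bundle is equivalent to $h(Z)\ge0$ for all subvarieties $Z$, where $h(Z)$ denotes the height of $Z$ computed with the line bundle in question together with any nef bundles. The projection formula for Deligne pairings reduces this to sums of heights over $\pi^{-1}\pi(Z)$, giving the same decomposition as above and avoiding ramification analysis.
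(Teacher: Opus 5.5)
Your outline (reduce to model metrics, then check $h\ge 0$ on points, nefness on special fibers, and semipositivity at archimedean places) is a legitimate route. The point-height inequality $h_{\pi^*N\overline{L}}(x)\ge h_{\overline{L}}(x)$ is fine. However, the step you yourself call delicate is not carried out, and the workaround you propose for it fails.

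\textbf{The unfinished steps.} Your special-fiber identity $\deg_{\pi(C)}N(\mathcal{L})=\deg_{\pi^{-1}\pi(C)}\mathcal{L}$ uses compatibility of the norm with restriction to $\pi(C)$. That compatibility holds where $\mathcal{X}\to\mathcal{Y}$ is finite flat. Normality of $\mathcal{Y}$ only gives flatness at codimension-one points, while the points of a vertical curve typically have codimension $\ge 2$, so as written the identity is unjustified. At archimedean branch points you only assert that $\pi^*\pi_*\omega-\omega$ is positive. Both gaps can be patched (flatten to a finite flat model; extend the continuous psh potential across the branch locus and regularize), but this is not done.

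\textbf{The fallback criterion is false.} You suggest that $\overline{M}$ is nef iff $\overline{M}\cdot\overline{N}_1\cdots\overline{N}_{\dim Z}\cdot Z\ge 0$ for all subvarieties $Z$ and all nef $\overline{N}_i$. Counterexample on $\mathbb{P}^1_{\mathbb{Q}}$: take $\overline{M}=\overline{\mathcal{O}(1)}_{\mathrm{FS}}\otimes\overline{\mathcal{O}}(g)$, where $\|1\|=e^{-g}$ at the archimedean place, $g\ge 0$ is smooth, and $\omega_{\mathrm{FS}}+dd^cg$ is negative somewhere.
\begin{itemize}
\item For every point, $h_{\overline{M}}(x)\ge 0$.
\item For every nef $\overline{N}$, $\overline{M}\cdot\overline{N}=\overline{\mathcal{O}(1)}_{\mathrm{FS}}\cdot\overline{N}+\int g\,c_1(\overline{N})_\infty\ge 0$.
\item Yet $\overline{M}$ is not nef, since its archimedean metric is not semipositive.
\end{itemize}
Intersection numbers against nef bundles cannot detect local semipositivity, so they cannot replace the ramification analysis.

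\textbf{The missing idea: Galois closure.} This turns your local ``other sheets'' computation into a global identity and removes both difficulties at once. Let $\varphi:Z\to X$ be such that $\psi=\pi\circ\varphi:Z\to Y$ is Galois with group $G$, and let $H=\mathrm{Aut}_X(Z)$. The paper uses two identities: $\psi^*\langle\overline{M}\rangle_{Z/Y}=\sum_{\sigma\in G}\sigma^*\overline{M}$, and $\langle\varphi^*\overline{L}\rangle_{Z/X}=\deg(\varphi)\,\overline{L}$. Together they give
\[
\varphi^*\bigl(\pi^*\langle\overline{L}\rangle_{X/Y}-\overline{L}\bigr)=\frac{1}{\deg\varphi}\sum_{\sigma\in G\setminus H}\sigma^*\varphi^*\overline{L},
\]
which is visibly nef as a sum of pullbacks of nef bundles. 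Then apply $\langle\cdot\rangle_{Z/X}$, which preserves nefness \cite[Theorem~4.1.3]{YZ26}, and use $\langle\varphi^*\overline{N}\rangle_{Z/X}=\deg(\varphi)\,\overline{N}$ once more to descend nefness to $X$. No models, no flatness, no analysis at branch points, and no nefness criterion are needed.
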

\begin{proof}
Let $Z$ be the Galois closure of $\pi: X\to Y$. In other words, $Z$ is a normal projective variety over $K$, and there is a finite surjective morphism
$\varphi: Z\to X$ such that $\psi=\pi\circ \varphi: Z\to Y$ is a Galois cover. Let $G:=\mr{Aut}_Y(Z)$ and $H:=\mr{Aut}_X(Z)\subset G$, then $Y\simeq Z/G$ and $X\simeq Z/H$.

Since $Y\simeq Z/G$, for any adelic line bundle $\ov{M}\in \widehat{\Pic}(Z)$, we have 
\[\psi^*\langle\ov{M}\rangle_{Z/Y}=\sum_{\sigma\in G}\sigma^*\ov{M}.\]
Note that $\langle \varphi^*\ov{L}\rangle_{Z/X}=\deg(\varphi)\, \ov{L}$. Choose $\ov{M}:= \varphi^*\ov{L}$, we obtain
\[\deg(\varphi)\,\varphi^*\pi^*\langle\ov{L}\rangle_{X/Y}=\deg(\varphi)\,\psi^*\langle \ov{L}\rangle_{X/Y}=\psi^*\langle \varphi^*\ov{L}\rangle_{Z/Y}=\sum_{\sigma\in G}\sigma^*\varphi^*\ov{L}.\]
For $\sigma\in H$, $\varphi\circ\sigma=\varphi$. So 
\[\varphi^*\pi^*\langle\ov{L}\rangle_{X/Y}-\varphi^*\ov{L}=\frac{1}{\deg(\varphi)}\sum_{\sigma\in G}\sigma^*\varphi^*\ov{L}-\ov{L}=\frac{1}{\deg(\varphi)}\sum_{\sigma\in G\setminus H}\sigma^*\varphi^*\ov{L}.\]
The right-hand side is nef. Hence
\[\pi^*\langle\ov{L}\rangle_{X/Y}-\ov{L}=\frac{1}{\deg(\varphi)}\langle \varphi^*(\pi^*\langle\ov{L}\rangle_{X/Y}-\ov{L})\rangle_{Z/X}\]
is nef by \cite[Theorem 4.1.3]{YZ26}.
\end{proof}

\begin{lemma}\label{lem_dphtcomp}
Let $X, Y$ be normal projective varieties over a number field $K$ and $\pi: X\to Y$ is a finite surjective morphism. Let $\ov{L}\in \widehat{\Pic}(X)$ be a nef adelic line bundle on $X$, then
\[h_{\langle \ov{L}\rangle_{X/Y}}(y)\leq \deg(\pi)\max_{x\in \pi^{-1}(y)}h_{\ov{L}}(x).\]
\end{lemma}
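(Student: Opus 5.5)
The plan is to reduce the inequality to the Galois-closure formula already used in the proof of Lemma \ref{lem_dpnef}, and then compare heights fiberwise. Let $Z$ be the Galois closure of $\pi$, with $\varphi:Z\to X$ and $\psi=\pi\circ\varphi:Z\to Y$, $G=\mr{Aut}_Y(Z)$ and $H=\mr{Aut}_X(Z)$. As in that proof we have the identity in $\widehat{\Pic}(Z)_{\mb{Q}}$
\[
\psi^*\langle\ov{L}\rangle_{X/Y}=\frac{1}{\deg(\varphi)}\sum_{\sigma\in G}\sigma^*\varphi^*\ov{L}.
\]
Heights are functorial under pullback, so for any $z\in Z(\ov{K})$ with $\psi(z)=y$ this gives
\[
h_{\langle\ov{L}\rangle_{X/Y}}(y)=h_{\psi^*\langle\ov{L}\rangle_{X/Y}}(z)=\frac{1}{\deg(\varphi)}\sum_{\sigma\in G}h_{\ov{L}}\bigl(\varphi(\sigma(z))\bigr).
\]

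Every point $\varphi(\sigma(z))$ lies in $\pi^{-1}(y)$, because $\pi\circ\varphi\circ\sigma=\psi\circ\sigma=\psi$. Each term is therefore at most $\max_{x\in\pi^{-1}(y)}h_{\ov{L}}(x)$. Since $|G|=\deg(\psi)=\deg(\pi)\deg(\varphi)$, the right-hand side is at most $\deg(\pi)\max_{x\in\pi^{-1}(y)}h_{\ov{L}}(x)$, which is the claimed bound.

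One small point concerns the sign of the heights. Since $\ov{L}$ is nef, $h_{\ov{L}}\ge 0$ on $\ov{K}$-points, so bounding each term by the maximum is harmless. In fact the estimate above does not need nonnegativity at all.

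The main obstacle is justifying the displayed pullback identity pointwise, including at points $y$ over the branch locus. The identity $\psi^*\langle\ov{M}\rangle_{Z/Y}=\sum_{\sigma\in G}\sigma^*\ov{M}$ for a Galois cover holds as an isomorphism of adelic line bundles on all of $Z$, following \cite[Section 4]{YZ26}. It is obtained from the norm construction on models together with approximation, and I would cite it in that form. Combined with $\langle\varphi^*\ov{L}\rangle_{Z/X}=\deg(\varphi)\ov{L}$ and the transitivity $\langle\cdot\rangle_{Z/Y}=\langle\langle\cdot\rangle_{Z/X}\rangle_{X/Y}$, this yields the formula at every point, because heights of adelic line bundles are defined for all $\ov{K}$-points and are compatible with isomorphisms and pullbacks.

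If one prefers to avoid the Galois closure, there is an alternative route. The Deligne pairing height satisfies $h_{\langle\ov{L}\rangle_{X/Y}}(y)=\sum_{x\in\pi^{-1}(y)}e_x\,h_{\ov{L}}(x)$, where $e_x$ are the local multiplicities of $\pi$ summing to $\deg(\pi)$. The same bound then follows directly from this formula.
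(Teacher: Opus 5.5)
Your proposal is correct and follows essentially the paper's route: pass to the Galois closure, use $\psi^*\langle\cdot\rangle_{Z/Y}=\sum_{\sigma\in G}\sigma^*(\cdot)$ together with $\langle\varphi^*\ov{L}\rangle_{Z/X}=\deg(\varphi)\,\ov{L}$, and bound each of the $|G|=\deg(\pi)\deg(\varphi)$ terms by the maximum of $h_{\ov{L}}$ over $\pi^{-1}(y)$. The only difference is organizational: the paper first treats the Galois case and then reduces to it, whereas you apply the combined identity from the proof of Lemma \ref{lem_dpnef} in a single step.
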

\begin{proof}
If $\pi:X\to Y$ is a Galois cover, suppose $G:=\mr{Aut}_Y(X)$ and $Y\simeq X/G$. Then
\[\pi^*\langle\ov{L}\rangle_{X/Y}=\sum_{\sigma\in G}\sigma^*\ov{L}.\]
For $x\in X(\ov{K})$ with $\pi(x)=y\in Y(\ov{K})$
\[h_{\langle\ov{L}\rangle_{X/Y}}(y)=h_{\pi^*\langle\ov{L}\rangle_{X/Y}}(x)=\sum_{\sigma\in G}h_{\sigma^*\ov{L}}(x)=\sum_{\sigma\in G}h_{\ov{L}}(\sigma(x))\leq \deg(\pi)\max_{x'\in\pi^{-1}(y)}h_{\ov{L}}(x').\]

For the general case, let $Z$ be the Galois closure of $\pi: X\to Y$. Then $\varphi: Z\to X$ and $\psi=\pi\circ \varphi:Z\to Y$ are Galois covers. Let $G:=\mr{Aut}_Y(Z)$ and $H:=\mr{Aut}_X(Z)\subset G$, then $Y\simeq Z/G$ and $X\simeq Z/H$. Note that
 $\langle \varphi^*\ov{L}\rangle_{Z/X}= \deg(\varphi)\, \ov{L}$. For $y\in Y(\ov{K})$,
\[
\begin{aligned}
h_{\langle \ov{L}\rangle_{X/Y}}(y)
&=\frac{1}{\deg(\varphi)}h_{\langle \varphi^*\ov{L}\rangle_{Z/Y}}(y)\\
&\leq \frac{\deg(\pi\circ \varphi)}{\deg(\varphi)}
\max_{z\in (\pi\circ \varphi)^{-1}(y)}h_{\varphi^*\ov{L}}(z)\\
&=\deg(\pi)\max_{x\in \pi^{-1}(y)}h_{\ov{L}}(x).
\end{aligned}
\]
\end{proof}

\subsection{Bi-finite correspondences}\label{sec_bifin}

Throughout this subsection, let $X$ be a normal projective variety
of dimension $n$ over a field $\mathbf{k}$ of characteristic zero.

For an integer $r$, an $r$-cycle on $X$ is a finite formal sum $\alpha = \sum n_i [\Gamma_i]$, where the $\Gamma_i$ are distinct irreducible subvarieties of $X$ of dimension $r$, and $n_i \in \mathbb{Z}$. We say that $\alpha$ is \emph{effective} if $n_i \ge 0$ for all $i$.

A \emph{correspondence} $c : X \vdash X$ is an effective $n$-cycle $c = \sum n_i [\Gamma_i]$
on $X \times X$. We denote by $\mathcal{C}(X,X)$ the group of correspondences from $X$ to itself, and by
\[
\mathrm{Supp}(c) := \bigcup \Gamma_i \subset X \times X
\]
the support of $c$. We say that $c$ is \emph{bi-finite} if, for every irreducible component $\Gamma_i$ of $c$, the projections
\[
\pi_{1,\Gamma_i}, \pi_{2,\Gamma_i} : \Gamma_i \to X
\]
are finite morphisms.

Bi-finite correspondences satisfy the following basic properties:
\begin{enumerate}
\renewcommand{\labelenumi}{(\theenumi)}
\item If $c \in \mathcal{C}(X,X)$ is bi-finite, then its transpose $\trans c$ is also bi-finite.
\item If $c_1, c_2 \in \mathcal{C}(X,X)$ are bi-finite, then their composition $c_1 \circ c_2$ is bi-finite.
\item If $c_1, c_2 \in \mathcal{C}(X,X)$ are bi-finite and $r_1, r_2 \in \mathbb{Z}_{\ge 0}$, then $r_1 c_1 + r_2 c_2$ is bi-finite.
\item Let $f : X \to X$ be a surjective endomorphism, and identify $f$ with its graph $\Gamma_f \in \mathcal{C}(X,X)$. Then $f$ is bi-finite. Moreover,
\[
f \circ \trans f = \deg(f)\, \Delta,
\]
where $\Delta \subset X \times X$ denotes the diagonal.
\end{enumerate}

\begin{definition}
A bi-finite correspondence $c$ is \emph{numerically $d$-polarized},
where $d>1$ is an integer, if there is an ample class
$D\in N^1(X)_{\mathbb R}$ such that
\[
\pi_{2,\Gamma}^*D\equiv d\,\pi_{1,\Gamma}^*D
\]
for every irreducible component $\Gamma$ of $c$.
\end{definition}

\begin{remark}
For a surjective endomorphism $f:X\to X$, numerical
$d$-polarization is equivalent to the existence of an ample line
bundle $L$ with $f^*L\simeq L^{\otimes d}$, by
\cite[Proposition~1.1]{MZ18}.
\end{remark}

\begin{lemma}\label{lem_polarizediter}
If $c$ is numerically $d$-polarized, then $c^m$ is numerically
$d^m$-polarized for every $m\geq1$.
\end{lemma}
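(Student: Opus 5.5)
We argue by induction on $m$, using the same ample class $D$ at every step. The case $m=1$ is the hypothesis. Write $c=\sum n_i[\Gamma_i]$. Then $c^{m}=c\circ c^{m-1}$ is a sum of irreducible components $\Gamma$, each arising as follows. Take components $\Gamma'$ of $c^{m-1}$ and $\Gamma_i$ of $c$, and form the fiber product $W:=\Gamma'\times_X\Gamma_i$, using the second projection on $\Gamma'$ and the first projection on $\Gamma_i$. Each $\Gamma$ is the image of an irreducible component $W_0$ of $W$ under the map $p=(\pi_1\circ q_1,\pi_2\circ q_2):W\to X\times X$. Here $q_1,q_2$ are the projections of $W$ to $\Gamma'$ and $\Gamma_i$. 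Depending on the composition convention the roles of the two factors may be swapped, but the argument is symmetric.

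Since all the maps involved are finite, every component of $W$ has dimension $n$. Each component dominates $X$ through finite maps, so $W_0\to\Gamma$ is finite and surjective between $n$-dimensional varieties. Pulling back to $W_0$, we compute
\[
(\pi_2\circ q_2)^*D=q_2^*\pi_{2,\Gamma_i}^*D\equiv d\,q_2^*\pi_{1,\Gamma_i}^*D
= d\,q_1^*\pi_{2,\Gamma'}^*D\equiv d\cdot d^{m-1}q_1^*\pi_{1,\Gamma'}^*D
= d^m(\pi_1\circ q_1)^*D .
\]
The first congruence is the hypothesis on $\Gamma_i$. The middle equality holds because $\pi_{1,\Gamma_i}\circ q_2=\pi_{2,\Gamma'}\circ q_1$ on the fiber product. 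The second congruence is the induction hypothesis on $\Gamma'$. We also use that pullback of classes in $N^1_{\mathbb R}$ preserves numerical equivalence, since $D\cdot C$ computed on a pullback equals $D\cdot(\text{pushforward of }C)$ by the projection formula. Thus, writing $r:=p|_{W_0}:W_0\to\Gamma$, we get $r^*(\pi_{2,\Gamma}^*D-d^m\pi_{1,\Gamma}^*D)\equiv0$.

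The main step is to descend this numerical triviality from $W_0$ to $\Gamma$. Since $r$ is finite and surjective, for any curve $C\subset\Gamma$ there is a curve $C'\subset W_0$ with $r_*C'=e\,C$ for some $e>0$. The projection formula then gives
\[
e\,(E\cdot C)=r^*E\cdot C'=0 ,
\]
where $E=\pi_{2,\Gamma}^*D-d^m\pi_{1,\Gamma}^*D$. Hence $E\equiv 0$ on $\Gamma$.

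The only technical care needed is that $\Gamma$ and $W_0$ may be non-normal, so all intersection numbers should be computed with Cartier classes on these varieties, which is fine for pullbacks of $\mathbb R$-Cartier classes from $X$. Ampleness of $D$ is unchanged, so $c^m$ is numerically $d^m$-polarized with respect to the same class $D$.
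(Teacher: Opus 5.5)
Your proof is correct and follows the paper's argument: induction with the same class $D$, a component $W_0$ of the fiber product $\Gamma'\times_X\Gamma_i$ mapping finitely onto $\Gamma$, the same chain of pulled-back congruences, and descent along $r$. The one addition is that you prove the injectivity of $r^*\colon N^1(\Gamma)_{\mathbb R}\to N^1(W_0)_{\mathbb R}$ via curve lifting and the projection formula, which the paper only asserts; separately, your aside that every component of $W$ has dimension $n$ does not follow from finiteness alone, but you never need it, since $\dim W_0=\dim\Gamma=n$ already follows from $W_0\to\Gamma$ being finite and surjective.
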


\begin{proof}
Let $D\in N^1(X)_{\mathbb R}$ be an ample class polarizing $c$.
We prove by induction on $m$ that the same class $D$ numerically
$d^m$-polarizes $c^m$.

The case $m=1$ is immediate. Suppose that the assertion holds
for $m-1$, and let $\Gamma$ be an irreducible component of $c^m$.
By the definition of composition, there exist irreducible
components $\Gamma_1$ of $c$ and $\Gamma_2$ of $c^{m-1}$,
and an irreducible component $Z\subseteq\Gamma_2\times_X\Gamma_1$, whose image under the endpoint map is $\Gamma$. Here the fiber product is taken with respect to
$\pi_{2,\Gamma_2}$ and $\pi_{1,\Gamma_1}$.

Write $\psi:Z\to\Gamma_2,\varphi:Z\to\Gamma_1$ and $r:Z\to\Gamma$ for the induced morphisms. Then $r$ is finite and surjective, and we have the commutative diagram

\[
\begin{tikzcd}
  &                                                                            & \Gamma \arrow[llddd, "{\pi_{2,\Gamma}}"'] \arrow[rrddd, "{\pi_{2,\Gamma}}"] &                                                                             &   \\
  &                                                                            & Z \arrow[u, "r"] \arrow[ld, "\psi"] \arrow[rd, "\varphi"']                  &                                                                             &   \\
  & \Gamma_2 \arrow[rd, "{\pi_{2,\Gamma_2}}"] \arrow[ld, "{\pi_{1,\Gamma_2}}"] &                                                                             & \Gamma_1 \arrow[rd, "{\pi_{2,\Gamma_1}}"'] \arrow[ld, "{\pi_{1,\Gamma_1}}"] &   \\
X &                                                                            & X                                                                           &                                                                             & X
\end{tikzcd}\]

Using the polarization condition on $\Gamma_1$ and the induction
hypothesis on $\Gamma_2$, we obtain
\[
r^*\pi_{2,\Gamma}^*D=\varphi^*\pi_{2,\Gamma_1}^*D\equiv d\,\varphi^*\pi_{1,\Gamma_1}^*D=d\,\psi^*\pi_{2,\Gamma_2}^*D\equiv d^m\psi^*\pi_{1,\Gamma_2}^*D=d^m r^*\pi_{1,\Gamma}^*D.
\]
Since the map $r^*:N^1(\Gamma)_{\mb{R}}\to N^1(Z)_{\mb{R}}$ is injective, it follows that $\pi_{2,\Gamma}^*D\equiv d^m\pi_{1,\Gamma}^*D$. As $\Gamma$ was arbitrary, this completes the induction.
\end{proof}

\begin{lemma}\label{lem_polardes}
Let $\pi:X\to Y$ be a finite surjective morphism of normal projective
varieties, let $c$ be a bi-finite correspondence on $X$, and let
$f:Y\to Y$ be a surjective endomorphism. Suppose that
\[
(\pi\times\pi)\bigl(\operatorname{Supp}(c)\bigr)=\Gamma_f.
\]
If $c$ is numerically $d$-polarized, then $f$ is $d$-polarized.
\end{lemma}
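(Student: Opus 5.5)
Fix an irreducible component $\Gamma$ of $c$ and write $p_i:=\pi_{i,\Gamma}$. The hypothesis $(\pi\times\pi)(\mathrm{Supp}(c))=\Gamma_f$ gives $(\pi\times\pi)(\Gamma)\subseteq\Gamma_f$. Since $\Gamma$ is $n$-dimensional and $\pi\times\pi$ is finite, $(\pi\times\pi)(\Gamma)=\Gamma_f$. In other words, on $\Gamma$ we have
\[
\pi\circ p_2=f\circ\pi\circ p_1 .
\]
Let $D\in N^1(X)_{\mathbb R}$ be an ample class polarizing $c$. I want to produce an ample class $E$ on $Y$ with $f^*E\equiv dE$. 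Then the Remark after the definition (via \cite[Proposition~1.1]{MZ18}) turns numerical $d$-polarization of the endomorphism $f$ into $d$-polarization by an ample line bundle.

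To build $E$ I push $D$ forward. Set $E:=\pi_*D\in N^1(Y)_{\mathbb R}$, the numerical class of the norm, using $\pi_*\pi^*=\deg(\pi)$. I can work with Cartier representatives since $N^1$ classes pull back along finite maps, and the norm of a Cartier divisor under a finite map of normal varieties is Cartier. $E$ is ample because the norm of an ample line bundle under a finite surjective morphism is ample. The pushforward step is not quite direct, however, because the relation involves $p_1,p_2$ on $\Gamma$ and not $\pi$ itself.

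So I instead compare via $\Gamma$ and use injectivity of pullbacks under finite surjective maps, as in Lemma \ref{lem_polarizediter}. Consider the finite surjective map $q:=\pi\circ p_1:\Gamma\to Y$; then $\pi\circ p_2=f\circ q$. For any class $E'$ on $Y$ we have
\[
q^*f^*E'=p_2^*\pi^*E'
\qquad\text{and}\qquad
q^*E'=p_1^*\pi^*E'.
\]
Hence it suffices to find an ample $E'$ on $Y$ with $p_2^*\pi^*E'\equiv d\,p_1^*\pi^*E'$, since $q^*$ is injective on $N^1_{\mathbb R}$. The hypothesis provides this relation only for $D$, which need not be a pullback from $Y$.

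The remedy is to replace $D$ by a $\pi$-pullback class that still polarizes $c$, namely $D':=\pi^*\pi_*D$. To show $D'$ polarizes $c$, I use the decomposition of $\pi^*\pi_*D$ in a Galois closure $Z\to X\to Y$ with groups $H\subset G$, as in Lemma \ref{lem_dpnef}. Pulled back to $Z$, $D'$ becomes $\sum_{\sigma\in G/H}\sigma^*(\text{pullback of }D)$. Each translate $\sigma^*D$ polarizes the correspondence $(\sigma\times\sigma)$-transported component. The hypothesis that the image of $\mathrm{Supp}(c)$ is exactly the graph of $f$ should let me match the components: the full preimage $(\pi\times\pi)^{-1}(\Gamma_f)$ is stable under $G\times G$-translates. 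This is where care is needed, since the translates of $\Gamma$ lifted to $Z\times Z$ need not be components of $c$.

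A cleaner route avoids this matching by computing degrees instead. Numerical classes on $Y$ can be tested by intersection numbers against curves. For a curve $C\subset Y$, pick a curve $C_\Gamma\subset\Gamma$ mapping onto $C$ via $q$. Using the projection formula and the relation on $\Gamma$, I would show
\[
f^*E\cdot C=\tfrac{1}{\deg(q|_{C_\Gamma})}\,p_2^*\pi^*E\cdot C_\Gamma .
\]
The remaining difficulty is still relating $\pi^*E$ to $D$ on $\Gamma$.

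I expect this comparison to be the main obstacle. My first attempt will be the Galois-closure averaging: show that every $G$-translate of a lift of $\Gamma$ again lies over $\Gamma_f$, hence over some component of $(\pi\times\pi)^{-1}(\Gamma_f)$. I would then argue that the polarization identity, being numerical and $G$-equivariant, transfers along these translates.
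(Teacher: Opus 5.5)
Your proposal never reaches the conclusion. All three routes stop at the point you identify yourself: the hypothesis $\pi_{2,\Gamma}^*D\equiv d\,\pi_{1,\Gamma}^*D$ concerns a class $D$ that need not be pulled back from $Y$. But everything you can transport to $Y$ needs such a relation for a class of the form $\pi^*E$. This holds whether you go via $q=\pi\circ\pi_{1,\Gamma}$, via $\pi_*$, or via intersections with curves.

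The Galois-closure repair (with $\varphi:Z\to X$, $\psi=\pi\circ\varphi$, group $G$) does not supply this. The hypothesis constrains $D$ only on components of $c$. For a lift $\tilde\Gamma\subset Z\times Z$ of $\Gamma$ and $\sigma\in G$, the translate $(\sigma\times\sigma)\tilde\Gamma$ is in general a \emph{different} component of $(\psi\times\psi)^{-1}(\Gamma_f)$. The transported identity only says that a translate of $\varphi^*D$ polarizes that other component, which tells you nothing about $\tilde\Gamma$.

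The symmetries acting on $\tilde\Gamma$ itself are the pairs $(g,h)\in G\times G$ stabilizing it. These give $\tilde p_2^*h^*\varphi^*D\equiv d\,\tilde p_1^*g^*\varphi^*D$. Here $g$ does run over all of $G$, but $h$ may run only over a proper subgroup. So averaging does not produce $\psi^*\psi_*\varphi^*D$ on the target side. Closing this needs a further argument that the proposal does not contain, so as it stands you have no ample class $E$ on $Y$ with $f^*E\equiv dE$.

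The missing idea is to use $D$ only through nef cones, so that no descended class is ever needed. Since $\pi^*$ is injective on $N^1(Y)_{\mathbb R}$, the quantity $\|E\|_D:=\inf\{t\geq 0: tD\pm\pi^*E \text{ is nef on } X\}$ is a norm on $N^1(Y)_{\mathbb R}$.

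Your identity $\pi\circ\pi_{2,\Gamma}=f\circ\pi\circ\pi_{1,\Gamma}$, combined with the polarization on a single component $\Gamma$, gives $\pi_{2,\Gamma}^*(tD\pm\pi^*E)\equiv\pi_{1,\Gamma}^*(dtD\pm\pi^*f^*E)$. Finite surjective pullback both preserves and detects nefness, so $\|f^*E\|_D=d\|E\|_D$. Hence all positive and negative powers of $d^{-1}f^*$ are bounded, and $(f^*)^{\pm1}$ preserve $\mathrm{Nef}(Y)$.

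Then \cite[Proposition~2.9]{MZ18} gives an ample $H$ with $f^*H\equiv dH$, and your final step via \cite[Proposition~1.1]{MZ18} finishes. This is the paper's argument, and it never requires $D$ to descend to $Y$.
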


\begin{proof}
Choose an ample class $L$ polarizing $c$. Since $\pi^*:N^1(Y)_{\mb{R}}\to N^1(X)_{\mb{R}}$ is
injective, the formula
\[
\|D\|_L
:=
\inf\bigl\{
t\geq0:tL+\pi^*D,\ tL-\pi^*D\in\operatorname{Nef}(X)
\bigr\}
\]
defines a norm on $N^1(Y)_{\mathbb R}$.

Let $\pi_{1,\Gamma},\pi_{2,\Gamma}:\Gamma\to X$ be the projections of a component $\Gamma$ of $c$. Since $(\pi \times \pi)(\Supp(c))=\Gamma_f$, we have $(\pi \times \pi)(\Gamma)=\Gamma_{f}$, and hence
\[
f \circ \pi \circ \pi_{1,\Gamma} = \pi \circ \pi_{2,\Gamma}.
\]
Therefore,
\[
\begin{aligned}
(\pi_{2,\Gamma})^*(tL \pm  \pi^*D)
&\equiv dt \pi_{1,\Gamma}^*L \pm  \pi_{1,\Gamma}^*\pi^*f^*D \\
&=  \pi_{1,\Gamma}^*\big(dtL \pm  \pi^*f^*D\big).
\end{aligned}
\]
Finite surjective pullback preserves nefness, so $\|f^*D\|_L=d\|D\|_L$. Since $f^*:N^1(Y)_{\mb{R}}\to N^1(Y)_{\mb{R}}$ is invertible, this yields
\[
\|(f^*)^mD\|_L=d^m\|D\|_L
\]
for all $m\in \mb{Z}$. Moreover, $(f^*)^{\pm1}$ preserves $\operatorname{Nef}(Y)$.
By \cite[Proposition~2.9]{MZ18}, there is an ample class
$H\in N^1(Y)_{\mathbb R}$ satisfying $f^*H\equiv dH$.
Hence \cite[Proposition 1.1]{MZ18} implies that $f$ is $d$-polarized.
\end{proof}

\begin{definition}\label{def_relation}
A bi-finite correspondence $R$ is an \emph{equivalence relation}
if it satisfies:
\begin{enumerate}
\renewcommand{\labelenumi}{(\theenumi)}
    \item every component of $R$ has multiplicity one;
    \item $\Delta\subset \Supp(R)$
    \item $\trans R = R$;
    \item $\mathrm{Supp}(R \circ R) = \mathrm{Supp}(R)$;
\end{enumerate}
Equivalently, its support defines an equivalence relation on $X(\overline{\mathbf{k}})$.
\end{definition}

With its reduced structure, $\operatorname{Supp}(R)$ is a
pure-dimensional finite set-theoretic equivalence relation. By \cite[Lemma~21]{Kol12}, its geometric quotient $X/R$ exists and is a normal projetive variety. Moreover, the quotient map $\pi:X\to X/R$ is finite surjective, and
\[
\pi(x)=\pi(y)\quad\Longleftrightarrow\quad(x,y)\in\operatorname{Supp}(R)
\]
for all $x,y\in X(\overline{\mathbf{k}})$.

\begin{lemma}\label{lem_descent}
Let $R$ be an equivalence relation on $X$, and let
$\pi:X\to Y:=X/R$ be its geometric quotient.
\begin{enumerate}
\renewcommand{\labelenumi}{(\theenumi)}
\item
Let $c$ be a bi-finite correspondence satisfying
\[
\operatorname{Supp}(c\circ R\circ\trans c)\subseteq\operatorname{Supp}(R).
\]
Then there is a unique finite surjective endomorphism $f:Y\to Y$
such that
\[
(\pi\times\pi)\bigl(\operatorname{Supp}(c)\bigr)=\Gamma_f.
\]

\item If $c$ in \textup{(1)} is numerically $d$-polarized, then $f$
is $d$-polarized.
\end{enumerate}
\end{lemma}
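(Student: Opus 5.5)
Part (2) follows from part (1) and Lemma \ref{lem_polardes}: once $(\pi\times\pi)(\operatorname{Supp}(c))=\Gamma_f$ with $f$ surjective, numerical $d$-polarization of $c$ gives that $f$ is $d$-polarized. So the work is in (1).

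For (1), set $\Gamma:=(\pi\times\pi)(\operatorname{Supp}(c))\subset Y\times Y$, a closed subset since $\pi\times\pi$ is finite. First I show $\Gamma$ is the graph of a map on $\overline{\mathbf{k}}$-points. Let $(y,z_1),(y,z_2)\in\Gamma$, lifted to $(x_1,x_1')$ and $(x_2,x_2')$ in $\operatorname{Supp}(c)$ with $\pi(x_i)=y$ and $\pi(x_i')=z_i$. Then $(x_1,x_2)\in\operatorname{Supp}(R)$. Hence $(x_1',x_2')$ lies in $\operatorname{Supp}(\trans c\circ R\circ c)$ up to the ordering convention, i.e.\ $x_1'\mapsto x_1\sim x_2\mapsto x_2'$, which is the hypothesis $\operatorname{Supp}(c\circ R\circ\trans c)\subseteq\operatorname{Supp}(R)$. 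Therefore $z_1=z_2$. Here I use that the support of a composition of bi-finite correspondences is, set-theoretically, the set of composable chains. Next, each fiber of $\Gamma\to Y$ (first projection) is nonempty, since every component of $c$ projects finitely and, by dimension count, surjectively onto $X$. So $\Gamma$ is set-theoretically the graph of a map $f:Y(\overline{\mathbf{k}})\to Y(\overline{\mathbf{k}})$.

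The main obstacle is showing this is a morphism. Give $\Gamma$ its reduced structure. Its first projection $p_1:\Gamma\to Y$ is finite, being the image of a finite map composed with $\pi$, and it is bijective on $\overline{\mathbf{k}}$-points. Each component $\Gamma_i'$ of $\Gamma$ dominates $Y$, so $p_1|_{\Gamma_i'}$ is birational in characteristic zero (bijective finite dominant implies degree one). Distinct components would give two points over a general $y$, so $\Gamma$ is irreducible. Thus $p_1$ is a finite birational morphism onto the normal variety $Y$, hence an isomorphism by Zariski's main theorem. Then $f:=p_2\circ p_1^{-1}$ is a morphism with $\Gamma_f=\Gamma$. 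It is surjective because the second projection of $c$ is surjective, and finiteness follows since $f\circ\pi\circ\pi_{1,\Gamma_i}=\pi\circ\pi_{2,\Gamma_i}$ and dimensions are preserved, so a surjective endomorphism of a projective variety is finite. The uniqueness of $f$ is clear, since $\Gamma_f$ is determined.
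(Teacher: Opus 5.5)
Your proof is correct and follows the same route as the paper's. You use the hypothesis on $c\circ R\circ\trans c$ to show that the first projection $(\pi\times\pi)(\operatorname{Supp}(c))\to Y$ is injective on geometric points, get birationality in characteristic zero and then an isomorphism by normality of $Y$, and deduce (2) from Lemma~\ref{lem_polardes}. Your explicit irreducibility argument for $\Gamma$ and your finiteness and surjectivity check for $f$ fill in details the paper leaves implicit; also, with the paper's composition convention the chain $x_1'\mapsto x_1\sim x_2\mapsto x_2'$ is exactly $c\circ R\circ\trans c$, so the ``up to ordering'' hedge is unnecessary.
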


\begin{proof}
Let $\Gamma:=(\pi\times\pi)\bigl(\operatorname{Supp}(c)\bigr)$ with its reduced structure. Both projections $\Gamma\to Y$
are finite and surjective. If $(x,x'),(y,y')\in\operatorname{Supp}(c)$ and
$\pi(x)=\pi(y)$, then $(x,y)\in\operatorname{Supp}(R)$.
The hypothesis gives $(x',y')\in\operatorname{Supp}(R)$, so
$\pi(x')=\pi(y')$.
Thus the first projection $\Gamma\to Y$ is one-to-one on
geometric points. In characteristic zero it is birational,
hence an isomorphism because $Y$ is normal.
Therefore $\Gamma$ is the graph of the required endomorphism and (2) follows directly from Lemma~\ref{lem_polardes}.
\end{proof}

\begin{remark}\label{rmk_bifin_corr}
Let $f:X\to X$ be a surjective endomorphism.
For every bi-finite correspondence $c$, one has
\[
\operatorname{Supp}
\bigl(\Gamma_f\circ c\circ\trans\Gamma_f\bigr)=(f\times f)\bigl(\operatorname{Supp}(c)\bigr).
\]
In particular, if
\[
(f\times f)\bigl(\operatorname{Supp}(R)\bigr)
=\operatorname{Supp}(R),
\]
then $f$ descends to a surjective endomorphism $g:Y\to Y$. Suppose further that $c$ satisfies Lemma~\ref{lem_descent}\textup{(1)}, inducing $h:Y\to Y$.
If $(f\times f)\bigl(\operatorname{Supp}(c)\bigr)=\operatorname{Supp}(c)$, then $g$ and $h$ commute. Indeed, applying $\pi\times\pi$ gives $(g\times g)(\Gamma_{h})=\Gamma_{h}$, which implies $g\circ h=h\circ g$.    
\end{remark}

\section{AT-varieties and exceptional maps}

\subsection{AT-varieties}\label{sec_AT}
We first recall the notion of AT-varieties. 

\begin{definition}
Let $\mathbf{k}$ be a field. Let $A$ be an abelian variety over $\mathbf{k}$, $T$ be a torus, and $\pi:Y\to A$ be a $T$-torsor over $A$. We call the triple $(Y,A,\pi)$ an {\em AT-variety}. Here, we allow the abelian variety $A$ to be a point.
\end{definition}
Here AT stands for abelian-by-torus. For $\dim T=0$, we have $Y=A$. Any semiabelian variety is an AT-variety. However, not every AT-variety admits a group structure. Since an AT-variety $Y$ is a $T$-torsor, there is an action $\rho:T\times Y\to Y$. For simplicity, we denote by $t\cdot x:=\rho(t,x)$ for $t\in T(\mathbf{k})$ and $x\in Y(\mathbf{k})$. This action induces an isomorphism 
\[(\rho,p_2): T\times Y\to Y\times_AY,\quad (t,x)\mapsto (t\cdot x,x).\]
For morphisms between AT-varieties, we have the following lemma.

\begin{lemma}[{\cite[Lemma 2.1]{Bet25}}]\label{lem_ATmap}
Let $(Y,A,\pi)$ and $(Y',A',\pi')$ be AT-varieties over $\mathbf{k}$ and $\psi_Y:Y\to Y'$ be a morphism of $\mathbf{k}$-varieties. Then there exists a unique homomorphism $\psi_T:T\to T'$ and a unique morphism $\psi_A:A\to A'$ such that the squares
\[
\begin{tikzcd}
Y \arrow[d, "\pi"] \arrow[r, "\psi_Y"] & Y' \arrow[d, "\pi'"] \\
A \arrow[r, "\psi_A"]                & A'                  
\end{tikzcd} 
\qquad
\begin{tikzcd}
T\times Y \arrow[d, "\rho"] \arrow[r, "\psi_T\times \psi_Y"] & T'\times Y' \arrow[d, "\rho'"] \\
Y \arrow[r, "\psi_Y"]                                        & Y'                             
\end{tikzcd}
\]
commutes, where $\rho$ and $\rho'$ are the action maps.
\end{lemma}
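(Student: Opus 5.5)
The plan is to build $\psi_A$ first, using rigidity of morphisms from tori into abelian varieties, and then read off $\psi_T$ from how $\psi_Y$ acts on $T$-orbits. We may pass to $\overline{\mathbf{k}}$ and then use uniqueness to descend.

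\textbf{Construction of $\psi_A$.} Consider the composite $\pi'\circ\psi_Y:Y\to A'$. For each geometric point $a\in A$, the fiber $\pi^{-1}(a)$ is isomorphic to $T$, a rational variety. Any morphism from a torus (or any rational variety) to an abelian variety is constant. Hence $\pi'\circ\psi_Y$ is constant on the fibers of $\pi$. Since $\pi$ is a torsor, it is faithfully flat and $A$ is the categorical quotient $Y/T$. Concretely, the two maps $Y\times_A Y\cong T\times Y\rightrightarrows Y\to A'$, namely $(t,x)\mapsto \pi'\psi_Y(t\cdot x)$ and $(t,x)\mapsto\pi'\psi_Y(x)$, agree. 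This holds because for fixed $x$ the map $t\mapsto \pi'\psi_Y(t\cdot x)$ is constant, and reduced schemes allow us to check this pointwise. Fpqc descent then gives a unique $\psi_A:A\to A'$ with $\psi_A\circ\pi=\pi'\circ\psi_Y$. Uniqueness holds because $\pi$ is an epimorphism.

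\textbf{Construction of $\psi_T$.} Once $\psi_A$ exists, $\psi_Y$ maps the fiber $\pi^{-1}(a)$ into $\pi'^{-1}(\psi_A(a))$. Define
\[\Phi:T\times Y\to T',\qquad (t,x)\mapsto \text{the unique } s \text{ with } \psi_Y(t\cdot x)=s\cdot\psi_Y(x).\]
This is a morphism because $(\rho',p_2):T'\times Y'\to Y'\times_{A'}Y'$ is an isomorphism, so $\Phi$ is its inverse composed with $(\psi_Y\circ\rho,\psi_Y\circ p_2)$ and then projected to $T'$.

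For fixed $x$, the map $t\mapsto\Phi(t,x)$ is a morphism of tori sending $1$ to $1$. After trivializing the torsors over $\overline{\mathbf{k}}$, it is a morphism of varieties $T\to T'$, and any such map is a character-valued translate of a homomorphism. The key claim is that $\Phi(t,x)$ is independent of $x$, and this is where the main work lies. My approach is to view $x\mapsto \Phi(\cdot,x)$ as a map from $Y$ to the scheme of pointed morphisms $T\to T'$. Since $\Phi(1,x)=1$, these pointed morphisms are homomorphisms by Rosenlicht's theorem: a morphism of tori preserving the identity is a homomorphism, because units on $T$ are characters times constants. The homomorphisms $T\to T'$ form a discrete set, namely $\Hom(X^*(T'),X^*(T))$ after base change. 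Since $Y$ is connected, $x\mapsto\Phi(\cdot,x)$ is constant; call the resulting map $\psi_T$. Then $\psi_Y(t\cdot x)=\psi_T(t)\cdot\psi_Y(x)$, which is exactly the commutativity of the second square.

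To make the discreteness argument rigorous, I would argue via units. For a character $\chi'$ of $T'$, the function $(t,x)\mapsto\chi'(\Phi(t,x))$ is a unit on $T\times Y$ equal to $1$ along $\{1\}\times Y$. By Rosenlicht, units on $T\times Y$ are products of units on $T$ and units on $Y$. This forces the function to be a character of $T$ independent of $x$.

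\textbf{Uniqueness and descent.} Uniqueness of $\psi_T$ follows from freeness of the action. Uniqueness of $\psi_A$ follows from the surjectivity of $\pi$. Galois-equivariance of both constructions then descends them to $\mathbf{k}$.
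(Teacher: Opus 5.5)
The paper gives no proof of this lemma; it is imported from \cite[Lemma~2.1]{Bet25}, so there is no internal argument to match. Your proof is correct.

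\textbf{Comparison with the paper's neighbouring lemmas.} Your argument is built the same way as the paper's own proofs of Lemmas~\ref{lem_ATcomp} and~\ref{lem_isolift}. The isomorphism $(\rho',p_2)\colon T'\times Y'\to Y'\times_{A'}Y'$ produces a difference morphism into $T'$, and a rigidity statement removes its dependence on the auxiliary variable. The paper's rigidity inputs there are that a morphism from the proper $A$ to the affine $T'$ is constant, or that a morphism from the connected $T\times Y$ into the finite set $\ker(f_T)$ is constant. Yours are the complementary facts:
\begin{itemize}
\item morphisms $T_{\ov{\mathbf{k}}}\to A'_{\ov{\mathbf{k}}}$ are constant, which yields $\psi_A$ by fpqc descent along $\pi$;
\item Rosenlicht's decomposition $\mathcal{O}(T\times Y)^*=\mathcal{O}(T)^*\cdot\mathcal{O}(Y)^*$ over $\ov{\mathbf{k}}$, where the normalization $\Phi(1,x)=1$ forces the $Y$-factor to be constant and the $T$-factor to be a character.
\end{itemize}
That units argument is the actual proof of your key claim. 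You can drop the heuristic about a discrete scheme of pointed morphisms, since making it rigorous would itself require this relative form of Rosenlicht.

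\textbf{Points to state explicitly.}
\begin{itemize}
\item Rosenlicht's theorem needs $Y_{\ov{\mathbf{k}}}$ to be irreducible. This holds because it is a torsor under a connected group over the irreducible $A_{\ov{\mathbf{k}}}$.
\item The assignment sending $\chi'$ to the character $\chi'\circ\Phi$ of $T$ is a homomorphism $X^*(T'_{\ov{\mathbf{k}}})\to X^*(T_{\ov{\mathbf{k}}})$. This is exactly what gives $\Phi=\psi_T\circ p_1$ with $\psi_T$ a homomorphism.
\item Galois equivariance is awkward if $\mathbf{k}$ is imperfect, so use fpqc descent instead. $\Phi$ is already defined over $\mathbf{k}$, and you can descend it along the fpqc map $p_1\colon T\times Y\to T$. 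The two pullbacks to $T\times Y\times Y$ agree after base change to $\ov{\mathbf{k}}$, hence over $\mathbf{k}$.
\end{itemize}
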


\begin{lemma}\label{lem_ATcomp}
Let $(Y,A,\pi)$ and $(Y',A',\pi')$ be  AT-varieties over a field $\mathbf{k}$, with associated tori $T$ and $T'$. Assume $\psi,\varphi:Y\to Y'$ are morphisms with $\psi_A=\varphi_A$ and $\psi_T=\varphi_T$. Then there exists a unique element $t_0\in T'(\mathbf{k})$ such that $\psi(x)=t_0\cdot\varphi(x)$ for all $x\in Y$.
\end{lemma}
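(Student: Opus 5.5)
We will compare $\psi$ and $\varphi$ fiberwise over $A'$ and show that the resulting ``difference'' is a constant point of $T'$. Since $\psi_A=\varphi_A$, Lemma~\ref{lem_ATmap} gives $\pi'\circ\psi=\psi_A\circ\pi=\varphi_A\circ\pi=\pi'\circ\varphi$. Hence the morphism $(\psi,\varphi):Y\to Y'\times Y'$ factors through $Y'\times_{A'}Y'$. Composing with the inverse of the isomorphism $(\rho',p_2):T'\times Y'\xrightarrow{\sim}Y'\times_{A'}Y'$ and projecting to $T'$, we obtain a unique morphism $\delta:Y\to T'$ such that
\[
\psi(x)=\delta(x)\cdot\varphi(x)\qquad\text{for all }x\in Y .
\]
Uniqueness of $t_0$ is immediate: the action of $T'$ on $Y'$ is free, since it is a torsor. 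So it remains to show that $\delta$ is constant with value in $T'(\mathbf{k})$.

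First we show that $\delta$ is $T$-invariant. Using the equivariance square of Lemma~\ref{lem_ATmap} for both maps and $\psi_T=\varphi_T$, for $t\in T$ we get
\[
\delta(t\cdot x)\cdot\varphi(t\cdot x)=\psi(t\cdot x)=\psi_T(t)\cdot\psi(x)=\psi_T(t)\delta(x)\cdot\varphi(x)=\delta(x)\cdot\varphi(t\cdot x),
\]
using that $T'$ is commutative. Freeness of the action then gives $\delta(t\cdot x)=\delta(x)$. This should be checked on points with values in arbitrary $\mathbf{k}$-schemes, or equivalently as an identity of morphisms $T\times Y\to T'$.

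Next we descend $\delta$ to $A$. Since $\pi:Y\to A$ is a $T$-torsor, it is an fppf quotient by $T$. Therefore the $T$-invariant morphism $\delta$ factors uniquely as $\delta=\bar\delta\circ\pi$ for a morphism $\bar\delta:A\to T'$; this is fppf descent of morphisms to the affine target $T'$. When $A$ is a point, this step already gives the conclusion.

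Otherwise we use that $A$ is proper and geometrically connected and $T'$ is affine. After base change to $\bar{\mathbf{k}}$, each coordinate of $\bar\delta$ in an embedding $T'_{\bar{\mathbf{k}}}\simeq\mathbb{G}_m^r$ is a regular function on the proper connected variety $A$, hence constant. So $\bar\delta$ is constant, equal to some point $t_0\in T'(\bar{\mathbf{k}})$. That point is fixed by the Galois group, or more generally we can use $H^0(A,\mathcal{O}_A)=\mathbf{k}$ together with the fact that $A$ has a $\mathbf{k}$-point (its origin). Evaluating $\bar\delta$ at $0_A$ shows $t_0\in T'(\mathbf{k})$.

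The main technical point is making the descent step rigorous scheme-theoretically rather than only on geometric points. I would handle it by phrasing the $T$-invariance as the equality $\delta\circ\rho=\delta\circ p_2$ of maps $T\times Y\to T'$. Combined with $T\times Y\simeq Y\times_AY$, this is exactly the cocycle condition for fppf descent along $\pi$.
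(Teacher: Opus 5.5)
Your proposal is correct and follows the paper's proof essentially step for step. Both arguments construct the difference morphism $\delta:Y\to T'$ from the torsor isomorphism $T'\times Y'\simeq Y'\times_{A'}Y'$, deduce $T$-invariance of $\delta$ from equivariance together with freeness of the $T'$-action, descend $\delta$ to a morphism $A\to T'$, and conclude it is constant because $A$ is proper and connected while $T'$ is affine. You simply make explicit two points the paper leaves implicit: the fppf descent along $\pi$, and why the constant value is $\mathbf{k}$-rational (evaluate at $0_A$).
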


\begin{proof}
Since the maps $\psi$ and $\varphi$ lie over the same morphism $A\to A'$. The $T'$-torsor structure of $Y'\to A'$ therefore yields a unique morphism $\delta:Y\to T'$ such that
\[
\psi(x)=\delta(x)\cdot\varphi(x).
\]
For $t\in T$, equivariance gives
\[\psi_T(t)\cdot\psi(x)=\psi(t\cdot x)=\delta(t\cdot x)\cdot \varphi(t\cdot x)=\big(\delta(t\cdot x)\varphi(t)\big)\cdot \varphi(x),\]
Since $\psi_T=\varphi_T$ and the $T'$-action is free, we obtain $\delta(t\cdot x)=\delta(x)$. Thus $\delta$ is $T$-invariant and descends to a morphism $A\to T'$, which must be a constant map, with value $t_0\in T'(\mathbf{k})$. Hence $\psi(x)=t_0\cdot\varphi(x)$ for all $x\in Y$. Uniqueness follows from the freeness of the $T'$-action.
\end{proof}

For simplicity, we say that the morphism $\psi_Y:Y\to Y'$ lies over the morphism $\psi_A:A\to A'$ and under the homomorphism $\psi_T:T\to T'$. We say a morphism $\psi_Y:Y\to Y'$ of AT-varieties an \emph{isogeny} if $\phi_T:T\to T'$ is an isogeny and $\phi_A:A\to A'$ is a finite and surjective morphism. Note that, when $\mr{char\,}\mathbf{k}=0$, any isogeny of AT-varieties over $\mathbf{k}$ is finite étale. 

\medskip
We now recall the $d$-lifts of AT-varieties.
\begin{definition}\label{defidlift}
Let $d\geq 2$ be an integer, and let $(Y,A,\pi)$ be an AT-variety with associated torus $T$. A self-isogeny $\psi : Y\to Y$ is called a \emph{$d$-lift} if the induced morphism $\psi_A:A\to A$ is a $d$-polarized isogeny and $\psi_T=[d]_T$, where
$$
[d]_T :T\longrightarrow T,\qquad t\longmapsto t^d
$$
is the multiplication-by-$d$ isogeny of $T$.
\end{definition}

\begin{remark}
For an isogeny $\psi:Y\to Y'$ of AT-varieties, the induced morphism $\psi_A:A\to A'$ need not be an isogeny of abelian varieties. However, in the definition of a $d$-lift $\psi:Y\to Y$, we require that $\psi_A:A\to A$ be an isogeny of abelian varieties.
\end{remark}

Recall that if $Y\to A$ is an AT-variety and $\psi_A:A'\to A$ is a morphism from an abelian variety $A'$, then the pullback $\psi_A^*Y:=Y\times_A A'$ is naturally a $T$-torsor over $A'$ and is equipped with a canonical morphism $\psi_A^*Y\to Y$. Similarly, given a homomorphism of tori $\psi_T:T\to T'$, the pushout $\psi_{T*}Y$ is the quotient of $T'\times Y$ by the $T$-action
\[
g\cdot(g',\widetilde{x})=\bigl(g'\psi_T(g)^{-1},g\widetilde{x}\bigr).
\]
It is naturally a $T'$-torsor over $A$ and is equipped with a canonical morphism $Y\to\psi_{T*}Y$.

Now let $Y$ and $Y'$ be AT-varieties with associated tori $T$ and $T'$ and abelian varieties $A$ and $A'$, respectively. Let $\psi_A:A\to A'$ be a morphism and $\psi_T:T\to T'$ an isogeny. By \cite[Lemma 2.3]{Bet25}, there exists a morphism $\psi:Y\to Y'$ inducing both $\psi_A$ and $\psi_T$ if and only if
\[
\psi_{T*}Y\simeq\psi_A^*Y'
\]
as $T'$-torsors over $A$.

Let $T$ be a split algebraic torus over $\mathbf{k}$, and fix an isomorphism $T\simeq\mb{G}_{m,\mathbf{k}}^r$. An AT-variety $Y\to A$ with associated torus $T$ can then be written as
\[
Y=L_1^\times\times_A\cdots\times_A L_r^\times,
\]
where each $L_i$ is a line bundle on $A$ and $L_i^\times$ denotes the complement of its zero section. Thus $Y\to A$ is determined, up to isomorphism as a $T$-torsor, by the tuple $(A,L_1,\dots,L_r)$, with $L_i\in\operatorname{Pic}(A)$. The following lemma is proved by the same argument as \cite[Theorem 2.8]{Bet25}.

\begin{lemma}\label{lem_ATlift}
Let $(Y,A,\pi)$ be an AT-variety over a field $\mathbf{k}$, with associated torus $T\simeq \mb{G}_{m,\mathbf{k}}^r$. Suppose that the $T$-torsor $Y$ corresponds to a tuple $(A,L_1,\dots,L_r)$, where $L_i\in\operatorname{Pic}(A)$ for $1\leq i\leq r$. Let $\psi_A:A\to A$ be a $d$-polarized isogeny satisfying $\psi_A^*L_i\simeq L_i^{\otimes d}$ for all $1\leq i\leq r$. Then there exists a $d$-polarized isogeny $\psi:Y\to Y$ such that $\pi\circ\psi=\psi_A\circ\pi$.
\end{lemma}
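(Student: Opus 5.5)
We use the criterion recalled just before the statement, \cite[Lemma~2.3]{Bet25}: with $\psi_T=[d]_T$ and the given $\psi_A$, a morphism $\psi:Y\to Y$ inducing both exists if and only if
\[
[d]_{T*}Y\simeq\psi_A^*Y
\]
as $T$-torsors over $A$. So the proof reduces to comparing these two torsors and then verifying that the resulting $\psi$ is a $d$-polarized isogeny.

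\textbf{Step 1: the torsor identification.} Write $T=\mb{G}_m^r$ with characters $\chi_1,\dots,\chi_r$. A $T$-torsor over $A$ is the same as the tuple of line bundles obtained by pushing out along the $\chi_i$. Here $Y$ corresponds to $(L_1,\dots,L_r)$.
\begin{itemize}
\item Pushing out along $[d]_T$ raises each coordinate to the $d$-th power, so $[d]_{T*}Y$ corresponds to $(L_1^{\otimes d},\dots,L_r^{\otimes d})$. Concretely, $L^\times\to (L^{\otimes d})^\times$, $s\mapsto s^{\otimes d}$, is equivariant for $t\mapsto t^d$.
\item Pulling back along $\psi_A$ gives $\psi_A^*Y=(\psi_A^*L_1)^\times\times_A\cdots\times_A(\psi_A^*L_r)^\times$, which corresponds to $(\psi_A^*L_1,\dots,\psi_A^*L_r)$.
\end{itemize}
The hypothesis $\psi_A^*L_i\simeq L_i^{\otimes d}$ then yields the required isomorphism of $T$-torsors. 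Alternatively, $\psi$ can be written down directly: compose the fibrewise $d$-th power map $Y\to[d]_{T*}Y$ with the isomorphism $[d]_{T*}Y\simeq\psi_A^*Y$ and the canonical map $\psi_A^*Y\to Y$. By construction $\pi\circ\psi=\psi_A\circ\pi$.

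\textbf{Step 2: $\psi$ is an isogeny.} By construction $\psi_T=[d]_T$ and $\psi_A$ is an isogeny, so $\psi$ is an isogeny of AT-varieties in the sense defined above.

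\textbf{Step 3: $\psi$ is $d$-polarized.} This is the main obstacle, since $Y$ is not projective. I read the claim as: there is an ample line bundle on $Y$, or an ample divisor class on a natural equivariant compactification, pulled back to its $d$-th power. The plan is as follows.
\begin{itemize}
\item Take an ample $H$ on $A$ with $\psi_A^*H\simeq H^{\otimes d}$; this exists because $\psi_A$ is $d$-polarized.
\item Then $\pi^*H$ is ample on $Y$, because $\pi$ is affine.
\item Since $\pi\circ\psi=\psi_A\circ\pi$, we get $\psi^*\pi^*H=\pi^*\psi_A^*H\simeq(\pi^*H)^{\otimes d}$.
\end{itemize}
If compactness is wanted, pass to $\ov{Y}=\mathbb{P}(\mathcal{O}\oplus L_1)\times_A\cdots\times_A\mathbb{P}(\mathcal{O}\oplus L_r)$. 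The $d$-th power map extends to $\ov{Y}$ by $[s_0:s_1]\mapsto[s_0^{d}:s_1^{d}]$ via the same isomorphisms. A suitable combination $\pi^*H^{\otimes m}\otimes\bigotimes_i\mathcal{O}(1)_i$ is then ample, and pulls back to its $d$-th power. To check this, use that $\mathcal{O}(1)_i$ pulls back to $\mathcal{O}(d)_i$ twisted by $\psi_A^*$-compatible data, via $\psi_A^*L_i\simeq L_i^d$. This mirrors the argument of \cite[Theorem~2.8]{Bet25}.
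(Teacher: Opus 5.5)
Your Step 1 is exactly the paper's proof: identify $[d]_{T*}Y$ with the tuple $(L_1^{\otimes d},\dots,L_r^{\otimes d})$ and $\psi_A^*Y$ with $(\psi_A^*L_1,\dots,\psi_A^*L_r)$, then invoke \cite[Lemma~2.3]{Bet25} to obtain $\psi$ lying over $\psi_A$ and inducing $[d]_T$. The paper stops at that point, reading ``$d$-polarized isogeny'' as the $d$-lift condition. Your Steps 2--3 (ampleness of $\pi^*H$ because $\pi$ is affine, and on the compactification $\bar{\psi}^*\mathcal{O}(1)_i\simeq\mathcal{O}(1)_i^{\otimes d}$, with no extra twist) are correct additional checks, not a different route.
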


\begin{proof}
Write $Y$ as a $T$-torsor over $A$, and consider the isogeny $\psi_T=[d]_T:T\to T$. The pushout $\psi_{T*}Y$ corresponds to the tuple
\[
(A,L_1^{\otimes d},\dots,L_r^{\otimes d}),
\]
whereas the pullback $\psi_A^*Y$ corresponds to
\[
(A,\psi_A^*L_1,\dots,\psi_A^*L_r).
\]
The isomorphisms $\psi_A^*L_i\simeq L_i^{\otimes d}$ therefore yield an isomorphism $\psi_{T*}Y\simeq\psi_A^*Y$ of $T$-torsors over $A$. By \cite[Lemma 2.3]{Bet25}, there exists a morphism $\psi:Y\to Y$ lying over $\psi_A$ and inducing $\psi_T=[d]_T$ on the associated torus. This is the desired $d$-polarized isogeny.
\end{proof}

We prove the following property of isogenies, which confirms a conjecture in \cite[Remark 2.5]{Bet25}.

\begin{proposition}\label{prop_etaleAT}
Let $\mathbf{k}$ be an algebraically closed field of characteristic zero, and let $(Y,A,\pi)$ be an AT-variety. Let $\psi\colon P\to Y$ be a connected finite \'etale cover. Then there exist a torus $T'$, an abelian variety $B$, and a morphism $p:P\to B$ such that $P$ is a $T'$-torsor over $B$. Moreover, $\psi:P\to Y$ is an isogeny of AT-varieties.    
\end{proposition}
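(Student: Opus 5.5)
We reduce to the case where $P\to Y$ is Galois and compute the fundamental group of $Y$. Since $\mathbf{k}$ is algebraically closed of characteristic zero and $T\simeq\mb{G}_m^r$, the torsor $Y\to A$ is the fiber product $L_1^\times\times_A\cdots\times_A L_r^\times$, which is a locally trivial fibration with fiber $T$. The homotopy exact sequence (étale version, valid in characteristic zero) reads
\[
\pi_1(T)\longrightarrow\pi_1(Y)\longrightarrow\pi_1(A)\longrightarrow 1 .
\]
The first step is therefore to show that every connected finite étale cover $P$ of $Y$ is dominated by a cover of the form $Y_{m,\phi}:=\phi^*\bigl([m]_{T*}\text{-root of }Y\bigr)$. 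Concretely, given $P$ of degree $N$, we consider the isogeny $[m]_T\times$ (with $m=N!$) on the fibers together with an isogeny $\phi:A'\to A$. We then show that the cover $P\times_Y Y'\to Y'$ becomes trivial for a suitable AT-variety isogeny $Y'\to Y$. The cleanest route: the kernel of $\pi_1(Y)\to\pi_1(P)$-action restricted to a fiber $T$ gives a finite étale cover of $T$, hence one dominated by $[m]_T$. So after pulling back along the cover $Y\to Y$ given fiberwise by $m$-th roots, we obtain a cover that is trivial on the torus fibers and hence descends to a finite étale cover of $A$. Any such cover is an isogeny onto $A$ up to translation (Serre–Lang). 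Making this precise requires an isogeny $\Psi:Y'\to Y$ of AT-varieties factoring through $P$. To construct $Y'$ we use \cite[Lemma 2.3]{Bet25}: we need $A'\to A$ and tori such that $[m]_{T*}Y'\simeq\phi^*Y$. Choose $\phi=[m]_A$ (possibly composed with a further isogeny). Since $[m]_A^*L\simeq L^{\otimes m^2}\otimes(\text{algebraically trivial})$, the line bundles $[m]_A^*L_i$ admit $m$-th roots, so such a $Y'$ exists.

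Assuming $\Psi:Y'\to Y$ is Galois with group $G$ (an AT-isogeny between AT-varieties, so $G$ is a finite group acting by translations: $G$ sits in an extension of $\ker\phi$ by $\ker[m]_T$) and factors as $Y'\to P\to Y$, we get $P=Y'/H$ for a subgroup $H\subset G$. Each $h\in H$ is an automorphism of $Y'$ lying over translation by some $a_h\in\ker\phi\subset A'$ and inducing identity on $T'$ (by Lemma \ref{lem_ATmap}, since it is a deck transformation commuting with the torus action up to the fixed torus part). Let $H_T:=H\cap\ker[m]_T$ (elements acting by torus translation) and $H_A$ its image in $A'$. Then $Y'/H_T$ is a $T'/H_T$-torsor over $A'$, and $H/H_T\simeq H_A$ acts freely on it compatibly with translation by $H_A$ on $A'$, and via Lemma \ref{lem_ATcomp}-type reasoning this action commutes with the torus action. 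Hence $P=(Y'/H_T)/H_A$ is a $T'':=T'/H_T$-torsor over $B:=A'/H_A$, an abelian variety. Both maps $P\to Y$ then induce isogenies on tori and finite surjective maps on the bases, so $\psi$ is an AT-isogeny.

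The main obstacle is the first step: proving that $P$ is dominated by an AT-isogeny $Y'\to Y$ and that the resulting cover is Galois. I would handle this with the homotopy sequence above, checking that $\pi_1(Y)$ is an extension of $\widehat{\pi_1(A)}$ by a quotient of $\widehat{\Z}^r$, hence virtually... in fact, since the quotient is abelian and the sequence is central (the $T$-action commutes), $\pi_1(Y)$ is a central extension of an abelian profinite group. Its open subgroups then contain $m\pi_1(Y)$-type subgroups, namely the images of the $Y_{m}$ covers, where $Y_m$ is obtained by applying the construction with $\phi=[m^2]$ or similar. We then verify that $Y_m\to Y$ is Galois (its group is the central extension of $A[\cdot]$ by $T[m]$) and that $Y_m\to Y$ corresponds to a subgroup contained in any open subgroup of index at most $N$, which uses the centrality and the fact that the commutator pairing is controlled by the Chern classes $c_1(L_i)$.
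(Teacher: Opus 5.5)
Your route is genuinely different from the paper's. The paper never dominates $P$ by another AT-variety. Over $\mathbb{C}$, set $\Gamma=\pi_1(Y)$, $N=\pi_1(T)$, $\Lambda=\pi_1(A)$ and $H=\psi_*\pi_1(P)$. The paper then:
\begin{itemize}
\item takes $B\to A$ and $T'\to T$ to be the covers attached to $\pi_*(H)$ and $N'=H\cap N$;
\item lifts $\pi\circ\psi$ to $p\colon P\to B$;
\item lifts $\rho_Y\circ(\psi_{T'}\times\psi)$ to a $T'$-action on $P$, which is possible because its image on $\pi_1$ is $N'H=H$;
\item gets the torsor property fibrewise, because $H\backslash\Gamma_B/N$ is a single point and the fibre cover corresponds to $N'$.
\end{itemize}
Your second half is correct. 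A deck transformation $g$ of an AT-isogeny over $\phi$ with torus part $[m]$ satisfies $[m]\circ g_T=[m]$, so $g_T=\mathrm{id}$. It also satisfies $\phi\circ g_A=\phi$, so $g_A$ is a translation by an element of $\ker\phi$. Those lying over $0$ are translations by $T'[m]$ by Lemma~\ref{lem_ATcomp}. Then $P=(Y'/H_T)/H_A$ is a $T'/H_T$-torsor over $A'/H_A$ by descent. This buys an explicit algebraic description of $P$ as a quotient, and shows that Galois AT-isogenies are cofinal among finite \'etale covers of $Y$. The cost is that the whole difficulty moves into producing a \emph{Galois} AT-isogeny through $P$, which the paper's one-shot covering argument never needs.

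That step, which you flag as the main obstacle, is only sketched, and two claims in the sketch are wrong as stated.

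\emph{AT-isogenies need not be Galois.} Take an elliptic curve $E$, $Y=\mathcal{O}_E(0)^\times$, a cyclic isogeny $\phi\colon E'\to E$ of prime degree $m$, and an $m$-th root $L'$ of $\phi^*\mathcal{O}_E(0)$, so $\deg L'=1$. An automorphism of $Y'=L'^\times$ over $Y$ lies over some $\tau_a$ with $a\in\ker\phi$ and forces $\tau_a^*L'\simeq L'$, hence $a=0$. So $\mathrm{Aut}(Y'/Y)=\mu_m$, although $\deg(Y'\to Y)=m^2$. In group terms, let $\omega\colon\Lambda\times\Lambda\to N$ be the bilinear commutator pairing of $\Gamma$ (given by the $c_1(L_i)$). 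The subgroup of an AT-isogeny with torus part $[m]$ and base lattice $\Lambda'$ is normal if and only if $\omega(\Lambda,\Lambda')\subseteq mN$.

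\emph{For your universal covers $Y_m$, the choice of $m$-th roots matters.} Changing the roots changes the subgroup by a homomorphism $\Lambda'\to N/mN$. For example, take $Y=A\times\mathbb{G}_m$ and a nontrivial $M\in\mathrm{Pic}^0(A)[m]$: the cover $M^\times\to Y$ over $[m^2]_A$ does not factor through $\mathrm{id}_A\times[m]$. So containment in every subgroup of index at most $\deg\psi$ fails for an arbitrary choice.

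The step closes cleanly with your own descent idea.
\begin{itemize}
\item Take $Y'\to Y$ over $[m]_A$ with torus part $[m]$ and $m=(\deg\psi)!$.
\item Each component $Q$ of $P\times_YY'$ is trivial on the torus fibres, so $\pi_1(Q)\supseteq\ker(\pi_1(Y')\to\pi_1(A'))$. Hence $Q\simeq Y'\times_{A'}A''$ for an isogeny $A''\to A'$.
\item Choose $k$ with $[k]_{A'}$ factoring through $A''$, and set $Y'':=Y'\times_{A',[k]}A'$.
\end{itemize}
Then $Y''$ maps to $Q$, hence to $P$ over $Y$. It lies over $[mk]_A$ with torus part $[m]$, and $Y''\to Y$ is Galois because $\omega(\Lambda,mk\Lambda)\subseteq mN$. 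Exactness of $\pi_1(T)\to\pi_1(Y)\to\pi_1(A)\to1$ for this non-proper fibration is easiest after reducing to $\mathbb{C}$, as the paper does. With these repairs your argument is complete.
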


\begin{proof}
By a standard spreading-out argument and the Lefschetz principle, it suffices to prove the statement over $\mathbf{k}=\mathbb{C}$.

\textbf{Step 1.} We first construct $B$, $T'$, and $p$. Let $\rho_Y:T\times Y\to Y$ denote the given $T$-action. Since the universal cover $\mathbb{C}^{\dim A}$ of $A^{\an}$ is contractible, we have $\pi_2(A^{\an})=0$. The homotopy exact sequence associated with the fiber bundle $Y^{\an}\to A^{\an}$ therefore gives an exact sequence
\[
1\longrightarrow \pi_1(T^{\an})
 \longrightarrow \pi_1(Y^{\an})
 \xrightarrow{\pi_*}\pi_1(A^{\an})
 \longrightarrow 1.
\]
Set
\[
N:=\pi_1(T^{\an})\simeq\mathbb{Z}^r,\qquad
\Gamma:=\pi_1(Y^{\an}),\qquad
\Lambda:=\pi_1(A^{\an})\simeq\mathbb{Z}^{2\dim A}.
\]
Furthermore, let
\[
H:=\psi_*\pi_1(P^{\an})\subseteq\Gamma,\qquad
N':=H\cap N,\qquad
\Lambda':=\pi_*(H)\subseteq\Lambda.
\]
Because $\psi^{\an}:P^{\an}\to Y^{\an}$ is a connected finite covering, the subgroup $H$ has finite index in $\Gamma$. Consequently, $N'$ and $\Lambda'$ have finite index in $N$ and $\Lambda$, respectively.

The subgroup $\Lambda'\subseteq\Lambda$ determines a connected finite topological covering
\[
\psi_B^{\an}:B^{\an}\longrightarrow A^{\an}.
\]
By the Riemann existence theorem \cite[Expos\'e XII, Th\'eor\`eme 5.1]{SGA1}, this covering algebraizes to a connected finite \'etale morphism $\psi_B:B\longrightarrow A$. By \cite[Th\'eor\`eme 2]{LS57}, after choosing a point of $B$ lying over $0\in A$, the variety $B$ admits a unique structure of an abelian variety for which $\psi_B$ is an isogeny.

Let $\widetilde{T}\simeq\mathbb{C}^r$ be the universal cover of $T^{\an}$. The finite-index inclusion $N'\subseteq N$ determines a connected finite covering
\[
\widetilde{T}/N'\longrightarrow \widetilde{T}/N=T^{\an}.
\]
By the Riemann existence theorem, this covering algebraizes to a morphism $\psi_{T'}:T'\longrightarrow T$, where $(T')^{\an}\simeq \wt{T}/N'$. By \cite[Proposition 1.1]{BS13}, $T'$ admits a structure of an algebraic tours for which $\psi_{T'}$ is an isogeny.

Now consider $f^{\an}:=\pi^{\an}\circ\psi^{\an}:P^{\an}\longrightarrow A^{\an}$. By construction,
\[
f_*\pi_1(P^{\an})=\pi_*(H)=\Lambda'.
\]
The covering-space lifting criterion therefore gives a lift $p^{\an}:P^{\an}\longrightarrow B^{\an}$ such that $\psi_B^{\an}\circ p^{\an}=\pi^{\an}\circ\psi^{\an}$. By the Riemann existence theorem, this analytic lift is algebraic. We thus obtain a morphism $p:P\longrightarrow B$ satisfying $\psi_B\circ p=\pi\circ\psi$.

\textbf{Step 2.} We next construct the $T'$-action on $P$. Consider the algebraic morphism
\[
\beta
 :=\rho_Y\circ(\psi_{T'}\times\psi):
 T'\times P\longrightarrow T\times Y\longrightarrow Y.
\]
Note that
\[
\pi_1\bigl((T')^{\an}\times P^{\an}\bigr)
 \simeq \pi_1((T')^{\an})\times\pi_1(P^{\an})
 \simeq N'\times H.
\]
It follows from \cite[Lemma 2.11]{Bet25} that
\[
\beta_*^{\an}\pi_1\bigl((T')^{\an}\times P^{\an}\bigr)=N'H\subseteq\Gamma.
\]
Since $N'\subseteq H$, we have $N'H=H$. Hence, after choosing a base point $(e,y_0)\in (T')^{\an}\times P^{\an}$, the covering-space lifting criterion gives a unique lift
\[
\rho_P^{\an}:(T')^{\an}\times P^{\an}\longrightarrow P^{\an}
\]
satisfying
\[
\psi^{\an}\bigl(\rho_P^{\an}(t,y)\bigr)
 =\rho_Y^{\an}\bigl(\psi_{T'}^{\an}(t),\psi^{\an}(y)\bigr),
\]
and sending the chosen point $(e,y_0)$ to $y_0$. By the uniqueness of lifts, $\rho_P^{\an}$ defines an analytic $(T')^{\an}$-action on $P^{\an}$. By the Riemann existence theorem once again, $\rho_P^{\an}$ comes from an algebraic action $\rho_P:T'\times P\longrightarrow P$. Moreover, the uniqueness of lifts shows that the action $\rho_P$ preserves the fibers of $p:P\to B$.

\textbf{Step 3.} It remains to prove that $p:P\to B$ is a $T'$-torsor. We first claim that the fibers of $p^{\an}$ are connected. Let $Y_B:=Y\times_A B$, and let $q:Y_B\to B$ be the second projection. The morphism $\psi:P\to Y$ induces a morphism
\[
\varphi:=(\psi,p):P\longrightarrow Y_B.
\]
The covering $Y_B^{\an}\to Y^{\an}$ corresponds to the subgroup
\[
\Gamma_B:=\pi_1(Y_B^{\an})
 =\pi_*^{-1}(\Lambda')\subseteq\Gamma.
\]
The morphism $\varphi^{\an}$ is the finite covering associated with the inclusion $H\subseteq\Gamma_B$.

Fix $b\in B^{\an}$, and set $P_b^{\an}:=(p^{\an})^{-1}(b),(Y_B^{\an})_b:=(q^{\an})^{-1}(b)$. Then $P_b^{\an}=(\varphi^{\an})^{-1}\bigl((Y_B^{\an})_b\bigr)$. The fiber $(Y_B^{\an})_b$ is isomorphic to $T^{\an}$, and the image of its fundamental group in $\Gamma_B$ is $N$. Standard covering-space theory (see \cite[Theorem 3.4.10]{Geo08}) gives a bijection between the path-connected components of $P_b^{\an}$ and the double cosets $H\backslash\Gamma_B/N$. By the construction of $\Gamma_B$, we have $\Gamma_B=HN$. It follows that $H\backslash\Gamma_B/N$ consists of a single element. Thus $P_b^{\an}$ is connected.

Set $a:=\psi_B^{\an}(b)\in A^{\an}$. The restriction $\theta:P_b^{\an}\longrightarrow Y_a^{\an}$ of $\psi^{\an}$ is therefore a connected finite covering. Under the identification $\pi_1(Y_a^{\an})\simeq N$, the subgroup associated with this covering is
\[
\theta_*\pi_1(P_b^{\an})=H\cap N=N'.
\]
Choose $y\in P_b^{\an}$. The point $\psi^{\an}(y)\in Y_a^{\an}$ gives an isomorphism
\[
T^{\an}\xrightarrow{\sim}Y_a^{\an},
\qquad
t\longmapsto t\cdot\psi^{\an}(y).
\]
Under this identification, both $\psi_{T'}^{\an}:(T')^{\an}\longrightarrow T^{\an}$ and $\theta:P_b^{\an}\longrightarrow Y_a^{\an}$ are connected coverings corresponding to the same subgroup $N'\subseteq N$. Thus, by the uniqueness of the connected covering corresponding to $N'$, the orbit map
\[
(T')^{\an}\longrightarrow P_b^{\an},
\qquad
t\longmapsto t\cdot y,
\]
is an isomorphism. Hence the $(T')^{\an}$-action is free and transitive on every fiber of $p^{\an}$. It follows that the map
\[
(T')^{\an}\times P^{\an}
\longrightarrow
P^{\an}\times_{B^{\an}}P^{\an},
\qquad
(t,y)\longmapsto(t\cdot y,y),
\]
is biholomorphic. Hence the algebraic morphism
\[
T'\times P\longrightarrow P\times_B P
\]
induced by $\rho_P$ is an isomorphism. Since $p:P\to B$ is smooth and surjective, it is a $T'$-torsor. Finally, since $\psi:P\to Y$ is finite \'etale and surjective, it is an isogeny of AT-varieties.
\end{proof}

\begin{lemma}\label{lem_isolift}
Let $\mathbf{k}$ be an algebraically closed field of characteristic zero. Let $(Y,A,\pi)$ and $(Y',A',\pi')$ be AT-varieties over $\mathbf{k}$, and let $\varphi:Y\to Y,\psi:Y'\to Y'$ and $f:Y\to Y'$ be isogenies of AT-varieties satisfying $\psi\circ f=f\circ\varphi$. Assume $\psi$ is a $d$-lift of $Y'$, then, up to changing the base point of $A$, $\varphi$ is a $d$-lift of $Y$.
\end{lemma}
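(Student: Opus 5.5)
We have to show two things: $\varphi_T=[d]_T$, and $\varphi_A$ is a $d$-polarized isogeny once the origin of $A$ is chosen suitably. By Lemma \ref{lem_ATmap}, the relation $\psi\circ f=f\circ\varphi$ splits into a relation on the tori and a relation on the bases,
\[
\psi_T\circ f_T=f_T\circ\varphi_T,\qquad \psi_A\circ f_A=f_A\circ\varphi_A .
\]
This uses the uniqueness in Lemma \ref{lem_ATmap} applied to the two composite morphisms $Y\to Y'$.

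\textbf{Torus part.} Since $\psi_T=[d]_{T'}$ commutes with every homomorphism of tori, we get $f_T\circ[d]_T=[d]_{T'}\circ f_T=f_T\circ\varphi_T$. Because $f_T$ is an isogeny, $\ker f_T$ is finite. Hence the homomorphism $t\mapsto \varphi_T(t)\,t^{-d}$ takes values in the finite group $\ker f_T$. Its source $T$ is connected, so this homomorphism is trivial, and therefore $\varphi_T=[d]_T$.

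\textbf{Abelian part.} The morphisms $f_A$ and $\varphi_A$ are only morphisms of varieties, and $\varphi_A$ is finite surjective. I first choose a base point $0_A\in A$ with $f_A(0_A)=0_{A'}$, so that $f_A$ becomes a homomorphism. It is finite surjective, hence an isogeny.

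Next I handle the translation part of $\varphi_A$. Write $\varphi_A=\tau_c\circ u$ with $u$ a homomorphism. Applying $f_A$ to the base relation gives $f_A\circ u=\psi_A\circ f_A$ on the homomorphism parts, and $f_A(c)=\psi_A(0)=0$ on the constant parts. So $c\in\ker f_A$, which is finite. From $f_A\circ u=\psi_A\circ f_A$ we see that $\ker u$ is finite, so $u$ is an isogeny. Since $\psi_A$ is $d$-polarized, it is not unipotent on $H^1$; its eigenvalues have absolute value $\sqrt d>1$. Hence $u$ has no eigenvalue $1$, and $1-u$ is an isogeny. Choose $a$ with $(1-u)(a)=c$. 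Then $\varphi_A(a)=u(a)+c=a$, so moving the origin to $a$ makes $\varphi_A$ a homomorphism $u'$. After this change, $f_A$ is a homomorphism up to translation; re-normalizing $0_{A'}$ to $f_A(a)$ keeps $\psi_A$ a homomorphism up to the same kind of correction. Alternatively, and more simply, I can work directly with homomorphism parts, since polarization only depends on them.

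\textbf{Polarization.} Let $H'$ be ample on $A'$ with $\psi_A^*H'\simeq H'^{\otimes d}$. Set $H:=f_A^*H'$, which is ample because $f_A$ is finite. Then
\[
\varphi_A^*H=(f_A\circ\varphi_A)^*H'=(\psi_A\circ f_A)^*H'=f_A^*\psi_A^*H'\simeq H^{\otimes d}.
\]
If an isomorphism only up to translation is obtained (i.e.\ equality in $\mathrm{NS}$), I will invoke \cite[Proposition~1.1]{MZ18} to upgrade numerical $d$-polarization to $d$-polarization.

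The main obstacle is the base-point bookkeeping: arranging simultaneously that $\varphi_A$ is a homomorphism, which requires a fixed point, obtained from the invertibility of $1-u$. Everything else is formal.
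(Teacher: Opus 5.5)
Your proof is correct and follows the paper's structure. You split the relation via Lemma~\ref{lem_ATmap}, force $\varphi_T=[d]_T$ from the finiteness of $\ker f_T$ and the connectedness of $T$ (your torus-level argument is a streamlined version of the paper's difference morphism $\theta$), then move the origin of $A$ to a fixed point of $\varphi_A$ and pull back the polarization of $\psi_{A'}$ along $f_A$.

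Your argument that $1-u$ is an isogeny supplies the existence of that fixed point, which the paper only asserts. The detour re-normalizing $0_{A'}$ and the appeal to \cite{MZ18} are unnecessary, since $\varphi_A^*(f_A^*H')\simeq (f_A^*H')^{\otimes d}$ is an honest isomorphism that does not depend on base points.
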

\begin{proof}
By Lemma~\ref{lem_ATmap}, the isogenies $\varphi$, $\psi$, and $f$
induce morphisms
\[
\begin{aligned}
\varphi_A&:A\to A, & \psi_{A'}&:A'\to A', & f_A&:A\to A',\\
\varphi_T&:T\to T, & \psi_{T'}&:T'\to T', & f_T&:T\to T'.
\end{aligned}
\]
Moreover, the uniqueness part in Lemma~\ref{lem_ATmap} gives
\[
f_A\circ\varphi_A=\psi_{A'}\circ f_A
\quad\text{and}\quad
f_T\circ\varphi_T=\psi_{T'}\circ f_T.
\]
Up to replacing the base of $A$ by a fixed point of $\varphi_A$, $\varphi_A$ is an isogeny of $A$. Since $\psi_{A'}$ is a $d$-polarized, $\varphi_A$ is also $d$-polarized.
Let $[d]_T:T\to T$ and $[d]_{T'}:T'\to T'$ denote the power maps given by $(t_1,\dots,t_r)\mapsto (t_1^d,\dots,t_r^d)$. By definition, $\psi_{T'}=[d]_{T'}$. It remains to prove that $\varphi_T=[d]_T$. By the uniqueness of $\varphi_T$, it suffices to show that $\varphi(t\cdot x)=[d]_T(t)\cdot\varphi(x)$ for all $t\in T$ and $x\in Y$.
Indeed,
\[
\pi\bigl(\varphi(t\cdot x)\bigr)
=\varphi_A(\pi(x))
=\pi\bigl([d]_T(t)\cdot\varphi(x)\bigr).
\]
Hence $\varphi(t\cdot x)$ and $[d]_T(t)\cdot\varphi(x)$ lie in the same fiber of $\pi$. Let
\[
\delta:Y\times_A Y\to T\times Y\to T
\]
be the difference morphism characterized by $x_1=\delta(x_1,x_2)\cdot x_2$ for every $(x_1,x_2)\in Y\times_A Y$. Define
\[
\theta:T\times Y\to Y\times Y\to T,\qquad (t,x)
\mapsto\delta\bigl(\varphi(t\cdot x),[d]_T(t)\cdot\varphi(x)\bigr).
\]
Then
\[
\varphi(t\cdot x)
=\theta(t,x)\cdot\bigl([d]_T(t)\cdot\varphi(x)\bigr).
\]
Applying $f$, we obtain
\[
f\bigl(\varphi(t\cdot x)\bigr)
=f_T\bigl(\theta(t,x)\bigr)\cdot
f\bigl([d]_T(t)\cdot\varphi(x)\bigr).
\]
On the other hand, using the equivariance of $f$ and $\psi$, we have
\[
\begin{aligned}
f\bigl(\varphi(t\cdot x)\bigr)&=\psi\bigl(f(t\cdot x)\bigr)=\psi_{T'}\bigl(f_T(t)\bigr)\cdot\psi(f(x))=[d]_{T'}\bigl(f_T(t)\bigr)\cdot f(\varphi(x))\\
&=f_T\bigl([d]_T(t)\bigr)\cdot f(\varphi(x))=f\bigl([d]_T(t)\cdot\varphi(x)\bigr).
\end{aligned}
\]
Since the $T'$-action on $Y'$ is free, comparison of the preceding two equalities gives $f_T\bigl(\theta(t,x)\bigr)=e_{T'}$. Thus the image of $\theta$ is contained in the finite set
$\ker(f_T)$. Since $T\times Y$ is connected, $\theta$ is constant. Moreover, $\theta(e_T,x)=e_T$ for every $x\in Y$. Therefore, $\theta(t,x)=e_T$ for all $t\in T$ and $x\in Y$. Hence $\varphi(t\cdot x)=[d]_T(t)\cdot\varphi(x)$, which completes the proof.
\end{proof}

\subsection{Basic properties of exceptional maps}\label{sec_exp}
In this subsection, we assume that $\mathbf{k}$ is an algebraically closed field of characteristic zero. Our main goal is to prove the following theorem.

\begin{theorem}\label{thm_semiconj}
Let $\pi: X\to Y$ be a finite surjective morphism between normal projective varieties over $\mathbf{k}$. Let $f:X\to X$ and $g: Y\to Y$ be $d$-polarized endomorphisms satisfying $\pi\circ f=g\circ \pi$. Then $f$ is exceptional if and only if $g$ is exceptional.
\end{theorem}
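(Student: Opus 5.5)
The two implications are proved separately; the direction "$f$ exceptional $\Rightarrow$ $g$ exceptional" is the easy one.

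\textbf{From $f$ to $g$.} Suppose $f$ is exceptional. Then there are an AT-variety $(Y_0,A,\pi_0)$, a $d$-lift $\psi$, and a morphism $h:Y_0\to X$ with $h\circ\psi=f^l\circ h$, where $h$ is finite onto a dense open set $X^\circ$. Composing with $\pi$ gives $\pi\circ h:Y_0\to Y$, and
\[
(\pi\circ h)\circ\psi=\pi\circ f^l\circ h=g^l\circ(\pi\circ h).
\]
It remains to check that $\pi\circ h$ is finite onto a dense open subset of $Y$. Since $\pi$ is finite, $\pi\circ h$ is quasi-finite. The difficulty is that $\pi(X^\circ)$ need not be open, and $\pi\circ h$ need not be finite onto its image. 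To fix this, I would choose a $g$-stable open set in a canonical way. Let $V\subset Y$ be the largest open set over which $\pi\circ h$ is finite; it is nonempty because of generic finiteness and Zariski's main theorem. The equation $h\circ\psi=f^l\circ h$, together with the fact that $\psi$ is finite étale and surjective, should let us replace $V$ by $\bigcap_k g^{-lk}(V)$ or by a suitable saturation. Alternatively, and more simply, we can restrict $Y_0$: take $Y^\circ:=Y\setminus\pi(X\setminus X^\circ)$, which is open since $\pi$ is finite. Then $(\pi h)^{-1}(Y^\circ)\to Y^\circ$ is finite. However, we need the whole of $Y_0$ to map finitely, so this restriction only works if $X^\circ$ is $\pi$-saturated. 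For that reason I expect the intended argument replaces $X^\circ$ by the complement of a backward-invariant closed set (the boundary $X\setminus h(Y_0)$ is totally invariant), and then uses that $\pi$ maps totally invariant sets to totally invariant sets.

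\textbf{From $g$ to $f$.} Suppose $h:Y_0\to Y$ and $\psi$ witness the exceptionality of $g$. Form the normalization $P$ of a component of $Y_0\times_Y X$ dominating $X$; it comes with maps $q:P\to Y_0$ and $h':P\to X$. The key claim is that $q$ is finite étale over $Y_0$. To prove it, first note that $\pi$ is étale over the complement of the branch locus, and that $g$-stability of the branch divisor under backward iteration forces $h^{-1}(\text{branch locus of }\pi)$ to be a $\psi$-totally invariant divisor on $Y_0$. Then show that a $d$-lift of an AT-variety admits no $\psi$-totally invariant prime divisor: a totally invariant divisor $D$ would satisfy $\psi^*D=D$ and $\psi^*\mathcal{O}(D)\simeq\mathcal{O}(D)$, whereas the degree or ramification count contradicts $\psi$ being étale. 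Concretely, $\psi^{-1}(D)=D$ with $\psi$ étale of degree $>1$ forces $\psi|_D:D\to D$ to be étale of the same degree, and this should be excluded by a dimension/volume argument on $D$ (or on its image in $A$, resp. its torus-orbit closure). Hence by purity of the branch locus (Zariski–Nagata, with $Y_0$ smooth), $q$ is finite étale, and by Proposition~\ref{prop_etaleAT} $P$ is an AT-variety isogenous to $Y_0$.

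Next we lift the dynamics. Replacing $l$ by a multiple, $\psi$ lifts to $P$: the finitely many connected étale covers of $Y_0$ of bounded degree in the tower are permuted by $\psi$, so some iterate of $\psi$ pulls $P$ back to a cover dominated by $P$. Via $f^{l}$ we obtain $\varphi:P\to P$ with $h'\circ\varphi=f^{l'}\circ h'$ and $q\circ\varphi=\psi^{k}\circ q$. Here uniqueness of lifts is handled by Lemma~\ref{lem_ATcomp}, i.e. by composing with a translation $t_0$. Lemma~\ref{lem_isolift} then says that $\varphi$ is a $d^{k}$-lift after moving the base point, which yields exceptionality of $f$; finiteness of $h'$ onto an open set comes from finiteness of $q$ and $\pi$.

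\textbf{Main obstacle.} I expect the hardest steps to be the étaleness of $q$, i.e. the absence of totally invariant divisors for $d$-lifts, and the construction of the lift $\varphi$ of $f^{l'}$ to $P$ commuting with $h'$. For the lift, I would first try to use the universal cover in the analytic setting: $\widetilde{Y_0}$ maps to $X$ through $P$, and the fundamental-group subgroup defining $P$ is preserved by $\psi_*$ up to finite index.
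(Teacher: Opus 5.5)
Both directions have a genuine gap, and in both the missing idea is the same: first get dynamics on a fiber product over $Y$, and only then deduce saturation or \'etaleness from Lemma~\ref{lem_etale} and Lemma~\ref{lem_invar_empty}.

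\textbf{$f$ exceptional $\Rightarrow$ $g$ exceptional.} You correctly isolate what must be shown: $X^\circ$ is $\pi$-saturated, $\pi^{-1}(\pi(X^\circ))=X^\circ$. Given that, $\pi(X^\circ)$ is open, since $\pi$ is universally open onto the normal $Y$, and $\pi\circ h$ is finite onto it. But you do not prove saturation. The principle you fall back on, that $\pi$ carries totally invariant sets to totally invariant sets, is not available. From $g(y)\in\pi(E)$ you only get $x\in\pi^{-1}(y)$ with $\pi(f(x))\in\pi(E)$, not $f(x)\in E$, and closing that gap is essentially the saturation statement itself. So this is not the easy direction.

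The paper's argument runs as follows. Let $W$ be the normalization of $X$ in $\mathbf{k}(Y_0)$; then $\psi$ extends to a polarized $\widetilde\psi$ with $\widetilde\psi^{-1}(Y_0)=Y_0$. Put $E:=W\setminus Y_0$. If saturation fails, then $Y_0\times_Y E\neq\varnothing$. The map $(\widetilde\psi,\widetilde\psi)$ sends the finitely many components of $(W\times_Y W)_{\mathrm{red}}$ meeting this locus among themselves, so one of them, $C$, is periodic. Its normalization carries an endomorphism $\varphi$ lifting $\widetilde\psi^{m}$ on both factors. Lemma~\ref{lem_etale} makes $\varphi$ \'etale over $\pi_1^{-1}(Y_0)$. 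Then the closure of $\pi_1^{-1}(Y_0)\cap\pi_2^{-1}(E)$ is a nonempty, proper, totally invariant closed set at whose generic points $\varphi$ is \'etale, which contradicts Lemma~\ref{lem_invar_empty}.

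\textbf{$g$ exceptional $\Rightarrow$ $f$ exceptional.} Your \'etaleness argument for $q$ rests on a false claim. The branch locus of $\pi$ need not be backward $g$-stable, and $h^{-1}(B_\pi)$ need not be totally $\psi$-invariant. Take $Y_0=\mathbb{G}_m$, $\psi(z)=z^2$, $h(z)=z+z^{-1}$, $g(w)=w^2-2$, and $X=\mathbb{P}^1$, $f(z)=z^2$, $\pi(z)=z+z^{-1}$. Then $B_\pi=\{\pm2\}$ but $g^{-1}(B_\pi)=\{0,\pm2\}$. Also $h^{-1}(B_\pi)=\{\pm1\}$ while $\psi^{-1}(\{\pm1\})=\{\pm1,\pm i\}$. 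Nevertheless both components $\{z'=z\}$ and $\{z'=z^{-1}\}$ of $Y_0\times_Y X$ are \'etale over $Y_0$. So \'etaleness cannot be read off from $\pi$; it has to come from dynamics on the cover. That means you need the lift before \'etaleness, not after, and the lift is exactly what you leave open as the main obstacle.

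No universal-cover argument is needed for the lift. Let $W$ be the normalization of $Y$ in $\mathbf{k}(Y_0)$ and $\widetilde\psi$ the extension of $\psi$. The relations $p\circ\widetilde\psi=g^l\circ p$ and $\pi\circ f^l=g^l\circ\pi$ show that $(\widetilde\psi,f^l)$ maps $(W\times_Y X)_{\mathrm{red}}$ to itself and sends components onto components. Hence some component is periodic, and its normalization carries $\varphi$ with $\pi_1\circ\varphi=\widetilde\psi^{m}\circ\pi_1$ and $\pi_2\circ\varphi=f^{lm}\circ\pi_2$.

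Only now, on the totally invariant open set $P':=\pi_1^{-1}(Y_0)=\pi_2^{-1}(\pi^{-1}(Y^\circ))$, does the argument go through. The ramification formula with $\psi$ \'etale gives $R_{\pi_1}=R_\varphi+\varphi^*R_{\pi_1}$, hence $\varphi^*R_{\pi_1}=R_{\pi_1}$. Lemma~\ref{lem_invar_empty} together with purity then forces $\pi_1|_{P'}$ to be \'etale; this is Lemma~\ref{lem_etale}. Your instinct to use a volume argument of this kind is right, but it must be applied to $R_{\pi_1}$ on the periodic component, not to $h^{-1}(B_\pi)$ on $Y_0$. Proposition~\ref{prop_etaleAT} and Lemma~\ref{lem_isolift} then finish as you planned, and $\pi_2|_{P'}$ is automatically finite onto $\pi^{-1}(Y^\circ)$. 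For an arbitrary component of $Y_0\times_Y X$, as in your proposal, there is no endomorphism and hence no invariance to exploit.
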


We first do some preparations.
\begin{lemma}\label{lem_invar_empty}
Let $X$ be a normal projective variety over $\mathbf{k}$ and $f:X\to X$ be a polarized endomorphism. Let $Z\subset X$ be a non-empty closed subset satisfying $f^{-1}(Z)\subset Z$. Assume $f$ is \'etale at the generic point of $Z$, then $Z=X$.
\end{lemma}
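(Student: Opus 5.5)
The plan is to argue by contradiction through a degree count on the top-dimensional components of $Z$, using the polarization. Suppose $Z\neq X$. Let $k:=\dim Z<n:=\dim X$, and let $W_1,\dots,W_s$ be the irreducible components of $Z$ of dimension $k$. Let $L$ be an ample line bundle with $f^*L\simeq L^{\otimes d}$. Then $\deg f=d^n$, and for any $k$-dimensional subvariety $W$ the projection formula gives
\[
\deg(f|_W)\,(L^k\cdot f(W))=(f^*L)^k\cdot W=d^k\,(L^k\cdot W).
\]
I read the hypothesis ``étale at the generic point of $Z$'' as saying that $f$ is étale at the generic point of every irreducible component of $Z$; in particular this holds at the generic point of each $W_i$.

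\textbf{Step 1: fibres over a general point of $W_j$.} Fix $j$ and choose a general point $y\in W_j$. Take it outside the other components of $Z$ and outside the images $f(Z')$ of the components $Z'\subset Z$ of dimension $<k$; these images have dimension $<k$, so $y$ avoids them. Every point of $f^{-1}(y)$ lies in $Z$, because $f^{-1}(Z)\subset Z$. By the choice of $y$, each such point lies in some $W_i$ with $f(W_i)=W_j$. Moreover it is a general point of that $W_i$, since $f|_{W_i}$ is finite onto $W_j$. Hence $f$ is étale there. Since $X$ is normal and $f$ is finite surjective, the local multiplicities over $y$ sum to $\deg f$, and at étale points each multiplicity is $1$. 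Therefore $\#f^{-1}(y)=d^n$. Counting the same fibre component by component gives
\[
\sum_{i:\,f(W_i)=W_j}\deg(f|_{W_i})=d^n .
\]

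\textbf{Step 2: summing over $j$.} Multiply the identity of Step 1 by $L^k\cdot W_j$ and substitute the projection formula. This yields
\[
d^n\,(L^k\cdot W_j)=\sum_{i:\,f(W_i)=W_j} d^k\,(L^k\cdot W_i).
\]
Sum over $j$. Each $W_i$ occurs on the right for at most one $j$, namely the one with $W_j=f(W_i)$ if that image is one of the $W_j$'s. Since $L$ is ample, every term $L^k\cdot W_i$ is positive. We obtain
\[
d^n\sum_j L^k\cdot W_j\le d^k\sum_i L^k\cdot W_i .
\]
As both sums run over the same components and are positive, this forces $d^n\le d^k$. Because $d\geq 2$, we get $n\le k$, contradicting $k<n$. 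Hence $Z=X$.

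\textbf{Main obstacle.} The delicate point is Step 1, the claim that the geometric fibre of $f$ over a general point of $W_j$ has exactly $\deg f$ points. This uses two facts. First, for a finite surjective morphism onto a normal variety, the sum of local multiplicities over every point equals $\deg f$; I would cite universal openness, or compute with the normalized trace. Second, the generic étaleness along each $W_i$ forces every one of these multiplicities to be $1$. I would verify this carefully, including the choice of $y$ avoiding the images of the lower-dimensional components of $Z$.
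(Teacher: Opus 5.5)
Your argument is correct. It runs on the same mechanism as the paper's proof: generic \'etaleness makes the preimage of a top-dimensional piece of $Z$ reduced, and $\deg f=d^n$ is then played off against the factor $d^k$ coming from $f^*L\simeq L^{\otimes d}$ and the projection formula. The bookkeeping is different, though.

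The paper first reduces to $Z$ irreducible and totally invariant. It stabilizes the chain $Z\supset f^{-1}(Z)\supset f^{-2}(Z)\supset\cdots$ to get $f^{-1}(Z)=Z$, and passes to an iterate fixing every component. It then uses the cycle pullback $f^*Z$, combining $f_*f^*Z=d^nZ$ with $f^*Z\leq Z$ to get $d^n(Z\cdot L^r)\leq d^r(Z\cdot L^r)$.

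You skip this reduction. An explicit count of a general fibre replaces $f^*W_j$, and summing over all top-dimensional components replaces the need for $f$ to permute them. This has two benefits. You need no theory of pulling back cycles along a finite map onto a possibly singular normal variety. You also never have to check that the \'etale hypothesis survives the passage to $f^{-i}(Z)$ and its components; it does for the top-dimensional ones, which are components of $Z$, and these are exactly the ones you use. The paper's version is shorter once those points are granted.

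Two things to tighten in Step 1.
\begin{itemize}
\item The conditions you list on $y$ do not by themselves force $f(W_i)=W_j$ for each $W_i$ meeting $f^{-1}(y)$, because $f(W_i)$ need not be a component of $Z$. You must also take $y\notin f(W_i)$ whenever $f(W_i)\neq W_j$. You must also take $y\notin f(W_i\cap W_{i'})$ for $i\neq i'$, so that the component-by-component count does not double count. Both are proper closed subsets of $W_j$, so ``general'' covers them, but you should say so.
\item The equality $\#f^{-1}(y)=d^n$ is most cleanly justified without local multiplicities. The non-\'etale locus $R\subset X$ is closed, and your choice of $y$ gives $y\notin f(R)$. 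Over the open set $X\setminus f(R)$ the map $f$ is finite \'etale, hence finite locally free of constant rank $\deg f$.
\end{itemize}
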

\begin{proof}
We may first reduce to the case that $Z$ is irreducible and $f^{-1}(Z)=Z$. In fact, we have
\[Z\supset f^{-1}(Z)\supset f^{-2}(Z)\supset\cdots.\]
Since $X$ is noetherian, $f^{-i}(Z)=f^{-i-1}(Z)$ for some $i$. Thus $f^{-1}(Z)=Z$. Note that $f$ permutes the irreducible components of $Z$. Replacing $f$ by an iteration, we may assume $f$ fixes every irreducible component of $Z$. It follows that every irreducible component is totally invariant under $f$. 

Now, we assume $Z$ is irreducible and $f^{-1}(Z)=Z$. Suppose $\dim X=n$ and $\dim Z=r$. Let $L$ be an ample line bundle on $X$ such that $f^*L\simeq dL$. By the projection formula,
\[f_*f^*Z=\deg(f)Z=d^{n}Z.\]
So we have
\[d^r(f^*Z\cdot L^r)=(f^*Z\cdot (f^*L)^r)=(f_*f^*Z\cdot L^r)=d^n(Z\cdot L^r).\]
On the other hand, since $f$ is \'etale at the generic point of $Z$, the cycle $f^*Z$ is reduced. Since $f^{-1}(Z)=Z$, we obtain $f^*Z\leq Z$. Thus
\[(f^*Z\cdot L^r)\leq (Z\cdot L^r).\]
Hence $n=r$ and $Z=X$.
\end{proof}

\begin{lemma}\label{lem_etale}
Let $X$ be a normal projective variety, let $f:X\to X$ be a
polarized endomorphism, and let $X^{\circ}\subset X$ be a nonempty
open subset satisfying $f^{-1}(X^{\circ})=X^{\circ}$. Suppose that there are a smooth variety $P$, a finite surjective morphism $\pi:X^{\circ}\to P$, and a finite \'etale endomorphism $\psi:P\to P$ such that
\[
\pi\circ f_{X^{\circ}}=\psi\circ \pi,
\]
where $f_{X^{\circ}}:=f|_{X^{\circ}}:X^{\circ}\to X^{\circ}$. Then both $\pi$ and $f_{X^{\circ}}$ are \'etale.
\end{lemma}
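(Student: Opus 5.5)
The plan is to show first that $f_{X^\circ}$ is étale, using Lemma~\ref{lem_invar_empty} applied to the ramification locus, and then to deduce that $\pi$ is étale from the relation $\pi\circ f^n=\psi^n\circ\pi$ with $\psi$ étale.

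The tool for comparing ramification is purity of the branch locus. Because $P$ is smooth, the branch locus of a finite surjective morphism to $P$ from a normal variety is pure of codimension one, or empty. Let $R_\pi\subset X^\circ$ be the ramification locus of $\pi$, that is, the set of points where $\pi$ is not étale. Similarly let $R_f\subset X^\circ$ be the set of points where $f_{X^\circ}$ is not étale. Both sets are closed.

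The key relation comes from the identity $\pi\circ f=\psi\circ\pi$ together with the fact that $\psi$ is étale. Take $x\in X^\circ$. Since $\psi$ is étale at $\pi(x)$, the composite $\psi\circ\pi$ is étale at $x$ exactly when $\pi$ is. If $f$ is étale at $x$, then $\pi\circ f$ is étale at $x$ exactly when $\pi$ is étale at $f(x)$. This gives two facts.
\begin{enumerate}
\item If $x\notin R_f$, then $x\in R_\pi$ if and only if $f(x)\in R_\pi$.
\item If $x\in R_f$, then $\pi\circ f$ is not étale at $x$. For normal varieties, étaleness of a composite $\pi\circ f$ of finite maps forces $f$ to be étale, since unramifiedness of the composite implies unramifiedness of the first map, and for finite maps between normal varieties of the same dimension unramified implies étale. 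Hence $\psi\circ\pi$ is not étale at $x$, so $x\in R_\pi$.
\end{enumerate}
Therefore $R_f\subset R_\pi$.

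Now use Lemma~\ref{lem_invar_empty}. Let $Z$ be the closure in $X$ of $R_\pi$. I claim $f^{-1}(R_\pi)\subset R_\pi$. Take $x$ with $f(x)\in R_\pi$. If $x\in R_f$, then $x\in R_\pi$ by fact (2). Otherwise $x\in R_\pi$ by fact (1). Because $f^{-1}(X^\circ)=X^\circ$, this inclusion passes to closures: $f^{-1}(Z)\subset Z$.

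Suppose $R_\pi\neq\emptyset$. By purity, $R_\pi$ is a divisor in $X^\circ$. Apply the invariance argument componentwise, as in the proof of Lemma~\ref{lem_invar_empty}. After iterating, some component $Z_0$ of $Z$ satisfies $f^{-1}(Z_0)=Z_0$. The components that are totally invariant along the eventual chain are permuted by $f$, so after replacing $f$ by an iterate, $f^{-1}(Z_0)=Z_0$. If $f$ were étale at the generic point of $Z_0$, Lemma~\ref{lem_invar_empty} would give $Z_0=X$, which is absurd. So $f$ ramifies along every such $Z_0$, and then the generic point of $Z_0$ lies in $R_f$.

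At this step the counting from the proof of Lemma~\ref{lem_invar_empty} is needed, applied to the whole divisor rather than a single component. Write $D=\sum Z_i$ for the totally invariant part, with $f^*D=\sum e_i Z_{\sigma(i)}$ where the $e_i$ are ramification indices. Intersecting with $L^{n-1}$ gives
\[
d\,(D\cdot L^{n-1})=\sum e_i\,(Z_i\cdot L^{n-1}).
\]
This by itself is not a contradiction.

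Instead, use $\psi$ directly. The relation $\pi\circ f=\psi\circ\pi$ gives an equality of ramification divisors:
\[
R_\pi+\pi^*\text{-type pullback}=f^*R_\pi+R_f .
\]
Since $\psi$ is étale, this says $R_f+f^*R_\pi=R_\pi$ as divisors on $X^\circ$, via $K_{X^\circ}=\pi^*K_P+R_\pi$. Here I assume $X^\circ$ is $\mathbb{Q}$-Gorenstein in codimension one; since $X$ is normal this holds on the smooth locus, which suffices. Take closures in $X$ and intersect with $L^{n-1}$. The left side has degree at least $d\,(\overline{R_\pi}\cdot L^{n-1})$, because $f^*$ multiplies degrees by $d$ on divisors whose support is contained in $X^\circ$, using $f^{-1}(X^\circ)=X^\circ$. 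The right side has degree $(\overline{R_\pi}\cdot L^{n-1})$. Since $d\geq 2$, this forces $R_\pi=0$, so $\pi$ is étale. Then $R_f=0$ as well, so $f_{X^\circ}$ is étale.

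I expect the main obstacle to be the divisor identity $R_f+f^*R_\pi=R_\pi$: justifying the Hurwitz formula on a normal, possibly singular $X^\circ$ by working in codimension one, and justifying that the degree of $f^*$ on divisors supported in $X^\circ$ scales by $d$, which needs $f^{-1}(X^\circ)=X^\circ$ so that $\overline{f^*R_\pi}=f^*\overline{R_\pi}$.
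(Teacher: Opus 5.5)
Your argument is correct, and its decisive step differs from the paper's. Both proofs rest on the identity $R_\pi=R_{f_{X^\circ}}+f_{X^\circ}^*R_\pi$ of ramification divisors on $X^\circ$.

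The paper uses the identity as follows:
\begin{itemize}
\item It gives the chain $R_\pi\geq f^*R_\pi\geq (f^2)^*R_\pi\geq\cdots$. This chain must stabilize, so $f^*R_\pi=R_\pi$ and $R_{f_{X^\circ}}=0$.
\item Then $D=\overline{\operatorname{Supp}R_\pi}$ satisfies $f^{-1}(D)=D$, and $f$ is \'etale at the generic points of its components.
\item Lemma~\ref{lem_invar_empty} then forces $D=\varnothing$.
\end{itemize}

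You instead take closures in $X$ and compare $L$-degrees. This gives $\deg_L\overline{R_\pi}\geq d\,\deg_L\overline{R_\pi}$ in one line. It is the same projection-formula count that sits inside the proof of Lemma~\ref{lem_invar_empty} (the case of a divisor, $r=n-1$), so your route is a shorter, inlined version. The paper's route has the advantage of reusing a lemma it needs anyway in higher codimension, in Proposition~\ref{prop_semiconj1}. The first half of your proposal is superseded by your final paragraph and can be dropped: the set-theoretic invariance of the non-\'etale locus, the componentwise discussion, and the abandoned count.

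Three points to tighten:
\begin{itemize}
\item \emph{Equality of divisors.} The identity must be an equality of divisors, not only of divisor classes. The cleanest justification uses ramification indices at a prime divisor $E\subset X^\circ$. Since $e_\psi=1$, we have $e_\pi(E)=e_f(E)\,e_\pi(f(E))$. Combine this with $ab-1=(a-1)+a(b-1)$. No Gorenstein hypothesis is needed.
\item \emph{Degree step.} It should be stated as two facts. First, $\overline{f_{X^\circ}^*R_\pi}=f^*\overline{R_\pi}$. This uses $f^{-1}(X^\circ)=X^\circ$: every component of $f^{-1}(\overline{E})$ maps onto $\overline{E}$, so it meets $f^{-1}(E)\subset X^\circ$. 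Second, $\deg_L f^*D=d\deg_L D$ for every Weil divisor $D$ on $X$. You anticipated this correctly.
\item \emph{Final step.} $R_{f_{X^\circ}}=0$ alone only gives \'etaleness in codimension one. Purity does not apply to $f$, because $X^\circ$ need not be smooth. Conclude as the paper does: $\pi\circ f_{X^\circ}=\psi\circ\pi$ is \'etale and $\pi$ is \'etale, hence $f_{X^\circ}$ is \'etale. Your fact (2) also gives this.
\end{itemize}
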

\begin{proof}
Let $R_\pi$ and $R_{f_{X^{\circ}}}$ denote the ramification divisors
of $\pi$ and $f_{X^{\circ}}$, respectively.
Since $\psi$ is \'etale, the ramification formula gives
\[
R_\pi=R_{f_{X^{\circ}}}+f_{X^{\circ}}^*R_\pi.
\]
Consequently,
\[
R_\pi\geq f_{X^{\circ}}^*R_\pi
\geq(f_{X^{\circ}}^2)^*R_\pi
\geq\cdots.
\]
We obtain $f_{X^{\circ}}^*R_\pi=R_\pi,$ and $R_{f_{X^{\circ}}}=0$. In particular, $f_{X^{\circ}}$ is \'etale in codimension one.

Set $D:=\Zar{\operatorname{Supp}R_\pi}\subset X$. Since $f^{-1}(X^{\circ})=X^{\circ}$, we have
\[
f^{-1}(D)=\overline{f_{X^{\circ}}^{-1}(\operatorname{Supp}R_\pi)}=D.
\]
If $D$ were nonempty, it would be a proper closed subset of $X$.
Moreover, the generic points of its irreducible components lie
in $X^{\circ}$ and have codimension one, so $f$ is \'etale at these
points. Lemma~\ref{lem_invar_empty} would then imply $D=X$,
a contradiction. Thus $R_\pi=0$.

Since $X^{\circ}$ is normal and $P$ is smooth, purity of the branch
locus implies that $\pi$ is \'etale
\cite[Tag~0BMB]{stacks-project}.
The identity $\pi\circ f_{X^{\circ}}=\psi\circ\pi$ then implies that $f_{X^{\circ}}$ is also \'etale.
\end{proof}

\begin{proposition}\label{prop_semiconj1}
Let $\pi:X\to Y$ be a finite surjective morphism between normal
projective varieties over $\mathbf{k}$. Let $f:X\to X$ and
$g:Y\to Y$ be $d$-polarized endomorphisms, where $d\geq 2$, satisfying
$\pi\circ f=g\circ\pi$. If $f$ is exceptional, then $g$ is exceptional.
\end{proposition}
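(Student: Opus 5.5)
The idea is to compose the data exhibiting exceptionality of $f$ with $\pi$. This gives a map from an AT-variety to $Y$ semiconjugating a $d$-lift to an iterate of $g$. The one defect is that the composite need not be finite onto an open subset. We repair this by shrinking to a totally invariant open set and using Lemma~\ref{lem_etale} and Proposition~\ref{prop_etaleAT}.

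\textbf{Setup.} Since $f$ is exceptional, there are an AT-variety $(P,A,p)$, a $d$-lift $\psi:P\to P$, and a morphism $h:P\to X$. The map $h$ is finite onto a dense open $X^{\circ}\subset X$, and $h\circ\psi=f^l\circ h$. Then $\pi\circ h\circ\psi=g^l\circ\pi\circ h$.

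\textbf{Step 1: make the open sets totally invariant.} I first arrange $f^{-l}(X^\circ)=X^\circ$. Since $h(P)=X^\circ$ and $h\circ\psi=f^l\circ h$, we get $f^l(X^\circ)=X^\circ$, so $X^\circ\subset f^{-l}(X^\circ)$. Set $Z:=X\setminus X^\circ$; then $f^{-l}(Z)\subset Z$. Following Lemma~\ref{lem_invar_empty}, each component of $Z$ not totally invariant is eventually removed. Replacing $Z$ by the stable set $\bigcap_i f^{-li}(Z)$ enlarges $X^\circ$ to a totally invariant open set. Its preimage under $h$ is automatically compatible with $\psi$. Still, $h$ may no longer surject onto the enlarged set, so I would instead keep $X^\circ$ and argue directly.

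So the plan is as follows. Let $Y^\circ:=Y\setminus \pi(X\setminus X^{\circ\prime})$, where $X^{\circ\prime}:=\pi^{-1}\pi(\cdots)$ is chosen so that $X^{\circ\prime}=\pi^{-1}(Y^\circ)$. Concretely, take $Y^\circ:=Y\setminus\pi(Z)$ and $X':=\pi^{-1}(Y^\circ)\subset X^\circ$. Then $\pi|_{X'}:X'\to Y^\circ$ is finite. Its preimage $P':=h^{-1}(X')$ is open in $P$, and $\pi\circ h:P'\to Y^\circ$ is finite and surjective. I would check $g^{-l}(Y^\circ)\supseteq Y^\circ$ using $\pi\circ f=g\circ\pi$ and the invariance of $Z$.

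\textbf{Step 2: recover an AT-variety.} The difficulty is that $P'$ is only an open subset of $P$, not an AT-variety. Here I would show $P'=P$, namely $h^{-1}(\pi^{-1}\pi(Z))=\emptyset$. Apply Lemma~\ref{lem_etale} to $f^l$ on $X^\circ$ with the finite étale $\psi$. This shows $h$, and hence $f^l|_{X^\circ}$, is étale (after arranging total invariance, and after noting that $P$ is smooth and $h$ is finite). Then $f^l$ is étale at generic points of components of $\pi^{-1}\pi(Z)\cap X^\circ$. I expect $g^l$ to be étale on $\pi(X^\circ)$ as well, via the ramification formula $R_{\pi}+\pi^*R_{g^l}=R_{f^l}+f^{l*}R_\pi$ and the same descending-chain trick. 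The backward invariance of the closure of $\pi(Z)\cap \pi(X^\circ)$ under $g^l$, together with Lemma~\ref{lem_invar_empty}, forces $\pi(Z)\cap\pi(X^\circ)=\emptyset$. Then $h':=\pi\circ h:P\to Y$ is finite onto the open $\pi(X^\circ)$. This gives $h'\circ\psi=g^l\circ h'$, so $g$ is exceptional. If the CM condition is also wanted, it transfers trivially, since $A$ is unchanged.

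\textbf{Main obstacle.} The hard step is Step 2: proving that $\pi(Z)$ misses $\pi(X^\circ)$, which is exactly what makes $\pi\circ h$ finite. If the invariance argument only yields finiteness over a smaller open set $Y^\circ$, a fallback is available. Replace $P$ by a connected component of the finite étale cover over $Y^\circ$ obtained by normalization. Proposition~\ref{prop_etaleAT} identifies such covers as AT-varieties, and Lemma~\ref{lem_isolift} shows the lifted endomorphism is again a $d$-lift.
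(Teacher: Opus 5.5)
Your Step 2 target is the right one. Showing $\pi(Z)\cap\pi(X^{\circ})=\varnothing$, i.e.\ $\pi^{-1}(\pi(X^{\circ}))=X^{\circ}$, is exactly what the paper proves. There it is phrased as $q^{-1}(q(P))=P$ for $q=\pi\circ p$, where $p:W\to X$ is the normalization of $X$ in $\mathbf{k}(P)$. Your Step 1 worry is unnecessary: $f^{l}(X^{\circ})=X^{\circ}$ gives $f^{-l}(Z)\subseteq Z$, and since $f^l$ is finite and surjective, comparing irreducible components forces $f^{-l}(Z)=Z$.

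\textbf{The gap is the mechanism of Step 2.}
\begin{enumerate}
\item Lemma~\ref{lem_etale} requires a finite map \emph{from} the invariant open set \emph{to} the smooth variety carrying the étale endomorphism. Your $h$ goes from $P$ to $X^{\circ}$, so the lemma does not apply. The conclusion you draw from it is false: for $f=T_d$ on $\mathbb{P}^1$ with $P=\mathbb{G}_m$, $\psi(z)=z^d$, $h(z)=z+z^{-1}$ and $X^{\circ}=\mathbb{A}^1$, neither $h$ (ramified at $\pm1$) nor $f|_{X^{\circ}}$ is étale.
\item $g^{l}$ need not be étale on $\pi(X^{\circ})$ either. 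Take $f(z)=z^d$, $h$ the inclusion of $\mathbb{G}_m$, $\pi(z)=z+z^{-1}$ and $g=T_d$.
\item Downstairs you only know $g(\pi(Z))=\pi(Z)$, i.e.\ $\pi(Z)\subseteq g^{-1}(\pi(Z))$. So the backward invariance of $\overline{\pi(Z)\cap\pi(X^{\circ})}$ that Lemma~\ref{lem_invar_empty} needs is not available.
\end{enumerate}

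\textbf{The missing idea is to pass to a fiber product over $Y$,} where étaleness and total invariance both hold. The paper's argument runs as follows.
\begin{enumerate}
\item Extend $\psi$ to $\widetilde\psi:W\to W$ with $\widetilde\psi^{-1}(P)=P$, and set $E:=W\setminus P$.
\item Assuming $P\times_Y E\neq\varnothing$, take a component $C$ of $(W\times_Y W)_{\mathrm{red}}$ that meets $P\times_Y E$ and is periodic under $(\widetilde\psi,\widetilde\psi)$. On the normalization $\widetilde C$ this gives an endomorphism $\varphi$ with $\pi_i\circ\varphi=\widetilde\psi^{m}\circ\pi_i$.
\item Lemma~\ref{lem_etale} now applies in the correct direction, to $\pi_1:\pi_1^{-1}(P)\to P$. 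Hence $\varphi$ is étale on $P':=\pi_1^{-1}(P)$.
\item The set $D:=\overline{P'\cap\pi_2^{-1}(E)}$ is nonempty and totally invariant, since it is built from preimages of the totally invariant sets $P$ and $E$. Each of its components meets $P'$, where $\varphi$ is étale.
\item Lemma~\ref{lem_invar_empty} then gives $D=\widetilde C$, contradicting $D\subseteq\pi_2^{-1}(E)\subsetneq\widetilde C$.
\end{enumerate}

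Your fallback does not substitute for this. The part of $P$ lying over a smaller open $Y^{\circ}$ is an open subset of $P$, not a finite étale cover of an AT-variety. So Proposition~\ref{prop_etaleAT} and Lemma~\ref{lem_isolift} produce no AT-variety here; the paper uses them for the converse, Proposition~\ref{prop_semiconj2}.
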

\begin{proof}
After replacing $f$ and $g$ by compatible iterates, choose an
exceptional presentation
\[
h:P\to X^{\circ}\subset X,
\qquad
h\circ\psi=f\circ h,
\]
where $P$ is an AT-variety, and $X^{\circ}\subset X$ is an open dense subset. Then $f^{-1}(X^{\circ})=X^{\circ}$ by noetherianity and surjectivity.

Let $p:W\to X$ be the normalization of $X$ in $\mathbf{k}(P)$.
Then $p^{-1}(X^{\circ})=P$, and $\psi$ extends to a polarized
endomorphism $\wt{\psi}:W\to W$ satisfying
\[
p\circ\wt{\psi}=f\circ p,
\qquad
\wt{\psi}^{-1}(P)=P.
\]
Set
\[
q:=\pi\circ p,\qquad E:=W\setminus P,\qquad V:=q(P).
\]
The morphism $q$ is finite and universally open, so $V$ is open. We show that $q^{-1}(V)=P$.

Suppose otherwise, so $P\times_Y E\neq\varnothing$.
Consider
\[
Z:=(W\times_Y W)_{\mathrm{red}},
\qquad
\alpha:=(\wt{\psi},\wt{\psi}):Z\to Z.
\]
Both projections $Z\to W$ are finite and open, so every
irreducible component dominates both factors.
Thus $\alpha$ sends components to components.
Since $\alpha$ preserves $P\times_Y E$, some periodic component
$C$ meets this locus.

Let $\wt{C}$ be the normalization of $C$.
For some $m\geq1$, $\alpha^m(C)=C$ and the restriction $\alpha^m|_C:C\to C$ lifts to an
endomorphism $\varphi:\wt{C}\to\wt{C}$.
The two projections $\pi_1,\pi_2:\wt{C}\to W$ are finite
surjective and satisfy
\[
\pi_i\circ\varphi=\wt{\psi}^{\,m}\circ\pi_i,
\qquad i=1,2.
\]
In particular, $\varphi$ is an polarized endomorphism

Set $P':=\pi_1^{-1}(P)$.
Then $\varphi^{-1}(P')=P'$. Lemma~\ref{lem_etale} therefore implies that $\varphi|_{P'}$
is \'etale.

Now, set
\[
D:=\Zar{P'\cap\pi_2^{-1}(E)}
\subseteq\pi_2^{-1}(E)\subsetneq\wt{C}.
\]
By the construction of $\wt{C}$, we have $D\neq \varnothing$. Since both $P'$ and $\pi_2^{-1}(E)$ are totally invariant, we have $\varphi^{-1}(D)=D$.
Every irreducible component of $D$ meets $P'$, so $\varphi$
is \'etale at its generic point.
Lemma~\ref{lem_invar_empty} gives $D=\wt{C}$, a contradiction.

Hence $q^{-1}(V)=P$, and $q|_P:P\to V$ is finite surjective.
Together with
\[
q|_P\circ\psi=g\circ q|_P,
\]
we obtain the exceptionality of $g$.
\end{proof}

\begin{proposition}\label{prop_semiconj2}
Let $\pi:X\to Y$ be a finite surjective morphism between normal
projective varieties over $\mathbf{k}$. Let $f:X\to X$ and
$g:Y\to Y$ be $d$-polarized endomorphisms, where $d\geq 2$, satisfying
$\pi\circ f=g\circ\pi$. If $g$ is exceptional, then $f$ is exceptional.    
\end{proposition}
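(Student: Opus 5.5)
The plan is to pull back the exceptional presentation of $g$ along $\pi$ and use Proposition~\ref{prop_etaleAT} to recognise the result as an AT-variety. After replacing $f$ and $g$ by compatible iterates, we fix an exceptional presentation
\[
h:P\to Y^{\circ}\subset Y,\qquad h\circ\psi=g\circ h,
\]
with $P$ an AT-variety, $\psi$ a $d$-lift, and $Y^{\circ}$ open dense. As in the proof of Proposition~\ref{prop_semiconj1}, noetherianity and surjectivity give $g^{-1}(Y^{\circ})=Y^{\circ}$. Put $X^{\circ}:=\pi^{-1}(Y^{\circ})$; then $f^{-1}(X^{\circ})=X^{\circ}$.

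The first step is étaleness. Since $\psi$ is finite étale and $P$ is smooth, Lemma~\ref{lem_etale} applied to $h$ shows that $h:P\to Y^{\circ}$ and $g|_{Y^{\circ}}$ are étale. In particular $Y^{\circ}$ is smooth. Applying Lemma~\ref{lem_etale} once more, to $\pi|_{X^{\circ}}:X^{\circ}\to Y^{\circ}$ with the étale endomorphism $g|_{Y^{\circ}}$, shows that $\pi|_{X^{\circ}}$ and $f|_{X^{\circ}}$ are étale.

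Next we form the fibre product $P\times_{Y^{\circ}}X^{\circ}$, which is finite étale over $P$. We choose a connected component $Q$ of it; by Proposition~\ref{prop_etaleAT}, $Q$ is an AT-variety and $Q\to P$ is an isogeny. The map $\Phi:=(\psi,f)$ acts on $P\times_{Y^{\circ}}X^{\circ}$ and permutes its finitely many components. It is surjective, since $\psi$ and $f$ are, so some component $Q$ is mapped to itself by $\Phi^m$. We replace $\psi,f,g$ by their $m$-th iterates; $\psi^m$ is still a $d^m$-lift. The restriction $\varphi:=\Phi^m|_Q$ is a finite self-map, and $\varphi$ together with $\psi^m$ and the isogeny $Q\to P$ satisfies the hypotheses of Lemma~\ref{lem_isolift}, which needs $\varphi$ to be an isogeny.

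To verify this, note that $\varphi$ covers $\psi^m$, which is finite étale. Since $f$ is étale on $X^{\circ}$, $\varphi$ is finite étale, and by Proposition~\ref{prop_etaleAT} it is an isogeny of AT-varieties. Lemma~\ref{lem_isolift} then shows that, after changing the base point, $\varphi$ is a $d^m$-lift of $Q$. The projection $Q\to X^{\circ}$ is finite, being a base change of the finite map $h$ restricted to a component, and it intertwines $\varphi$ with $f^m$. Its image is open because the map is étale, and closed in $X^{\circ}$ because it is finite.

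The main obstacle I expect is surjectivity onto a dense open subset: $X$ is irreducible, so $X^{\circ}$ is irreducible and the image of $Q$ is all of $X^{\circ}$. That gives $h_Q:Q\to X^{\circ}$ finite and dominant with $h_Q\circ\varphi=f^m\circ h_Q$, so $f$ is exceptional. The delicate points are the following:
\begin{itemize}
\item the repeated applications of Lemma~\ref{lem_etale} require the endomorphism on the source to be the restriction of a polarized endomorphism of a normal projective variety with a totally invariant open set, which holds here;
\item we must check that $\varphi$ genuinely maps $Q$ onto $Q$ as a finite étale cover, so that Proposition~\ref{prop_etaleAT} applies;
\item we must check that the degree $d$ is unchanged, which is ensured because $\psi_T=[d]_T$ is transported by Lemma~\ref{lem_isolift}.
\end{itemize}
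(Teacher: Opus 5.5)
The argument breaks at your first \'etaleness step, because it applies Lemma~\ref{lem_etale} in the wrong direction. That lemma concerns a finite map \emph{from} a totally invariant open subset of a normal projective variety carrying a polarized endomorphism \emph{to} a smooth variety with a finite \'etale endomorphism. Its proof uses the projective source, through Lemma~\ref{lem_invar_empty}. Your map $h:P\to Y^{\circ}$ goes the other way, so the lemma says nothing about it.

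The conclusions you draw are in fact false in general. Take $g=T_d$ on $\mathbb{P}^1$ with $P=\mathbb{G}_m$, $\psi(z)=z^d$ and $h(z)=z+z^{-1}$. Then $Y^{\circ}=\mathbb{A}^1$, $h$ is ramified at $z=\pm1$, and $T_d|_{\mathbb{A}^1}$ has critical points. Your second application of Lemma~\ref{lem_etale} therefore has no valid hypotheses, and its conclusion also fails: with $X=\mathbb{P}^1$, $f(z)=z^d$ and $\pi(z)=z+z^{-1}$, the map $\pi|_{X^{\circ}}:\mathbb{G}_m\to\mathbb{A}^1$ is ramified.

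In this example, $P\times_{Y^{\circ}}X^{\circ}=\{(z,w)\in\mathbb{G}_m^2 : z+z^{-1}=w+w^{-1}\}$ is the union of the curves $w=z$ and $w=z^{-1}$, which meet at $(1,1)$ and $(-1,-1)$. It is connected, singular, and not \'etale over $P$, so Proposition~\ref{prop_etaleAT} cannot be applied to its connected components. Your later claims fall with it: that $\varphi$ is \'etale, and that $Q\to X^{\circ}$ has open image because it is \'etale.

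What is missing is a projective compactification on which Lemma~\ref{lem_etale} can be used in its correct orientation, together with normalized \emph{irreducible} components in place of connected components. The paper proceeds as follows.
\begin{enumerate}
\item It takes the normalization $p:W\to Y$ of $Y$ in $\mathbf{k}(P)$. Then $p^{-1}(Y^{\circ})=P$, and $\psi$ extends to a polarized endomorphism $\widetilde{\psi}$ of $W$.
\item It chooses an irreducible component $C$ of $(W\times_Y X)_{\mathrm{red}}$ that is periodic under $(\widetilde{\psi},f)$, and normalizes it to $\widetilde{C}$ with the induced polarized endomorphism $\varphi$.
\item It applies Lemma~\ref{lem_etale} to $\pi_1|_{P'}:P'=\pi_1^{-1}(P)\to P$. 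Now the source is a totally invariant open subset of a normal projective variety, and the target $P$ is smooth with the finite \'etale endomorphism $\psi^m$. This shows that $P'\to P$ is finite \'etale, without requiring $h$, $g$, $\pi$ or $f$ to be \'etale.
\end{enumerate}
In the example above, $\widetilde{C}$ restricted over $P$ is just one of the two graphs, each isomorphic to $\mathbb{G}_m$ and \'etale over $P$. Proposition~\ref{prop_etaleAT} and Lemma~\ref{lem_isolift} then make $\varphi|_{P'}$ a $d^m$-lift. Finally, $\pi_2|_{P'}:P'=\pi_2^{-1}(X^{\circ})\to X^{\circ}$ is finite surjective, because every component of $W\times_Y X$ dominates $X$.
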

\begin{proof}
After replacing $f$ and $g$ by compatible iterates, choose an
exceptional presentation
\[
h:P\to Y^{\circ}\subset Y,
\qquad
h\circ\psi=g\circ h.
\]
where $P$ is an AT-variety, and $Y^{\circ}\subset Y$ is an open dense subset. Then $g^{-1}(Y^{\circ})=Y^{\circ}$. Set $X^{\circ}:=\pi^{-1}(Y^{\circ})$; thus $f^{-1}(X^{\circ})=X^{\circ}$.

Let $p:W\to Y$ be the normalization of $Y$ in $\mathbf{k}(P)$.
Then $p^{-1}(Y^{\circ})=P$, and $\psi$ extends to a polarized
endomorphism $\wt{\psi}:W\to W$ satisfying
\[
p\circ\wt{\psi}=g\circ p,
\qquad
\wt{\psi}^{-1}(P)=P.
\]
Consider
\[
Z:=(W\times_Y X)_{\mathrm{red}},
\qquad
\alpha:=(\wt{\psi},f):Z\to Z.
\]
Every irreducible component of $Z$ dominates both
factors, and the finite morphism $\alpha$ sends components to
components.

Choose a periodic component $C$, and let $\wt{C}$ be its
normalization. For some $m\geq1$, $\alpha^m(C)=C$ and the restriction $\alpha^m|_C:C\to C$ lifts to an
endomorphism $\varphi:\wt{C}\to\wt{C}$.
The projections
\[
\pi_1:\wt{C}\to W,
\qquad
\pi_2:\wt{C}\to X
\]
are finite surjective and satisfy
\[
\pi_1\circ\varphi=\wt{\psi}^{\,m}\circ\pi_1,
\qquad
\pi_2\circ\varphi=f^m\circ\pi_2.
\]
In particular, $\varphi$ is a polarized endomorphism.

Set
\[
P':=\pi_1^{-1}(P)=\pi_2^{-1}(X^{\circ}).
\]
Then $\varphi^{-1}(P')=P'$, and
$\pi_1|_{P'}\circ\varphi|_{P'}=\psi^m\circ\pi_1|_{P'}$.
Lemma~\ref{lem_etale} implies that $\pi_1|_{P'}$ is finite
\'etale. By Proposition~\ref{prop_etaleAT} and
Lemma~\ref{lem_isolift}, $P'$ admits an AT-variety structure
for which $\varphi|_{P'}$ is a $d$-lift.

Finally, $\pi_2|_{P'}:P'\to X^{\circ}$ is finite surjective and satisfies
\[
\pi_2|_{P'}\circ\varphi|_{P'}=f^m\circ\pi_2|_{P'}.
\]
Hence $f$ is exceptional.    
\end{proof}

Combine Proposition \ref{prop_semiconj1} and Proposition \ref{prop_semiconj2}, we obtain Theorem \ref{thm_semiconj}.

	\section{Dinh-Sibony's theorem for commuting polarized endomorphisms}\label{sec:Dinh-Sibony}
	\label{sec:dinh-sibony}
	
	The aim of Sections~\ref{sec:Dinh-Sibony} and~\ref{sec:commuting maps}
	is to prove Theorem~\ref{thm_commuting}. In Section~\ref{sec:Dinh-Sibony} , we recall
	Dinh--Sibony's rigidity theorem for commuting endomorphisms of $\P^k$~\cite[Theorem~1.1]{DS02} and extend it to polarized endomorphisms of normal projective varieties.
	The main result is Proposition~\ref{prop:input-data}.
	Their method extends naturally to our setting. For completeness,
	we give the full proof.

	\medskip

	In Sections~\ref{sec:Dinh-Sibony} and~\ref{sec:commuting maps}, all varieties and morphisms are defined over $\C$. A variety is assumed to be irreducible and reduced.
	Let $X$ be a normal projective variety, and let $f_1,f_2:X\to X$
	be commuting polarized endomorphisms with polarized degrees $d_1,d_2>1$.
	We assume that $d_1^m\neq d_2^n$ for all integers $m,n\geq1$. We write $\Gm$ for the multiplicative algebraic group. For $r\geq0$, write $T_r:=\Gm^r$ and
	\[
	[d]_{T_r}(t_1,\ldots,t_r):=(t_1^d,\ldots,t_r^d).
	\]


	\subsection{A common polarization}\label{sec:common-polarization}
	Put $k:=\dim X$ and write $q:=d_1$, $r:=d_2$. The case $k=0$ is immediate;
	we assume $k\geq1$.

	\medskip

	\begin{thm}\label{thm:commonL}
		Let $f_1,f_2$ commute and be individually polarized with polarized degrees $q,r>1$. Then there exists an ample line bundle $L$ such that 
		\begin{equation}\label{eq:commonL}
			f_1^*L\simeq L^{\otimes q}\quad\text{and}\quad f_2^*L\simeq L^{\otimes r}.
		\end{equation}
	\end{thm}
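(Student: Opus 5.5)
We work in $N^1(X)_{\mathbb R}$ first and upgrade to line bundles at the end. Let $H_1,H_2$ be ample line bundles with $f_1^*H_1\simeq H_1^{\otimes q}$ and $f_2^*H_2\simeq H_2^{\otimes r}$. The linear maps $f_1^*,f_2^*$ on $V:=N^1(X)_{\mathbb R}$ commute and preserve the nef cone $\mathrm{Nef}(X)$.

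By \cite[Proposition~2.9]{MZ18}, or directly from polarization, every eigenvalue of $f_1^*$ on $V$ has modulus $q$. Moreover, $f_1^*/q$ is bounded in all iterates, both positive and negative, on the nef cone, so $f_1^*$ is semisimple with all eigenvalues of modulus $q$. The same holds for $f_2^*$ with modulus $r$. Set
\[
E_1:=\{D\in V : f_1^*D=qD\},
\]
the eigenspace for the real eigenvalue $q$. Since $f_1^*$ and $f_2^*$ commute, $f_2^*$ preserves $E_1$.

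Next we show that $E_1$ contains an ample class. Take $D_0:=\lim_N \frac1N\sum_{i<N} q^{-i}(f_1^*)^i H$ for an ample $H$. This is the Cesàro projection onto $E_1$, and it exists because $f_1^*/q$ is semisimple and unitary-like. Apply it with $H=H_1$: then $D_0=H_1$, so $E_1\cap\mathrm{Amp}(X)\neq\varnothing$. Now $C:=E_1\cap\mathrm{Nef}(X)$ is a closed convex cone with nonempty relative interior inside $E_1$, since it contains the ample class $H_1$. The cone $C$ is preserved by $f_2^*$.

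On $E_1$, all eigenvalues of $f_2^*$ have modulus $r$ and $f_2^*$ is semisimple. Averaging along $f_2$, that is, taking $\lim_N \frac1N\sum_{j<N} r^{-j}(f_2^*)^jH_1$, produces a class $D\in C$ with $f_2^*D=rD$ and $f_1^*D=qD$. The key point is that this class is ample. We show ampleness by repeating the norm argument from Lemma~\ref{lem_polardes}. Since $r^{-j}(f_2^*)^j$ maps the ample class $H_1$ into $\{D': D'-\epsilon H_2\in\mathrm{Nef}\}$ for a uniform $\epsilon>0$ (using $f_2^*H_2=rH_2$ and $H_1\geq\epsilon H_2$), the average $D$ satisfies $D\ge \epsilon H_2$, so $D$ is ample. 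Because eigenvalues can be complex, we take Cesàro averages of the real semisimple operator. Its fixed part is exactly the real eigenvector space, so the projection is real-linear and the limit exists.

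Finally we pass from $\mathbb R$-classes to a line bundle. The joint eigenspace $E:=\{f_1^*=q,\ f_2^*=r\}$ is defined over $\mathbb Q$ because $f_i^*$ preserve $N^1(X)_{\mathbb Q}$. Hence rational points are dense in $E$, and since ampleness is an open condition we may choose an ample integral class $D\in N^1(X)$ with $f_1^*D\equiv qD$ and $f_2^*D\equiv rD$. It remains to correct numerical equivalence to linear equivalence. Take a line bundle $M$ representing $D$. Then $f_1^*M-qM$ lies in the group of numerically trivial classes, $\mathrm{Pic}^\tau(X)$. On $\mathrm{Pic}^0(X)$ the operator $f_1^*-q$ is an isogeny, because $f_1^*$ acts with eigenvalues of modulus $\sqrt q$ on $H^1$, so $q$ is not an eigenvalue. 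Similarly $f_2^*-r$ is an isogeny. We would like to solve for $N\in\mathrm{Pic}^\tau$ such that $M+N$ satisfies both equations simultaneously. The difficulty is that two different equations must hold at once. Here commutativity gives the compatibility condition
\[
(f_2^*-r)(f_1^*M-qM)=(f_1^*-q)(f_2^*M-rM),
\]
so the pair of errors is a cocycle. Using that both operators are invertible up to torsion on $\mathrm{Pic}^0$, we can solve the system after replacing $M$ by a multiple to kill the finite torsion parts. We expect this last descent, making the cocycle argument work on $\mathrm{Pic}^\tau$ including its torsion component group, to be the main technical obstacle. The first thing we would try is passing to $M^{\otimes m}$ for $m$ divisible by $|\mathrm{Pic}^\tau/\mathrm{Pic}^0|$ and by the degrees of the relevant isogenies.
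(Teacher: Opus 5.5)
Your numerical part is the paper's argument with the roles of $f_1$ and $f_2$ interchanged, and it is correct. Cesàro-averaging $r^{-j}(f_2^*)^j H_1$ stays in the $q$-eigenspace of $f_1^*$. Nefness of $H_1-\epsilon H_2$ gives nefness of $D-\epsilon H_2$. Rationality of the joint eigenspace then yields an ample integral class $D$ with $f_1^*D\equiv qD$ and $f_2^*D\equiv rD$.

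The gap is the final step from numerical to linear equivalence. You leave it as an expectation, and the fix you suggest would not work as phrased. Replacing $M$ by $M^{\otimes m}$ only moves the errors $e_1=f_1^*M-qM$ and $e_2=f_2^*M-rM$ into $\mathrm{Pic}^0(X)$. These errors are in general non-torsion, so no choice of $m$ kills them. You also never need to solve the two equations simultaneously, nor to know that $f_2^*-r$ is an isogeny.

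The missing idea is to make one equation hold exactly and then apply your own compatibility identity.

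\textbf{Step 1.} Choose $m$ with $me_1\in\mathrm{Pic}^0(X)$. Since $\phi_1:=f_1^*-q$ is an isogeny of $\mathrm{Pic}^0(X)$, it is surjective, so there is $N\in\mathrm{Pic}^0(X)$ with $\phi_1(N)=-me_1$. Then $M_1:=mM+N$ satisfies $f_1^*M_1\simeq M_1^{\otimes q}$. The paper obtains such an $L_0$ directly from \cite[Lemma~3.4]{MZ18}.

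\textbf{Step 2.} The remaining error $e:=f_2^*M_1-rM_1$ is numerically trivial. By commutativity, $\phi_1(e)=(f_2^*-r)\phi_1(M_1)=0$, that is, $f_1^*e\simeq e^{\otimes q}$.

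\textbf{Step 3.} With $k:=|\mathrm{Pic}^\tau(X)/\mathrm{Pic}^0(X)|$, the element $ke$ lies in the finite group $\ker(\phi_1|_{\mathrm{Pic}^0(X)})$. Hence $e$ is torsion, say of order $t$. Then $L:=M_1^{\otimes t}$ satisfies both $f_1^*L\simeq L^{\otimes q}$ and $f_2^*L\simeq L^{\otimes r}$.

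This is exactly how the paper concludes.
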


	\begin{proof}
		Let $V:=N^1(X)_\R$ be the real vector space of numerical classes of line
		bundles on $X$. We write $L_1\equiv L_2$ if they are numerically equivalent. 
		
		
		Choose ample line bundles $M_1,M_2$ polarizing $f_1,f_2$, and let
		$a:=[M_1]$, $b:=[M_2]$. By \cite[Proposition~2.9]{MZ18},
		the normalized averages
		\[
		\mathcal P_2:=\lim_{m\to\infty}\frac1m
		\sum_{j=0}^{m-1}r^{-j}(f_2^j)^*:V\longrightarrow V
		\]
		converge to a projection onto the $r$-eigenspace of $f_2^*$. In particular,
		$f_2^*\circ\mathcal P_2=r\mathcal P_2$ and $\mathcal P_2b=b$.
		Since pullback preserves nef classes, so does $\mathcal P_2$.
		Take $\epsilon>0$ such that $a-\epsilon b$ is nef. Then
		$\mathcal P_2a-\epsilon b$ is nef, and hence $\mathcal P_2a$ is ample.
		The commutation relation gives
		$(f_1^*\circ\mathcal P_2)(a)=(\mathcal P_2\circ f_1^*)(a)=q\mathcal P_2a$.
		Since the simultaneous eigenspace is defined over $\Q$, we may choose a
		rational ample class in it. After clearing denominators, we obtain an
		ample line bundle $H$ with $f_1^*H\equiv H^{\otimes q}$ and
		$f_2^*H\equiv H^{\otimes r}$.
		
		By \cite[Lemma~3.4]{MZ18}, a numerical polarization of
		integer degree greater than one admits a line-bundle polarization
		in the same numerical class. Thus there is an ample line
		bundle $L_0\equiv H$ such that $f_1^*L_0\simeq L_0^{\otimes q}$.
		Set $D:=f_2^*L_0\otimes L_0^{\otimes(-r)}$. Then $D$ is numerically
		trivial, and
		\[
		f_1^*D\simeq f_2^*(L_0^{\otimes q})
		\otimes L_0^{\otimes(-qr)}
		\simeq D^{\otimes q}.
		\]
		
		Let $\operatorname{Pic}^0(X)$ be the compact complex torus of line
		bundles algebraically equivalent to $\mathcal O_X$. There is an integer
		$a_0>0$ such that $D^{\otimes a_0}\in \operatorname{Pic}^0(X)$. By
		\cite[Lemma~2.3(1)]{NZ} and the identification
		$T_0\operatorname{Pic}^0(X)=H^1(X,\mathcal O_X)$, the eigenvalues of the
		differential of $f_1^*:\operatorname{Pic}^0(X)\to \operatorname{Pic}^0(X)$ have modulus $\sqrt q$. Therefore
		\[
		\operatorname{Pic}^0(X)\longrightarrow \operatorname{Pic}^0(X),\qquad
		P\longmapsto f_1^*P\otimes P^{\otimes(-q)}
		\]
		has invertible differential and finite kernel. This kernel contains
		$D^{\otimes a_0}$, so $D^{\otimes t}\simeq\mathcal O_X$ for some $t>0$.
		The line bundle $L:=L_0^{\otimes t}$ then satisfies~\eqref{eq:commonL}.
	\end{proof}
	
	\subsection{Projective extensions and homogeneous polynomial lifts}\label{sec:ambient}

	\begin{lemma}\label{lem:ambient-extension}
		Let $L$ be an ample line bundle on a normal projective variety $X$, and suppose $f_i^*L\simeq L^{\otimes q_i}$ with integers $q_i\geq2$, for $i=1,2$. A positive power of $L$ defines a projectively normal embedding $\iota:X\hookrightarrow\P^N$ such that there are endomorphisms $\widetilde f_i:\P^N\to\P^N$ of algebraic degrees $q_i$ satisfying $\widetilde f_i\circ\iota=\iota\circ f_i$ for $i=1,2$.
	\end{lemma}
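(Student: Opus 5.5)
The plan is the standard lifting argument for polarized endomorphisms through the section ring; the only real issue is choosing the power of $L$ correctly.

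\textbf{Step 1: the embedding.} Replace $L$ by $L^{\otimes e}$ with $e$ large. By Serre vanishing and Mumford's criterion, the section ring
\[
R=\bigoplus_{m\ge0}H^0(X,L^{\otimes em})
\]
is then generated in degree one. Since $X$ is normal, this makes the embedding $\iota\colon X\hookrightarrow\P^N=\P(H^0(X,L^{\otimes e})^\vee)$ projectively normal. The relations $f_i^*L^{\otimes e}\simeq (L^{\otimes e})^{\otimes q_i}$ persist for $L^{\otimes e}$, with the same $q_i$.

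\textbf{Step 2: lifting to $\P^N$.} Fix isomorphisms $\phi_i\colon f_i^*L^{\otimes e}\simeq L^{\otimes eq_i}$. For each coordinate $x_j\in H^0(X,L^{\otimes e})$, the section $\phi_i(f_i^*x_j)$ lies in $H^0(X,L^{\otimes eq_i})$. Projective normality says the restriction map $H^0(\P^N,\mathcal O(q_i))\to H^0(X,L^{\otimes eq_i})$ is surjective. So we may choose homogeneous polynomials $F_{i,j}$ of degree $q_i$ restricting to $\phi_i(f_i^*x_j)$. On $\iota(X)$ these polynomials have no common zero, because the $f_i^*x_j$ have none. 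The tuple $(F_{i,0},\dots,F_{i,N})$ therefore defines $f_i$ on $X$, but may have common zeros off $X$.

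\textbf{Step 3: removing base points (the main obstacle).} The lift must be a genuine endomorphism of $\P^N$, so the common zeros outside $X$ have to be eliminated. Let $G_0,\dots,G_s$ be homogeneous generators of degree $q_i$ of the saturated ideal of $\iota(X)$. Consider modified lifts
\[
F'_{i,j}=F_{i,j}+\sum_k c_{jk}G_k .
\]
These still restrict to the same sections on $X$. I would show that for a general choice of the coefficients $c_{jk}$, the $F'_{i,j}$ have no common zero on $\P^N$.

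For this, look at the linear system on $\P^N$ spanned by the $F_{i,j}$ and $G_k$ in each coordinate slot. I would run a dimension count on the incidence variety of (choice of coefficients, common zero) and stratify by $\iota(X)$ and its complement.
\begin{itemize}
\item On $\iota(X)$ the $F_{i,j}$ already have no common zero.
\item Off $\iota(X)$, the $G_k$ cut out $\iota(X)$ scheme-theoretically provided $q_i$ is at least the regularity bound, so at each point off $X$ some $G_k$ is nonzero. There, each coordinate imposes an independent condition, giving codimension $N+1$ conditions on an $N$-dimensional space, so a general choice avoids common zeros.
\end{itemize}

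The hypothesis that makes this work is that the ideal of $\iota(X)$ is generated in degree at most $\min_i q_i$. Since $q_i\ge2$ is fixed by the statement, I would secure it by choosing $e$ large. For large $e$, the homogeneous ideal of $\iota(X)$ is generated by quadrics (the Mumford/Green theorem that high Veronese-type embeddings are cut out by quadrics), and quadrics suffice because $q_i\ge2$.

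\textbf{Step 4: conclusion.} With these choices, $\widetilde f_i=[F'_{i,0}:\cdots:F'_{i,N}]$ is an endomorphism of $\P^N$ of algebraic degree $q_i$ satisfying $\widetilde f_i\circ\iota=\iota\circ f_i$. No commutation between $\widetilde f_1$ and $\widetilde f_2$ is required by the statement.
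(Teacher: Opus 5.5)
Your proposal is correct and follows essentially the same route as the paper. The paper takes a power of $L$ giving a projectively normal embedding in which $X$ is cut out by forms of degree at most $\min(q_1,q_2)$, using the construction in Fakhruddin's Corollary~2.2, and then invokes Fakhruddin's Proposition~2.1; the proof of that proposition is exactly your lift-then-perturb-by-the-ideal dimension count. One small precision: in Step~3 the $G_k$ should be a basis of the degree-$q_i$ part of the ideal. This part cuts out $\iota(X)$ because the ideal is generated by quadrics and $q_i\ge 2$, whereas the ideal itself is not generated by forms of degree $q_i$.
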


	\begin{proof}
		Let $\delta:=\min(q_1,q_2)$. By the construction in the proof of
		\cite[Corollary~2.2]{Fak03}, applied to the map of degree
		$\delta$, there is a very ample power $B:=L^{\otimes a}$ such that the
		associated embedding $X\hookrightarrow\P^N$ satisfies:
		\begin{enumerate}
        \renewcommand{\labelenumi}{(\theenumi)}
			\item for every $m\geq0$, the restriction map
			\[
			H^0(\P^N,\mathcal O_{\P^N}(m))\longrightarrow H^0(X,B^{\otimes m})
			\]
			is surjective;
			\item $X$ is cut out set-theoretically by homogeneous polynomials of
			degrees at most $\delta$.
		\end{enumerate}
		For such an embedding, \cite[Proposition~2.1]{Fak03}
		shows that any morphism $u:X\to X$ with $u^*B\simeq B^{\otimes e}$,
		$e\geq\delta$, can be extended to an endomorphism of $\P^N$ of degree $e$.  Since $f_i^*B\simeq B^{\otimes q_i}$ and
		$q_i\geq\delta$, it applies to both $f_i$ in the same embedding.	By the first property,  the homogeneous coordinate ring is
		$\bigoplus_{m\geq0}H^0(X,B^{\otimes m})$. This ring is integrally
		closed because $X$ is normal. Thus the embedding is projectively normal.
	\end{proof}

	\medskip

	Apply Lemma~\ref{lem:ambient-extension} to the line bundle in
	Theorem~\ref{thm:commonL}. 
	Replacing $L$ by a power, we identify $X$ with its image in $\P^N$ and we identify  $\widetilde f_i$ as $f_i$. Let $p:\C^{N+1}\setminus\{0\}\to\P^N$ be the projectivization map $p(z):=[z]$.
	Set
	\begin{equation}\label{eq:geometric-lift}
		Z:=\{0\}\cup p^{-1}(X)\subset\C^{N+1},\qquad
		Z^\times:=p^{-1}(X)\quad\text{and}\quad p_X:=p|_{Z^\times}.
	\end{equation}
	The reduced cone $Z$ is defined by the homogeneous equations of $X$.
	It is normal because the embedding is projectively normal.

	\medskip
	
	\begin{prop}\label{prop:cone}
		There exist homogeneous polynomial lifts $F_i:\C^{N+1}\to\C^{N+1}$
		of degrees $q_1=q$ and $q_2=r$ with $p\circ F_i=f_i\circ p$.
		They are finite and surjective, satisfy $F_i^{-1}(0)=\{0\}$ and preserve $Z$.
		They can be normalized so that $F_1\circ F_2=F_2\circ F_1$ on $Z$.
		Consequently the restrictions $u_i:=F_i|_Z:Z\to Z$ satisfy
		$u_1\circ u_2=u_2\circ u_1$ and $p_X\circ u_i=f_i\circ p_X$.
		There are constants $0<c<C$ such that
		\begin{equation}\label{eq:cone-growth}
			c\|z\|^{q_i}\leq\|F_i(z)\|\leq C\|z\|^{q_i}, \;\;
			z\in\C^{N+1}
		\end{equation}
	\end{prop}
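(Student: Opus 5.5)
We take the extensions $\widetilde f_i$ from Lemma~\ref{lem:ambient-extension}, built from the common polarization of Theorem~\ref{thm:commonL}. Each $\widetilde f_i$ is an endomorphism of $\P^N$ of algebraic degree $q_i$. It is therefore given by homogeneous polynomials $F_i=(F_{i,0},\dots,F_{i,N})$ of degree $q_i$ with no common zero in $\C^{N+1}\setminus\{0\}$, and $F_i$ is unique up to a nonzero scalar. By construction $p\circ F_i=\widetilde f_i\circ p$.

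\textbf{Basic properties of $F_i$.} Since the components of $F_i$ have no common zero away from the origin, $F_i^{-1}(0)=\{0\}$. The $F_{i,j}$ form a homogeneous system of parameters, so $\C[z]$ is a finite module over $\C[F_{i,0},\dots,F_{i,N}]$. Hence $F_i$ is finite, and finiteness together with equal dimension gives surjectivity.

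\textbf{Growth bounds \eqref{eq:cone-growth}.} By homogeneity it is enough to bound $\|F_i\|$ above and below on the unit sphere. There $\|F_i\|$ is continuous and nonvanishing, so it attains a positive minimum $c$ and a maximum $C$.

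\textbf{$F_i$ preserves $Z$.} Because $\widetilde f_i(X)\subset X$ and $p\circ F_i=\widetilde f_i\circ p$, we get $F_i(p^{-1}(X))\subset p^{-1}(X)$. Together with $F_i(0)=0$ this gives $F_i(Z)\subset Z$.

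\textbf{Normalizing so the lifts commute on $Z$.} This is the main point. On $Z^\times$ both $F_1\circ F_2$ and $F_2\circ F_1$ lift $f_1\circ f_2=f_2\circ f_1$. Consequently
\[
F_1\circ F_2(z)=\lambda(z)\,F_2\circ F_1(z)
\]
for a function $\lambda:Z^\times\to\C^*$. Both sides are homogeneous of degree $qr$ in $z$, so $\lambda$ is invariant under scaling and descends to a map $X\to\C^*$. We want this map to be regular. Since $Z$ is normal, it suffices to see that $\lambda$ is a ratio of coordinate functions on the open sets where the relevant coordinate of $F_2\circ F_1$ is nonzero, which gives a regular function on $Z^\times$. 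Being homogeneous of degree $0$, it comes from a regular function on the projective variety $X$, hence is constant, say $\lambda$. We then rescale: replacing $F_1$ by $aF_1$ multiplies $F_1\circ F_2$ by $a$ and $F_2\circ F_1$ by $a^{r}$. Choosing $a$ with $a^{1-r}=\lambda^{-1}$, which is possible because $r\ge 2$, makes the two compositions agree on $Z$. If $X$ is not connected this would need care, but $X$ is a variety, so it is irreducible and the argument goes through.

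\textbf{Restrictions $u_i$.} The maps $u_i:=F_i|_Z$ are endomorphisms of $Z$ that commute by the previous step. The identity $p_X\circ u_i=f_i\circ p_X$ holds on $Z^\times$ by restricting $p\circ F_i=\widetilde f_i\circ p$.
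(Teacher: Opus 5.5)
Your proposal is correct and follows the paper's proof. The paper also argues that the ratio of $F_1\circ F_2$ and $F_2\circ F_1$ on $Z^\times$ is a nowhere-vanishing regular function of degree zero, hence a constant $\lambda$ since $X$ is projective and irreducible. It then rescales $F_1$ by $a$ with $a^{r-1}=\lambda$ and gets the growth bounds from the extrema of $\|F_i\|$ on the unit sphere. You give more detail than the paper on finiteness and surjectivity of $F_i$, via the homogeneous system of parameters. Your appeal to normality of $Z$ is harmless but unnecessary, since the local description of $\lambda$ as a ratio of coordinate functions already shows it is regular.
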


	\begin{proof}
	Let $F_i$ be  a homogeneous polynomial lift of $f_i$.
		Since $f_i(X)=X$, we have $F_i(Z)\subset Z$. The tuples $F_1\circ F_2$ and $F_2\circ F_1$ have the same degree and the same
		projectivization on $Z^\times$. Their ratio is a nowhere-zero regular
		function, homogeneous of degree zero. It descends to an invertible
		regular function on $X$ and is therefore constant. Write
		$F_1\circ F_2=cF_2\circ F_1$ on $Z$, where $c\in\C^*$. Choose
		$\lambda^{r-1}=c$. Replacing $F_1$ by $\lambda F_1$ makes the lifts commute on $Z$.
		
		Taking the minimum and maximum of $\|F_i\|$ on the unit sphere
		gives~\eqref{eq:cone-growth} by homogeneity. The restrictions $u_i$
		are finite and surjective on $Z$.
	\end{proof}

	\medskip

	\begin{prop}\label{prop:escape}
		For $i=1,2$ the locally uniform limit
		\[
		H_i(z):=\lim_{m\to\infty}q_i^{-m}\log\|F_i^m(z)\|
		\]
		exists on $\C^{N+1}\setminus\{0\}$. It is continuous and plurisubharmonic, satisfies $H_i(tz)=\log|t|+H_i(z)$ and $H_i\circ F_i=q_iH_i$, and differs from $\log\|z\|$ by a bounded function. Its restrictions satisfy $H_1|_{Z^\times}=H_2|_{Z^\times}$. Write $H$ for this common restriction. We have
		\begin{gather}\label{eq:H}
			H(tz)=\log|t|+H(z),\quad H\circ u_1=qH,\quad H\circ u_2=rH,\\
			\intertext{and}
			H(z)-\log\|z\|=O(1)\qquad(z\in Z^\times).
		\end{gather}
		The function $G^+:=\max\{H,0\}$, extended by $G^+(0):=0$, is continuous, nonnegative and plurisubharmonic on $Z$, satisfies $G^+\circ u_i=q_iG^+$, and has compact zero set $K\subset Z$.
	\end{prop}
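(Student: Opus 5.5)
The proof follows the standard construction of Green functions for homogeneous polynomial maps. The one genuinely new point is the equality $H_1=H_2$ on $Z^\times$, which uses commutativity.

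\textbf{Existence and basic properties.} Set $h_m(z):=q_i^{-m}\log\|F_i^m(z)\|$. By \eqref{eq:cone-growth}, $|\log\|F_i(w)\|-q_i\log\|w\||\le C_0$ for all $w\neq 0$, with $C_0:=\max(|\log c|,|\log C|)$. Hence $|h_{m+1}-h_m|\le C_0q_i^{-m-1}$. The sequence therefore converges uniformly on $\C^{N+1}\setminus\{0\}$. The limit $H_i$ is continuous and plurisubharmonic, being a uniform limit of the plurisubharmonic functions $q_i^{-m}\log\|F_i^m\|$; note that $F_i^m$ has no zeros off the origin because $F_i^{-1}(0)=\{0\}$. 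Summing the telescoping series gives $|H_i-\log\|z\||\le C_0/(q_i-1)$. Homogeneity of $F_i^m$, of degree $q_i^m$, gives $H_i(tz)=\log|t|+H_i(z)$, and reindexing the limit gives $H_i\circ F_i=q_iH_i$.

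\textbf{Equality on $Z^\times$.} This is the main point. Consider $\Phi:=H_1\circ F_2-rH_1$ on $Z^\times$. Because the lifts were normalized to commute on $Z$ (Proposition~\ref{prop:cone}), we get
\[
\Phi\circ u_1=H_1\circ F_1\circ F_2-rH_1\circ F_1=q\,\Phi .
\]
Moreover $\Phi(tz)=q_2\log|t|-r\log|t|+\Phi(z)=\Phi(z)$, so $\Phi$ descends to a function on $X$. It is also bounded, since both $H_1\circ F_2$ and $rH_1$ differ from $r\log\|z\|$ by $O(1)$. Iterating the relation gives $\Phi=q^{-m}\Phi\circ u_1^m$, and letting $m\to\infty$ shows $\Phi\equiv0$; thus $H_1\circ u_2=rH_1$ on $Z^\times$.

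Next set $\Psi:=H_1-H_2$ on $Z^\times$. It is bounded (each $H_i$ is $\log\|z\|+O(1)$) and satisfies $\Psi\circ u_2=r\Psi$. The same iteration with $u_2$ gives $\Psi=r^{-m}\Psi\circ u_2^m\to0$. Hence $H_1=H_2$ on $Z^\times$, and the relations \eqref{eq:H} follow from the two functional equations just proved.

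\textbf{The function $G^+$.} Near $0$ we have $H\le\log\|z\|+C'\to-\infty$, so $G^+=\max\{H,0\}$ vanishes on a neighborhood of $0$. Hence $G^+$, extended by $G^+(0)=0$, is continuous on $Z$. It is plurisubharmonic as the maximum of two plurisubharmonic functions; near $0$ it is identically zero, which sidesteps any question about plurisubharmonicity at the possibly singular vertex. The relation $G^+\circ u_i=q_iG^+$ follows from $H\circ u_i=q_iH$ together with $u_i(0)=0$. Finally, the zero set is $K=\{H\le0\}\cup\{0\}$, which is closed and contained in $\{\log\|z\|\le C'\}$, so it is bounded and hence compact.
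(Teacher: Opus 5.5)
Your proof is correct and follows the same route as the paper, which cites the homogeneous Green-function construction of \cite{DS02} for the first assertions and the argument of \cite[Proposition~3.3]{DS02} for $H_1=H_2$ on $Z^\times$. You write those steps out: uniform convergence from \eqref{eq:cone-growth}, then the bounded-function argument using commutativity on $Z$ twice (first $H_1\circ u_2=rH_1$, then $H_1-H_2=0$). Your treatment of $G^+$, namely that it vanishes near the vertex and its zero set is compact because $H=\log\|z\|+O(1)$, is the paper's argument.
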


	\begin{proof}
		By the homogeneous Green-function construction in
		\cite[Section~3, homogeneous Green-function construction before Theorem~3.1]{DS02},
		the functions $H_i$ are continuous and plurisubharmonic, logarithmically
		homogeneous, and satisfy $H_i\circ F_i=q_iH_i$. The estimate
		\eqref{eq:cone-growth} gives local uniform convergence and
		$H_i(z)-\log\|z\|=O(1)$.
		
		By Proposition~\ref{prop:cone}, the restrictions of $F_1,F_2$ preserve
		$Z$ and commute there. Since $H_1-H_2$ is bounded on $Z^\times$, the
		argument of \cite[Proposition~3.3]{DS02} gives
		$H_1|_{Z^\times}=H_2|_{Z^\times}$ and the two scaling identities for
		$H$. Finally, $H(z)=\log\|z\|+O(1)$ implies that $G^+$ vanishes near $0$ and that its zero set is compact. Thus $G^+$ extends
		continuously and plurisubharmonically across $0$.
	\end{proof}
	
	\subsection{Simultaneous Poincar\'e maps and continuous real symmetry}\label{sec:poincare}\label{sec:flow}

We write $Z_{\reg}$ for the smooth locus of $Z$. 	Suppose that $a\in Z^\times_{\reg}$ and $s\geq1$ satisfy
	$u_1^s(a)=u_2^s(a)=a$ and that $(u_1\circ u_2)^s$ is repelling at $a$.
	Fix such a pair $(a,s)$ and put $R_1:=u_1^s$, $R_2:=u_2^s$ and $d:=k+1$.
	Thus $R_1(a)=R_2(a)=a$, and $H_0:=R_1\circ R_2$ is repelling at the smooth point $a$.

	\medskip

	Fix $\rho:=(\rho_1,\ldots,\rho_d)$ with $|\rho_j|>1$.
	A polynomial automorphism $h=(h_1,\ldots,h_d)$ of $\C^d$ fixing $0$
	is called \emph{$\rho$-resonant} if every monomial $z^\alpha$ occurring
	with nonzero coefficient in $h_j$ satisfies
	\[
	\rho^\alpha:=\rho_1^{\alpha_1}\cdots\rho_d^{\alpha_d}
	=\rho_j,\qquad \alpha\in\Z_{\geq0}^d,\quad |\alpha|\geq1.
	\]
	Equivalently, $h$ commutes with the diagonal map
	$D_\rho(z):=(\rho_1z_1,\ldots,\rho_dz_d)$.
	We denote the group of these automorphisms by $\mathcal G_\rho$.
	Since $|\rho_j|>1$, only finitely many monomials satisfy these
	relations, and $\mathcal G_\rho$ is a finite-dimensional complex
	Lie group with uniformly bounded polynomial degrees
	\cite[Section~2, before Proposition~2.1]{DS02}.
	In the next proposition, $\rho_1,\ldots,\rho_d$ are the
	eigenvalues of $DH_0(a)$. Resonance in that proposition is taken with respect to
	this fixed tuple $\rho$.

	\medskip

	\begin{prop}\label{prop:poincare}
	The following hold: 
		\begin{enumerate}
		\renewcommand{\labelenumi}{(\theenumi)}
			
		\item There are commuting polynomial automorphisms $A,B$ of $\C^d$ fixing $0$, a repelling polynomial automorphism $Q:=A\circ B$, and a holomorphic map $\Psi:\C^d\to Z^\times$ with $\Psi(0)=a$ such that
		\[
		\Psi\circ A=R_1\circ\Psi
		\quad\text{and}\quad \Psi\circ B=R_2\circ\Psi.
		\]
		The map $\Psi$ is locally biholomorphic at $0$, locally finite everywhere, and open. 
		
		\item The maps $A^m\circ B^n$, for all integers $m,n$, belong to $\mathcal G_\rho$ and have a common upper bound on their polynomial degrees. Moreover $Q^{-n}\to0$ locally uniformly on $\C^d$.
		
	\end{enumerate}
	\end{prop}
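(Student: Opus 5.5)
We follow the Dinh--Sibony construction of simultaneous Poincaré maps, adapted to the smooth point $a$ of the cone $Z^\times$. First we linearize $H_0=R_1\circ R_2$ at $a$. Since $a\in Z^\times_{\reg}$ is a repelling fixed point of $H_0$ with multipliers $\rho=(\rho_1,\dots,\rho_d)$, $|\rho_j|>1$, the Poincaré--Dulac normal form theorem applies in a local chart $\phi:(\C^d,0)\to(Z^\times,a)$. It gives a local biholomorphism $\phi$ and a $\rho$-resonant polynomial automorphism $Q$ with $H_0\circ\phi=\phi\circ Q$ near $0$. Because $Q$ is repelling at $0$ and $Q^{-1}$ is a contraction near $0$, we extend $\phi$ globally by
\[
\Psi:=H_0^n\circ\phi\circ Q^{-n}\qquad\text{on }Q^n(U).
\]
The functional equation makes this well defined, and $Q^{-n}\to0$ locally uniformly because $Q^{-1}$ is a polynomial automorphism attracting the whole space to $0$. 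The latter follows from resonance: $Q$ is triangular in a suitable ordering of the $\rho_j$ by modulus, so its inverse is again resonant with multipliers $\rho_j^{-1}$. This yields $\Psi:\C^d\to Z^\times$ holomorphic, with $\Psi(0)=a$, locally biholomorphic at $0$, and satisfying $\Psi\circ Q=H_0\circ\Psi$.

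Next we produce $A$ and $B$. Since $R_1$ commutes with $H_0$ and fixes $a$, the germ $\phi^{-1}\circ R_1\circ\phi$ commutes with $Q$ near $0$. The standard fact (Dinh--Sibony, Section~2) is that a germ commuting with a Poincaré--Dulac normal form $Q$ with all $|\rho_j|>1$ is itself a $\rho$-resonant polynomial. One proves this by comparing Taylor coefficients: the equation $g\circ Q=Q\circ g$ forces non-resonant monomials to vanish, using $|\rho_j|>1$ to make the recursion finite. We call this polynomial $A$, and define $B$ in the same way from $R_2$. Both are invertible, since $R_i$ is a local biholomorphism at $a$: $DH_0(a)=DR_1(a)DR_2(a)$ is invertible, so each factor is too. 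Hence $A,B\in\mathcal G_\rho$. They commute with each other and $A\circ B=Q$, because $R_1\circ R_2=R_2\circ R_1=H_0$ locally. Then $\Psi\circ A=R_1\circ\Psi$ holds near $0$ and extends to all of $\C^d$ by the identity principle.

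It remains to prove the properties of $\Psi$ and the degree bound. Every $A^mB^n$ lies in the group $\mathcal G_\rho$, whose elements have uniformly bounded degree, which gives (2). For local finiteness of $\Psi$, a positive-dimensional fiber component $W$ through some $z$ would have image $Q^{-n}(W)$ lying in $\Psi^{-1}(\Psi(Q^{-n}z))$ by the conjugacy. For large $n$ this image meets the neighborhood of $0$ where $\Psi$ is injective, which is a contradiction. A discrete-fiber holomorphic map between equidimensional spaces is open. We also need $\Psi(\C^d)\subset Z^\times$ and not merely $Z$, and this is the main point requiring care. We use $H\circ\Psi\circ Q^n=q^nr^nH\circ\Psi$ together with $H(z)=\log\|z\|+O(1)$ on $Z^\times$. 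If $\Psi(z_0)=0$, then $\Psi(Q^n z_0)=H_0^n(0)=0$ for all $n$, so $\Psi$ would vanish on an unbounded set accumulating along the backward-compatible sequence, while $\log\|\Psi\|\approx H\circ\Psi$ is finite near $0$. Instead, we argue that the analytic set $\Psi^{-1}(0)$ is invariant under $Q^{-1}$, which contracts it into the neighborhood of $0$ where $\Psi\neq0$, so it is empty.
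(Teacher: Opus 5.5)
Your proposal is correct and is essentially the Dinh--Sibony argument (their Proposition~2.2, with $Q^{-n}\to0$ as in their Proposition~3.5), which the paper simply cites: Poincar\'e--Dulac normalization of $H_0$ at $a$, the global extension $\Psi=H_0^n\circ\phi\circ Q^{-n}$, and the fact that germs commuting with the normal form are $\rho$-resonant polynomials, with the degree bound read off from $\mathcal G_\rho$. When writing it up, drop the abandoned growth argument via $H$ and state the inputs you use tacitly, all of which hold here: $H_0^{-1}(0)=\{0\}$ from $F_i^{-1}(0)=\{0\}$ (for the $Q^{-1}$-invariance of $\Psi^{-1}(0)$), finiteness of $H_0^{-n}(y)$ (so $\Psi$ is constant on the connected set $Q^{-n}(W)$), normality of $Z$ (for openness onto a possibly singular target), and the triangular structure of resonant maps (for global, not merely local, invertibility of $A$ and $B$).
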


	\begin{proof}
		(1) follows from  \cite[Proposition~2.2]{DS02}. Since $Q$ is repelling, $Q^{-j}\to0$ locally uniformly, as in the
		proof of \cite[Proposition~3.5]{DS02}.   Since $A,B\in\mathcal G_\rho$, and the degree bound follows from
		the definition above, hence (2) holds.
			\end{proof}


	\medskip
	
	The simultaneous Poincar\'e map also yields a continuous real symmetry of the pulled-back escape function.
	Set $U:=G^+\circ\Psi:\C^d\to\R_{\geq0}$.
	The function $U$ is continuous and plurisubharmonic, satisfies $U(0)=0$, and
	\begin{equation}\label{eq:U}
		U\circ A=q^sU\quad\text{and}\quad U\circ B=r^sU.
	\end{equation}
	Since $a$ is periodic for $u_1$ and $u_2$,  we have $G^+(a)=0$.

	\medskip

	\begin{lemma}\label{lem:no-entire}
		If $v:\C\to\C^d$ is an entire curve with $U\circ v=0$, then $v$ is constant. 
	\end{lemma}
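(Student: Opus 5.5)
The plan is to push the entire curve forward by the Poincaré map into the ambient space $\C^{N+1}$, show the image curve is bounded there, and then pull constancy back using local finiteness of $\Psi$. The statement then follows from Liouville's theorem together with the properties of $\Psi$ in Proposition~\ref{prop:poincare}, with no dynamics needed. Write $w:=\Psi\circ v:\C\to Z^\times\subset\C^{N+1}$.

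\textbf{Step 1: $w$ is bounded.} Since $\Psi$ is holomorphic, $w$ is holomorphic, i.e.\ an entire map into $\C^{N+1}$. The hypothesis $U\circ v=0$ means $G^+(w(\zeta))=0$ for all $\zeta\in\C$. Hence $w(\C)$ is contained in the zero set $K$ of $G^+$. By Proposition~\ref{prop:escape}, $K$ is compact: concretely, $H(z)=\log\|z\|+O(1)$ on $Z^\times$, so $G^+(z)=0$ forces $\|z\|\le e^{O(1)}$. Thus $w$ is a bounded entire map.

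\textbf{Step 2: $w$ is constant.} Apply Liouville's theorem to each coordinate function of $w$. Each is a bounded entire function, so $w\equiv b$ for some point $b\in Z^\times$.

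\textbf{Step 3: $v$ is constant.} We now have $v(\C)\subset\Psi^{-1}(b)$. Since $\Psi$ is locally finite everywhere (Proposition~\ref{prop:poincare}(1)), the fiber $\Psi^{-1}(b)$ is a closed analytic subset of $\C^d$ whose points are isolated, i.e.\ a discrete set. A continuous map from the connected space $\C$ into a discrete set is constant, so $v$ is constant.

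The only point needing care is Step 1: one must use that $w$ lands in $Z^\times$, where the estimate $H-\log\|z\|=O(1)$ of Proposition~\ref{prop:escape} holds. That estimate is what makes $\{G^+=0\}$ bounded; once boundedness is in hand, the rest is formal.
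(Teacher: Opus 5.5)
Your proposal is correct and follows the same route as the paper: push $v$ forward by $\Psi$ into the compact zero set $K$ of $G^+$, apply Liouville to each coordinate of $\Psi\circ v$, and use the discreteness of the fibers of $\Psi$ together with the connectedness of $\C$ to conclude that $v$ is constant. Your Step~1 also spells out, via $H=\log\|z\|+O(1)$ on $Z^\times$, why $K$ is compact, which the paper simply cites from Proposition~\ref{prop:escape}.
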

	\begin{proof}
		Each coordinate of $\Psi\circ v$ is a bounded entire function,
		since its image lies in the compact set $K\subset Z\subset\C^{N+1}$. 
		Thus $\Psi\circ v$ is constant. By Proposition~\ref{prop:poincare},
		the fibers of $\Psi$ are discrete, so the connected image of $v$
		is a point. 
	\end{proof}
	
	\medskip
	
	\begin{thm}\label{thm:flow}
        In the setting of Section~\ref{sec:poincare}.
        Assume that the degrees are multiplicatively independent:
        \begin{equation}\label{eq:independence}
        q^m\neq r^n\qquad\text{for all integers }m,n\geq1.
        \end{equation}
        Then the following hold:
		
		\begin{enumerate}
        \renewcommand{\labelenumi}{(\theenumi)}
			\item $0$ is a repelling fixed point of $A$ and $B$.  
			
			\item The closure $\Gamma$ of $\langle A,B\rangle$ in $\mathcal G_\rho$ contains a real one-parameter subgroup $(A_t)_{t\in\R}$ satisfying
			\begin{equation}\label{eq:flow}
				U\circ A_t=e^tU
				\quad\text{and}\quad A_t\longrightarrow0\quad(t\to-\infty)
			\end{equation}
			locally uniformly on $\C^d$.
		\end{enumerate} 
	\end{thm}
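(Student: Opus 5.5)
We follow the strategy of Dinh--Sibony \cite[Proposition~3.5]{DS02}, working inside the finite-dimensional Lie group $\mathcal G_\rho$. For (1), use \eqref{eq:U} and Lemma~\ref{lem:no-entire}. First linearize: $A$ and $B$ are $\rho$-resonant, so their linear parts $DA(0)$ and $DB(0)$ commute with $D_\rho$ and with each other. Their product is $DQ(0)$, which has eigenvalues $\rho_j$ with $|\rho_j|>1$. Suppose some eigenvalue of $DA(0)$ had modulus at most $1$, on a common eigenvector $w$. The idea is to produce, by a normal-families argument, a nonconstant entire curve on which $U$ vanishes. Concretely, consider the iterates $A^{m}$ restricted to a suitable invariant resonant submanifold through $0$ tangent to the non-expanding directions. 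These iterates have uniformly bounded degree and lie in $\mathcal G_\rho$, so some subsequence of suitably rescaled iterates converges. Along this submanifold we have $U\circ A^m=q^{sm}U$, while the points stay bounded. This forces $U=0$ on the submanifold, because $U$ is continuous and $U(0)=0$. The submanifold is polynomially parametrized, so it contains a nonconstant entire curve, contradicting Lemma~\ref{lem:no-entire}. Hence $A$, and symmetrically $B$, is repelling at $0$.

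For (2), take the closure $\Gamma$ of the abelian group $\langle A,B\rangle$ in $\mathcal G_\rho$. This is possible because all $A^mB^n$ have uniformly bounded degree by Proposition~\ref{prop:poincare}(2), so they lie in a finite-dimensional Lie group. Then $\Gamma$ is a closed abelian Lie subgroup. Define the homomorphism $\chi:\langle A,B\rangle\to\R_{>0}$ by requiring $U\circ g=\chi(g)U$, so that $\chi(A^mB^n)=q^{sm}r^{sn}$. By \eqref{eq:independence}, $\log q$ and $\log r$ are $\Q$-linearly independent, so the image $\{sm\log q+sn\log r\}$ is dense in $\R$. Choose $(m_j,n_j)$ with $\chi(A^{m_j}B^{n_j})\to1$ but $(m_j,n_j)\to\infty$.

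The key compactness step is to show that $g_j:=A^{m_j}B^{n_j}$ stays in a compact subset of $\mathcal G_\rho$. Since $U\circ g_j=\chi(g_j)U$ with $\chi(g_j)$ bounded, $g_j$ maps $\{U\le 1\}$ into $\{U\le 2\}$, and similarly for $g_j^{-1}$. We need these sublevel sets to be compact near $0$. This should follow from repellence in (1) together with Lemma~\ref{lem:no-entire}: the set $\{U=0\}$ is $Q$-invariant and contains no entire curves, and $Q^{-n}\to 0$. Bounded degree plus a uniform bound on a neighborhood then gives precompactness of the coefficients. Limits of the $g_j$ give nontrivial elements of $\Gamma$ with $\chi=1$, and also elements with arbitrary prescribed $\chi$. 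We conclude that $\Gamma$ is a non-discrete closed subgroup and that $\chi$ extends continuously to a surjective homomorphism $\Gamma\to\R_{>0}$.

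Choose a one-parameter subgroup $A_t=\exp(t\xi)$ of the identity component $\Gamma^0$ with $\chi(A_t)=e^t$. Such a subgroup exists since $\chi|_{\Gamma^0}$ is a nontrivial continuous homomorphism to $\R$, so we can scale $\xi$. This gives \eqref{eq:flow}, namely $U\circ A_t=e^tU$. Finally, $A_t\to0$ as $t\to-\infty$ follows by comparing with $Q^{-n}\to0$. For $t\in[-n\log(q^sr^s)-c,\,-n\log(q^sr^s)]$, write $A_t=Q^{-n}\circ h$ with $h$ in a compact subset of $\Gamma$ where $\chi\in[e^{-c},1]$. Here we again use the compactness of $\chi^{-1}$ of compact sets modulo the kernel, after possibly adjusting $\xi$ by an element of the compact kernel. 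The main obstacle is this compactness/properness of $\chi$ on $\Gamma$, i.e.\ showing that $\{g\in\Gamma:\chi(g)\in[a,b]\}$ is compact. I would first attempt it via the compact-sublevel-set argument above, using Lemma~\ref{lem:no-entire} to rule out degenerations.
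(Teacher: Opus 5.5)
\textbf{There is a genuine gap, and it is the same in both parts.} Your outline of (2) has the right structure. The closure $\Gamma$ is an abelian Lie group, $\chi$ extends to a continuous character with dense image, you pick $\xi$ with $d\chi(\xi)=1$, and you compare $A_t$ with $Q^{-n}$. This is the argument of \cite[Corollary~4.3]{DS02} that the paper invokes.

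But all of it depends on the compactness statement you leave open. This is the paper's input from \cite[Lemma~4.2]{DS02}: the maps $g=A^mB^n$ with $\chi(g)=q^{sm}r^{sn}\le C$ have uniformly bounded coefficients.

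The route you propose cannot work, because sublevel sets of $U$ in $\C^d$ are not compact, not even $\{U=0\}$. Since $H(a)=0$, $H(ta)=\log t<0$ for $t<1$, and $\Psi$ is open at $0$, the set $\{U=0\}$ has nonempty interior near $0$. It is also invariant under the expanding map $Q$, because $U\circ Q^{\pm1}=(qr)^{\pm s}U$. Hence it is unbounded.

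The missing idea is to bound $\Psi\circ g$ rather than $g$:
\begin{itemize}
\item On a fixed ball $B$ one has $G^+\circ\Psi\circ g=\chi(g)U\le C\sup_B U$. Since $\{G^+\le c\}$ is compact in $Z$, the maps $\Psi\circ g$ are uniformly bounded on $B$, hence equicontinuous at $0$.
\item Choose a neighborhood $V$ of $a$ such that the connected component $W$ of $\Psi^{-1}(V)$ containing $0$ is relatively compact and mapped biholomorphically onto $V$. This is possible since $\Psi$ is locally biholomorphic at $0$ with discrete fibers.
\item As $g(0)=0$, there is $\delta>0$ independent of $g$ with $g(B_\delta)\subset W$. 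So $g=(\Psi|_W)^{-1}\circ\Psi\circ g$ is uniformly bounded on $B_\delta$, and the uniform degree bound in $\mathcal G_\rho$ then bounds the coefficients.
\end{itemize}
Apply this to $g$ and $g^{-1}$ and pass to limits. This shows that $\{g\in\Gamma:\chi(g)\in[c_1,c_2]\}$ is compact. Both your surjectivity step and your final step $A_t=Q^{-n}\circ h\to0$ need exactly this. Nontriviality of $\chi|_{\Gamma^0}$ then follows because $\Gamma/\Gamma^0$ is countable while $\chi(\Gamma)=\R_{>0}$.

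\textbf{Your argument for (1) has a further gap.} You do not justify the existence of an $A$-invariant holomorphic submanifold tangent to the non-expanding directions on which forward orbits stay bounded. For eigenvalues of modulus exactly $1$ no holomorphic center manifold is available in general. Even on an invariant piece, a unipotent part of $A$ can make orbits grow. Moreover, a uniform degree bound alone does not make a family of iterates precompact, rescaled or not.

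Once the compactness statement is proved, (1) needs no eigenvector analysis, and this is how the paper argues. Since $\chi(A^{-j})=q^{-sj}\le1$, the sequence $(A^{-j})_j$ is precompact. Any limit $L$ is a polynomial map with $L(0)=0$ and $U\circ L=0$ by \eqref{eq:U}. Restricting $L$ to complex lines and applying Lemma~\ref{lem:no-entire} gives $L\equiv0$. Hence $A^{-j}\to0$ locally uniformly, so $(DA(0))^{-j}\to0$ and $0$ is repelling for $A$; likewise for $B$. The same argument applied to $A_t$ with $t\le0$ gives $A_t\to0$ as $t\to-\infty$.
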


	\begin{proof}
		The argument of \cite[Lemma~4.2]{DS02} shows that
		the coefficients of $A^m\circ B^n$ are bounded whenever $q^{sm}r^{sn}$
		is bounded above. It uses the finite-dimensional resonance group,
		the scaling identities~\eqref{eq:U}, and the fact that $\{U=0\}$
		contains no nonconstant entire curve, which follows from
		Lemma~\ref{lem:no-entire}. In particular, $A^{-j}$ and $B^{-j}$
		have bounded coefficients. Every subsequential limit has image
		in $\{U=0\}$ and fixes $0$, so it is the constant zero map. Hence
		$A^{-j},B^{-j}\to0$ locally uniformly. Taking derivatives at $0$,
		we conclude that both $A$ and $B$ are repelling.
		
		Applying the argument of \cite[Corollary~4.3]{DS02} to $A,B,U$
		in their resonance group, we obtain a real one-parameter subgroup
		$(A_t)$ in $\Gamma$ such that $U\circ A_t=e^tU$. The same
		corollary gives $A_t\to0$ locally uniformly as $t\to-\infty$.
	\end{proof}

	\medskip
	
	\subsection{The equilibrium measure and common repelling periodic points}
We write $(L^{\dim W})$
	for the top self-intersection of a line bundle $L$ on a projective variety $W$.

	\medskip

	\begin{prop}\label{prop:normal-equilibrium}
		Let $W$ be a normal complex projective variety of dimension $d\geq1$,
		and let $f:W\to W$ be a polarized endomorphism of polarized degree $q>1$.
		Choose an ample line bundle $L$ with $f^*L\simeq L^{\otimes q}$,
		and let $\omega$ be a smooth positive representative of
		$(L^d)^{-1/d}c_1(L)$. Then its Green current is
		$T:=\lim_{j\to\infty}q^{-j}(f^j)^*\omega$ and has continuous local potentials.
		Put $\nu:=T^d$, where products on $W$ are defined on a
		resolution and pushed down. Then the following hold.
		\begin{enumerate}
        \renewcommand{\labelenumi}{(\theenumi)}
			\item $\nu$ is a probability measure, gives no mass to pluripolar sets,
			and satisfies $f^*\nu=q^d\nu$.
			\item There is a countable union $B:=\bigcup_{j\geq0}B_j$ of proper
			algebraic subsets of $W$ such that $q^{-dn}(f^n)^*\delta_z\to\nu$
			weakly for every $z\notin B$. The inverse images here are counted with their local multiplicities.
			\item The smooth repelling periodic points belonging to $\operatorname{supp}\nu$
			are dense in $\operatorname{supp}\nu$ for the complex topology.
		\end{enumerate}
	\end{prop}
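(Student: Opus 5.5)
The plan is to reduce everything to the smooth case via a resolution of singularities and then follow the classical arguments of Forn\ae ss--Sibony, Briend--Duval and Dinh--Sibony for endomorphisms of $\P^k$, working as far as possible with pluripotential theory on $W$ itself.

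\textbf{Construction of the Green current.} Since $f^*\omega$ and $q\omega$ are cohomologous, write $q^{-1}f^*\omega=\omega+dd^c g$, where $g$ is continuous on $W$. This uses $f^*L\simeq L^{\otimes q}$ and the fact that on a normal variety a smooth metric on $L$ pulls back to a continuous metric; for instance, take $\omega$ as the restriction of a Fubini--Study form under an embedding by a power of $L$. Then $q^{-j}(f^j)^*\omega=\omega+dd^c\sum_{i<j}q^{-i}g\circ f^i$, and the sum converges uniformly. Hence $T$ exists, has continuous local potentials, and satisfies $f^*T=qT$. Continuity of the potentials allows us to define $T^d$ in the Bedford--Taylor sense on a resolution $\sigma:\widetilde W\to W$, applied to $\sigma^*T$, and then push forward. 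The total mass is $(L^d)\cdot(L^d)^{-1}=1$. The identity $f^*\nu=q^d\nu$ follows from $f^*T=qT$ and the compatibility of pullback with wedge products of currents with continuous potentials, checked on the locus where $f$ is a local biholomorphism. Since $\nu$ puts no mass on the critical values (these form a proper analytic set) nor on $\Sing W$, this is enough. Bedford--Taylor measures of continuous psh potentials charge no pluripolar sets, which gives (1).

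\textbf{Equidistribution.} For (2), the plan is to follow Dinh--Sibony's exceptional-set argument, adapted via the embedding $W\subset\P^N$ of Lemma~\ref{lem:ambient-extension} with $f$ extended. The operator $\Lambda:=q^{-d}f_*$ acts on continuous functions, and $\langle q^{-dn}(f^n)^*\delta_z,\varphi\rangle=\Lambda^n\varphi(z)$. Using Dinh--Sibony's bounded-variation estimates for $\Lambda$ on $dd^c$-regular test functions, one shows that $\Lambda^n\varphi\to\langle\nu,\varphi\rangle$ outside a set where the local multiplicity of $f^n$ grows too fast. By Dinh--Sibony's results on the Lojasiewicz exponent, this set lies in a countable union of proper algebraic subsets $B_j$; the totally invariant exceptional set is contained in it. Alternatively, one can avoid the exceptional-set theory entirely and use the weaker statement with $B$ countable: the bad points $z$ are those for which $\limsup\Lambda^n\varphi(z)$ deviates from the limit, and these lie in the union over $n$ of analytic sets cut out by multiplicity conditions. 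I expect this step to be the main obstacle, because the singularities of $W$ complicate the multiplicity estimates. My first attempt will work on $\widetilde W$ only through potentials, keeping all currents on $W$.

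\textbf{Repelling points.} For (3), the plan is to use the Briend--Duval argument. Because $\nu$ has positive Lyapunov exponents, or more elementarily because $\nu$ is mixing with entropy $d\log q$ and $\nu$ charges neither $\Sing W$ nor the postcritical-type analytic sets, we can construct inverse branches. Specifically, for $\nu$-generic points and small balls in the smooth locus, a proportion tending to $1$ of the $q^{dn}$ inverse branches of $f^n$ on the ball are defined and have diameter tending to zero exponentially. Combined with (2) and mixing, a contracting inverse branch maps a small ball $U$ into itself, and its fixed point is a smooth repelling periodic point. Applying this near every point of $\supp\nu$ gives density. The key inputs are the positivity of the Lyapunov exponents, which is where $\nu$ giving no mass to pluripolar and analytic sets is used, and the avoidance of singular points. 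The latter holds since $\nu(\Sing W)=0$, so generic orbits stay in the smooth locus, where the standard Briend--Duval estimates apply.
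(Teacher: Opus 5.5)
Your part (1) is fine and is essentially what the paper does: continuous potentials, Bedford--Taylor on a resolution with pushforward, and checking $f^*\nu=q^d\nu$ off a proper analytic set. The gap is in (2) and (3), which carry the real content.

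\textbf{Part (2).} You propose to run Dinh--Sibony's exceptional-set and Lojasiewicz-exponent machinery. Those estimates are proved on $\P^k$, or on smooth manifolds. You never say how the multiplicity and Lelong-number comparisons survive at singular points of $W$; you flag this yourself as the main obstacle and leave it open. The embedding $W\subset\P^N$ does not help. The extended map has its own equilibrium measure on $\P^N$, and for $z\in W$ most of its $q^{Nn}$ preimages lie off $W$. Your fallback is that the points where $\Lambda^n\varphi(z)$ fails to converge lie in analytic sets cut out by multiplicity conditions. That is exactly the statement to be proved, not an argument for it.

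\textbf{Part (3).} The claim that ``$\nu$ is mixing with entropy $d\log q$'' yields exponentially contracting inverse branches would fail as stated. Via Ruelle's inequality, entropy only bounds the \emph{sum} of the positive Lyapunov exponents. The Briend--Duval construction needs \emph{every} exponent bounded below, e.g.\ by $\tfrac12\log q$. That lower bound is itself a $\P^k$ argument you would have to redo on the singular $W$.

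\textbf{The missing idea.} Use the resolution $p:W_0\to W$, an isomorphism over $W_{\rm reg}$, not only to define $T^d$ but to transport the dynamics.
\begin{itemize}
\item The map $g:=p^{-1}\circ f\circ p$ is a dominant meromorphic self-map of the compact K\"ahler manifold $W_0$.
\item By birational invariance of dynamical degrees (Truong), $\lambda_j(g)=\lambda_j(f)=q^j$. So the topological degree $q^d$ strictly dominates all other dynamical degrees.
\item For such maps, the Dinh--Nguyen--Truong theorems give, on the smooth space $W_0$, equidistribution of preimages outside a countable union of proper subvarieties, and equidistribution of repelling periodic points.
\item For this you need to identify $\widetilde\nu=(p^*T)^d$ with the equilibrium measure of $g$. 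This follows from $g^*\widetilde\nu=q^d\widetilde\nu$ and the boundedness of the potentials of $p^*T$.
\item Then push down by $p$. Add to $B$ the images of those subvarieties and the forward images of $\Sing W$. Discard periodic points in $p^{-1}(\Sing W)$, which carries no $\widetilde\nu$-mass.
\end{itemize}
This gives (2) and (3) without any estimates on the singular space.
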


	\begin{proof}
		By \cite[Theorem~2.4(1)]{GV}, the normalized
		pullbacks of a curvature form under a polarized endomorphism of a
		complex projective variety converge to a positive closed current
		$T=\omega+dd^c\phi$, where $\phi$ is continuous and $f^*T=qT$.
		
		Let $p:W_0\to W$ be a projective resolution which is an isomorphism
		over $W_{\mathrm{reg}}$, and let $g:=p^{-1}\circ f\circ p:W_0\dashrightarrow W_0$ be the induced
		dominant rational map. Set
		$\widetilde\nu:=(p^*T)^d$.
		The $j$-th dynamical degree is $\lambda_j(f)=q^j$. By birational invariance of dynamical
		degrees \cite[Theorem~1.1(2), with the base a point]{Tru20},
		$\lambda_j(g)=q^j$ as well. Thus $g$ is a dominant
		meromorphic map on the compact K\"ahler manifold $W_0$, whose
		topological degree $q^d$ is larger than all its other dynamical degrees.
		
		The property (1) holds since $p^*T$ has bounded local potentials.
		Thus $\widetilde\nu$  also gives no mass to pluripolar sets. Moreover,
		$\int_{W_0}(p^*T)^d=\int_W\omega^d=1$,
		so $\nu=p_*\widetilde\nu$ is a probability measure.
		Moreover, $f^*(T^d)=(f^*T)^d=q^dT^d$ away from
		the singular locus. Both measures give no mass to the omitted proper analytic sets:
		for $f^*\nu$, this follows because $f$ is finite and their images
		are proper analytic sets of $\nu$-mass zero. Hence
		$f^*\nu=q^d\nu$ on $W$.


			By \cite[Theorem~1.2]{DNT},   the inverse-image convergence to
		$\widetilde\nu$ outside a countable union of proper subvarieties.
		Let $B$ contain the images under $p$ of these exceptional sets,
		together with all forward images of $W_{\mathrm{sing}}$.
	This is a countable union of proper algebraic subsets.
		Outside $B$, the inverse fibers of $f$ and $g$ correspond with
		multiplicities, so pushing the convergence down by $p$ proves (2).
		
		For (3), let $R_n$ be the set of repelling fixed points of $g^n$
		in the support of its equilibrium measure. By
		\cite[Theorem~1.1]{DNT}, these points equidistribute:
		$q^{-dn}\sum_{a\in R_n}\delta_a\to\widetilde\nu$.
		Since the closed set $p^{-1}(W_{\mathrm{sing}})$ has zero
		$\widetilde\nu$-mass, we can discard the repelling periodic points it contains
		without changing the limit. The remaining points descend through
		local biholomorphisms to smooth repelling periodic points of $f$, and their images
		are dense in $\operatorname{supp}\nu$.
	\end{proof}

	\medskip

	\begin{prop}\label{prop:cone-equilibrium}
		Let $H,G^+$ and the maps $u_i,F_i$ with degrees $q_i$ be as in
		Proposition~\ref{prop:escape}, and put $d=\dim Z=k+1$. Then the
		following hold.
		\begin{enumerate}
        \renewcommand{\labelenumi}{(\theenumi)}
			\item The measure $\nu_Z:=(L^k)^{-1}(dd^cG^+)^d$
			is a probability measure supported on the compact set $\{H=0\}$.
			It gives no mass to pluripolar sets and satisfies
			$u_i^*\nu_Z=q_i^d\nu_Z$ for $i=1,2$.
			For $u_1$, the measure $\nu_Z$ has the backward-orbit
			equidistribution property of Proposition~\ref{prop:normal-equilibrium}(2).
			\item The smooth periodic points in $J:=\operatorname{supp}\nu_Z$
			which are repelling for both $u_1$ and $u_2$ are dense in $J$.
		\end{enumerate}
	\end{prop}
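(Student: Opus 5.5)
For (1), I would compare $\nu_Z$ with the equilibrium measure $\nu$ of $f_1$ on $X$ from Proposition~\ref{prop:normal-equilibrium}, using the standard cone/projectivization picture. On $Z^\times$ write $H = \log\|z\|+\phi\circ p_X$. Here $\phi$ is the continuous potential of the Green current of $f_1$ (equal to that of $f_2$, since $H_1=H_2$ on $Z^\times$), taken with respect to the Fubini--Study type form $\omega=dd^c\log\|z\|$ restricted to $X$.

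The set $\{H=0\}$ is a compact circle bundle $S$ over $X$, transverse to the radial direction. On $\{H>0\}$ the function $G^+=H$ is pluriharmonic in the radial direction, so $(dd^cG^+)^d=0$ there. On the interior of $\{H<0\}$ we have $G^+\equiv0$. Hence $(dd^cG^+)^d$ is supported on $S$. I would compute it by the classical formula $(dd^c\max\{H,0\})^{k+1}=d^c H\wedge (dd^cH)^k|_{S}$. Integrating along the circle fibers gives $p_{X*}(dd^cG^+)^{d}=T^k$, whose total mass is $(L^k)$ times the normalized mass. Dividing by $(L^k)$ yields a probability measure, and $p_{X*}\nu_Z=\nu$. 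To avoid the singular vertex, I would carry this out on a resolution, or on $Z_{\reg}$ with the Bedford--Taylor theory for continuous psh functions on normal spaces, so that $d$-th powers are well defined.

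No mass on pluripolar sets follows because $G^+$ is continuous, hence locally bounded, and Bedford--Taylor measures of bounded psh functions charge no pluripolar set. The invariance $u_i^*\nu_Z=q_i^d\nu_Z$ comes from $G^+\circ u_i=q_iG^+$: we get $u_i^*(dd^cG^+)^d=(dd^c(G^+\circ u_i))^d=q_i^d(dd^cG^+)^d$ away from the critical set. The critical set is a proper analytic set carrying no mass, and the identity extends as in the proof of Proposition~\ref{prop:normal-equilibrium}(1).

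For the backward-orbit equidistribution, I would take $z\in Z^\times$ with $p(z)\notin B$, for the exceptional set $B$ of $f_1$, and with $H(z)\neq 0$ if needed. Then consider $q^{-dn}(u_1^n)^*\delta_z$. It is circle-invariant in the limit and projects to $q^{-kn}(f_1^n)^*\delta_{p(z)}\to\nu$ after accounting for the $q^n$ factor along fibers. Since $H(u_1^{-n}(z))=q^{-n}H(z)\to0$, limit points are supported on $S$. The delicate part is the angular equidistribution along the circle fibers. I would get it by noting that $u_1$ acts on fibers by $t\mapsto t^{q}$, which mixes angles, so any limit is invariant under the circle action. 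A circle-invariant measure on $S$ projecting to $\nu$ is exactly $\nu_Z$.

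For (2), which I expect to be the main obstacle, I would first use Proposition~\ref{prop:normal-equilibrium}(3) to get dense smooth repelling periodic points of $f_1$ in $\operatorname{supp}\nu$. These must be lifted to points of $J$ that are periodic and repelling for both $u_1$ and $u_2$ simultaneously. My plan is to mimic Dinh--Sibony: use commutation to show that $u_2$ permutes the finite set of $u_1^n$-fixed points in $J$ (commuting maps preserve $\mathrm{Fix}(u_1^n)$, and $G^+$ is preserved appropriately after normalization on $\{H=0\}$). This makes each such point periodic for $u_2$ as well. Repelling for $u_2$ would then follow from the fact that points where $u_2$ fails to be expanding form a set of $\nu_Z$-measure zero, via Lyapunov exponents of $\nu_Z$ being at least $\tfrac12\log r$. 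Equidistribution of repelling cycles of $u_1$ then gives density. The radial direction is repelling because $H\circ u_i=q_iH$, so the degrees $q_i>1$ give expansion transversally to $S$. The finiteness and permutation step, together with the simultaneous-repelling measure estimate, is where I expect the real work.
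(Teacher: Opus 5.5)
\textbf{Part (1).} This is correct but takes a longer route than the paper. The paper simply notes that the extensions of $u_i$ to the projective closure of $Z$ are polarized endomorphisms whose equilibrium measure, in the sense of Proposition~\ref{prop:normal-equilibrium}, is $\nu_Z$; every assertion of (1) is then read off directly. Your fibration argument also works: you push $\nu_Z$ to $\nu$ along the circle bundle $\{H=0\}\to X$, and recover the angular part from the invariance of $q^{-dn}(u_1^n)^*\delta_z$ under $q^n$-th roots of unity. Since $H$ is only continuous, your boundary formula for $(dd^c\max\{H,0\})^{k+1}$ needs a smoothing argument.

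\textbf{Part (2), first gap: periodicity for $u_2$.} Commutation only gives $u_2(\Fix(u_1^n))\subset\Fix(u_1^n)$. A self-map of a finite set need not be injective, so these points are merely preperiodic for $u_2$. This already fails for $X=\P^1$, $Z=\C^2$, $u_1=(z_0^2,z_1^2)$, $u_2=(z_0^3,z_1^3)$, whose degrees are multiplicatively independent. The point $(1,e^{2\pi i/3})$ lies in $J=\{|z_0|=|z_1|=1\}$ and is a repelling fixed point of $u_1^2$. But $u_2$ sends it to the fixed point $(1,1)$, so it is not $u_2$-periodic.

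The missing idea is to work with $u:=u_1\circ u_2$.
\begin{itemize}
\item Since $G^+\circ u=q_1q_2G^+$, the projective extension of $u$ again has equilibrium measure $\nu_Z$. Proposition~\ref{prop:normal-equilibrium}(3) then gives density in $J$ of smooth repelling periodic points of $u$.
\item On $S:=\Fix(u^n)$, the identity $u^n|_S=\mathrm{id}_S$ forces each $u_i|_S$ to be a bijection, with inverse $u_{3-i}\circ u^{n-1}$.
\item Hence $u_1^s=u_2^s=\mathrm{id}$ on $S$ for $s=n(\#S)!$.
\end{itemize}

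\textbf{Part (2), second gap: repelling for $u_2$.} This cannot come from Lyapunov exponents. A periodic orbit is finite, hence $\nu_Z$-null, so knowing that non-expanding points form a $\nu_Z$-null set says nothing about the particular points you constructed. A pointwise argument is needed, and the paper supplies one as follows.
\begin{itemize}
\item Put $R_i:=u_i^s$, and take $x\in S\cap J$ at which $R_1\circ R_2=u^s$ is repelling.
\item Build the simultaneous Poincar\'e map of Proposition~\ref{prop:poincare}: $\Psi\circ A=R_1\circ\Psi$ and $\Psi\circ B=R_2\circ\Psi$, with $D\Psi(0)$ invertible.
\item Theorem~\ref{thm:flow}(1) shows that $A$ and $B$ are individually repelling at $0$. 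Local conjugacy transfers this to $R_1$ and $R_2$ at $x$.
\end{itemize}
Theorem~\ref{thm:flow}(1) is stated under \eqref{eq:independence}. It rests on Dinh--Sibony's coefficient bounds for $A^{-j}$ and $B^{-j}$, derived from $U\circ A=q^sU$, $U\circ B=r^sU$ and Lemma~\ref{lem:no-entire}. This step is the real content of (2), and it is absent from your plan.
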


	\begin{proof}
	For the extensions of $u_i$ to the projective closure of $Z$,
	the measure $\nu$ in Proposition~\ref{prop:normal-equilibrium} is
	$\nu_Z$. Its conclusion (1) gives $\nu_Z(Z)=1$,
	$\nu_Z(E)=0$ for every pluripolar set $E\subset Z$, and
	$u_i^*\nu_Z=q_i^d\nu_Z$.
	By Proposition~\ref{prop:normal-equilibrium}(2),
	$q_1^{-dn}(u_1^n)^*\delta_z\to\nu_Z$ for $z\in Z\setminus B$,
	where $B$ is a countable union of proper algebraic subsets of $Z$.
	This proves (1).
	
	For (2), put $u:=u_1\circ u_2$.
	Proposition~\ref{prop:escape} gives $G^+\circ u=(q_1q_2)G^+$,
	so the projective extension of $u$ has equilibrium measure $\nu_Z$.
	By Proposition~\ref{prop:normal-equilibrium}(3),
	$\overline{\operatorname{Per}^*(u)\cap J}=J$, where
	$\operatorname{Per}^*$ denotes smooth repelling periodic points.
	Fix $n\geq1$. The finite set
	$S:=\Fix(u^n)$ satisfies $u_i(S)\subset S$, since $u^n\circ u_i=u_i\circ u^n$.
	As $u^n|_S=\mathrm{id}_S$, each $u_i|_S$ is a permutation with inverse
	$(u_{3-i}\circ u^{n-1})|_S$. Taking $s:=n(\#S)!$, we have
	$n\mid s$ and $u_1^s|_S=u_2^s|_S=\mathrm{id}_S$.
	Thus every point of $S$ is periodic for both $u_1$ and $u_2$.
	
	Let $x\in S\cap J$ be a smooth repelling fixed point of $u^n$.
	Put $R_i:=u_i^s$. Then $R_1(x)=R_2(x)=x$ and
	$R_1\circ R_2=u^s=(u^n)^{s/n}$ is repelling at $x$.
	By Proposition~\ref{prop:poincare}, there are $A,B,\Psi$ with
	$\Psi(0)=x$ and $D\Psi(0)$ invertible such that
	\[
	\Psi\circ A=R_1\circ\Psi\quad\text{and}\quad
	\Psi\circ B=R_2\circ\Psi.
	\]
	By Theorem~\ref{thm:flow}, $A$ and $B$ are repelling at $0$.
	Hence $R_1,R_2$ are repelling at $x$ by local conjugacy, and
	$x\in\operatorname{Per}^*(u_1)\cap\operatorname{Per}^*(u_2)$.
	Varying $n$, we obtain
	\[
	\operatorname{Per}^*(u)\cap J
	\subset\operatorname{Per}^*(u_1)\cap\operatorname{Per}^*(u_2)\cap J.
	\]
	Taking closures proves (2).
	\end{proof}

\subsection{Simultaneous analytic lamination}
	
	\begin{definition}[Simultaneous analytic lamination]\label{def:simultaneous-lamination}
		Let $M$ be a real analytic manifold of dimension $D$, let $G\geq0$ be
		continuous on $M$, and let $\mu$ be a Radon measure on $M$.
		An \emph{$m$-lamination chart for $(G,\mu)$} is a real analytic
		coordinate box $\Omega:=P\times Q$, where $P\subset\R^m$ and
		$Q\subset\R^{D-m}$ are open, such that
		$\mu|_\Omega=(dp_1\otimes\cdots\otimes dp_m)\otimes\sigma$.
		Here $dp_1\otimes\cdots\otimes dp_m$ is the standard Lebesgue measure
		on $P$, and $\sigma$ is a Radon measure on $Q$.
		For every fixed $q\in Q$, the function $p\mapsto G(p,q)$ is real analytic.
		We additionally require a function $\delta:[0,\epsilon)\to(0,\infty)$,
		with $\delta(t)\to1$ as $t\to0$, such that
		\[
		G(p,q)\leq\delta(\|p-p'\|)G(p',q)
		\quad\text{when }\|p-p'\|<\epsilon.
		\]
		This is the simultaneous lamination condition in
		\cite[Section~4, definitions preceding Proposition~4.4]{DS02}.
	\end{definition}

	\medskip

	For a fixed pair $(G,\mu)$, we say that $G$ is \emph{laminated} by the projection $\Omega\to Q$
	if $\Omega$ is a lamination chart for $(G,\mu)$ as in
	Definition~\ref{def:simultaneous-lamination}.
	
	\medskip
	
	\begin{prop}
		\label{prop:cone-lamination}
		With the notation of Propositions~\ref{prop:escape}
		and~\ref{prop:cone-equilibrium}, there exist a nonempty coordinate box
		$\Omega=P\times Q\subset Z_{\mathrm{reg}}$ and an integer
		$d\leq m<2d$ such that
		\[
		J\cap\Omega=P\times\{0\}
		\quad\text{and}\quad \nu_Z|_\Omega=(dp_1\otimes\cdots\otimes dp_m)\otimes\delta_0,
		\]
		and $G^+$ is laminated by the projection $\Omega\to Q$.
	\end{prop}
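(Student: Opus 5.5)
We follow Dinh--Sibony \cite[Section~4]{DS02}, transporting the flow of Theorem~\ref{thm:flow} to $Z$ through the Poincar\'e map. First, by Proposition~\ref{prop:cone-equilibrium}(2), choose a smooth point $a\in J\cap Z^\times_{\reg}$, periodic and repelling for both $u_1$ and $u_2$. Proposition~\ref{prop:poincare} then gives $A,B,\Psi$ with $\Psi(0)=a$, and $\Psi$ is biholomorphic near $0$. Theorem~\ref{thm:flow} gives a real one-parameter subgroup $(A_t)\subset\Gamma\subset\mathcal G_\rho$ with $U\circ A_t=e^tU$ and $A_t\to0$ as $t\to-\infty$. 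Since $\Gamma$ is a closed abelian subgroup of the finite-dimensional Lie group $\mathcal G_\rho$, we take its identity component $\Gamma_0$, a connected abelian real Lie group acting polynomially on $\C^d$. Pulling back, set $\mu:=\Psi^*\nu_Z$ near $0$. Each element $g=\lim A^{m_j}B^{n_j}$ of $\Gamma$ satisfies $U\circ g=\lambda(g)U$ for a continuous character $\lambda$. Moreover, the relations $u_i^*\nu_Z=q_i^d\nu_Z$ together with $\nu_Z=(L^k)^{-1}(dd^cG^+)^d$ give $g^*\mu=\lambda(g)^d\mu$, by homogeneity of the Monge--Amp\`ere operator, at least locally where things are defined.

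Second, we produce the lamination. Let $\Gamma_1:=\ker\lambda\cap\Gamma_0$, a closed subgroup preserving $U$ and $\mu$. We consider the orbit of a generic point $z\in\operatorname{supp}\mu$ near $0$ under $\Gamma_0$. The $\Gamma_0$-orbits are real-analytic immersed submanifolds, and on each orbit $U$ is real analytic, being $\lambda(g)U(z)$ along $g\cdot z$. Taking an orbit of maximal dimension $m$ inside $\operatorname{supp}\mu$, we use analyticity of the action to choose a box $P\times Q$ near such a point in which the orbits are the slices $P\times\{q\}$. Since $\Gamma_0$ is abelian and its action is free on the maximal orbits, Haar measure on $\Gamma_0$ transfers to Lebesgue measure on $P$. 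The invariance $g^*\mu=\lambda^d\mu$, combined with $\Gamma_1$-invariance, then forces $\mu$ to disintegrate as Lebesgue on the leaves times a transverse measure $\sigma$, up to the analytic density $\lambda^d$. This density can be absorbed by reparametrizing $P$.

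The sub-multiplicativity estimate $G(p,q)\le\delta(\|p-p'\|)G(p',q)$ follows from $U(g z)=\lambda(g)U(z)$, with $\lambda$ continuous and $\lambda(e)=1$. To pass from $U$ to $G^+$, we push everything forward by $\Psi$, which is biholomorphic near $0$, and then translate the box by some $A^mB^n$ to place it anywhere near $a$ in $Z_{\reg}$.

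Third, we show $\sigma$ is a Dirac mass and determine the range of $m$. For the lower bound $m\ge d$: $\mu=\Psi^*(dd^cG^+)^d$ is a top-degree Monge--Amp\`ere measure of a continuous psh function, so it gives no mass to pluripolar sets. An orbit of real dimension $<d$ in a generic direction, or a totally real piece, would carry a measure charging a pluripolar set. The relevant point is that real-analytic submanifolds of dimension $<d$ that are totally real are pluripolar, while complex directions inside the leaves would let $U$ be pluriharmonic along them, contradicting $(dd^cU)^d\ne0$. For $m<2d$: if $m=2d$, then $J$ would contain an open set, so $\{H=0\}$ would have interior. Then $G^+$ vanishes on an open set, and hence so does $\mu=(dd^cG^+)^d$, a contradiction. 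Finally, a single leaf suffices. Using density of common repelling points together with the contraction $A_t\to0$, we show that $\mu$-a.e. transverse point is attracted under $A_{-t}$ to the leaf through $0$. The $\Gamma_0$-equivariance then forces $\sigma$ to be supported at one transverse value, after shrinking $Q$, giving $J\cap\Omega=P\times\{0\}$.

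I expect this last step to be the main obstacle: proving that $\sigma$ is atomic and that $J$ is locally exactly one leaf. The plan is to mimic \cite[Proposition~4.4 and Lemma~4.5]{DS02} closely, using the contracting flow and maximality of the orbit dimension.
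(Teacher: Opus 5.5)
Your inputs match the paper's: a common smooth repelling point $a\in J\cap Z_{\mathrm{reg}}$, the simultaneous Poincar\'e map $\Psi$, the flow $A_t$ with $U\circ A_t=e^tU$ and $A_t^*(\Psi^*\nu_Z)=e^{dt}\Psi^*\nu_Z$, and non-pluripolarity for $m\geq d$. Your argument for $m<2d$ is also valid and differs from the paper's use of $H(ty)=\log|t|+H(y)$: an open subset of $J\subset\{H=0\}$ would be an open set where $G^+\equiv0$, hence $\nu_Z$-null. The gap is in how you produce the lamination. The leaves cannot be $\Gamma_0$-orbits: every element of $\Gamma\subset\mathcal G_\rho$ fixes $0$, and $\Gamma$ is in general far too small. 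Take $f_1=(z^2,w^2)$ and $f_2=(z^3,w^3)$ on $\P^1\times\P^1$ with the Segre embedding, so $Z=\{x_0x_3=x_1x_2\}$, $d=3$, $H=\log\max_i|x_i|$, and $J$ is the real $3$-torus $\{|x_i|=1\}$. At the fixed point $(1,1,1,1)$ one has $\Psi(u)=(e^{u_0},e^{u_1},e^{u_2},e^{u_1+u_2-u_0})$, $A=2I$, $B=3I$, hence $\Gamma=\Gamma_0=\{tI:t>0\}$. Its orbits in $J^*=\Psi^{-1}(J)=i\R^3$ are rays of dimension $1<d$. The transverse measure of $\Psi^*\nu_Z$ relative to these rays is a smooth measure on a $2$-dimensional family, not a Dirac mass. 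The true plaques are the $3$-dimensional slices $\{\operatorname{Re}u=c\}$, on which $U=\max(0,\operatorname{Re}u_0,\operatorname{Re}u_1,\operatorname{Re}u_2,\operatorname{Re}(u_1+u_2-u_0))$ is constant, and they are not orbits of anything in $\Gamma$.

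Your third step fails for the same reason. The $\Gamma_0$-orbit of $0$ is $\{0\}$, so ``attraction under $A_{-t}$ to the leaf through $0$'' only restates $A_{-t}\to0$. That holds for every point and puts no constraint on $\sigma$. Moreover, your ``maximal orbit dimension'' would give $m=1$ in this example, contradicting your own bound $m\geq d$. What is missing is a source of symmetries transverse to the $\Gamma_0$-orbits: local holomorphic maps that move points of $J^*$ along $J^*$ without fixing $0$. Something of this kind is needed to sweep out an $m$-dimensional plaque and force the transverse measure to be atomic. The paper obtains the chart by running the proof of \cite[Proposition~4.4]{DS02} in the Poincar\'e charts, using inverse branches of the dynamics in $Z_{\mathrm{reg}}$. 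Your plan has no counterpart to this, so as written it does not show that $J$ is locally a single real-analytic plaque carrying $\nu_Z$ as Lebesgue measure.
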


	\begin{proof}
		By Proposition~\ref{prop:cone-equilibrium}(1), the measure
		$\nu_Z$ gives no mass to pluripolar sets. By Proposition~\ref{prop:cone-equilibrium} (2), common smooth repelling periodic points are dense in its
		support. Choose such a point $a\in J\cap Z_{\reg}$, let $\Psi$ be
		the simultaneous Poincar\'e map based at $a$, and let
		$(A_t)_{t\in\R}$ be the real one-parameter subgroup of resonant polynomial
		automorphisms of $\C^d$ given by Theorem~\ref{thm:flow}.
		Thus $A_0=\mathrm{id}$ and $A_{t+t'}=A_t\circ A_{t'}$ for $t,t'\in\R$,
		and $A_t\to0$ locally uniformly as $t\to-\infty$.
		In a local chart, we have
		\[
		U\circ A_t=e^tU
		\quad\text{and}\quad A_t^*(\Psi^*\nu_Z)=e^{dt}\Psi^*\nu_Z.
		\]
		The proof of \cite[Proposition~4.4]{DS02}, applied in these charts
		and using inverse branches in $Z_{\mathrm{reg}}$, gives coordinates
		in which $J\cap\Omega=P\times\{0\}$, the measure $\nu_Z$ is a
		product, and $G^+$ satisfies the simultaneous lamination condition.
		
		The non-pluripolarity argument in the same proof gives $m\geq d$.
		Since $H(ty)=\log|t|+H(y)$, the set $\{H=0\}$ has empty interior.
		Thus $J\subset\{H=0\}$ implies $m<2d$.
	\end{proof}
	
	\subsection{Spectral bounds, the quadratic model, and local affine rigidity}\label{sec:affine-repair}

	A smooth real submanifold $M\subset\C^d$ is \emph{CR-generic} if
	$T_pM+iT_pM=T_p\C^d$ for every $p\in M$.
	In other words, the real tangent space spans the ambient complex
	tangent space. For a CR-generic $M$ of real dimension $d+n$, the complex tangent space
	$T_pM\cap iT_pM$ has complex dimension $n$, called the
	\emph{CR dimension}.

	\medskip

	Let $\Lambda$ be a polynomial automorphism of $\C^d$ fixing $0$,
	and put $A:=D\Lambda(0)$. Write $\lambda_1,\ldots,\lambda_d$
	for the eigenvalues of $A$.
	We call $\Lambda$ \emph{expanding} if $|\lambda_j|>1$ for all $j$.
	Choose coordinates in which the semisimple part of $A$ is
	$D_\lambda=\operatorname{diag}(\lambda_1,\ldots,\lambda_d)$.
	The condition $\Lambda\circ D_\lambda=D_\lambda\circ\Lambda$ means
	that $\Lambda$ belongs to the group $\mathcal G_\lambda$ defined as in
	Subsection~\ref{sec:poincare}, with $\rho$ replaced by
	$\lambda=(\lambda_1,\ldots,\lambda_d)$.
	
	\medskip

	\begin{lemma}
		\label{lem:spectral-bounds}
		Let $\Lambda$ be an expanding polynomial automorphism of $\C^d$
		fixing $0$ and commuting with $D_\lambda$, with $A=D\Lambda(0)$
		and $D_\lambda$ as above.
		Let $G\geq0$ be continuous and
		plurisubharmonic on $\mathbb C^d$, with $G\circ\Lambda=qG$ for $q>1$,
		and suppose $\{G=0\}$ contains no image of a nonconstant entire curve.
		Suppose $J:=\operatorname{supp}(dd^cG)^d$ is invariant under $\Lambda$
		and near $0$ is a real analytic CR-generic submanifold $M$ of real
		dimension $d+n$, on which $(dd^cG)^d$ has a positive continuous
		density with respect to Lebesgue measure in local real coordinates
		on $M$. Suppose also $0\in J\subset\{G=0\}$.
		Put $H:=T_0M$ and $E_c:=H\cap iH$, of complex dimension $n$.
		Then $H$ has an $A$-invariant real direct sum decomposition
		$H=E_c\oplus E_r$, with $\dim_\R E_r=d-n$.
		The eigenvalues of $A|_{E_c}$ have modulus $\sqrt q$, and those of
		the complexification of $A|_{E_r}$ have modulus $q$.
	\end{lemma}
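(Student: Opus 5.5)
The plan is to extract both spectral statements from the two scaling laws available at $0$: the invariance of $M$ under $\Lambda$ and the equation $G\circ\Lambda=qG$. Since $J$ is $\Lambda$-invariant and equals $M$ near $0$, the germ of $M$ at the fixed point $0$ is mapped into $J$ by $\Lambda$. Because $\Lambda$ is a local biholomorphism, this gives $\Lambda(M)=M$ as germs. Differentiating at $0$ shows that $A=D\Lambda(0)$ preserves $H=T_0M$. As $A$ is complex linear, it also preserves $iH$, and hence preserves $E_c=H\cap iH$. To obtain an $A$-invariant real complement $E_r$, I would use the primary decomposition of $A|_H$ into generalized eigenspaces, grouped by modulus of eigenvalue. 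Once the moduli have been computed, the spaces with modulus $\sqrt q$ and those with modulus $q$ give the splitting. Concretely, I would first show that $H/E_c$ carries only moduli $q$ and $E_c$ only moduli $\sqrt q$. Since $q\neq\sqrt q$ for $q>1$, the sequence $0\to E_c\to H\to H/E_c\to0$ then splits $A$-equivariantly, and $E_r$ is the sum of the generalized eigenspaces with modulus $q$. The dimension count $\dim_\R E_r=(d+n)-2n=d-n$ is then immediate.

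\emph{Measure scaling.} I would use $(dd^cG)^d$, which satisfies $\Lambda^*(dd^cG)^d=q^d(dd^cG)^d$. Restricted to $M$ with a positive continuous density, this forces the real Jacobian of $\Lambda|_M$ at $0$ to equal $q^d$, so $|\det_\R A|_H|=q^d$. This identity will pin the moduli down once upper bounds are in place.

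\emph{Transverse growth.} For $E_r$ (that is, $H/E_c$) I would use the growth of $G$ in directions transverse to $M$ inside $\C^d$. The space $iE_r$ is a complement of $H$, and $A$ acts on $\C^d/H\simeq iE_r$ with the same moduli as on $H/E_c$. Since $G$ vanishes on $M$ and is psh, and its Monge--Amp\`ere measure has a positive density on $M$, I would argue that $G$ grows at a rate comparable to the distance to $M$. Heuristically $G$ behaves like $\mathrm{dist}(\cdot,M)$ to first order, as in the model $G=\max(\mathrm{Im}\,z,0)$ for $\R\subset\C$. A Lipschitz upper bound follows from the positivity of the density, via a comparison principle with the model. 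A matching lower bound would come from $\{G=0\}$ containing no entire curve, together with iteration. The relation $G(A^kv)\approx q^kG(v)$ for transverse $v$, taken along iterates of $\Lambda$ and using resonance to replace $\Lambda$ by $A$ up to controlled error, then forces every eigenvalue on $\C^d/H$ to have modulus $q$.

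\emph{Complex tangent directions.} For $E_c$, the restriction of $G$ to complex discs tangent to $E_c$ inside $M$-leaves is quadratic rather than linear; this is the Levi-form behaviour of the quadratic model. That scaling yields moduli $\le\sqrt q$ and $\ge\sqrt q$. Alternatively, the determinant identity gives $|\det A|_{E_c}|^2\cdot q^{d-n}$ on the complexified count. With real dimensions, $|\det_\R A|_H|=|\det_\C A|_{E_c}|^2\cdot\prod_{E_r}|\lambda|=q^d$, so once $E_r$ contributes $q^{d-n}$, the product of the moduli on $E_c$ is $q^{n/2}$. Combined with a one-sided bound $\ge\sqrt q$ for each eigenvalue, this forces equality.

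The main obstacle is the sharp comparison of $G$ with the distance to $M$, and with its square in $E_c$ directions, near $0$. This is where the continuous positive density and the absence of entire curves in $\{G=0\}$ must be used, and where I expect to lean on the Dinh--Sibony arguments cited in Proposition~\ref{prop:cone-lamination}.
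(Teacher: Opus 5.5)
There is a genuine gap in the step that pins down the moduli. Your transverse argument needs a two-sided comparison $G\asymp\mathrm{dist}(\cdot,M)$, and the lower half of this comparison is false under the hypotheses of the lemma, because $\{G=0\}$ can have interior touching $M$.

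\begin{itemize}
\item Your own model already shows this: $\max(\mathrm{Im}\,z,0)$ vanishes on the whole lower half-plane.
\item In dimension two, take $\Lambda=2\,\mathrm{id}$ and $G=\max(0,-\mathrm{Im}\,w_1,-\mathrm{Im}\,w_2)$. Then $J=\R^2$, and $(dd^cG)^2$ is a positive multiple of Lebesgue measure on $\R^2$. The set $\{G=0\}$ is a product of closed upper half-planes, so by Liouville it contains no nonconstant entire curve. All hypotheses hold, yet $G\equiv0$ on an open cone adjacent to $M$.
\end{itemize}

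So the absence of entire curves in $\{G=0\}$ gives no lower bound on $G$ in terms of the distance to $M$. Consequently ``exactly $q$ on $H/E_c$'' is not established. The bound $\geq\sqrt q$ on $E_c$ rests on the same kind of lower estimate, so it has the same defect. Your determinant step then has nothing valid to combine with. Also, you cannot use the Levi-form behaviour ``of the quadratic model'': Proposition~\ref{prop:quadratic-model} takes the conclusions of this lemma as hypotheses.

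The repair is to use only upper bounds, which is how the paper argues. Two inputs are needed:
\begin{itemize}
\item every eigenvalue of $A$ has modulus at most $q$ (proof of \cite[Lemma~5.3]{DS02});
\item every eigenvalue $\alpha_1,\dots,\alpha_n$ of $A|_{E_c}$ has modulus at most $\sqrt q$ (first paragraph of the proof of \cite[Proposition~5.4]{DS02}).
\end{itemize}
These are the directions your Lipschitz and Levi-form heuristics point to. However, they are the real analytic input here, and the paper imports them from \cite{DS02} rather than deriving them from the model.

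By CR-genericity, $\C^d/E_c\simeq(H/E_c)\otimes_\R\C$. Hence the eigenvalues $\beta_1,\dots,\beta_{d-n}$ of the complexification of $A$ on $H/E_c$ are eigenvalues of $A$, and the global bound applies to them. Your measure-scaling identity then gives
\[
q^d=|\det\nolimits_\R(A|_H)|=\prod_{j=1}^n|\alpha_j|^2\prod_{j=1}^{d-n}|\beta_j|\leq q^n\,q^{d-n}=q^d .
\]
This forces $|\alpha_j|=\sqrt q$ and $|\beta_j|=q$. After that, your invariant spectral splitting $H=E_c\oplus E_r$ and the dimension count go through unchanged.
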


	\begin{proof}
		The proof of \cite[Lemma~5.3]{DS02} gives an upper bound $q$ for
		the modulus of every eigenvalue of $A$. On the complex tangent
		space $E_c$, the first paragraph of the proof of
		\cite[Proposition~5.4]{DS02} gives the upper bound $\sqrt q$. These
		arguments apply to the invariant spectral subspaces of $A$.
		
		Put $\mu:=(dd^cG)^d$. Since $G\circ\Lambda=qG$, we have
		$\Lambda^*\mu=q^d\mu$. The point $0$ is fixed, and the density of
		$\mu$ on $M$ is continuous and positive there. Hence
		$|\det(A|_H)|=q^d$, where the determinant is taken on the real vector space $H$, of
		dimension $d+n$. Let $\alpha_1,\ldots,\alpha_n$ be the complex
		eigenvalues of $A|_{E_c}$. The complexification of $A|_H$ has these
		eigenvalues, their conjugates, and $d-n$ further eigenvalues
		$\beta_1,\ldots,\beta_{d-n}$, all counted with algebraic multiplicity.
		Then
		\[
		q^d=\prod_{j=1}^n|\alpha_j|^2
		\prod_{j=1}^{d-n}|\beta_j|
		\leq q^nq^{d-n}=q^d.
		\]
		Thus $|\alpha_j|=\sqrt q$ and $|\beta_j|=q$.
		Let $E_r\subset H$ be the real spectral subspace whose complexification
		is the sum of the generalized eigenspaces of the complexification of
		$A|_H$ with eigenvalue modulus $q$.
		Since $q\ne\sqrt q$, we obtain $H=E_c\oplus E_r$ and
		$\dim_\R E_r=d-n$. CR-genericity gives
		$\C^d=E_c\oplus(E_r+iE_r)$ and $E_r\cap iE_r=\{0\}$.
	\end{proof}
	
	\medskip

	The complex eigenvalues of $A|_{E_r}$ need not equal $\pm q$.
	For a dynamical example, consider the endomorphism
	$f(x,y)=(y^{-2},x^2)$ of $(\P^1)^2$, polarized by
	$\mathcal O(1,1)$ with $q=2$. At its repelling fixed point $(1,1)$,
	the Poincar\'e map and the corresponding polynomial automorphism are
	\[
	\Psi(z_1,z_2)=(e^{iz_1},e^{iz_2}),\qquad
	\La(z_1,z_2)=(-2z_2,2z_1),
	\]
	so that $f\circ\Psi=\Psi\circ \La$.
	The equilibrium support $(S^1)^2$ pulls back to $\R^2$;
	thus $H=E_r=\R^2$, $E_c=0$, and
	\[
	A=D_0\La=2\begin{pmatrix}0&-1\\1&0\end{pmatrix},\qquad
	\operatorname{Spec}\bigl((A|_{E_r})_{\C}\bigr)=\{2i,-2i\}.
	\]

	\medskip

	We now use these spectral bounds to obtain a quadratic model.

	A function $\Phi\colon\R^r\to[0,\infty)$ is called
	\emph{positively one-homogeneous} if $\Phi(tv)=t\Phi(v)$ for
	every $v\in\R^r$ and every $t\geq0$.

	\medskip
	
	\begin{prop}\label{prop:quadratic-model}
		Let $U\colon\mathbb C^\kappa\to[0,\infty)$ be continuous and plurisubharmonic,
		with $U(0)=0$, and suppose that no nonconstant entire holomorphic curve is
		contained in $\{U=0\}$. Let $\La$ be a repelling $\lambda$-resonant
		polynomial automorphism such that $D_\lambda$ is the semisimple
		part of $D_0\La$, and suppose $U\circ \La=dU$, where $d>1$.
		Let $J\subset\{U=0\}$ be closed and satisfy $\La(J)=J$. Suppose that near $0$ it is a
		smooth real-analytic CR-generic submanifold of real dimension $\kappa+n$, with
		$0\leq n<\kappa$. Write $H:=T_0J$ and $E_c:=H\cap iH$.
		
		Assume that $H=E_c\oplus E_r$ is a $D_0\La$-invariant real direct sum,
		with $\dim_\R E_r=\kappa-n$. Suppose that every eigenvalue of
		$D_0\La|_{E_c}$ has modulus $\sqrt d$, and every eigenvalue of the complexification
		of $D_0\La|_{E_r}$ has modulus $d$.
		Assume also that an ambient real-analytic foliation near $0$ has $J$ as
		one plaque and satisfies the following uniform comparison: on each plaque,
		\[
		U(a)\leq\delta(\|a-b\|)U(b)\quad\text{and}\quad \delta(t)\longrightarrow1
		\quad(t\to0).
		\]
		Then after a polynomial change of coordinates, there are coordinates
		$(z,w)\in\mathbb C^n\times\mathbb C^r$, $r:=\kappa-n$, a real Hermitian
		vector-valued quadratic form $\Theta$, and a continuous convex positively
		one-homogeneous function $\Phi\colon\mathbb R^r\to[0,\infty)$ such that
		$\{v\in\C^n:\Theta(v,v)=0\}=\{0\}$ and
		\[
		J=\{\operatorname{Im}w=\Theta(z,z)\}
		\quad\text{and}\quad U(z,w)=\Phi(\operatorname{Im}w-\Theta(z,z)).
		\]
		Moreover
		\[
		\La(z,w)=(Pz,Bw),\quad \Theta(Pz,Pz)=B\Theta(z,z)\quad\text{and}\quad
		\Phi(Bv)=d\Phi(v),
		\]
		where $P\in\operatorname{GL}_n(\mathbb C)$ and
		$B\in\operatorname{GL}_r(\mathbb R)$ are matrices such that
		$P/\sqrt d$ is conjugate to a unitary matrix and
		$B/d$ is conjugate over $\mathbb R$ to an orthogonal matrix.
	\end{prop}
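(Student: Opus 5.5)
The plan is to follow the strategy of Dinh--Sibony \cite[Section~5]{DS02}, organized around the spectral splitting. First, normalize the linear part. Since $\La$ is $\lambda$-resonant and $|\lambda_j|>1$, the resonance relations $\lambda^\alpha=\lambda_j$ together with the moduli $\sqrt d$ on $E_c$ and $d$ on $E_r$ sharply restrict the possible monomials. Choose complex coordinates $(z,w)$ with $\C^n\times\{0\}\supset E_c$ and $\{0\}\times\C^r=E_r+iE_r$. The latter is a complement of $E_c$ by CR-genericity, as in Lemma~\ref{lem:spectral-bounds}. For the $z$-components, which have eigenvalue modulus $\sqrt d$, the only resonant monomials are linear in $z$. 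For the $w$-components, which have modulus $d$, the only resonant monomials are linear in $w$ or quadratic in $z$. Hence $\La(z,w)=(Pz,\,Bw+Q(z))$, where $Q$ is a holomorphic quadratic map. A polynomial change $w\mapsto w-\tilde Q(z)$ will remove $Q$, after solving $\tilde Q(Pz)-B\tilde Q(z)=Q(z)$. This operator is invertible unless there is a resonance; when there is one, I will absorb the resonant part into the form of $J$ below instead. Next, since $E_r$ is a real $B$-invariant subspace whose complexification is all of $\C^r$, we may choose the $w$-coordinates so that $E_r=\R^r$ and $B$ is real.

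Second, identify $J$. Near $0$, $J$ is a real-analytic graph $\operatorname{Im}w=h(z,\bar z,\operatorname{Re}w)$ with $h=O(2)$, and it is $\La$-invariant. Use $\La^{-m}$, which contracts with rates $d^{-m/2}$ on $z$ and $d^{-m}$ on $w$, and rescale. Invariance forces $h$ to be weighted-homogeneous of weight $2$, with $z$ of weight $1$ and $w$ of weight $2$. Hence $h=\operatorname{Re}(\text{holomorphic quadratic in }z)+\Theta(z,z)$ with $\Theta$ Hermitian. The holomorphic pluriharmonic part is removed by another change $w\mapsto w-iq(z)$; this uses up the resonant piece of $Q$ mentioned above. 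Because $J$ is $\La$-invariant and globally closed, I will argue via analytic continuation along $\La$ that $J$ globally equals $\{\operatorname{Im}w=\Theta(z,z)\}$. The relation $\Theta(Pz,Pz)=B\Theta(z,z)$ follows from invariance. Nondegeneracy, $\Theta(v,v)=0\Rightarrow v=0$, comes from the hypothesis on entire curves: if $\Theta(v,v)=0$ then, after a short check using the Hermitian structure, the complex line $t\mapsto(tv,0)$ would lie in $J\subset\{U=0\}$.

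Third, determine $U$ and prove the conjugacy statements. The foliation comparison hypothesis, combined with weighted rescaling by $\La^{-m}$, shows that $U$ is constant on the leaves. These leaves, after rescaling, become the translates $J+(0,ic)$ with $c\in\R^r$, i.e. $U$ depends only on $v=\operatorname{Im}w-\Theta(z,z)$: iterating $U=d^{-m}U\circ\La^m$ and applying $\delta(\cdot)\to1$ on shrinking plaques gives exact invariance. Write $U=\Phi(v)$. Then $U\circ\La=dU$ gives $\Phi(Bv)=d\Phi(v)$. Plurisubharmonicity of $U$ along the slices $z=\text{const}$ makes $\Phi(\operatorname{Im}w)$ plurisubharmonic in $w$, so $\Phi$ is convex. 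Positive one-homogeneity comes from the one-parameter flow $A_t$ of Theorem~\ref{thm:flow}, or equivalently from taking limits of $B^m$ normalized by $d^{-m}$. For the conjugacy claims, $\{\Phi=0\}=\{0\}$ by the entire-curve hypothesis, and $\Phi$ is convex and positive away from $0$, so $\Phi$ defines a norm-like gauge with $\Phi(Bv)=d\Phi(v)$. Then $\{d^{-m}B^m\}_{m\in\Z}$ is bounded, hence $B/d$ lies in a compact group and is conjugate to an orthogonal matrix. Similarly, $\Theta$ composed with any positive functional in the dual cone gives a positive definite Hermitian form, and the relation $\Theta(Pz,Pz)=B\Theta(z,z)$ makes $\{d^{-m/2}P^m\}$ bounded, so $P/\sqrt d$ is conjugate to a unitary matrix. \emph{The main obstacle} I expect is the second step: showing that $J$ is exactly the quadric, globally and with no higher-order terms, and that the Hermitian part is definite in the vector-valued sense. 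This requires the careful weighted-homogeneity and closure argument together with the entire-curve exclusion.
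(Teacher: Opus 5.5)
Your first two steps are essentially the paper's argument. Resonance gives $\La(z,w)=(Pz,Bw+C(z))$. The Taylor/weighted-degree argument gives $J=\{\operatorname{Im}w=\Theta(z,z)+\operatorname{Im}p(z)\}$ near $0$. Invariance of $J$ then forces $C=p\circ P-Bp$, so $(z,w)\mapsto(z,w-p(z))$ straightens $J$ and linearizes $\La$ at once. Since $C$ consists only of resonant monomials, the cohomological equation you mention never removes it; the equation of $J$ does.

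The genuine gap is in your third step. You claim $\{\Phi=0\}=\{0\}$ by the entire-curve hypothesis, and then use $\Phi$ as a norm-like gauge to bound $\{d^{-m}B^m\}$. This is false. Take $\kappa=1$, $U(w)=\max(0,-2\pi\operatorname{Im}w)$, $\La(w)=2w$, and $J=\R$ foliated by horizontal lines (the Poincar\'e model of $z\mapsto z^2$ at $1$). Every hypothesis holds, yet $\Phi(v)=\max(0,-2\pi v)$ vanishes on $[0,\infty)$. Compare the piecewise-linear formula $\Phi(v)=\max\bigl(0,\max_{\nu}-2\pi\langle\nu,v\rangle\bigr)$ derived in the proof of Proposition~\ref{prop:holomorphic-cover}, which typically vanishes on a whole cone. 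The entire-curve hypothesis only rules out vanishing along a full line: if $\Phi(tv)=\Phi(-tv)=0$ for all $t\geq0$, then $\zeta\mapsto(0,\zeta v)$ lies inside $\{U=0\}$. Your homogeneity step does not survive either. The flow $A_t$ of Theorem~\ref{thm:flow} is not among the hypotheses of this proposition. Passing to limits of $d^{-m}B^m$ presupposes the compactness you only derive afterwards from the false positivity, so the argument is circular.

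The repair, which is the paper's argument, works with $S(v):=\Phi(v)+\Phi(-v)$ and needs no homogeneity at the outset.
\begin{itemize}
\item The set $\mathcal B:=\{S\leq1\}$ is a bounded convex neighborhood of $0$. If it were unbounded, there would be a direction $v$ with $S(tv)=0$ for all $t\geq0$, giving the complex line above.
\item With $R:=B/d$, convexity and $\Phi(0)=0$ give $\Phi(Rv)\leq d^{-1}\Phi(Bv)=\Phi(v)$. Applying this to $\pm v$ yields $R\mathcal B\subset\mathcal B$.
\item All eigenvalues of $R$ have modulus one, so $|\det R|=1$, and volume comparison gives $R\mathcal B=\mathcal B$. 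Hence $\overline{\{R^m:m\in\Z\}}$ is a compact group, and $R^{m_j}\to I$ along some $m_j\to\infty$.
\item Only then does homogeneity follow, from $\Phi(R^{m_j}v)=d^{m_j}\Phi(d^{-m_j}v)$ together with the monotonicity of $t\mapsto\Phi(tv)/t$ coming from convexity.
\end{itemize}

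For $P$, you do not need a positive functional making $\Theta$ definite. You give no argument for one, and nondegeneracy alone does not supply it. For instance, every real linear combination of the components of $(|z_1|^2-|z_2|^2,\,2\operatorname{Re}z_1\bar z_2,\,2\operatorname{Im}z_1\bar z_2)$ is a trace-zero, hence indefinite or zero, Hermitian form. Instead, nondegeneracy and compactness of the unit sphere give $\|\Theta(u,u)\|\geq c\|u\|^2$. Then $\Theta\bigl((P/\sqrt{d})^mz,(P/\sqrt{d})^mz\bigr)=R^m\Theta(z,z)$, with $\{R^m\}_{m\in\Z}$ bounded, shows that all powers of $P/\sqrt{d}$ are bounded.
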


	\begin{proof}
		Choose complex linear coordinates with $E_c=\C^n\times\{0\}$
		and $E_r=\{0\}\times\R^r$. Taking absolute values in the resonance
		relations, a nonlinear monomial can only be quadratic in $z$ and
		occur in a $w$-component. Thus $\La(z,w)=(Pz,Bw+C(z))$,
		where $P\in\operatorname{GL}_n(\C)$
		and $B\in\operatorname{GL}_r(\R)$ are the matrices of the two
		blocks of $D_0\La$, and $C\colon\C^n\to\C^r$ is a holomorphic
		homogeneous quadratic polynomial map. Write $w=x+iy$, with
		$x,y\in\R^r$. The implicit function theorem gives, near $0$,
		$J=\{y=h(z,\bar z,x)\}$, where $h$ is a real analytic map to
		$\R^r$ with $h(0)=Dh(0)=0$.

		By the Taylor expansion argument of \cite[Proposition~5.4]{DS02},
		we can write $h(z,\bar z)=\Theta(z,z)+\operatorname{Im}p(z)$ as in
		\cite[Proposition~5.6]{DS02}.
		Here $\Theta\colon\C^n\times\C^n\to\C^r$ is complex linear in its
		first variable, conjugate linear in its second, and satisfies
		$\Theta(v,u)=\overline{\Theta(u,v)}$ componentwise; in particular,
		$\Theta(z,z)\in\R^r$. The map $p\colon\C^n\to\C^r$ is holomorphic
		and homogeneous of degree two. Comparing terms of bidegree
		$(1,1)$ in $z,\bar z$ in the graph equation gives
		$\Theta(Pz,Pz)=B\Theta(z,z)$. The remaining terms give
		$\operatorname{Im}(p(Pz)-Bp(z)-C(z))=0$.
		A holomorphic function taking only  real values is a constant
		hence constant, and  its value at $0$ is zero, therefore
		$C(z)=p(Pz)-Bp(z)$.
		The polynomial automorphism $\chi(z,w):=(z,w-p(z))$ gives
		$\chi\circ \La\circ\chi^{-1}(z,w)=(Pz,Bw)$.
		In these coordinates, $J$ agrees near $0$ with
		$\{\operatorname{Im}w=\Theta(z,z)\}$.
		Both sets are invariant under the transformed map, by
		$\Theta(Pz,Pz)=B\Theta(z,z)$, and $(P^{-m}z,B^{-m}w)\to0$
		locally uniformly as $m\to\infty$, since all eigenvalues of $P,B$
		have modulus greater than one. Applying a sufficiently large
		inverse iterate proves the equality globally.
		Finally, if $\Theta(v,v)=0$ for some $v\ne0$, then
		$t\mapsto(tv,0)$ is a nonconstant entire curve in
		$J\subset\{U=0\}$, a contradiction.

		By the argument in the proof of
		\cite[Proposition~5.7]{DS02}, the function $U$ is constant on each
		translated quadric $\{\operatorname{Im}w-\Theta(z,z)=c\}$. Thus
		$U=\Phi(\operatorname{Im}w-\Theta(z,z))$. The identity
		$U\circ \La=dU$ gives $\Phi(Bv)=d\Phi(v)$.
		
		The restriction $U(0,w)=\Phi(\operatorname{Im}w)$ is
		plurisubharmonic, so $\Phi$ is convex, and $\Phi(0)=0$.
		Put $S(v):=\Phi(v)+\Phi(-v)$. The symmetric convex set
		$\mathcal B:=\{v:S(v)\leq1\}$ is a neighborhood of $0$.
		It is bounded: otherwise, choose $v_j\in\mathcal B$ with
		$\|v_j\|\to\infty$ and $v_j/\|v_j\|\to v$, where $\|v\|=1$.
		For every $t\geq0$, convexity and continuity give
		\[
		S(tv)=\lim_{j\to\infty}S\left(\frac{t}{\|v_j\|}v_j\right)
		\leq\lim_{j\to\infty}\frac{t}{\|v_j\|}S(v_j)=0.
		\]
		Since $\Phi\geq0$, this implies $U(0,\zeta v)=0$ for every
		$\zeta\in\C$, contrary to the hypothesis on entire curves.

		Let $R:=B/d$. Convexity and $\Phi(Bv)=d\Phi(v)$ give
		$R\mathcal B\subset\mathcal B$. Every eigenvalue of $R$ has
		modulus one, so $|\det R|=1$; volume comparison therefore gives
		$R\mathcal B=\mathcal B$. Since $\mathcal B$ is a bounded
		neighborhood of zero, the closure of $\{R^m:m\in\Z\}$ is a
		compact group. Hence there are integers $m_j\to\infty$ such that
		$R^{m_j}\to I$.

		Fix $v\in\R^r$. For $0<a\leq b$, convexity gives
		\[
		\Phi(av)
		=\Phi\left(\frac ab\,bv+\left(1-\frac ab\right)0\right)
		\leq\frac ab\,\Phi(bv).
		\]
		Thus $t\mapsto\Phi(tv)/t$ is nondecreasing on $(0,\infty)$.
		Using continuity, we obtain
		\[
		d^{m_j}\Phi(d^{-m_j}v)
		=\Phi(B^{m_j}d^{-m_j}v)
		=\Phi(R^{m_j}v)\longrightarrow\Phi(v).
		\]
		For $0<t\leq1$ and all sufficiently large $j$, we have
		$d^{-m_j}\leq t\leq1$, and hence
		\[
		\Phi(R^{m_j}v)\leq\frac{\Phi(tv)}{t}\leq\Phi(v).
		\]
		Letting $j\to\infty$ gives $\Phi(tv)=t\Phi(v)$.
		For $t>1$, applying this identity to the vector $tv$ and the
		factor $1/t$ gives $\Phi(v)=t^{-1}\Phi(tv)$. The case $t=0$
		follows from $\Phi(0)=0$. Thus $\Phi$ is positively one-homogeneous.

		The equality $\Theta(P^mz,P^mz)=B^m\Theta(z,z)$ and the condition
		$\{v\in\C^n:\Theta(v,v)=0\}=\{0\}$ imply that all powers of $P/\sqrt d$ are bounded.
		By \cite[Lemma~5.10]{DS02}, a group of real (respectively complex)
		linear maps preserving a bounded spanning set preserves a
		positive-definite real (respectively Hermitian) inner product.
		The two normalized groups preserve bounded neighborhoods of zero,
		so this gives the asserted orthogonal and unitary forms.
	\end{proof}
	
	The quadratic model gives the following local affine rigidity statement.

	\begin{prop}
		\label{prop:quadric-affine-repair}
		Use the coordinates and notation of
		Proposition~\ref{prop:quadratic-model}, and suppose that
		$\operatorname{supp}(dd^cU)^\kappa=J$.
		Then there are a positive-definite Hermitian form $\zeta$ on $\mathbb C^n$
		and a positive-definite real inner product $\eta$ on $\mathbb R^r$,
		depending only on $\Phi$ and $\Theta$, with the following property.
		
		If $W\subset\mathbb C^\kappa$ is connected and open, $W\cap J\neq\varnothing$,
		and $\tau:W\to\mathbb C^\kappa$ is an open holomorphic map such that
		$U\circ\tau=U$, then $\tau$ is the restriction of an affine holomorphic
		automorphism of the form
		\begin{equation}
			\label{eq:repaired-affine-form}
			\tau(z,w)=
			\bigl(Az+a,\;Dw+2i \Theta(Az,a)+b+i \Theta(a,a)\bigr),
		\end{equation}
		where $a\in\mathbb C^n$, $b\in\mathbb R^r$,
		$A\in\operatorname{GL}_n(\mathbb C)$, and
		$D\in\operatorname{GL}_r(\mathbb R)$.
		We denote by $T_{a,b}$ the map in~\eqref{eq:repaired-affine-form}
		with $A=I_n$ and $D=I_r$. Moreover,
		$\Theta(Au,Av)=D\Theta(u,v)$, and
		\[
		\zeta(Au,Av)=\zeta(u,v)
		\quad\text{and}\quad \eta(Dc,De)=\eta(c,e).
		\]
			\end{prop}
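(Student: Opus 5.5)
We follow the strategy of \cite[Section~5]{DS02}: first show that $\tau$ preserves $J$ and the family of level quadrics, then use the rigidity of generic real quadrics to force $\tau$ to be affine.

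\textbf{Step 1: $\tau$ preserves $J$.} Put $\mu:=(dd^cU)^\kappa$. Since $U\circ\tau=U$ and $\tau$ is open, we get $\tau^*\mu=\mu$ on the open set where $\tau$ is locally biholomorphic. This set is the complement of the critical set, and the critical set is a proper analytic subset because $\tau$ is open. Hence $\tau$ maps $J\cap W$ into $J=\operatorname{supp}\mu$ off the critical set, and by continuity on all of $J\cap W$.

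\textbf{Step 2: $\tau$ preserves each level quadric.} More precisely, $\tau$ maps each translated quadric $Q_c:=\{\operatorname{Im}w-\Theta(z,z)=c\}$ into a quadric $Q_{c'}$ with $\Phi(c')=\Phi(c)$. To see this, note that the complex tangent directions of $Q_c$ are the maximal complex-analytic directions along which $U$ is locally constant in the lamination sense. Alternatively, one can use that $\tau$ restricted to $J\cap W$ is a CR map of a real-analytic CR-generic quadric with nondegenerate Levi form. The nondegeneracy comes from $\{\Theta(v,v)=0\}=\{0\}$, which is already established in Proposition~\ref{prop:quadratic-model}.

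\textbf{Step 3: $\tau$ is affine.} A local biholomorphism sending an open piece of the nondegenerate quadric $J$ into $J$ is a restriction of an element of the automorphism group of the quadric. For a quadric with this nondegeneracy, and with the extra constraint that the induced map preserves the whole foliation by the $Q_c$, the automorphisms are the affine maps of the form \eqref{eq:repaired-affine-form}. The plan for this step:
\begin{enumerate}
\item Compose $\tau$ with a Heisenberg-type translation $T_{a,b}$ so that the composite fixes a point of $J$, which we move to $0$.
\item Expand the composite in power series.
\item Compare homogeneous terms using the weighted homogeneity $(z,w)\mapsto(tz,t^2w)$ of the quadric. Preserving all the level sets $Q_c$, rather than only $J$, rules out the projective (Chern--Moser type) components.
\end{enumerate}
This yields a linear map $(z,w)\mapsto(Az,Dw)$ with $\Theta(Az,Az)=D\Theta(z,z)$. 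By polarization this gives $\Theta(Au,Av)=D\Theta(u,v)$, and $D$ is real because $D$ maps $\operatorname{Im}$-levels to $\operatorname{Im}$-levels. Writing out the composite with $T_{a,b}$ explicitly gives exactly \eqref{eq:repaired-affine-form}. Since $\tau$ is affine on the connected set $W$, it is the restriction of a global affine automorphism. I expect this step to be the main obstacle, specifically excluding the nonaffine quadric automorphisms. I would first try to use that $U=\Phi(\operatorname{Im}w-\Theta)$ is preserved on a full neighborhood, together with the rescaling by $\La$: conjugate by $\La^m$ and let $m\to\infty$. The rescaled maps then converge to the weighted-homogeneous part of $\tau$, and any higher-order terms would violate $U$-invariance after rescaling.

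\textbf{Step 4: the invariant forms.} It remains to construct $\zeta$ and $\eta$ depending only on $\Phi$ and $\Theta$. Since $\Phi$ is $D$-invariant ($\Phi(Dv)=\Phi(v)$, coming from $U\circ\tau=U$ on the $w$-axis), $D$ preserves the bounded symmetric convex body $\mathcal B=\{\Phi(v)+\Phi(-v)\le1\}$. Then $D$ lies in the compact group $\mathrm{Stab}(\mathcal B)$, and we take $\eta$ to be the inner product averaged over this group, i.e.\ invariant under it. Next, $\Theta(Az,Az)=D\Theta(z,z)$ shows that $A$ preserves the bounded set $\{z:\ \Theta(z,z)\in\mathcal B\}$, which is bounded by nondegeneracy. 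By \cite[Lemma~5.10]{DS02}, the group of all such $A$ preserves a Hermitian form $\zeta$ determined by this set.
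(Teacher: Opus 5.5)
Your Steps 1 and 4 are fine. The central claim, that $\tau$ is affine, is not established, and the route through Steps 2--3 breaks down in the generality of the statement. The paper itself gives no argument here: its proof is only the citation of \cite[Proposition~5.12]{DS02}.

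\textbf{Step 3.} It presupposes that a local biholomorphism preserving $J$ is the restriction of an element of a finite-dimensional (Chern--Moser/Tanaka) automorphism group of the quadric. Beyond $\{\Theta(v,v)=0\}=\{0\}$, this needs the components of $\Theta$ to be linearly independent, which fails here in general. For instance, when $n=0$ one has $J=\{\operatorname{Im}w=0\}=\mathbb R^r$, which is totally real. The complexification of any real-analytic local diffeomorphism of $\mathbb R^r$ then preserves $J$. This case genuinely occurs: for the power maps $[x_0^d:\cdots:x_k^d]$ one gets $U=\max(0,-\operatorname{Im}w_0,\dots,-\operatorname{Im}w_k)$.

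\textbf{Step 2.} The same example defeats your justification. On the open cone where $\Phi(v)=-v_0$, the function $U=-\operatorname{Im}w_0$ is pluriharmonic, and its level sets are foliated by complex hyperplanes. So the complex directions along which $U$ is locally constant are far larger than the (here trivial) complex tangent spaces of the $Q_c$. The CR-map alternative only concerns $Q_0=J$.

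\textbf{Rescaling.} It cannot see higher-order terms. Since $U\circ\Lambda=dU$, every conjugate $\Lambda^m\circ\tau\circ\Lambda^{-m}$ preserves $U$ exactly. The limit therefore only recovers the weighted-linear part of $\tau$ at the base point, and the discarded terms produce no contradiction.

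\textbf{What is missing.} You need to exploit the invariance at every point of $J\cap W$, not at a single point. Put $\rho:=\operatorname{Im}w-\Theta(z,z)$; then $\rho\circ T_{a,b}=\rho$. For $p\in J\cap W$ off the critical set of $\tau$, choose $T_{a,b}(0)=p$ and $T_{a',b'}(0)=\tau(p)$, and set $\tau_p:=T_{a',b'}^{-1}\circ\tau\circ T_{a,b}$. This map fixes $0$ and preserves $U$ and $J$. Your base-point analysis then shows that its weighted-linear part is $(z,w)\mapsto(A_pz,D_pw)$, with $D_p$ real, $\Phi\circ D_p=\Phi$, and $\Theta(A_p\cdot,A_p\cdot)=D_p\Theta$. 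So this part lies in the compact group of your Step 4, which means $\zeta$ and $\eta$ should be built first.

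You still have to show that $\tau|_J$ is affine. Here $\tau|_J$ is viewed as a map of the nilpotent group of the $T_{a,b}$, and its graded differential is everywhere a $(\zeta,\eta)$-isometry. In the totally real case this reduces to the fact that a map of $\mathbb R^r$ whose Jacobian is everywhere $\eta$-orthogonal is a rigid motion. The identity theorem then extends the affine formula from a neighbourhood of $J$ to all of $W$. This rigidity step is exactly the content the paper imports from \cite[Proposition~5.12]{DS02}.
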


	\begin{proof}
		This is \cite[Proposition~5.12]{DS02}.
	\end{proof}

	\subsection[The affine deck transformation group and the quotient map]{The affine deck transformation group\\and the quotient map}\label{sec:gq-global}
	We retain the cone $Z$ and the maps $u_1,u_2$ of
	Subsection~\ref{sec:ambient}, and put $\kappa:=\dim Z=k+1$.
	There are a common smooth repelling periodic point $x_0$ of $u_1,u_2$
	and an integer $s\geq1$ such that $u_1^s$ fixes this point,
	admits the quadratic model of Proposition~\ref{prop:quadratic-model},
	and satisfies the conclusions of Proposition~\ref{prop:gq-transitive} below.

	\medskip

	To choose $x_0$ and $s$, take a plaque $M$ from
	Proposition~\ref{prop:cone-lamination}, with
	$\kappa\leq m:=\dim_{\R}M<2\kappa$.
	By Proposition~\ref{prop:cone-equilibrium}(1) and
	\cite[Section~5, discussion before Lemma~5.3]{DS02}, we may shrink
	$M$ so that it is CR-generic. Proposition~\ref{prop:cone-equilibrium}(2) gives
	\[
	x_0\in M\cap\operatorname{Per}^*(u_1)\cap\operatorname{Per}^*(u_2)
	\quad\text{and}\quad u_1^s(x_0)=u_2^s(x_0)=x_0
	\]
	for some $s\geq1$. Put $F:=u_1^s$ and $D:=d_1^s$.
	By Proposition~\ref{prop:poincare}, there are a holomorphic map
	$\Psi:\C^\kappa\to Z^\times$ and an expanding $\lambda$-resonant
	polynomial automorphism $\Lambda$ such that
	\[
	F\circ\Psi=\Psi\circ\Lambda\quad\text{and}\quad\Psi(0)=x_0.
	\]
	Here $D_\lambda$ is the semisimple part of $D\Lambda(0)$.
	The map $\Psi$ is locally biholomorphic at $0$, open and locally
	finite, and $\Psi(0)$ is a repelling fixed point of $F$.

	\medskip

	Retain $U:=G^+\circ\Psi$ as in Subsection~\ref{sec:flow}, and put
	$J^*:=\Psi^{-1}(J)\subset\{U=0\}$. Local biholomorphicity at $0$ gives
	$\operatorname{supp}(dd^cU)^\kappa=J^*$ near $0$.
	Since $U\circ\Lambda=DU$, $\Lambda^{-1}(J^*)=J^*$ and
	$\Lambda^{-j}\to0$, this equality holds on all of $\C^\kappa$.
	The local density and lamination conditions follow from
	Proposition~\ref{prop:cone-lamination}; the absence of nonconstant
	entire curves in $\{U=0\}$ follows from Lemma~\ref{lem:no-entire}.
	Set $n:=m-\kappa$ and $r:=\kappa-n\geq1$.
	Lemma~\ref{lem:spectral-bounds} supplies the spectral hypotheses of
	Proposition~\ref{prop:quadratic-model}. In the resulting coordinates,
	\[
	J^*=\{\operatorname{Im}w=\Theta(z,z)\}
	\quad\text{and}\quad \Lambda(z,w)=(Pz,Bw),
	\]
	where $P/\sqrt D$ is unitary and $B/D$ is real orthogonal.
	We use $\zeta,\eta$ and $T_{a,b}$ from
	Proposition~\ref{prop:quadric-affine-repair}, applied with $J^*$ in place of $J$.

	\medskip

	Let $K$ be the group of linear maps of the form
	\eqref{eq:repaired-affine-form} with $a=b=0$ satisfying the identities
	in Proposition~\ref{prop:quadric-affine-repair}.
	It is a closed subgroup of the product of the isometry groups of
	$\zeta$ and $\eta$, hence compact.

	\medskip

	Define the affine deck transformation group and the image of $\Psi$ by
	\[
	\mathcal A:=\{\gamma\in\Aff(\C^\kappa):\Psi\circ\gamma=\Psi\}
	\quad\text{and}\quad \Omega:=\Psi(\C^\kappa).
	\]

	\medskip

	\begin{prop}\label{prop:gq-transitive}\label{prop:input-data}
		The group $\mathcal A$ acts properly discontinuously on $\C^\kappa$
		and cocompactly on $J^*$. Every fiber of $\Psi$ is an
		$\mathcal A$-orbit, $\Omega$ is open, and the induced map
		$\C^\kappa/\mathcal A\to\Omega$ is a biholomorphism.
		Moreover, every $\gamma\in\mathcal A$ has a factorization
		$\gamma=T_{a,b}\circ S$ with $S\in K$, and
		$\Lambda\mathcal A\Lambda^{-1}\subset\mathcal A$.
	\end{prop}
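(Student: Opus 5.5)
We will follow the strategy of Dinh--Sibony \cite[Section~6]{DS02}. The guiding idea is to produce many local deck transformations of $\Psi$ from inverse branches of the dynamics. Proposition~\ref{prop:quadric-affine-repair} then forces each of them to be affine of the form \eqref{eq:repaired-affine-form}, and affineness lets us globalize.

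\textbf{Step 1 (local deck maps).} Let $x,y\in\C^\kappa$ with $\Psi(x)=\Psi(y)$, and assume first that $\Psi$ is locally biholomorphic at both points. Choose a small connected neighbourhood $W$ of $x$ and let $\tau:=(\Psi|_{W'})^{-1}\circ\Psi|_W$ be the holomorphic map with $\tau(x)=y$. Since $G^+\circ\Psi=U$, we have $U\circ\tau=U$ on $W$. Proposition~\ref{prop:quadric-affine-repair} needs $W\cap J^*\neq\varnothing$. To arrange this we conjugate by $\Lambda^{-j}$: the maps $\Lambda^{j}\circ\tau\circ\Lambda^{-j}$ are also local deck maps, because $\Psi\circ\Lambda^j=F^j\circ\Psi$, and they are defined on $\Lambda^j(W)$. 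That domain eventually meets $J^*$, since $J^*$ is $\Lambda$-invariant, contains $0$, and $\Lambda^{-j}\to 0$. So these conjugates are affine of the required form, and hence so is $\tau$, because $\Lambda$ is linear in our coordinates. By analytic continuation, $\tau$ then extends to a global affine $\gamma$ with $\Psi\circ\gamma=\Psi$ on $W$, and hence on all of $\C^\kappa$. This gives $\gamma\in\mathcal A$ with $\gamma(x)=y$ and the factorization $\gamma=T_{a,b}\circ S$, $S\in K$.

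Points where $\Psi$ is not locally biholomorphic form an analytic set. We treat them by a limiting argument: approximate $x,y$ by regular pairs in the same fiber, which is possible because $\Psi$ is open and locally finite, and use the compactness statement in the next step. This shows that every fibre of $\Psi$ is an $\mathcal A$-orbit.

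\textbf{Step 2 (discreteness, properness, cocompactness).} Take a sequence $\gamma_j=T_{a_j,b_j}S_j\in\mathcal A$ with bounded translation parts. Since $K$ is compact, it has a convergent subsequence, and the limit $\gamma$ still satisfies $\Psi\circ\gamma=\Psi$. The fibres of $\Psi$ are discrete, so $\gamma_j(0)$ eventually equals $\gamma(0)$. By Step 1 the element $\gamma_j^{-1}\gamma$ fixes $0$ and is a deck map near the point $0$, where $\Psi$ is locally injective; hence $\gamma_j=\gamma$. Thus $\mathcal A$ is discrete in $\Aff(\C^\kappa)$. Since the linear parts lie in a compact group, this gives proper discontinuity.

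Next, $\Omega$ is open because $\Psi$ is open. The induced map $\C^\kappa/\mathcal A\to\Omega$ is then a continuous bijective open map. It is locally biholomorphic away from the branch set, and by normality it is biholomorphic; one has to check that $\Psi$ has no branching beyond the stabilisers, which follows because any branching would contradict the affine form of the local deck maps.

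For cocompactness on $J^*$, note that $\Psi(J^*)\subset J$, which is compact, and $\Psi$ is locally biholomorphic near $0$. Equivariance under $\Lambda$ together with $\Lambda^{-j}\to0$ shows that a fixed compact neighbourhood of $0$ in $J^*$ meets every fibre over $J$. Hence its $\mathcal A$-translates cover $J^*$.

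\textbf{Step 3.} For $\gamma\in\mathcal A$ we have $\Psi\circ\Lambda\gamma\Lambda^{-1}=F\circ\Psi\circ\gamma\circ\Lambda^{-1}=F\circ\Psi\circ\Lambda^{-1}=\Psi$. Therefore $\Lambda\gamma\Lambda^{-1}\in\mathcal A$, and it is affine because $\Lambda$ is linear.

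\textbf{Main obstacle.} The hardest point is in Step 1: handling points where $\Psi$ is not locally biholomorphic, and showing that local deck maps continue analytically to globally defined maps without monodromy problems. We would first try to use the fact that the affine continuation is unique and that $\C^\kappa$ is simply connected, as in \cite[Proposition~6.1]{DS02}.
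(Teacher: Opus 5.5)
\textbf{There is a genuine gap, and it is where the global content of the proposition lies.}

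\emph{First gap: fibres over $\Omega\setminus J$.} The conjugation trick in Step~1 cannot work. The set $J^*=\Psi^{-1}(J)$ is totally invariant, $\Lambda^{-1}(J^*)=J^*$, and $\Lambda$ is bijective. Hence $\Lambda^j(W)\cap J^*=\Lambda^j(W\cap J^*)$, which is empty for every $j$ whenever $W\cap J^*$ is empty. The contraction $\Lambda^{-j}\to 0$ shrinks sets toward $0$, but it cannot create intersections with an invariant set.

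So Proposition~\ref{prop:quadric-affine-repair} only produces deck transformations through pairs $x,y\in J^*$. There your Step~1, together with your limiting argument at branch points, is fine and essentially agrees with the paper. It gives no information about fibres of $\Psi$ over $\Omega\setminus J$.

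You also cannot reach $J^*$ by continuing the local inverse $(\Psi|_{W'})^{-1}$ along a path. $\Psi$ is only known to be open and locally finite, not a branched covering over $\Omega$, so the tracked preimage may escape to infinity (asymptotic values). The obstacle you name, monodromy, is not the real one. Once a deck map is affine on an open set meeting $J^*$, the identity theorem globalizes it with no continuation issue. What is actually missing is fibre-transitivity off $J^*$.

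\emph{Second gap: cocompactness is circular.} For $x\in J^*$ one has $\Psi(\Lambda^{-j}x)\in F^{-j}(\Psi(x))$. So the fact that $\Lambda^{-j}x$ eventually lies in a fixed compact set $N$ only shows that the backward orbit $F^{-j}(\Psi(x))$ meets $\Psi(N)$. It does not show that $\Psi(x)\in\Psi(N)$. Moving $x$ itself into $N$ by an element of $\mathcal A$ is exactly the cocompactness you are trying to prove.

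\emph{The missing idea} is a global dynamical input. The paper obtains both cocompactness on $J^*$ and $\Psi^{-1}(\Psi(x))=\mathcal A\cdot x$ for all $x\in\C^\kappa$ from the argument of \cite[Proposition~5.15]{DS02}. Its essential inputs are the backward-orbit equidistribution of Proposition~\ref{prop:cone-equilibrium}(1) and the local finiteness of $\Psi$, used together with the quadratic model and the invariant inner products. Your proposal never uses equidistribution, and nothing else in it controls $\Psi$ away from neighbourhoods of $0$ and of $J^*$.

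The remaining parts of your proposal are fine:
\begin{itemize}
\item discreteness of $\mathcal A$, and proper discontinuity via compactness of $K$;
\item openness of $\Omega$;
\item the biholomorphism $\C^\kappa/\mathcal A\to\Omega$ once fibres are known to be orbits (finite bijective onto a normal space);
\item the computation $\Lambda\mathcal A\Lambda^{-1}\subset\mathcal A$.
\end{itemize}
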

	\begin{proof}
		Let $\tau:W\to\C^\kappa$ be an open holomorphic local deck map,
		where $W$ is connected and open and $W\cap J^*\neq\varnothing$.
		Since $\Psi\circ\tau=\Psi$ on $W$, we have
		\[
		U\circ\tau=G^+\circ\Psi\circ\tau=G^+\circ\Psi=U.
		\]
		Together with $\operatorname{supp}(dd^cU)^\kappa=J^*$, this verifies
		the hypotheses of Proposition~\ref{prop:quadric-affine-repair},
		with $J^*$ in place of $J$.
		Hence $\tau=\gamma|_W$ for some $\gamma\in\Aff(\C^\kappa)$.
		Thus $\Psi\circ\gamma=\Psi$ on $\C^\kappa$, so $\gamma\in\mathcal A$.
		For every $\gamma\in\mathcal A$, the same proposition gives
		$\gamma=T_{a,b}\circ S$ with $S\in K$, since $U\circ\gamma=U$.

		The group $\mathcal A$ is closed in $\Aff(\C^\kappa)$;
		local injectivity of $\Psi$ at $0$ makes it discrete.
		For compact $C\subset\C^\kappa$, the affine formula and compactness
		of $K$ make
		$\{\gamma\in\mathcal A:\gamma(C)\cap C\neq\varnothing\}$
		compact, hence finite. Thus the action is properly discontinuous.

		Use the proof of \cite[Proposition~5.15]{DS02} with
		$(\varphi,f,J_k,d,k)=(\Psi,F,J,D,\kappa)$, using the local
		finiteness in Proposition~\ref{prop:poincare}, the backward
		equidistribution in Proposition~\ref{prop:cone-equilibrium}(1), and
		the quadratic model and invariant inner products in
		Propositions~\ref{prop:quadratic-model} and~\ref{prop:quadric-affine-repair}.
		Its cocompactness conclusion makes $J^*/\mathcal A$ compact, and
		its fiber-transitivity conclusion gives
		\[
		\Psi^{-1}(\Psi(x))=\mathcal A\cdot x
		\qquad(x\in\C^\kappa).
		\]

		By Proposition~\ref{prop:poincare}, $\Omega$ is open.
		For $x\in\C^\kappa$, let $\mathcal A_x$ be its finite stabilizer
		and choose a finite surjective local representative $\Psi:W\to V$,
		where $V\subset\Omega$ is open, $W$ is invariant under $\mathcal A_x$, and
		$\gamma(W)\cap W=\varnothing$ for $\gamma\notin\mathcal A_x$.
		The fiber identity makes $W/\mathcal A_x\to V$ finite and bijective,
		hence biholomorphic since $V$ is normal. These local identifications
		give $\C^\kappa/\mathcal A\simeq\Omega$.

		Finally, for $\gamma\in\mathcal A$,
		\[
		\Psi\circ\Lambda\circ\gamma\circ\Lambda^{-1}
		=F\circ\Psi\circ\gamma\circ\Lambda^{-1}
		=F\circ\Psi\circ\Lambda^{-1}=\Psi.
		\]
		Thus $\Lambda\circ\gamma\circ\Lambda^{-1}\in\mathcal A$.
	\end{proof}
	
	\section[Commuting polarized endomorphisms and exceptional maps]{Commuting polarized endomorphisms\\and exceptional maps}\label{sec:commuting maps}
	\label{sec:exceptional}

In this section, we complete the proof of Theorem \ref{thm_commuting}. Throughout, all varieties and morphisms are defined over $\C$, and we use the notation of Section \ref{sec:Dinh-Sibony}.
	
\subsection{A holomorphic torus cover}
\label{sec:holomorphic-lift}
    
We retain the notation of Proposition~\ref{prop:input-data},
including the function $U$ of Subsection~\ref{sec:flow}.
We will construct a cover $Y_0$ of the cone and then take its quotient
by the radial action to obtain a cover $Y$ of an open subset of $X$.
Proposition~\ref{prop:escape} gives a constant $C_0$ such that
\begin{equation}\label{eq:new-growth}
	\bigl|\log^+\|\Psi(z,w)\|-U(z,w)\bigr|\leq C_0.
\end{equation}

	\medskip

	A discrete subgroup $\Gamma$ of a Lie group $H$ is a
	\emph{uniform lattice} if $H/\Gamma$ is compact.

	\medskip

	For a complex Lie group $G$, a \emph{principal holomorphic $G$-bundle}
	is a holomorphic map $p:E\to B$ with a holomorphic right $G$-action
	and $G$-equivariant local trivializations $p^{-1}(V)\simeq V\times G$
	over open subsets $V\subset B$, where $G$ acts by right multiplication
	on the second factor.

	\medskip

	A \emph{holomorphic isogeny} is a surjective holomorphic group
	homomorphism between compact complex tori with finite kernel.

	\medskip

	\begin{prop}
		\label{prop:holomorphic-cover}
		With the preceding notation, the following assertions hold.
		\begin{enumerate}[(1)]
			\item Let $\mathscr N$ be the group of translations $T_{a,b}$
			from Proposition~\ref{prop:quadric-affine-repair}. Let
			$\mathscr N_c:=\{T_{0,b}:b\in\R^r\}$ and 
			$\mathscr N_h:=\mathscr N/\mathscr N_c$ be the quotient group.  We have $\mathscr N_c \cong (\R^r,+)$ and  $\mathscr N_h \cong (\C^n,+)$.
			Then the subgroup $\Delta:=\mathcal A\cap\mathscr N$ is normal of finite
			index in $\mathcal A$ and is a uniform lattice in $\mathscr N$.
			Let
			\begin{align*}
			\Sigma&:=\{b\in\R^r:T_{0,b}\in\Delta\},\\
			\intertext{and}
			\Pi&:=\{a\in\C^n:T_{a,b}\in\Delta\text{ for some }b\in\R^r\}.
			\end{align*}
			Then the quotient
			\[
			Y_0:=(\C^n\times\C^{r})/\Delta\longrightarrow A:=\C^n/\Pi
			\]
			is a holomorphic principal $T_0$-bundle, where
			$T_0:=\C^{r}/\Sigma\simeq(\C^*)^{r}$, and $\Psi$ induces
			a finite surjective holomorphic map $h_0:Y_0\to\Omega$.
			Moreover $\Lambda\Delta\Lambda^{-1}\subset\Delta$.
			
			\item There is an integer $m\geq1$ such that $B^m=D^mI$.
			Set $\ell:=sm$ and $d:=d_1^\ell=D^m$. The map $\Lambda^m$ induces a
			holomorphic endomorphism $\psi_0:Y_0\to Y_0$ over the
			holomorphic isogeny $\alpha:A\to A$ induced by $P^m$, with
			$h_0\circ\psi_0=u_1^\ell\circ h_0$ and
			$\psi_0(t\cdot y)=t^d\cdot\psi_0(y)$ for $t\in T_0$.
			
		\end{enumerate}
	\end{prop}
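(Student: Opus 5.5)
The plan is to analyze $\mathcal A$ through its factorization $\gamma=T_{a,b}\circ S$ with $S\in K$ (Proposition~\ref{prop:gq-transitive}). Then $\mathcal N$, the group of translations $T_{a,b}$, is a closed normal subgroup of the group $\mathcal N\rtimes K$, with compact quotient $K$. A direct computation from \eqref{eq:repaired-affine-form} gives the group law
\[
T_{a,b}\circ T_{a',b'}=T_{a+a',\,b+b'-2\operatorname{Im}\Theta(a,a')}.
\]
Hence $\mathscr N_c$ is central and isomorphic to $\R^r$, and $\mathscr N_h\cong\C^n$; thus $\mathcal N$ is a Heisenberg-type nilpotent group.

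\textbf{Step 1: the lattice $\Delta$.} Since $\mathcal A$ is discrete and acts cocompactly on $J^*$, and $\mathcal N$ acts simply transitively on $J^*=\{\operatorname{Im}w=\Theta(z,z)\}$ (via $T_{a,b}$ applied to $(0,0)$), the group $\mathcal A$ is a uniform lattice in $\mathcal N\rtimes K$. I would apply an Auslander/Bieberbach-type argument: the image of $\mathcal A$ in $K$ has finite-index preimage in $\Delta=\mathcal A\cap\mathcal N$. Alternatively, I would use the contraction trick: $\Lambda\mathcal A\Lambda^{-1}\subset\mathcal A$, and conjugation by $\Lambda$ scales translations. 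Elements of $\mathcal A$ with rotational part near the identity therefore generate a nilpotent subgroup (Zassenhaus neighbourhood), which forces the $K$-component to be finite. Then $\Delta$ is normal of finite index and is a uniform lattice in $\mathcal N$. The center $\Delta\cap\mathscr N_c$ is a lattice $\Sigma$ in $\R^r$, and the projection $\Pi$ is a lattice in $\C^n$ by the standard structure of lattices in nilpotent Lie groups (Malcev). Since $\Lambda$ normalizes both $\mathcal A$ and $\mathcal N$ (indeed $\Lambda T_{a,b}\Lambda^{-1}=T_{Pa,Bb}$), it follows that $\Lambda\Delta\Lambda^{-1}\subset\Delta$. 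I expect this finite-index claim to be the main obstacle.

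\textbf{Step 2: the bundle and the map $h_0$.} $\Delta$ acts freely and properly on $\C^n\times\C^r$, because a fixed point would force $a=0$ and then $b=0$. Projection to $z$ is equivariant for $\Delta\to\Pi$, which gives $Y_0\to A=\C^n/\Pi$. The action of $\C^r$ by $w\mapsto w+c$ commutes with $\Delta$, since $\Delta$ only translates $w$ by functions of $z$, plus constants. It therefore descends to a free action of $\C^r/\Sigma=T_0$ on $Y_0$, transitive on fibres. Local sections of $\C^n\to A$ give equivariant trivializations. The map $\Psi$ is $\Delta$-invariant, so it factors through $h_0:Y_0\to\Omega$. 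Since $\C^\kappa/\mathcal A\cong\Omega$ and $[\mathcal A:\Delta]<\infty$, $h_0$ is finite and surjective.

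\textbf{Step 3: the endomorphism $\psi_0$.} $B/D$ lies in a compact group preserving the lattice $\Sigma$, because $\Lambda$ normalizes $\Delta$ and $B\Sigma\subset\Sigma$. A compact group element preserving a lattice generates a finite group, so $(B/D)^m=I$ for some $m\ge1$. Then $\Lambda^m(z,w)=(P^mz,D^mw)$ normalizes $\Delta$ and descends to $\psi_0$, covering the map induced by $P^m$ on $A$. This map is a holomorphic isogeny because $P^m\Pi\subset\Pi$. The relation $F\circ\Psi=\Psi\circ\Lambda$ gives $h_0\circ\psi_0=u_1^{sm}\circ h_0$. Finally, $\Lambda^m(z,w+c)=\Lambda^m(z,w)+(0,D^mc)$ yields $\psi_0(t\cdot y)=t^d\cdot\psi_0(y)$ with $d=D^m$.
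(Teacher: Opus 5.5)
Your Steps 1 and 2 essentially follow the paper. The paper uses Dekimpe's generalized Bieberbach theorem for the discrete subgroup $\mathcal A\subset\mathscr N\rtimes K_{\max}$ acting cocompactly on $\mathscr N$, and Eberlein's result for $\Sigma$ and $\Pi$, exactly where you invoke Auslander/Bieberbach and Malcev. So the finite-index claim is a citation, not the real obstacle.

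The gap is in Step 3, where you claim that $B/D$ has finite order. You assert that $B/D$ preserves the lattice $\Sigma$, but it does not. From $\Lambda\Delta\Lambda^{-1}\subset\Delta$ you only get $B\Sigma\subset\Sigma$, i.e. $B$ is an integer matrix in a basis of $\Sigma$. Since $|\det B|=D^r$, the lattice $(B/D)\Sigma=D^{-1}B\Sigma$ differs from $\Sigma$ in general.

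Integrality of $B$ together with $B/D$ being conjugate to an orthogonal matrix does not force finite order. For $r=2$, $D=5$ and $B=\begin{pmatrix}3&-4\\4&3\end{pmatrix}$, the matrix $B$ preserves $\Z^2$ and $B/5$ is a rotation. Its eigenvalues $(3\pm4i)/5$ are not algebraic integers, hence not roots of unity, so $B^m\neq 5^mI$ for every $m\geq1$. Thus $B^m=D^mI$, and with it the equivariance $\psi_0(t\cdot y)=t^d\cdot\psi_0(y)$ that is the main content of part (2), remains unproved.

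The missing ingredient has to use the dynamics through $\Phi$, not just the lattice. The paper expands the $\Sigma$-periodic map $E(w):=\Psi(0,w)$ in a Fourier series.
\begin{itemize}
\item The growth bound $\log^+\|E(u+iv)\|\leq C_0+\Phi(v)\leq C_0+C_1\|v\|$ from \eqref{eq:new-growth} kills every frequency $\nu$ with $2\pi\|\nu\|>C_1$, leaving a finite set $S$.
\item Parseval along rays $v\mapsto tv$, combined with $U(0,u+itv)=t\Phi(v)$, identifies $\Phi(v)=\max\bigl(0,\max_{\nu\in S}-2\pi\langle\nu,v\rangle\bigr)$.
\item So $\Phi$ is the support function of a polytope $\mathcal P$. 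Its vertices span $\R^r$, because $\{U=0\}$ contains no nonconstant entire curve.
\item From $\Phi(Bv)=D\Phi(v)$ and one-homogeneity, $R:=B/D$ satisfies $R^{\mathsf T}\mathcal P=\mathcal P$. Hence $R^{\mathsf T}$ permutes finitely many spanning vertices, and some power of it is the identity.
\end{itemize}
This polyhedrality is exactly what excludes the irrational rotation above. A rotation-invariant non-polyhedral $\Phi$, such as a Euclidean norm, is compatible with every constraint your Step 3 uses, so some argument of this kind is indispensable.
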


	\begin{proof}
		To prove (1), identify $T_{a,b}$ with its parameters $(a,b)$.
		The group law is
		\begin{align*}
			(a,b)(a',b')&=(a+a',b+b'+2\operatorname{Im}\Theta(a,a')),\\
			\intertext{and}
			[(a,b),(a',b')]&=(0,4\operatorname{Im}\Theta(a,a')).
		\end{align*}
		Thus $\mathscr N$ is simply connected, nilpotent of step at most two,
		and acts simply transitively on $J^*$. Its center is $\mathscr N_c$.
		Indeed, if $T_{a,b}$ is central, then
		$\operatorname{Im}\Theta(a,a')=0$ for every $a'$. Taking $a'=ia$, we get
		$\Theta(a,a)=0$, hence $a=0$.
		
		The group $K$ acts on $\mathscr N$ by conjugation, and
		$\mathcal A\subset \mathscr N\rtimes K$ is discrete and cocompact on
		$\mathscr N$. Choose a maximal compact subgroup $K_{\max}$ of
		$\operatorname{Aut}(\mathscr N)$ containing $K$.
		Then $\mathcal A$  is a discrete
		subgroup of $\mathscr N\rtimes K_{\max}$ whose action on
		$\mathscr N$ is cocompact. By \cite[Theorem~3.4]{Dek},
		$\Delta=\mathcal A\cap \mathscr N$ is a uniform
		lattice and $\mathcal A/\Delta$ is finite. Since $\mathscr N$ is normal
		in $\mathscr N\rtimes K$, $\Delta$ is normal in $\mathcal A$.
		
		When $n=0$, we have $\mathscr N=\mathscr N_c\simeq(\R^r,+)$,
		$\Delta\simeq\Sigma$, and $\Pi=\{0\}$. Suppose that $n>0$. Choose a left invariant metric on
		$\mathscr N$ whose value at the identity makes the coordinate tangent
		subspaces $\C^n\times\{0\}$ and $\{0\}\times\R^r$ orthogonal.
		By \cite[Proposition~5.3(1)]{Ebe94}, the intersection of a uniform
		lattice in a simply connected two-step nilpotent Lie group with its
		center is a uniform lattice in the center. Thus
		$\Delta\cap\mathscr N_c$ is a uniform lattice in $\mathscr N_c$,
		and its coordinate image $\Sigma$ is a uniform lattice in $(\R^r,+)$.
		By part~(3) of the same proposition, the image of $\Delta$ in $\mathscr N_h$
		is a uniform lattice, identified with $\Pi\subset(\C^n,+)$.
		

		If $T_{a,b}$ has a fixed point in $\C^{n+r}$, then $a=b=0$. Hence the action of $\Delta$ is free. It is also proper,
		since the action of $\mathcal A$ is proper. Taking the quotient of the
		central fibers by $\Sigma$ and then of the base by $\Pi$, we
		obtain the holomorphic principal $T_0$-bundle $Y_0\to A$.
		By Proposition~\ref{prop:gq-transitive}, the quotient of $Y_0$ by
		the finite group $\mathcal A/\Delta$ is $\Omega$. Thus $h_0$ is finite
		and surjective.
		
		Since $\Theta(Pz,Pz)=B\Theta(z,z)$, we have $\Lambda\mathscr N\Lambda^{-1}=\mathscr N$.
		It follows that
		\[
		\Lambda\Delta\Lambda^{-1}
		= \Lambda\mathcal A\Lambda^{-1}\cap \mathscr N
		\subset\mathcal A\cap \mathscr N=\Delta.
		\]
		Conjugation by $\Lambda$ acts as $B$ on $\mathscr N_c\simeq(\R^r,+)$
		and as $P$ on $\mathscr N_h\simeq(\C^n,+)$.
		Hence $B\Sigma\subset\Sigma$ and $P\Pi\subset\Pi$.
		
		For (2), choose real linear coordinates such that
		$\Sigma=\Z^{r}$ and set $E(w):=\Psi(0,w)$. Since $E$ is
		$\Sigma$-periodic, we may write the Fourier series
		\[
		E(w)=\sum_{\nu\in\Z^{r}}c_\nu e^{2\pi i\langle\nu,w\rangle},
		\qquad c_\nu\in\C^{N+1}.
		\]
		Since $U(0,u+iv)=\Phi(v)$ and $\Phi(v)\leq C_1\|v\|$,
		equation~\eqref{eq:new-growth} gives
		\[
		\log\|E(u+iv)\|\leq\log^+\|E(u+iv)\|
		\leq C_0+\Phi(v)\leq C_0+C_1\|v\|
		\]
		for $u,v\in\R^r$. Hence $\|E(u+iv)\|\leq e^{C_0+C_1\|v\|}$.
		For $t>0$, $v\in\R^r$ and $\nu\in\Z^r$, the Fourier coefficient formula gives
		\[
		c_\nu=e^{2\pi t\langle\nu,v\rangle}
		\int_{[0,1]^r}E(u+itv)e^{-2\pi i\langle\nu,u\rangle}\,du.
		\]
	Therefore
		\[
		\begin{aligned}
		\|c_\nu\|
		&\leq e^{2\pi t\langle\nu,v\rangle}
		\int_{[0,1]^r}\|E(u+itv)\|\,du\\
		&\leq\exp\bigl(C_0+tC_1\|v\|+2\pi t\langle\nu,v\rangle\bigr).
		\end{aligned}
		\]
		For $\nu\ne0$, take $v:=-\nu/\|\nu\|$. Then
		$\|c_\nu\|\leq e^{C_0+t(C_1-2\pi\|\nu\|)}$ for every $t>0$.
		Letting $t\to\infty$, we obtain $c_\nu=0$ whenever
		$2\pi\|\nu\|>C_1$. Thus the set $S$ of
		frequencies with nonzero coefficients is finite. By Parseval's identity,
		\begin{equation}\label{eq:parseval-fourier}
		1+\int_{[0,1]^{r}}\|E(u+itv)\|^2\,du
		=1+\sum_{\nu\in S}\|c_\nu\|^2e^{-4\pi t\langle\nu,v\rangle}.
		\end{equation}
		Since $U(0,u+itv)=\Phi(tv)=t\Phi(v)$, equation~\eqref{eq:new-growth}
		gives $|\log^+\|E(u+itv)\|-t\Phi(v)|\leq C_0$, uniformly in
		$u\in[0,1]^r$. For $a\geq0$, we have
		$\max\{1,a^2\}\leq1+a^2\leq2\max\{1,a^2\}$.
		Applying this with $a:=\|E(u+itv)\|$ and integrating over the unit
		cube, whose Lebesgue measure is one, we obtain
		\[
		e^{2t\Phi(v)-2C_0}
		\leq1+\int_{[0,1]^r}\|E(u+itv)\|^2\,du
		\leq2e^{2t\Phi(v)+2C_0}.
		\]
		Taking logarithms and dividing by $2t$ gives
		\[
		\Phi(v)-\frac{C_0}{t}
		\leq\frac{1}{2t}\log\left(1+\int_{[0,1]^r}\|E(u+itv)\|^2\,du\right)
		\leq\Phi(v)+\frac{2C_0+\log2}{2t}.
		\]
		Thus the logarithm of the left-hand side of~\eqref{eq:parseval-fourier},
		divided by $2t$, tends to $\Phi(v)$ as $t\to+\infty$.
		The right-hand side therefore gives
		\[
		\Phi(v)=\max\bigl(0,\max_{\nu\in S}-2\pi\langle\nu,v\rangle\bigr).
		\]
		Put $\mathcal P:=\operatorname{conv}\bigl(\{0\}\cup\{-2\pi\nu:\nu\in S\}\bigr)$,
		where $\operatorname{conv}$ denotes the convex hull.
		The vertices of $\mathcal P$ span $\R^{r}$. Otherwise, there
		would be a nonzero real vector $v$ annihilating their span. We would
		then have $\Phi(v)=\Phi(-v)=0$, so the entire line $(0,\C v)$
		would lie in $\{U=0\}$, a contradiction.
		
		Set $R:=B/D$. Since $\Phi(Bv)=D\Phi(v)$, homogeneity gives
		$\Phi(Rv)=\Phi(v)$, or equivalently $R^{\mathsf T}\mathcal P=\mathcal P$,
		where $R^{\mathsf T}$ denotes the transpose of $R$. 
		Hence $R^{\mathsf T}$ permutes the vertices of $\mathcal P$.
		A positive power fixes every vertex and is therefore the identity.
		Thus $B^m=D^mI$ for some $m\geq1$.
		
		By (1), $\Lambda^m\Delta\Lambda^{-m}\subset\Delta$, so $\Lambda^m$ induces a
		holomorphic map $\psi_0$ on $Y_0$. Similarly,
		$P^m\Pi\subset\Pi$ gives a holomorphic map $\alpha$ on $A$.
		Since $P^m$ is invertible and $\Pi$ is a uniform lattice, $\alpha$
		is a holomorphic isogeny. The central block of $\Lambda^m$ is $dI$,
		so $\psi_0$ is equivariant for the $d$th-power map on $T_0$.
		The relation $\Psi\circ\Lambda^m=u_1^\ell\circ\Psi$ descends to
		$h_0\circ\psi_0=u_1^\ell\circ h_0$.
	\end{proof}

	\medskip

	In the next proposition, we draw a conclusion for $f_1$ instead of for $u_1$.
	For a one-dimensional subtorus $S_{\rm rad}\subset T_0$ and a holomorphic
	group isomorphism $\iota:\C^*\to S_{\rm rad}$, {\em equivariance} of $h_0$ means
	\[
	h_0(\iota(\lambda)\cdot y)=\lambda h_0(y)
	\qquad(\lambda\in\C^*,\ y\in Y_0).
	\]

	\medskip

	\begin{prop}\label{prop:radial-quotient}
		Use the notation of Proposition~\ref{prop:holomorphic-cover}.
		In particular, $\alpha:A\to A$ is the holomorphic isogeny from
		part~(2), and $d=d_1^\ell=D^m>1$ is the polarized degree of $f_1^\ell$.
		Then there is a one-dimensional subtorus $S_{\rm rad}\subset T_0$
		such that $h_0$ is equivariant.  Put $s:=r-1$ and
		$T:=T_0/S_{\rm rad}\simeq(\C^*)^s$. The quotient
		$\pi:Y:=Y_0/S_{\rm rad}\to A$ is a holomorphic principal $T$-bundle,
		of dimension $k$. There are a nonempty open subset $X^\circ\subset X$
		for the complex topology, a finite surjective holomorphic map $h:Y\to X^\circ$,
		and a holomorphic endomorphism $\psi:Y\to Y$ with
		\begin{equation}\label{eq:new-lift-identities}
			\pi\circ\psi=\alpha\circ\pi,\qquad
			\psi(t\cdot y)=t^d\cdot\psi(y)\quad\text{and}\quad
			h\circ\psi=f_1^\ell\circ h.
		\end{equation}
	\end{prop}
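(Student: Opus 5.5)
The plan is to find the radial direction inside the central torus $T_0=\C^r/\Sigma$ and to divide by it. The cone $Z$ carries the scalar $\C^*$-action $\lambda\cdot z=\lambda z$, which preserves $Z^\times$. Since $H(tz)=\log|t|+H(z)$ and $\Psi$ is open and locally finite, a holomorphic vector field on $\C^\kappa$ should lift this action.

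\textbf{Step 1: lift the Euler field.} First I lift the Euler vector field $\xi(z)=z$ on $Z^\times_{\reg}$ through $\Psi$. Near points where $\Psi$ is locally biholomorphic, pull back $\xi$. Extend across the branch locus using local finiteness of $\Psi$, normality, and the fiber-transitivity from Proposition~\ref{prop:gq-transitive}. The result should be a holomorphic vector field $\tilde\xi$ on $\C^\kappa$ with $\Psi_*\tilde\xi=\xi$.

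\textbf{Step 2: identify the flow with central translations.} Its flow $\phi_\lambda$ (for $\lambda$ near $1$, then globalized via $\Lambda^{-j}\to 0$) consists of open holomorphic maps. They satisfy $U\circ\phi_\lambda=U+\log|\lambda|$ wherever $U>0$. In the quadratic model, the flows with $|\lambda|=1$ preserve $U$ and $J^*$. So Proposition~\ref{prop:quadric-affine-repair} makes them affine of the form~\eqref{eq:repaired-affine-form}.

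These maps form a one-parameter group commuting with $\mathcal A$, because scalar multiplication commutes with deck transformations. They also commute with $\Lambda$, because $F$ is homogeneous: $F(\lambda z)=\lambda^D F(z)$. I expect commuting with $\Lambda$ up to the twist $\lambda\mapsto\lambda^D$ to force the linear parts to be trivial and $a=0$. The reason is that conjugation by $\Lambda$ scales central translations by $B$ and noncentral ones by $P$, with different moduli $D$ and $\sqrt D$. Hence $\phi_{e^{i\theta}}=T_{0,\theta b_0}$ for some $b_0\in\R^r$. Complexifying gives $\phi_\lambda(z,w)=(z,w+(\log\lambda/i)\,b_0)$, a complex central translation.

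\textbf{Step 3: the subtorus $S_{\rm rad}$.} Because $\phi_{e^{2\pi i}}=\id$ in $Z$ and the fibers of $\Psi$ are $\mathcal A$-orbits, $2\pi b_0$ lies in $\Sigma$ up to the finite group $\mathcal A/\Delta$. After replacing $b_0$ by a multiple, I expect $2\pi b_0\in\Sigma$, which also needs a check that $\Delta$ acts centrally here. Then $\C b_0/\Z(2\pi b_0)$ defines a one-dimensional subtorus $S_{\rm rad}\subset T_0$ and an isomorphism $\iota:\C^*\to S_{\rm rad}$. These are arranged so that $h_0(\iota(\lambda)y)=\lambda h_0(y)$; if needed, adjust $\iota$ by the finite index, or pass to a cover of $\C^*$ to make $\iota$ injective.

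\textbf{Step 4: the quotient and the maps on it.} $S_{\rm rad}$ acts freely on $Y_0$ and preserves the fibers over $A$. Hence $Y:=Y_0/S_{\rm rad}\to A$ is a principal bundle for $T=T_0/S_{\rm rad}\simeq(\C^*)^{r-1}$, since quotients of tori by subtori are tori. Its dimension is $\kappa-1=k$.

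Since $\psi_0(ty)=t^d\psi_0(y)$, the map $\psi_0$ sends $S_{\rm rad}$-orbits to $S_{\rm rad}$-orbits, so it descends to $\psi$ with the same $d$-th power equivariance and $\pi\circ\psi=\alpha\circ\pi$. The map $h_0$ is $\C^*$-equivariant, so composing with $p_X$ it descends to $h:Y\to p(\Omega)=:X^\circ$. Here $X^\circ$ is open because $\Omega$ is open and $p$ is open. The map $h$ is finite because $h_0$ is finite and the action is proper. It is surjective onto $X^\circ$. Finally, $h\circ\psi=f_1^\ell\circ h$ follows from $h_0\circ\psi_0=u_1^\ell\circ h_0$ together with $p_X\circ u_1=f_1\circ p_X$.

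The main obstacle is Steps 1–2: lifting the radial flow globally through the branched map $\Psi$, and proving that it acts by central translations. I would try first to use fiber-transitivity to define $\phi_\lambda$ on the quotient $\C^\kappa/\mathcal A\simeq\Omega$, and then lift it by the homotopy lifting for the orbifold cover.
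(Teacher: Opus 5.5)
This is essentially the paper's proof. The paper likewise lifts the rotation $R_t=\Psi^{-1}(e^{it}\Psi(\cdot))$ only near $0$ and makes it affine by Proposition~\ref{prop:quadric-affine-repair}; it then kills the linear part and the $z$-translation of the generator via $\Lambda^m R_t\Lambda^{-m}=R_{dt}$, and gets $\Psi(z,w+tb)=e^{it}\Psi(z,w)$ for all $t\in\C$ by analytic continuation (your use of $\Lambda$ with $F(\lambda z)=\lambda^DF(z)$ works equally well, since $\mathrm{Ad}_\Lambda$ has eigenvalue moduli in $\{1,D^{\pm1/2}\}$ and $P$ has no eigenvalue of modulus $D$). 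Your Step~1 global lift of the Euler field is unnecessary, because Proposition~\ref{prop:quadric-affine-repair} already makes the local lift global; so is the hedging in Step~3, because $T_{0,2\pi b_0}$ is a central translation with $\Psi\circ T_{0,2\pi b_0}=\Psi$, hence lies in $\mathcal A\cap\mathscr N=\Delta$ without passing to a multiple (which would in fact spoil $h_0(\iota(\lambda)y)=\lambda h_0(y)$), and $\Psi\neq0$ gives $\C b_0\cap\Sigma=2\pi\Z b_0$, so $2\pi b_0$ is primitive and $\iota$ is an isomorphism onto $S_{\rm rad}$.
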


	\begin{proof}
		For $x$ in a small neighborhood of $0$, set
		$R_t(x):=\Psi^{-1}(e^{it}\Psi(x))$ for  $t>0$ small enough.
		Thus $U\circ R_t=U$.
		By Proposition~\ref{prop:quadric-affine-repair}, each $R_t$ extends affinely
		to $\C^{n+r}$. These extensions satisfy $\Psi\circ R_t=e^{it}\Psi$ and form a local
		real-analytic one-parameter group: $R_{t+u}=R_t\circ R_u$ for small
		real $t,u$. Recall that  $u_1^\ell$ is homogeneous
		of degree $d=d_1^\ell=D^m$, that is,
		\[
		u_1^\ell(\lambda x)=\lambda^d u_1^\ell(x)
		\qquad(\lambda\in\C,\ x\in Z).
		\]
		Together with $\Psi\circ\Lambda^m=u_1^\ell\circ\Psi$ from
		Proposition~\ref{prop:holomorphic-cover}(2), this gives
		\[
		\Psi\circ\Lambda^m\circ R_t=e^{idt}\Psi\circ\Lambda^m
		=\Psi\circ R_{dt}\circ\Lambda^m.
		\]
		Local injectivity of $\Psi$ gives $\Lambda^m\circ R_t\circ\Lambda^{-m}=R_{dt}$.
		For the generator $V(x):=\left.\frac{\partial}{\partial t}R_t(x)\right|_{t=0}
		=Cx+v$, differentiation yields
		\[
		\Lambda^m C\Lambda^{-m}=dC
		\quad\text{and}\quad \Lambda^mv=dv.
		\]
		The eigenvalues of $\Lambda^m$ have modulus $\sqrt d$ and $d$.
		Thus those of $M\mapsto\Lambda^m M\Lambda^{-m}$ have modulus in
		$\{d^{-1/2},1,d^{1/2}\}$, all smaller than $d$, so $C=0$.
		Since $R_t$ preserves $J^*$ and $B^m=dI$, we have
		$v\in\ker(\Lambda^m-dI)\cap T_0J^*=\{0\}\times\R^r$. 
		Write $v=(0,b)$, then $V(x)\equiv (0,b)$ gives 
		\begin{equation}\label{eq:new-radial-identity}
		\Psi(z,w+tb)=e^{it}\Psi(z,w)\qquad(t\in\C).
		\end{equation}

	Since $\Psi$ is non-constant, $b\neq 0$. 	By definition, for $c\in\R^r$, we have $c\in\Sigma$ if and only if
		$\Psi(z,w+c)=\Psi(z,w)$ for every $(z,w)$. Thus $\C b\cap\Sigma=2\pi\Z b$.
		
		Hence 
		\[
		S_{\rm rad}:=\C b/(2\pi\Z b)\subset T_0,
		\qquad\iota(\lambda):=[-ib\log\lambda]
		\]
		defines a subtorus with $h_0(\iota(\lambda)\cdot y)=\lambda h_0(y)$.
		In particular $\C^*\Omega=\Omega$.

		Extend $2\pi b$ to a basis of $\Sigma$. The coordinate characters
		give holomorphic line bundles $E_0,\ldots,E_s$ on $A$ such that
		\begin{align*}
		Y_0&=E_0^\times\times_A E_1^\times\times_A\cdots\times_A E_s^\times,\\
		\intertext{and}
		Y:=Y_0/S_{\rm rad}&=E_1^\times\times_A\cdots\times_A E_s^\times.
		\end{align*}
		Thus $\pi:Y\to A$ is a principal $T$-bundle of dimension
		$n+r-1=k$. Put $X^\circ:=p_X(\Omega)$. Since $p_X$ is open and
		$\C^*\Omega=\Omega$, the set $X^\circ$ is open for the complex
		topology and $\Omega=p_X^{-1}(X^\circ)$.
		Equivariance gives a holomorphic finite surjection
		$h:Y\to X^\circ$, $h([y]):=p_X(h_0(y))$.


		By Proposition~\ref{prop:holomorphic-cover}(2),
		$\psi_0(\iota(\lambda)\cdot y)
		=\iota(\lambda^d)\cdot\psi_0(y)$.
		Thus $\psi_0$ descends to a holomorphic endomorphism
		$\psi:Y\to Y$, and the identities in the same proposition descend
		to~\eqref{eq:new-lift-identities}.
	\end{proof}

	\medskip

In Proposition~\ref{prop:polarization-after-algebraization}, we will show
	that $\psi$ is algebraic and that $\alpha$ admits an ample
	polarization.
	\subsection{Finite area and meromorphic extension}\label{sec:new-extension}
	For $1\leq j\leq s$, write $t\cdot_j y:=(1,\ldots,1,t,1,\ldots,1)\cdot y$,
	with $t\in\C^*$ in the $j$th coordinate of $T$.
	We measure the area of a holomorphic curve in $X$ by integrating the
	pullback of the Fubini--Study form, so that multiplicities are counted.

	\medskip

	\begin{prop}\label{prop:torus-area}
		In the setting of Proposition~\ref{prop:radial-quotient}, let $\omega_{\rm FS}$ be the Fubini--Study form on $\P^N$, normalized by
		$\int_{\P^1}\omega_{\rm FS}=1$. For every $y\in Y$ and $1\leq j\leq s$, the holomorphic curve
		$c(t):=h(t\cdot_j y)$ has finite area
		$\int_{\C^*}c^*\omega_{\rm FS}<\infty$.
		Consequently $c$ extends holomorphically to $\P^1\to X$.
	\end{prop}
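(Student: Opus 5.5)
We lift the curve to the cover $\C^\kappa$ and compute the area there, exploiting the growth estimate \eqref{eq:new-growth} and the explicit torus structure. Choose a basis $2\pi b=e_0,e_1,\dots,e_s$ of $\Sigma$ as in the proof of Proposition~\ref{prop:radial-quotient}. For $y=[(z,w)]$, the orbit $t\cdot_j y$ lifts to the holomorphic map $\zeta\mapsto(z,w+\zeta e_j/(2\pi i))$, with $t=e^{\zeta}$ in the upper half-plane picture, so $c(t)=p_X\bigl(\Psi(z,w+\tfrac{\log t}{2\pi i}e_j)\bigr)$. Thus $c$ is the projectivization of the $\C^{N+1}$-valued holomorphic map $\widetilde c(t):=\Psi(z,w+\tfrac{\log t}{2\pi i}e_j)$, which is single-valued on $\C^*$ because $e_j\in\Sigma$. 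It is never zero since $\Psi$ maps into $Z^\times$.

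\textbf{Laurent polynomial.} First I show that $\widetilde c$ is a Laurent polynomial, up to a factor. Write $w+\frac{\log t}{2\pi i}e_j$ with $\log|t|$ entering through $\operatorname{Im}$. By \eqref{eq:new-growth} and $U=\Phi(\operatorname{Im}w-\Theta(z,z))$ with $\Phi$ positively one-homogeneous and hence Lipschitz, we get
\[
\log^+\|\widetilde c(t)\|\le C+C'\bigl|\log|t|\bigr|.
\]
Hence $\widetilde c$ has at most polynomial growth in $|t|^{\pm1}$ near $0$ and $\infty$. By Riemann's removable singularity and Cauchy estimates, each coordinate is a Laurent polynomial; this is the same Fourier-coefficient argument as in Proposition~\ref{prop:holomorphic-cover}(2), applied in one variable. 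A Laurent polynomial map $\C^*\to\C^{N+1}\setminus\{0\}$ projectivizes to a rational map $\C^*\to\P^N$. Multiplying by a power of $t$ and dividing out common zeros at $0$ and $\infty$ extends it to a holomorphic map $\P^1\to\P^N$, whose image lies in the closed set $X$.

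\textbf{Finite area.} The area is then the degree of this rational curve, which is finite. Equivalently, one can bound the area directly. On $\C^*$ we have
\[
c^*\omega_{\rm FS}=dd^c\log\|\widetilde c\|,
\]
and integrating over annuli $\{\epsilon<|t|<1/\epsilon\}$ gives boundary terms controlled by the slopes of $\log\|\widetilde c\|$ in $\log|t|$. These slopes are bounded by $C'$ by the estimate above together with the convexity of circle averages of a psh function in $\log|t|$. Either route gives finiteness.

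\textbf{Main obstacle.} The delicate point is verifying the linear growth bound uniformly in $\arg t$. This needs the identity $U(z,w+\zeta e_j/2\pi i)=\Phi(\operatorname{Im}w-\Theta(z,z)+\operatorname{Re}(\cdot)\,\text{term})$ to depend only on $\log|t|$ linearly, and it is here that the form of $U$ from Proposition~\ref{prop:quadratic-model} and the central position of $e_j\in\{0\}\times\R^r$ are used.
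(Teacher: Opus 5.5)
Your proof is correct, and its main line differs from the paper's. The paper never identifies the lift $E(t)=\Psi(z_0,w_0+\frac{\log t}{2\pi i}v)$ explicitly. It uses only the upper bound $\log\|E(t)\|\le C+\Phi\bigl(b-\frac{\log|t|}{2\pi}v\bigr)$, and shows that the circular means of the subharmonic function $\log\|E\|$ are convex in $\log|t|$ with slopes in $[-\Phi(v)/2\pi,\Phi(-v)/2\pi]$. Stokes then gives the area bound $(\Phi(v)+\Phi(-v))/2\pi$, and the paper appeals to Griffiths' theorem that finite-area holomorphic maps from a punctured disc extend meromorphically.

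You feed the same estimate into Cauchy estimates on the Laurent coefficients. It is uniform in $\arg t$ because $e_j$ is real: $\arg t$ only moves $\operatorname{Re}w$, while $U$ sees only $\operatorname{Im}w$. You conclude that the lift is a Laurent polynomial vector, so $c$ is visibly a rational curve. The holomorphic extension to $\P^1$ is then elementary, and finiteness of the area becomes finiteness of the degree. This avoids the extension theorem altogether and is the one-variable analogue of the Fourier argument in Proposition~\ref{prop:holomorphic-cover}(2). Tracking constants, it even recovers the paper's uniform bound. The exponents lie in $[-\Phi(e_j)/2\pi,\Phi(-e_j)/2\pi]$, so the degree is at most $(\Phi(e_j)+\Phi(-e_j))/2\pi$, independently of $y$.

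Your alternative ``direct'' argument in the finite-area paragraph is exactly the paper's proof. Two cosmetic points. The ``upper half-plane picture'' is spurious, since $\zeta$ ranges over all of $\C$ with $t=e^{\zeta}$. The Lipschitz bound on $\Phi$ uses convexity (hence subadditivity), not positive homogeneity alone.
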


	\begin{proof}
		Let $v\in\Sigma$ be the basis vector corresponding to the $j$th
		coordinate of $T$ in the proof of Proposition~\ref{prop:radial-quotient}, and let
		$(z_0,w_0)$ be a lift of $y$ through $Y_0$ to $\C^n\times \C^r$.
		Define $E:\C^*\to Z^\times\subset\C^{N+1}\setminus\{0\}$ by
		\[
		E(t):=\Psi\left(z_0,w_0+\frac{\log t}{2\pi i}v\right).
		\]
		This is a single-valued, nowhere-vanishing holomorphic map, since
		$T_{0,v}\in\Delta$. We have $c(t)=[E(t)]$. Hence
		$u(t):=\log\|E(t)\|$ is smooth and subharmonic, with
		$dd^cu=c^*\omega_{\rm FS}$.
		Put $b:=\operatorname{Im}w_0-\Theta(z_0,z_0)$, where $\Theta$ is the
		form introduced in Proposition~\ref{prop:quadratic-model}.
		Proposition~\ref{prop:escape}, where $G^+$ was defined, gives
		$\log\|x\|\leq C+G^+(x)$ for $x\in Z^\times$.
		Since $v\in\R^r$ and $U=G^+\circ \Psi$,
		the formula $U(z,w)=\Phi(\operatorname{Im}w-\Theta(z,z))$ in
		Proposition~\ref{prop:quadratic-model} gives
		\[
		\begin{aligned}
		u(t)&\leq C+G^+(E(t))\\
		&=C+U\left(z_0,w_0+\frac{\log t}{2\pi i}v\right)\\
		&=C+\Phi\left(b-\frac{\log|t|}{2\pi}v\right).
		\end{aligned}
		\]
		The function $\Phi$ is subadditive by convexity and positive homogeneity.
		It follows that the circular mean
		$m(x):=\frac1{2\pi}\int_0^{2\pi}u(e^{x+i\theta})\,d\theta$
		is convex on $\R$ and satisfies
		\[
		m(x)\leq C+\Phi(b)+
		\begin{cases}
			x\Phi(-v)/(2\pi),&x\geq0,\\
			-x\Phi(v)/(2\pi),&x\leq0.
		\end{cases}
		\]
		By convexity, $-\Phi(v)/(2\pi)\leq m'(x)\leq\Phi(-v)/(2\pi)$ for every
		$x$: otherwise a supporting line would contradict one of these bounds
		as $x\to-\infty$ or $x\to+\infty$. By Stokes' formula, we get
		\[
		\int_{\C^*}c^*\omega_{\rm FS}
		=m'(+\infty)-m'(-\infty)
		\leq\frac{\Phi(v)+\Phi(-v)}{2\pi}<\infty.
		\]
		By \cite[Proposition~(2.4), p.~32]{Gri71},
		a holomorphic map from a punctured disc to a compact Hermitian manifold
		with finite area extends meromorphically across the puncture.
		Applying this to $c$ at $0$ and $\infty$, we obtain an extension
		$\P^1\to\P^N$, which is holomorphic since a meromorphic map from a
		nonsingular curve has no indeterminacy. Its image lies in $X$, since
		$X$ is closed in $\P^N$.
	\end{proof}

	\medskip

	We write $\mathbb D:=\{z\in\C:|z|<1\}$ and
	$\mathbb D^*:=\mathbb D\setminus\{0\}$.
	We use meromorphic maps in the graph sense: the irreducible analytic
	graph is proper and bimeromorphic over the source.

	\medskip

	Write $\sO_A$ for the sheaf of holomorphic functions on $A$, and also
	for the corresponding trivial holomorphic line bundle. With the line
	bundles $E_j$ constructed in the proof of
	Proposition~\ref{prop:radial-quotient}, set
	\[
	\overline Y:=\P(\sO_A\oplus E_1)\times_A\cdots
	\times_A\P(\sO_A\oplus E_s).
	\]
	Here $\P(\sO_A\oplus E_j)$ parametrizes lines, and the embeddings
	$E_j^\times\hookrightarrow\P(\sO_A\oplus E_j)$, $v\mapsto[1:v]$,
	identify $Y$ with an open subset of $\overline Y$.
	Thus $\overline Y$ is a compactification of $Y$, and
	$\overline Y\to A$ is a holomorphic $(\P^1)^s$-bundle.

	\medskip

	\begin{prop}
		\label{prop:slice-extension}
		The map $h$ has a meromorphic extension
		$\overline h:\overline Y\dashrightarrow X$.
	\end{prop}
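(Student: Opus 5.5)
We plan to show that the holomorphic map $h:Y\to X\subset\P^N$ extends meromorphically across the boundary divisor $\overline Y\setminus Y$. This boundary is a union of the $2s$ smooth hypersurfaces $D_j^0,D_j^\infty$ (zero and infinity sections of $\P(\sO_A\oplus E_j)$), meeting with normal crossings. Since meromorphic extension is a local question, it suffices to work near a point $p\in\overline Y\setminus Y$. There we choose local coordinates $(x,t_1,\dots,t_s)$ on $\overline Y$, with $x$ a coordinate on $A$ and $t_j$ the fiber coordinate of the $j$-th $\P^1$-factor. We arrange that $p$ corresponds to $t_j\in\{0,\infty\}$ for $j$ in some subset $I$. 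After replacing $t_j$ by $1/t_j$ where needed, $h$ becomes a holomorphic map on $V\times(\mathbb D^*)^{I}\times W$, with $V\subset\C^n$ and $W$ a polydisc in the remaining variables.

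The key tool is an extension criterion for maps into projective space from the complement of a normal crossing divisor. We use the Siu/Griffiths-type theorem: a holomorphic map $\Delta^{m}\setminus\{t_1\cdots t_a=0\}\to\P^N$ extends meromorphically if the current $h^*\omega_{\rm FS}$ has locally finite mass near the divisor. This is the higher-dimensional analogue of \cite[Proposition~(2.4)]{Gri71} invoked in Proposition~\ref{prop:torus-area}. Equivalently, we write $h=[E]$ locally with $E$ a nowhere-vanishing $\C^{N+1}$-valued holomorphic map, and ask that $\log\|E\|$ be bounded above by $C+c\sum_{j\in I}\bigl|\log|t_j|\bigr|$. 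Such logarithmic growth implies, via Laurent expansion in $t_j$ with coefficients holomorphic in the other variables, that each coordinate of $E$ is meromorphic with pole order at most $c$ along $t_j=0$. Then $[E]$ is meromorphic.

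To obtain this growth bound, we mimic the estimate in Proposition~\ref{prop:torus-area}, now uniformly in the parameters. We lift to $\C^n\times\C^r$, writing the point as $(z,w_0+\sum_j\frac{\log t_j}{2\pi i}v_j)$ with $z$ in a compact set. Then \eqref{eq:new-growth} and the formula $U=\Phi(\operatorname{Im}w-\Theta(z,z))$ from Proposition~\ref{prop:quadratic-model} give
\[
\log\|E\|\le C_0+\Phi\Bigl(b(z,w_0)-\sum_{j}\tfrac{\log|t_j|}{2\pi}v_j\Bigr).
\]
Here $b$ ranges over a compact set. Subadditivity and positive homogeneity of $\Phi$ bound the right side by $C+\sum_j\max(\Phi(v_j),\Phi(-v_j))\,\bigl|\log|t_j|\bigr|/2\pi$. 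One subtlety is the radial quotient: $E$ must be a section on $Y$, not $Y_0$. We handle this by fixing the lift in the $b$-direction, choosing $w_0$ in a local holomorphic slice transverse to $\C b$. Multiplying by a nonvanishing scalar does not change $[E]$, and the same bound persists after we adjust by the bounded factor coming from the slice.

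The main obstacle is rigor in the extension step. It requires either a clean citation of the several-variable criterion (Siu's extension theorem for meromorphic maps across codimension-one sets with finite-volume graphs, or the Laurent-series argument sketched above) or a direct check that the Laurent coefficients of $E$ in the $t_j$ are holomorphic in the remaining variables with uniform pole bounds. We would try the Laurent-series route first, since the bound above is explicit. A secondary point is that the boundary divisors are only locally trivialized, so the coordinates $t_j$ differ from the torus coordinates by holomorphic transition factors on $A$. These factors are nonvanishing and bounded on compact sets, so they do not affect the growth estimate.
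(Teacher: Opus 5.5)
Your argument is correct, but it takes a different route from the paper.

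\textbf{The paper's route.} The paper works slice by slice. Proposition~\ref{prop:torus-area} shows, via convexity of the circular means and Griffiths' finite-area extension, that each one-variable orbit $t\mapsto h(t\cdot_j y)$ extends over $0$ and $\infty$. The paper then applies a Thullen-type result of Siu one punctured factor at a time. That result says a meromorphic map on $\mathbb D^m\times\mathbb D^*$ extends to $\mathbb D^{m+1}$ if its punctured-disc slices extend for a positive-measure set of parameters. Some care is needed at points where several boundary divisors meet.

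\textbf{Your route.} You make the upper bound from \eqref{eq:new-growth} and $U=\Phi(\operatorname{Im}w-\Theta(z,z))$ uniform in all variables at once. Take the local lift $E(z,t)=\Psi\bigl(z,\sum_j\frac{\log t_j}{2\pi i}v_j\bigr)$, with $2\pi b,v_1,\dots,v_s$ a basis of $\Sigma$. This is single-valued by $\Sigma$-periodicity of $\Psi$, so with this slice no correcting scalar factor is needed at all. Subadditivity and homogeneity of $\Phi$ then give $\|E\|\le C\prod_{j\in I}|t_j|^{-c}$ for $z$ in a compact set.

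\textbf{What each approach buys.} Your approach handles every stratum of the normal-crossing boundary in one step. It uses only the upper half of \eqref{eq:new-growth}, not the finite-area computation or Griffiths' theorem, and it yields an explicit pole-order bound. The paper's approach reuses the one-variable statement and treats Siu's theorem as a black box, at the cost of a less transparent inductive step.

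\textbf{A simplification.} You need neither the finite-mass criterion nor the Laurent expansion; the two criteria are not literally equivalent anyway, and the growth bound is what you actually use. For an integer $M\ge c$, the map $\prod_{j\in I}t_j^{M}E$ is bounded near the divisor. Riemann's extension theorem therefore gives a holomorphic $G$ on the full polydisc with $[G]=h$ off the divisor. This $[G]$ is a meromorphic map into $\P^N$ whose image lies in $X$, since $X$ is closed. The local extensions glue because they agree on the dense open set $Y$.
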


	\begin{proof}
		The assertion is immediate if $s=0$, and follows from
		Proposition~\ref{prop:torus-area} if $k=1$. Assume $s>0$ and $k\geq2$.
		By \cite[Section~3.2, assertion~$(*)$, p.~442]{Siu75}, a meromorphic map
		$\mathbb D^m\times\mathbb D^*\dashrightarrow\P^N$ extends meromorphically to
		$\mathbb D^{m+1}$ if its punctured-disc slices extend holomorphically for
		a set of transverse parameters of positive Lebesgue measure.
		
		Let $x\in\overline Y\setminus Y$, and let $j$ be the number of fiber
		coordinates of $x$ equal to $0$ or $\infty$, so $1\leq j\leq s$.
		In bundle coordinates near $x$, the map $h$ is defined on
		$\mathbb D^{k-j}\times(\mathbb D^*)^j$, with one punctured factor for each
		such coordinate.
		By Proposition~\ref{prop:torus-area}, every coordinate slice extends
		holomorphically. Applying the slicing extension result in the first punctured factor,
		with the remaining punctured coordinates restricted to small discs away
		from zero, we obtain a meromorphic extension to
		$\mathbb D^{k-j+1}\times(\mathbb D^*)^{j-1}$. For the next factor, the original
		slice extensions are available whenever the coordinates just filled
		are nonzero. These parameters form an open set of positive measure,
		so the same result applies again. We obtain a meromorphic
		extension across all the boundary factors and their intersections.
		The local extensions agree on $Y$ and hence glue to a meromorphic map
		on $\overline Y$.
	\end{proof}
	
	\subsection{Algebraization}
	\label{sec:new-algebraization}
	A compact complex manifold of dimension $m$ is called \emph{Moishezon} if its field of
	meromorphic functions has transcendence degree $m$.

	\medskip

	\begin{prop}\label{prop:post-extension-algebraization}
		With the notation of Propositions~\ref{prop:radial-quotient}
		and~\ref{prop:slice-extension}, the torus $A$ is an abelian variety,
		$\overline Y$ is projective, $Y\to A$ is an algebraic torus torsor,
		$X^\circ$ is Zariski open in $X$, and
		$h:Y\to X^\circ$ is a finite surjective algebraic morphism.
	\end{prop}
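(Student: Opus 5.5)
Since $\overline Y$ is a compact complex manifold and $\overline h:\overline Y\dashrightarrow X$ is meromorphic (Proposition~\ref{prop:slice-extension}), the plan is to show that $\overline h$ is generically finite. Then $\overline Y$ is Moishezon, and from that I will derive the algebraicity of $A$ and of the torsor. Generic finiteness comes from Proposition~\ref{prop:radial-quotient}: $h:Y\to X^\circ$ is finite surjective and $\dim Y=k=\dim X$. Pulling back rational functions on $X$ along $\overline h$ therefore gives $k$ algebraically independent meromorphic functions on $\overline Y$, so $\overline Y$ is Moishezon.

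Next I deduce that $A$ is an abelian variety. The projection $\overline Y\to A$ is a holomorphic surjection from a Moishezon space, so the image $A$ is Moishezon; this is Moishezon's theorem that images of Moishezon spaces are Moishezon, applied via the graph. A compact complex torus that is Moishezon is projective, hence an abelian variety. Alternatively, the zero-section-type slices $A\hookrightarrow \overline Y$ given by fiber coordinate $[1:0]$ or $[0:1]$ make $A$ a submanifold of a Moishezon manifold, and that also suffices. By GAGA, each holomorphic line bundle $E_j$ on the projective $A$ is algebraic. Hence $\overline Y=\prod_A\P(\sO_A\oplus E_j)$ is a projective algebraic $(\P^1)^s$-bundle over $A$, and $Y=E_1^\times\times_A\cdots\times_A E_s^\times$ is an algebraic $T$-torsor.

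With $\overline Y$ projective, $\overline h$ is a meromorphic map between projective varieties, so by GAGA (Chow's theorem applied to its closed analytic graph in $\overline Y\times X$) it is a rational map. Its restriction $h$ to $Y$ is holomorphic, and the graph of $h$ over $Y$ is the restriction of the algebraic graph $\Gamma\subset\overline Y\times X$. Hence $h$ is a regular algebraic morphism on $Y$: it is a rational map whose graph projects isomorphically onto $Y$, and $Y$ is normal.

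It remains to show that $X^\circ=h(Y)$ is Zariski open and that $h$ is algebraically finite. This is the main obstacle, since $X^\circ$ was only known to be open in the complex topology. I would argue as follows. The image $h(Y)$ of the algebraic morphism $h$ is constructible by Chevalley, and it is open in the classical topology. A constructible set that is classically open is Zariski open: its complement is constructible and classically closed, and for constructible sets the classical closure equals the Zariski closure, so the complement is Zariski closed. Then $h:Y\to X^\circ$ is an algebraic morphism that is analytically finite, i.e.\ proper with finite fibers, and analytic properness of an algebraic morphism implies algebraic properness. Quasi-finite plus proper gives finite, and surjectivity holds by construction.
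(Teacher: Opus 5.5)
Your proposal is correct. Its second half coincides with the paper's proof: Chow's theorem applied to the graph of $\overline h$, the fact that a constructible, classically open set is Zariski open, and the fact that analytic finiteness forces algebraic properness and hence finiteness. The difference lies in how you obtain projectivity of $\overline Y$ and algebraicity of $A$, where the order is reversed.

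\emph{The paper's route.} It proves $\overline Y$ projective first. Moishezon comes from dominance of $\overline h$. K\"ahler comes from Blanchard's criterion: K\"ahler base $A$, fiber $(\P^1)^s$ with vanishing $H^1$, and structure group preserving the product K\"ahler class. Then it applies the theorem that compact K\"ahler Moishezon manifolds are projective. Only afterwards does it get $A$ algebraic, via Chow's theorem applied to a zero section $A\subset\overline Y$. Finally it has to identify the algebraic structure on $\overline Y$ given by GAGA with the projective-bundle structure built from the algebraized $E_j$.

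\emph{Your route.} You push the Moishezon property down to $A$, using Moishezon's theorem on holomorphic images or closed subspaces of Moishezon spaces. You then need only the torus case of ``Moishezon $\Rightarrow$ projective''. You algebraize the $E_j$ on $A$ by GAGA and construct $\overline Y=\prod_A\P(\sO_A\oplus E_j)$ algebraically. Since analytification commutes with projective bundles and fiber products, several things come for free:
\begin{itemize}
\item projectivity of $\overline Y$;
\item Zariski openness of $Y$ in $\overline Y$;
\item agreement with the analytic compactification.
\end{itemize}

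\emph{Trade-off.} Your argument avoids Blanchard's theorem and the general K\"ahler--Moishezon projectivity theorem, at the cost of Moishezon's result on images or subspaces. The paper gets projectivity of the total space in one stroke and then needs only Chow's theorem to algebraize $A$.

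Minor points on your write-up. Pulling back rational functions requires $\overline h$ to be dominant, not merely generically finite. Dominance holds because $X^\circ$ is a nonempty classically open set, hence Zariski dense in $X$. Also, the Zariski-openness step you call the main obstacle is routine; it is exactly SGA1, Expos\'e XII, Corollaire 2.3, which the paper cites.
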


	\begin{proof}
		Since $\overline h:\overline Y\dashrightarrow X$ is dominant and
		$\dim\overline Y=\dim X$, the manifold $\overline Y$ is Moishezon.
		The bundle $\overline Y\to A$ is K\"ahler by
		\cite[Th\'eor\`eme principal~II]{Bla56}: the complex torus $A$ is
		K\"ahler, its fiber $(\P^1)^s$ has
		vanishing $H^1(-,\R)$, and its structure group preserves the
		product K\"ahler class. By \cite[Theorems~1.1 and~1.2]{Ji93},
		a compact K\"ahler Moishezon manifold is projective. Thus
		$\overline Y$ is projective.

		The zero sections identify $A$ with a closed analytic submanifold
		of $\overline Y$. By the GAGA principle \cite[Proposition~13]{Ser56},
		$A$ is algebraic. By the GAGA principle \cite[Proposition~15]{Ser56},
		its group operations are algebraic; hence $A$ is an abelian variety.

		By the GAGA principle \cite[Proposition~18, with $n=1$]{Ser56}, each $E_j$ is
		algebraic, and therefore so is the torus torsor $Y\to A$.
		By the GAGA principle \cite[Proposition~15]{Ser56}, its projective
		bundle compactification is algebraically identified with $\overline Y$.
		In particular, $Y$ is Zariski open in $\overline Y$.
		By the GAGA principle \cite[Proposition~13]{Ser56}, the closed
		graph of $\overline h$ is algebraic. Its restriction to $Y\times X$
		is the graph of $h$. By the GAGA principle \cite[Proposition~8]{Ser56},
		$h$ is algebraic.
		Its image $X^\circ=h(Y)$ is constructible
		\cite[Th\'eor\`eme~1.8.4]{EGAIV1} and open for the complex topology,
		hence Zariski open by
		\cite[Expos\'e~XII, Corollaire~2.3]{SGA1}.
		Finally, $h:Y\to X^\circ$ is finite and surjective analytically,
		hence algebraically by
		\cite[Expos\'e~XII, Proposition~3.2(i), (vi)]{SGA1}.
	\end{proof}

	\medskip

	\begin{prop}
		\label{prop:polarization-after-algebraization}
		With the notation of
		Propositions~\ref{prop:holomorphic-cover}--\ref{prop:post-extension-algebraization},
		the isogeny $\alpha$ admits an ample line bundle $\mathcal H$ satisfying
		$\alpha^*\mathcal H\simeq\mathcal H^{\otimes d}$.
		The map $\psi$ is algebraic and is therefore a $d$-lift of $\alpha$
		in the algebraic sense.
	\end{prop}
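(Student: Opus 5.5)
The argument has two parts: algebraicity of $\psi$, then construction of a polarization for $\alpha$. For algebraicity I would use the relation $h\circ\psi=f_1^\ell\circ h$ from Proposition~\ref{prop:radial-quotient}, together with the fact from Proposition~\ref{prop:post-extension-algebraization} that $h$ is a finite algebraic morphism onto the Zariski open set $X^\circ$. Let $\Gamma_\psi\subset Y\times Y$ be the graph of $\psi$. It is an irreducible component of the analytic set
\[
(h\times h)^{-1}\bigl(\Gamma_{f_1^\ell}\cap (X^\circ\times X^\circ)\bigr)=\{(y,y')\in Y\times Y: h(y')=f_1^\ell(h(y))\}.
\]
This set is algebraic, since $h$ and $f_1^\ell$ are algebraic, and it has pure dimension $k$ because $h$ is finite. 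Its irreducible analytic components are therefore exactly its algebraic irreducible components, so $\Gamma_\psi$ is Zariski closed in $Y\times Y$. A holomorphic map with algebraic graph between algebraic varieties is algebraic, so $\psi$ is a morphism. Alternatively, I could extend $\Gamma_\psi$ to its closure in $\overline Y\times\overline Y$ and apply GAGA as in Proposition~\ref{prop:post-extension-algebraization}.

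Once $\psi$ is algebraic, the induced map on the base $A$ is algebraic by Lemma~\ref{lem_ATmap}, and it coincides with $\alpha$ by uniqueness. So $\alpha$ is an isogeny of abelian varieties with $\alpha_*=P^m$ on the universal cover. The torus part of $\psi$ is $[d]_T$ by Proposition~\ref{prop:radial-quotient}. It remains to show that $\alpha$ is $d$-polarized.

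Since $P^m/d^{1/2}$ is conjugate to a unitary matrix (Proposition~\ref{prop:quadratic-model}), there is a positive-definite Hermitian form $\zeta$ on $\C^n$ with $\zeta(P^mu,P^mv)=d\,\zeta(u,v)$. I would obtain such a form by averaging over the compact closure of $\{(P^m/\sqrt d)^j\}$, which also preserves $\Theta$-compatibility. The difficulty is that $\operatorname{Im}\zeta$ must be integral on $\Pi$, so an arbitrary invariant $\zeta$ will not do.

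To get an integral form I would start algebraically: take any ample line bundle $\mathcal H_0$ on $A$, which exists because $A$ is an abelian variety. Then $\alpha^*$ acts on $NS(A)_\R$, and its eigenvalues on $NS(A)_\R\subset\mathrm{Herm}(\C^n)$ have the form $\lambda_i\bar\lambda_j$, all of modulus $d$ since the eigenvalues of $P^m$ have modulus $\sqrt d$. Moreover $(\alpha^*/d)$ acting on Hermitian forms is conjugate to an isometry, hence bounded. The averaging procedure of \cite[Proposition~2.9]{MZ18} (Ces\`aro means $\frac1N\sum_j d^{-j}(\alpha^j)^*$) applied to $c_1(\mathcal H_0)$ then converges to a nef class $c$ with $\alpha^*c=dc$. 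This class is ample: each $d^{-j}(\alpha^j)^*c_1(\mathcal H_0)$ is bounded below by a uniform positive multiple of $\zeta$, because conjugation by a unitary-type map preserves the eigenvalue bounds of a positive form relative to the invariant $\zeta$.

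The eigenspace is defined over $\Q$, so I can pick a rational ample class $H$ with $\alpha^*H\equiv dH$. On an abelian variety, \cite[Lemma~3.4]{MZ18}, or the argument used in Theorem~\ref{thm:commonL}, upgrades this numerical relation to $\alpha^*\mathcal H\simeq\mathcal H^{\otimes d}$ after replacing $\mathcal H$ by a power. The point is that $\alpha^*-[d]$ on $\mathrm{Pic}^0(A)$ has finite kernel, since the eigenvalues of $\alpha$ have modulus $\sqrt d\neq d$. This gives the required ample $\mathcal H$, and with it the conclusion that $\psi$ is a $d$-lift in the sense of Definition~\ref{defidlift}.

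The main obstacle is the ampleness of the averaged class. I would handle it through the uniform positivity bound relative to $\zeta$ described above.
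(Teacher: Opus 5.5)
Your proposal is correct. The main difference from the paper is how you prove that $\psi$ is algebraic.

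\textbf{Algebraicity of $\psi$.} The paper never uses $h$ here. It observes that $\psi(t\cdot y)=t^d\cdot\psi(y)$ makes each component of $\psi$ fiberwise homogeneous of degree $d$. Such a component is a holomorphic isomorphism $E_j^{\otimes d}\simeq\alpha^*E_j$ of line bundles on the projective variety $A$. By GAGA this isomorphism is algebraic, and $\psi$ is rebuilt algebraically from it, with $\alpha$ algebraic by GAGA directly. You instead identify $\Gamma_\psi$ as an irreducible component of the algebraic correspondence $\{h(y')=f_1^\ell(h(y))\}$. This works:
\begin{itemize}
\item $f_1^\ell(X^\circ)\subset X^\circ$ follows from $h\circ\psi=f_1^\ell\circ h$.
\item The first projection is finite, so every component has dimension at most $k$, which forces the $k$-dimensional irreducible analytic set $\Gamma_\psi$ to be a component.
\item The standard fact that an irreducible complex algebraic variety has irreducible analytification is what makes that component Zariski closed. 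Pure-dimensionality is not doing this work, so the ``therefore'' is misplaced.
\end{itemize}
Your route is more robust: it applies to any holomorphic self-map semiconjugated by a finite algebraic map to an algebraic one. The paper's route is intrinsic to the torsor and directly produces the algebraic data $E_j^{\otimes d}\simeq\alpha^*E_j$ used later in Section~6.

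\textbf{Polarization.} Here you follow the paper's scheme: boundedness of $d^{-j}(\alpha^j)^*$ from the unitary normalization of $P^m$, averaging as in \cite[Proposition~2.9]{MZ18}, rationality of the eigenspace, then \cite[Lemma~3.4]{MZ18}. Your explicit bound $d^{-j}(\alpha^j)^*c_1(\mathcal H_0)\ge\epsilon\,\zeta$ is a clean, concrete substitute for the abstract cone fixed-point statement.

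\textbf{One correction in the last step.} You cannot upgrade $\alpha^*H\equiv H^{\otimes d}$ to an isomorphism by passing to a power of $H$. The defect $D:=\alpha^*H\otimes H^{-d}\in\mathrm{Pic}^0(A)$ can be non-torsion. For example, take $\alpha=[n]$ on an elliptic curve and $H=\mathcal O(p)$ with $p$ non-torsion; then $D$ corresponds to $(n-n^2)p$. Unlike in Theorem~\ref{thm:commonL}, there is no relation $\alpha^*D\simeq D^{\otimes d}$ placing $D$ in a finite kernel. The right move is to twist $H$ by some $P\in\mathrm{Pic}^0(A)$ with $\alpha^*P\otimes P^{-d}\simeq D^{-1}$. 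Such a $P$ exists because $\alpha^\vee-[d]$ has finite kernel, hence is an isogeny and in particular surjective. This is precisely what the lemma you cite provides.
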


	\begin{proof}
		By the GAGA principle \cite[Proposition~15]{Ser56},
		the holomorphic isogeny $\alpha$ is algebraic.
		Suppose that $\dim A>0$.
		
		Put $V:=N^1(A)_\R$ and $S:=d^{-1}\alpha^*|_V$.
		The unitary normalization of $P^m$ implies that
		$\{S^j:j\in\Z\}$ is bounded, and $S^{\pm1}$ preserve
		$\operatorname{Nef}(A)$, the closure of the ample cone.
		By \cite[Proposition~2.9]{MZ18}, an invertible real linear
		map preserving a closed convex pointed spanning cone in both directions has
		an interior fixed vector whenever its positive and negative powers
		are bounded. Thus $S$ fixes an ample real class.
		Since $d$ is an integer, \cite[Lemma~3.5]{MZ18} gives an ample
		line bundle $\mathcal H_1$ with
		$\alpha^*\mathcal H_1\equiv\mathcal H_1^{\otimes d}$.
		Finally, \cite[Lemma~3.4]{MZ18} gives an ample line bundle
		$\mathcal H\equiv\mathcal H_1$ satisfying
		$\alpha^*\mathcal H\simeq\mathcal H^{\otimes d}$.
		
		The identity
		$\psi(t\cdot y)=[d](t)\cdot\psi(y)$ implies that the $j$th component
		of $\psi$ is a nowhere-zero homogeneous map of degree $d$ on the
		fibers. It therefore defines a holomorphic line-bundle isomorphism
		$E_j^{\otimes d}\simeq\alpha^*E_j$.
		By the GAGA principle \cite[Theorem~2]{Ser56}, these
		isomorphisms and their inverses are algebraic. They define an algebraic endomorphism of $Y$
		whose analytification is $\psi$. The identities
		$\pi\circ\psi=\alpha\circ\pi$ and $\psi(t\cdot y)=[d](t)\cdot\psi(y)$ then
		hold algebraically, so $\psi$ is a $d$-lift.
	\end{proof}

	\medskip

\begin{theorem}\label{thm_commuting}
Let $\mathbf{k}$ be an algebraically closed field of characteristic zero. Let $X$ be a normal projective variety over $\mathbf{k}$, and let
$f,g:X\to X$ be polarized endomorphisms with polarization degrees
$d_f$ and $d_g$, respectively. Assume that $f\circ g=g\circ f$ and $d_f^n\neq d_g^m$
for all positive integers $n,m$. Then $f$ and $g$ are exceptional maps.
\end{theorem}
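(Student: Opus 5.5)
The plan is to reduce to $\mathbf{k}=\C$ and then assemble the analytic construction of Sections~\ref{sec:Dinh-Sibony} and~\ref{sec:commuting maps}.

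\textbf{Reduction to $\C$.} The data $(X,f,g,L)$ are defined over a finitely generated subfield $\mathbf{k}_0\subset\mathbf{k}$. Choose an embedding $\mathbf{k}_0\hookrightarrow\C$. The definition of an exceptional map is algebraic: an AT-variety, a $d$-lift, a finite morphism onto a Zariski open subset, and a semiconjugacy identity. So if $f_\C$ is exceptional, the exceptional data descend to a finitely generated extension of $\mathbf{k}_0$ by spreading out, and then specialize to $\overline{\mathbf{k}_0}\subset\mathbf{k}$. One must check that the AT-structure, the $d$-lift and the finiteness are preserved under specialization at a general point. This works because being a torus torsor over an abelian scheme, having $\psi_T=[d]_T$, and being finite and $d$-polarized are all open conditions after spreading out. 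Hence we may assume $\mathbf{k}=\C$. By symmetry, it suffices to treat $f=f_1$ with $d_1=d_f$ and $f_2=g$.

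\textbf{Main argument over $\C$.} The hypothesis $d_f^n\neq d_g^m$ is exactly \eqref{eq:independence}, so all of Section~\ref{sec:Dinh-Sibony} applies.
\begin{enumerate}
\item Theorem~\ref{thm:commonL} gives a common polarization.
\item Lemma~\ref{lem:ambient-extension} and Proposition~\ref{prop:cone} embed $X$ in $\P^N$ and lift $f,g$ to commuting homogeneous maps $u_1,u_2$ on the cone $Z$.
\item Proposition~\ref{prop:gq-transitive} produces the Poincar\'e map $\Psi$ and the affine deck group $\mathcal A$.
\item Propositions~\ref{prop:holomorphic-cover} and~\ref{prop:radial-quotient} produce a holomorphic principal $T$-bundle $\pi:Y\to A$, a finite surjection $h:Y\to X^\circ$ and an endomorphism $\psi$ with $h\circ\psi=f^\ell\circ h$, where $\psi$ covers the isogeny $\alpha$ and is $[d]$-equivariant on $T$.
\item Propositions~\ref{prop:slice-extension}, \ref{prop:post-extension-algebraization} and~\ref{prop:polarization-after-algebraization} show that $A$ is an abelian variety, that $Y\to A$ is an algebraic torus torsor (an AT-variety), that $X^\circ$ is Zariski open dense with $h$ finite surjective algebraic, and that $\psi$ is a $d$-lift with $d=d_f^\ell$.
\end{enumerate}
This is precisely Definition~\ref{def:exceptional-intro}.

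\textbf{Remaining details.} Two points need attention. First, Definition~\ref{def:exceptional-intro} requires $h$ to be finite onto a dense open subset. $X^\circ$ is nonempty and Zariski open in the irreducible $X$, hence dense. Second, the polarization degree must match. The definition asks for a $d$-lift with $h\circ\psi=f^l\circ h$, where $f$ is $d_f$-polarized. Since $f^\ell$ is $d_f^\ell$-polarized, I read the definition as allowing the lift degree to be the polarization degree of the iterate, $d=d_f^\ell$, and I would state this explicitly.

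\textbf{Main obstacle.} The step I expect to be most delicate is the descent from $\C$ to an arbitrary algebraically closed $\mathbf{k}$. In particular, one must keep the abelian variety a genuine abelian variety and the torsor a torsor under a split torus after specialization. If this becomes difficult, an alternative is to use that $\mathbf{k}$ and $\C$ share an algebraically closed subfield of finite transcendence degree containing the field of definition, and to argue that the exceptional structure found over $\C$ is unique enough to be $\mathrm{Aut}(\C/\overline{\mathbf{k}_0})$-stable up to finitely many choices, hence defined over $\overline{\mathbf{k}_0}$.
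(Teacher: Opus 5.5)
Your proposal is correct and follows the paper's proof: reduce to $\C$ by spreading out and the Lefschetz principle, then chain Propositions~\ref{prop:holomorphic-cover}, \ref{prop:radial-quotient}, \ref{prop:torus-area}, \ref{prop:slice-extension}, \ref{prop:post-extension-algebraization} and~\ref{prop:polarization-after-algebraization} to obtain an algebraic $d$-lift with $d=d_f^\ell$ semiconjugate to $f^\ell$; your reading of the lift degree as the polarization degree of the iterate is exactly what the paper's proof uses. The only point you skip is the trivial case $\dim X=0$, which the paper handles separately because the construction of Section~\ref{sec:Dinh-Sibony} assumes $k\geq1$.
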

\begin{proof}
By a standard spreading-out argument and the Lefschetz principle, it suffices to prove the statement over $\mathbf{k}=\mathbb{C}$. The case $\dim X=0$ follows from Subsection~\ref{sec:common-polarization}. Assume $\dim X>0$. By Propositions~\ref{prop:holomorphic-cover} and~\ref{prop:radial-quotient}, there are $Y$, $h$, $\alpha$, $\psi$ and an integer $\ell\geq1$ satisfying~\eqref{eq:new-lift-identities}, with $d=d_1^\ell$. By Proposition~\ref{prop:torus-area}, $h$ extends holomorphically along each coordinate orbit. Proposition~\ref{prop:slice-extension} then gives a meromorphic extension to $\overline Y$. By Proposition~\ref{prop:post-extension-algebraization}, $A$ is an abelian variety, $Y\to A$ is an algebraic torus torsor, $X^\circ$ is a nonempty Zariski open subset of $X$, and $h:Y\to X^\circ$ is a finite surjective algebraic morphism. By Proposition~\ref{prop:polarization-after-algebraization}, the isogeny $\alpha$ has a polarization of polarized degree $d$, and $\psi$ is an algebraic $d$-lift. The identity $h\circ\psi=f_1^\ell\circ h$ proves the theorem.
	\end{proof}

\section{Endomorphisms comes from abelian sequence}\label{sec_main}

\subsection{Tubes in Berkovich spaces}\label{sec_tube}

Let $\mathbf{k}$ be a complete non-archimedean field equipped
with a nontrivial absolute value $|\cdot|$. Set
\[
\mathbf{k}^{\circ}:=\{x\in\mathbf{k}:|x|\leq 1\},
\qquad
\mathbf{k}^{\circ\circ}:=\{x\in\mathbf{k}:|x|<1\},
\qquad
\wt{\mathbf{k}}:=\mathbf{k}^{\circ}/\mathbf{k}^{\circ\circ}.
\]
Assume that $\mr{char}(\wt{\mathbf{k}})=p>0$, and let $q=p^a$
for some integer $a\geq 1$. We define tubes in
$(\mb{P}^N_{\mathbf{k}}\times\mb{P}^N_{\mathbf{k}})^{\an}$
as follows.

For $0\leq i\leq N$, consider the affine chart
\[
\begin{aligned}
U_i
&:=\left\{
([x_0:\dots:x_N],[y_0:\dots:y_N])
\mid x_i\neq 0,\ y_i\neq 0
\right\}\\
&=\Spe\mathbf{k}
[s_{j,i},t_{j,i}\mid 0\leq j\leq N,\ j\neq i],
\end{aligned}
\]
where $s_{j,i}=x_j/x_i$ and $t_{j,i}=y_j/y_i$.
For convenience, set $s_{i,i}=t_{i,i}=1$.
Let
\[
E_i:=
\left\{
\xi\in U_i^{\an}
\;\middle|\;
|s_{j,i}|_\xi\leq 1,\ |t_{j,i}|_\xi\leq 1
\text{ for all }j\neq i
\right\}
\]
be the closed unit polydisc in $U_i^{\an}$.
For $\delta\in[0,1)$ and $r\in\mb{Z}_{\geq 0}$, define
\[
T_i^{(r)}(\delta):=
\left\{
\xi\in E_i
\;\middle|\;
|t_{j,i}-s_{j,i}^{q^r}|_\xi\leq\delta
\text{ for all }j\neq i
\right\},
\]
and set
\[
T^{(r)}(\delta):=
\bigcup_{i=0}^N T_i^{(r)}(\delta)
\subset
(\mb{P}^N_{\mathbf{k}}\times\mb{P}^N_{\mathbf{k}})^{\an}.
\]

\begin{lemma}\label{lem_tube}
For every $r\in\mb{Z}_{\geq 0}$ and $\delta\in[0,1)$,
the tube $T^{(r)}(\delta)$ is compact and closed in
$(\mb{P}^N_{\mathbf{k}}\times\mb{P}^N_{\mathbf{k}})^{\an}$.
Moreover, $T^{(r)}(\delta)\cap E_i=T_i^{(r)}(\delta)$ for all $0\leq i\leq N$.
\end{lemma}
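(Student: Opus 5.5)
The plan is to prove the second assertion first, since compactness and closedness then follow formally. Each $E_i$ is a closed unit polydisc in the affine chart $U_i^{\an}$, hence compact (a closed Berkovich polydisc is compact, being homeomorphic to the Berkovich spectrum of a Tate algebra). The set $T_i^{(r)}(\delta)$ is cut out inside $E_i$ by the finitely many non-strict inequalities $|t_{j,i}-s_{j,i}^{q^r}|\leq\delta$. The functions $\xi\mapsto|g|_\xi$ are continuous, so $T_i^{(r)}(\delta)$ is closed in $E_i$ and therefore compact. A finite union of compact sets is compact, which gives compactness of $T^{(r)}(\delta)$. Since $(\mb{P}^N\times\mb{P}^N)^{\an}$ is Hausdorff, compact subsets are closed, so closedness follows as well.

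For the identity $T^{(r)}(\delta)\cap E_i=T_i^{(r)}(\delta)$, the inclusion $\supseteq$ is trivial. For $\subseteq$, take $\xi\in T_k^{(r)}(\delta)\cap E_i$ with $k\neq i$. The goal is to show $|t_{j,i}-s_{j,i}^{q^r}|_\xi\leq\delta$ for every $j$.

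The first step is a unit argument. Since $\xi\in E_i\cap E_k$, we have $|s_{k,i}|_\xi\leq1$ and $|s_{i,k}|_\xi\leq1$, and their product is $1$. Hence $|s_{k,i}|_\xi=1$, and likewise $|t_{k,i}|_\xi=1$.

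The second step uses $\xi\in T_k^{(r)}(\delta)$ with $j=i$: we have $|t_{i,k}-s_{i,k}^{q^r}|_\xi\leq\delta<1$. Writing $u=s_{i,k}^{q^r}$ and $v=t_{i,k}$, both of absolute value $1$ at $\xi$, we get
\[
|v^{-1}-u^{-1}|_\xi=|u-v|_\xi\leq\delta .
\]
This says $|t_{k,i}-s_{k,i}^{q^r}|_\xi\leq\delta$.

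The third step handles general $j$. Write $t_{j,i}=t_{j,k}t_{k,i}$ and $s_{j,i}^{q^r}=s_{j,k}^{q^r}s_{k,i}^{q^r}$. Then
\[
t_{j,i}-s_{j,i}^{q^r}=(t_{j,k}-s_{j,k}^{q^r})\,t_{k,i}+s_{j,k}^{q^r}\,(t_{k,i}-s_{k,i}^{q^r}).
\]
All the factors $|t_{k,i}|_\xi$ and $|s_{j,k}|_\xi$ are at most $1$, because $\xi\in E_k$. The ultrametric inequality then gives the bound $\delta$.

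The only subtle point is that the relations hold inside the seminorm on the common chart $U_i\cap U_k$, where all these functions are regular. Every seminorm in $E_i\cap E_k$ lies in $(U_i\cap U_k)^{\an}$, because $s_{k,i}$ and $t_{k,i}$ are nonzero there, so this is fine. Note also that no Frobenius property is used: the identity in the third step is purely multiplicative, so the argument works for any exponent $q^r$. I expect no real obstacle beyond this bookkeeping.
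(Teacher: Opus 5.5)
Your proof is correct and is essentially the paper's argument. In both, compactness of the polydiscs $E_i$, closedness of the defining inequalities and Hausdorffness give the first claim, and a change-of-chart computation on $E_i\cap E_k$ gives the second, using that the transition coordinates have absolute value $1$ together with the ultrametric inequality. The paper merely packages your Steps 2 and 3 into the single identity $t_{\ell,j}-s_{\ell,j}^{q^r}=(s_{j,i}^{q^r}e_\ell-s_{\ell,i}^{q^r}e_j)/(t_{j,i}s_{j,i}^{q^r})$ with $e_\ell:=t_{\ell,i}-s_{\ell,i}^{q^r}$. One harmless slip: your bound $|t_{k,i}|_\xi\le 1$ comes from $\xi\in E_i$ (or from your Step 1), not from $\xi\in E_k$.
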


\begin{proof}
Each $E_i$ is compact by \cite[Theorem~1.2.1]{Ber90},
and the defining inequalities show that $T_i^{(r)}(\delta)$
is closed in $E_i$. Thus $T^{(r)}(\delta)$ is compact,
and hence closed in the Hausdorff space
$(\mb{P}^N_{\mathbf{k}}\times\mb{P}^N_{\mathbf{k}})^{\an}$.

It remains to check that the local definitions agree on overlaps.
Let $\xi\in T_i^{(r)}(\delta)\cap E_j$, with $i\neq j$. Since $\xi\in E_i\cap E_j$, we have $|s_{j,i}|_\xi=|t_{j,i}|_\xi=1$. Writing $e_\ell=t_{\ell,i}-s_{\ell,i}^{q^r}$, then $e_i=0$ and $|e_{\ell}|_{\xi}\leq 1$ for all $\ell\neq i$. Thus, we obtain, for every $\ell\neq j$,
\[
t_{\ell,j}-s_{\ell,j}^{q^r}
=
\frac{s_{j,i}^{q^r}e_\ell-s_{\ell,i}^{q^r}e_j}
     {t_{j,i}s_{j,i}^{q^r}}.
\]
The denominator has absolute value $1$ at $\xi$, whereas
the numerator has absolute value at most $\delta$.
Thus $\xi\in T_j^{(r)}(\delta)$.
This proves the required compatibility and hence the assertion.
\end{proof}

\begin{lemma}\label{lem_comp}
Fix $\delta\in[0,1)$, and let
$x,y,z\in\mb{P}^N(\ov{\mathbf{k}})$. Then the following hold.
\begin{enumerate}
\renewcommand{\labelenumi}{(\theenumi)}
\item
For $r_1,r_2\in\mb{Z}_{\geq 0}$, if
$(x,y)\in T^{(r_1)}(\delta)$ and
$(y,z)\in T^{(r_2)}(\delta)$, then
\[
(x,z)\in T^{(r_1+r_2)}(\delta).
\]

\item
For integers $0\leq r_1\leq r_2$, if
$(y,x)\in T^{(r_1)}(\delta)$ and
$(y,z)\in T^{(r_2)}(\delta)$, then
\[
(x,z)\in T^{(r_2-r_1)}(\delta).
\]
\end{enumerate}
\end{lemma}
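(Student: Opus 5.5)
Both parts reduce to ultrametric estimates in one affine chart, combined with the compatibility statement of Lemma~\ref{lem_tube}. A $\ov{\mathbf{k}}$-point $(x,y)$ of $\mb{P}^N\times\mb{P}^N$ is a type-one point of the Berkovich space, with absolute value given by an extension of $|\cdot|$ to $\ov{\mathbf{k}}$. Choose homogeneous coordinates of $x$ with $\max_j|x_j|=1$. For an index $i$ with $|x_i|=1$, every $|s_{j,i}(x)|\leq1$, so the first factor lies in the unit polydisc.

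\textbf{Part (1).} Start from $(x,y)\in T^{(r_1)}(\delta)$. By Lemma~\ref{lem_tube}, membership in $T^{(r_1)}(\delta)$ can be tested in any chart $E_i$ containing the point. So it suffices to find an index $i$ such that $(x,y)\in E_i$, $(y,z)\in E_i$ and $(x,z)\in E_i$, and then apply Lemma~\ref{lem_tube} on that chart.

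To find $i$, pick $i$ with $(x,y)\in T_i^{(r_1)}(\delta)$. Then $|s_{j,i}(x)|\leq1$ and $|t_{j,i}(y)|\leq1$, so $y$ is normalized at $i$. Apply the same reasoning to $(y,z)$: there is some $i'$ with $(y,z)\in E_{i'}$. Since the tube is chart-independent on $E_{i'}$, and $y$ normalized at $i$ means $y$ lies in the polydisc for $i$, I want to show $(y,z)\in E_i$. For this I use $|t-s^{q^r}|\leq\delta<1$ on $E_{i'}$: it forces $|z_j|$ and $|y_j|^{q^{r_2}}$ to agree whenever either has absolute value $1$. Hence if $|y_i|=1$ in normalized coordinates, then $|z_i|=1$, so $z$ is normalized at $i$ and $(y,z)\in E_i$. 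The analogous argument shows $x$ and $y$ share their maximal coordinates as well.

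Now work in chart $i$ with $a=s(x)$, $b=s(y)$, $c=s(z)$, all of absolute value at most $1$. We have $|b-a^{q^{r_1}}|\leq\delta$ and $|c-b^{q^{r_2}}|\leq\delta$. Write $b=a^{q^{r_1}}+e$ with $|e|\leq\delta$. Then
\[
b^{q^{r_2}}-a^{q^{r_1+r_2}}=\sum_{0<k<q^{r_2}}\binom{q^{r_2}}{k}a^{q^{r_1}(q^{r_2}-k)}e^k+e^{q^{r_2}}.
\]
The binomial coefficients with $0<k<q^{r_2}$ are divisible by $p$, and we have $|e^k|\leq\delta$, so each term has absolute value at most $\delta$. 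The ultrametric inequality then gives $|c-a^{q^{r_1+r_2}}|\leq\delta$.

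The case $r_2=0$ is trivial. The one subtle point, which I expect to be the main obstacle, is the Frobenius step: the binomial expansion only ensures $|p|\leq1$, not $|p|<1$. This is nevertheless enough, since every term is bounded by $\delta$.

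\textbf{Part (2).} Use the same normalized chart $i$, with $|x_i|=|y_i|=|z_i|=1$. Now $y=:b$, $x=:a$, $z=:c$ satisfy $|a-b^{q^{r_1}}|\leq\delta$ and $|c-b^{q^{r_2}}|\leq\delta$, and we want $|c-a^{q^{r_2-r_1}}|\leq\delta$. Set $m=r_2-r_1$. The Frobenius estimate above gives $|a^{q^m}-b^{q^{r_2}}|\leq\delta$, and the ultrametric inequality finishes the argument.
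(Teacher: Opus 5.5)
Your argument is correct and is essentially the paper's proof. You place $x,y,z$ in a common chart $E_i$ by noting that $|t_{j,i}-s_{j,i}^{q^r}|\le\delta<1$ forces the unit-modulus coordinates of the two points to coincide, and you then combine the ultrametric inequality with the bound $|B^{n}-A^{n}|\le|B-A|$ for $|A|,|B|\le 1$; you derive that bound by binomial expansion, where only $|\binom{n}{k}|\le 1$ matters, rather than by factoring $B^n-A^n$, and the divisibility by $p$ that you flag as the main obstacle plays no role.
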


\begin{proof}
We first choose a common coordinate chart.
For a pair $(u,v)\in T^{(r)}(\delta)$, take homogeneous
coordinates with $\max_j|u_j|=\max_j|v_j|=1$. Choose $i$ such that $(u,v)\in T_{i}^{(r)}(\delta)$. Then $|u_i|=|v_i|=1$, and
\[
\left|
\frac{v_j}{v_i}
-
\left(\frac{u_j}{u_i}\right)^{q^r}
\right|
\leq\delta<1
\]
for every $j$. Consequently, $|u_j|=1$ if and only if $|v_j|=1$. Thus, in either case of the lemma, the same coordinate indices
attain the maximal absolute value for $x,y,z$. We may therefore choose homogeneous coordinates such that all coordinates have absolute value at most $1$ and
$x_i=y_i=z_i=1$ for a common index $i$. 

In case~(1), for every $j\neq i$,
\[
\begin{aligned}
|z_j-x_j^{q^{r_1+r_2}}|
&\leq
\max\left\{
|z_j-y_j^{q^{r_2}}|,
|y_j^{q^{r_2}}-x_j^{q^{r_1+r_2}}|
\right\}\\
&\leq
\max\left\{
|z_j-y_j^{q^{r_2}}|,
|y_j-x_j^{q^{r_1}}|
\right\}
\leq\delta.
\end{aligned}
\]
Hence $(x,z)\in T_i^{(r_1+r_2)}(\delta)$.

In case~(2), for every $j\neq i$,
\[
\begin{aligned}
|z_j-x_j^{q^{r_2-r_1}}|
&\leq
\max\left\{
|z_j-y_j^{q^{r_2}}|,
|y_j^{q^{r_2}}-x_j^{q^{r_2-r_1}}|
\right\}\\
&\leq
\max\left\{
|z_j-y_j^{q^{r_2}}|,
|y_j^{q^{r_1}}-x_j|
\right\}
\leq\delta.
\end{aligned}
\]
Thus $(x,z)\in T_i^{(r_2-r_1)}(\delta)$.
\end{proof}

Consider the standard model
\[
\mc{P}:=
\mb{P}^N_{\mathbf{k}^{\circ}}
\times_{\mathbf{k}^{\circ}}
\mb{P}^N_{\mathbf{k}^{\circ}}
\longrightarrow\Spe\mathbf{k}^{\circ}.
\]
Its special fiber is
$\mb{P}^N_{\wt{\mathbf{k}}}
\times_{\wt{\mathbf{k}}}
\mb{P}^N_{\wt{\mathbf{k}}}$.
As recalled in Section~\ref{sec_pre}, this model determines
a reduction map
\[
\red:
(\mb{P}^N_{\mathbf{k}}\times\mb{P}^N_{\mathbf{k}})^{\an}
\longrightarrow
\mb{P}^N_{\wt{\mathbf{k}}}
\times_{\wt{\mathbf{k}}}
\mb{P}^N_{\wt{\mathbf{k}}}.
\]
Let $\Phi_q$ be the Frobenius defined by 
\[
\Phi_q:\mb{P}^N_{\wt{\mathbf{k}}}
\longrightarrow\mb{P}^N_{\wt{\mathbf{k}}},
\qquad
[x_0:\dots:x_N]\longmapsto[x_0^q:\dots:x_N^q].
\]
For $r\geq 0$, write $\Phi_q^r$ for its $r$-th iterate
and $\Gamma_{\Phi_q^r}$ for the graph of $\Phi_q^r$.
In particular, $\Gamma_{\Phi_q^0}$ is the diagonal.

\begin{lemma}\label{lem_red}
For every $r\in\mb{Z}_{\geq 0}$ and $\delta\in[0,1)$, $\red\bigl(T^{(r)}(\delta)\bigr)
\subseteq\Gamma_{\Phi_q^r}$.
\end{lemma}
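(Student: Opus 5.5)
Since $T^{(r)}(\delta)=\bigcup_i T_i^{(r)}(\delta)$, it suffices to show that $\red(\xi)\in\Gamma_{\Phi_q^r}$ for each $i$ and each $\xi\in T_i^{(r)}(\delta)$. Fix such $\xi$. Its kernel is a scheme point $x\in U_i$ with completed residue field $\mathcal{H}(\xi)$, whose valuation ring is $\mathcal{H}(\xi)^\circ$. The first step is to identify the reduction explicitly. Because $|s_{j,i}|_\xi\le 1$ and $|t_{j,i}|_\xi\le 1$ for all $j$, the morphism $\Spec \mathcal{H}(\xi)\to U_i$ extends to $\Spec \mathcal{H}(\xi)^\circ$ through the affine chart
\[
\mathcal{U}_i=\Spec \mathbf{k}^\circ[s_{j,i},t_{j,i}\mid j\neq i]\subset\mathcal{P}.
\]
By uniqueness of the extension in the valuative criterion, this is the extension used in the definition of $\red$. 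Hence $\red(\xi)$ is the point of the special fiber chart $\Spec\wt{\mathbf{k}}[\bar s_{j,i},\bar t_{j,i}]$ given by the kernel of
\[
\wt{\mathbf{k}}[\bar s_{j,i},\bar t_{j,i}]\to\wt{\mathcal{H}(\xi)},
\]
obtained by sending each coordinate to the residue class of its value at $\xi$.

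The second step is to check the graph equations. In the special fiber chart where $x_i\ne 0$ and $y_i\ne 0$, the graph $\Gamma_{\Phi_q^r}$ is cut out by the equations $\bar t_{j,i}-\bar s_{j,i}^{q^r}=0$ for $j\ne i$. The ideal generated by these equations already defines the closed subscheme $\Gamma_{\Phi_q^r}\cap\{x_i\ne0\}$, and since $\Phi_q^r$ preserves the locus $x_i\ne 0$, the condition $y_i\ne0$ is then automatic. By the definition of $T_i^{(r)}(\delta)$,
\[
|t_{j,i}-s_{j,i}^{q^r}|_\xi\le\delta<1 .
\]
So the image of $t_{j,i}-s_{j,i}^{q^r}$ in $\mathcal{H}(\xi)^\circ$ lies in the maximal ideal. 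Its residue class $\bar t_{j,i}-\bar s_{j,i}^{q^r}$ therefore vanishes in $\wt{\mathcal{H}(\xi)}$, that is, it lies in the kernel above. Consequently $\red(\xi)$ lies in the closed subscheme $\Gamma_{\Phi_q^r}$.

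The only point needing care is the first step, namely that $\red$ can be computed in the chart $\mathcal{U}_i$. This follows from the boundedness conditions in the definition of $E_i$ together with the uniqueness of the lift, so no real obstacle is expected. The Frobenius compatibility itself is immediate because reduction is a ring homomorphism: the $q^r$-th power commutes with reduction modulo the maximal ideal, and $\wt{\mathbf{k}}$ has characteristic $p$.
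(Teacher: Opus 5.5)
Your proposal is correct and follows the paper's proof. In both, $\xi\in E_i$ places $\red(\xi)$ in the special-fiber chart $\wt{U_i}$, and $|t_{j,i}-s_{j,i}^{q^r}|_\xi\le\delta<1$ forces the reduced equations cutting out $\Gamma_{\Phi_q^r}$ on that chart to vanish at $\red(\xi)$. Your version spells out the chart identification of $\red$ via the valuative criterion in more detail than the paper does.
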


\begin{proof}
Let $\xi\in T_i^{(r)}(\delta)$.
Since $\xi\in E_i$, its reduction lies in the affine chart
\[
\wt{U_i}:=
\Spe\wt{\mathbf{k}}
[s_{j,i},t_{j,i}\mid 0\leq j\leq N,\ j\neq i]
\]
of the special fiber.
For every $j\neq i$, the inequality
\[
|t_{j,i}-s_{j,i}^{q^r}|_\xi\leq\delta<1
\]
implies that the reduction of $t_{j,i}-s_{j,i}^{q^r}$
vanishes at $\red(\xi)$.
These equations define $\Gamma_{\Phi_q^r}$ on $\wt{U_i}$.
Hence $\red(\xi)\in\Gamma_{\Phi_q^r}$.
\end{proof}

\begin{corollary}\label{cor_shilovtube}
Let $V\subset\mb{P}^N_{\mathbf{k}}\times\mb{P}^N_{\mathbf{k}}$
be a subvariety, let $\mc{V}$ be its schematic closure in
$\mc{P}$, and let $\wt{V}$ be the special fiber of $\mc{V}$.
Fix $\delta\in[0,1)$ and $r\in\mb{Z}_{\geq 0}$.
Assume that, for every generic point $\eta$ of an irreducible
component of $\wt{V}$, there exists a point $\rho_\eta\in V^{\an}\cap T^{(r)}(\delta)$ with $\red(\rho_\eta)=\eta$. Then $\Supp(\wt{V})\subseteq\Gamma_{\Phi_q^r}$.
\end{corollary}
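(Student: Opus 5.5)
The argument is short and runs componentwise. Let $\eta$ be the generic point of an irreducible component $W$ of $\wt{V}$. By hypothesis there is $\rho_\eta\in V^{\an}\cap T^{(r)}(\delta)$ with $\red(\rho_\eta)=\eta$. Here the reduction of $\rho_\eta$ computed through $\mc{V}$ agrees with its reduction in $\mc{P}$. The reason is that $\mc{V}\hookrightarrow\mc{P}$ is a closed immersion compatible with $V\hookrightarrow\mb{P}^N_{\mathbf{k}}\times\mb{P}^N_{\mathbf{k}}$. The valuative extension $\Spe\mathbf{k}(x)^\circ\to\mc{V}$ composed with this immersion is therefore the unique extension to $\mc{P}$, so the two reduction maps are compatible.

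Next we apply Lemma~\ref{lem_red}. Since $\rho_\eta\in T^{(r)}(\delta)$, we get $\eta=\red(\rho_\eta)\in\Gamma_{\Phi_q^r}$. Now $\Gamma_{\Phi_q^r}$ is Zariski closed in the special fiber $\mb{P}^N_{\wt{\mathbf{k}}}\times\mb{P}^N_{\wt{\mathbf{k}}}$, and $W=\ov{\{\eta\}}$. Hence $W\subseteq\Gamma_{\Phi_q^r}$.

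Finally, $\Supp(\wt{V})$ is the union of its finitely many irreducible components. Applying the previous paragraph to each component gives $\Supp(\wt{V})\subseteq\Gamma_{\Phi_q^r}$.

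The only point needing care is the compatibility of the reduction maps of $V^{\an}$ via $\mc{V}$ and of the ambient space via $\mc{P}$. This follows from uniqueness in the valuative criterion, since $\mc{V}$ is proper over $\mathbf{k}^\circ$ and closed in $\mc{P}$. Everything else is immediate from Lemma~\ref{lem_red} and closedness of the graph.
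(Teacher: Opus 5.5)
Your proof is correct and follows the paper's argument. You apply Lemma~\ref{lem_red} to each $\rho_\eta$ to place every generic point $\eta$ in the closed set $\Gamma_{\Phi_q^r}$, then take closures and the union over components; your remark that the reduction maps through $\mc{V}$ and through $\mc{P}$ agree, by uniqueness in the valuative criterion, is a detail the paper leaves implicit.
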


\begin{proof}
By Lemma~\ref{lem_red}, every such generic point $\eta$
belongs to $\Gamma_{\Phi_q^r}$.
Since $\Gamma_{\Phi_q^r}$ is closed, it contains the closure
of each $\eta$ and hence the support of $\wt{V}$. The asserted inclusion follows after passing to the reduced induced scheme structure.
\end{proof}

\subsubsection*{Tubes for general projective varieties}

Let $X$ be a projective variety over $\mathbf{k}$, and fix
a closed embedding $X\hookrightarrow\mb{P}^N_{\mathbf{k}}$.
It induces a closed embedding
\[
(X\times X)^{\an}
\hookrightarrow
(\mb{P}^N_{\mathbf{k}}\times\mb{P}^N_{\mathbf{k}})^{\an}.
\]
For $r\in\mb{Z}_{\geq 0}$ and $\delta\in[0,1)$, define
\[
T_X^{(r)}(\delta):=
T^{(r)}(\delta)\cap(X\times X)^{\an}.
\]
By Lemma~\ref{lem_tube}, this is a compact closed subset
of $(X\times X)^{\an}$.

Let $\mc{X}$ be the schematic closure of $X$ in
$\mb{P}^N_{\mathbf{k}^{\circ}}$, and let $\wt{X}$
be its special fiber. Then
\[
\mc{X}\times_{\mathbf{k}^{\circ}}\mc{X}
\longrightarrow\Spe\mathbf{k}^{\circ}
\]
is a model of $X\times X$, with special fiber
$\wt{X}\times_{\wt{\mathbf{k}}}\wt{X}$.

Assume that the closed immersion
$\wt{X}\hookrightarrow\mb{P}^N_{\wt{\mathbf{k}}}$
is defined over a finite subfield
$\mb{F}_q\subset\wt{\mathbf{k}}$.
Then $\Phi_q$ restricts to an endomorphism $\Phi_{q,\wt{X}}:\wt{X}\longrightarrow\wt{X}$. Its graph satisfies
\[
\Gamma_{\Phi_{q,\wt{X}}}
=
\Gamma_{\Phi_q}
\cap
\bigl(\wt{X}\times_{\wt{\mathbf{k}}}\wt{X}\bigr).
\]

\subsection{Construction of bi-finite correspondences}

Throughout this subsection, let $K$ be a number field and let
$X$ be a normal projective variety over $K$ of dimension $n\geq 1$.
Fix a finite place $v$ of $K$, and write $k_v$ for its residue
field. Set $p:=\mr{char}(k_v)$ and $q:=|k_v|$.

Let $\ov{L}$ be a nef adelic line bundle on $X$ with $L$ ample.
Assume that $v$ is a model place for $\ov{L}$. More precisely, after replacing $\ov{L}$ by a positive tensor
power, fix an arithmetic model $(\mc{X},\mc{L})$ of $(X,L)$ over $\Spe O_K$ and an open neighborhood $\mc{V}\subset\Spe O_K$ of $v$ such that the metrics of
$\ov{L}$ over $\mc{V}$ are induced by
$(\mc{X}_{\mc{V}},\mc{L}_{\mc{V}})$ and
$\mc{L}|_{\mc{X}_v}$ is ample.
Fix a closed embedding $\mc{X}\hookrightarrow\mb{P}^N_{O_K}$,
and let $\wt{X}:=\mc{X}_v$.
Write $\pi_i:X\times X\to X$ for the two projections.

Fix an embedding $\ov{K}\hookrightarrow\mb{C}_v$, and let $w$
be the resulting extension of $v$ to $\ov{K}$. Let $D_w\subset \Gal(\ov{K}/K)$ be the
decomposition group at $w$. Choose $\sigma\in D_w$ lifting
the Frobenius automorphism
\[
\phi_q:\ov{k_v}\longrightarrow\ov{k_v},
\qquad a\longmapsto a^q.
\]
We use the tubes defined in Section~\ref{sec_tube} for the
chosen projective embedding. All Zariski closures and irreducible components below are
taken over $K$, unless otherwise specified.

\begin{lemma}\label{lem_deltachoice}
Let $(x_m)_{m\geq 1}$ be a sequence in $X(\ov{K})$ that is
almost unramified at $v$. Then there exists $\delta\in(0,1)$
such that, for every $m\geq 1$,
\[
\sup_{\substack{z\in K(x_m)\\ |z|_w<1}}|z|_w\leq\delta.
\]
\end{lemma}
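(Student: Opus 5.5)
Almost unramified at $v$ means there is a uniform bound $e$ on the ramification indices $e(w'/v)$, where $w'$ runs over places of $K(x_m)$ above $v$ and $m$ runs over all indices. In particular this holds for the place $w_m:=w|_{K(x_m)}$ induced by the fixed embedding $\ov{K}\hookrightarrow\mb{C}_v$. My plan is to use this bound to show that the value group of $w$ restricted to $K(x_m)$ sits inside a discrete group that does not depend on $m$. The number $\delta$ is then the largest value below $1$ allowed by that group.

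\textbf{Step 1 (one field at a time).} Normalize $|\cdot|_w$ on $\mb{C}_v$ so that it extends $|\cdot|_v$ on $K$. Let $\varpi_v$ be a uniformizer of $K_v$, and put $c:=|\varpi_v|_v\in(0,1)$. Take any $m$ and any $z\in K(x_m)$ with $|z|_w<1$. The restriction of $|\cdot|_w$ to $K(x_m)$ is the absolute value attached to $w_m$. Its completion $K(x_m)_{w_m}$ is a finite extension of $K_v$ with ramification index $e_m:=e(w_m/v)$, so its value group is $c^{\frac{1}{e_m}\mb{Z}}$. Hence $|z|_w=c^{a/e_m}$ for some integer $a\geq 1$, and therefore $|z|_w\leq c^{1/e_m}$.

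\textbf{Step 2 (uniformity in $m$).} The hypothesis says $e_m\leq e$ for all $m$. So $c^{1/e_m}\leq c^{1/e}$, and we can take
\[
\delta:=c^{1/e}=|\varpi_v|_v^{1/e}\in(0,1).
\]
This choice of $\delta$ works for every $m$ at once, which is what the statement asks for.

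The only point that needs care is the normalization, and I do not expect a real obstacle here. The argument needs $|\cdot|_w$ on $K(x_m)$ to be a power of the normalized absolute value of $w_m$, with the exponent fixed by the embedding into $\mb{C}_v$ and not by $m$. Once $|\cdot|_w$ is chosen to extend $|\cdot|_v$, the value group of $K(x_m)_{w_m}$ inside $\R_{>0}$ is exactly $c^{\frac{1}{e_m}\mb{Z}}$, and Steps 1 and 2 go through as written.
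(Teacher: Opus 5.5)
Your proof is correct and is essentially the paper's argument: a uniform bound $e$ on $e(w_m/v)$, the bound $|z|_w\leq|\varpi_m|_w=|\varpi|_v^{1/e(w_m/v)}$ for any $z\in K(x_m)$ with $|z|_w<1$, where $\varpi_m$ is a uniformizer of $K(x_m)_{w_m}$, and the choice $\delta:=|\varpi|_v^{1/e}$. The only detail your Step 1 skips is $z=0$, which satisfies the bound trivially.
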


\begin{proof}
Let $w_m:=w|_{K(x_m)}$.
By assumption, there is an integer $e\geq 1$ such that
$e(w_m/v)\leq e$ for every $m$.
Choose a uniformizer $\varpi$ of $K_v$ and set
\[
\delta:=|\varpi|_v^{1/e}\in(0,1).
\]
The absolute value on $K(x_m)_{w_m}$ is understood to extend
that on $K_v$. If $\varpi_m$ is a uniformizer of this field,
then
\[
|\varpi_m|_w=|\varpi|_v^{1/e(w_m/v)}.
\]
Hence every $z\in K(x_m)$ with $|z|_w<1$ satisfies
\[
|z|_w\leq|\varpi_m|_w
=|\varpi|_v^{1/e(w_m/v)}
\leq|\varpi|_v^{1/e}
=\delta.
\]
\end{proof}

\begin{theorem}\label{thm_constructZ}
With the notation and model hypotheses above, assume that
there exists a generic sequence $(x_m)_{m\geq 1}$ in
$X(\ov{K})$ that is abelian, almost unramified at $v$, and
satisfies
\[
\lim_{m\to\infty}h_{\ov{L}}(x_m)=0.
\]
Then there exists a correspondence $\mf{C}_X\subset X\times X$
with the following properties:
\begin{enumerate}
\renewcommand{\labelenumi}{(\theenumi)}
\item
Every irreducible component of $\mf{C}_X$ has dimension $n$.
\item
If $\wt{\mf{C}_X}$ denotes the special fiber at $v$ of the
schematic closure of $\mf{C}_X$ in
$\mc{X}\times_{O_K}\mc{X}$, then
\[
\Supp(\wt{\mf{C}_X})
=
\Supp(\Gamma_{\Phi_{q,\wt{X}}}).
\]
\item
The projections $\pi_i|_{\mf{C}_X}:\mf{C}_X\to X$ are finite
and surjective for $i=1,2$. In particular, $\mf{C}_X$ is
bi-finite.
\end{enumerate}
\end{theorem}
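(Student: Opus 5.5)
The plan is to take $\mf{C}_X$ to be the union of the $n$-dimensional irreducible components of the Zariski closure (over $K$) of the set of pairs $\{(x_m,\sigma(x_m)) : m\geq 1\}$, after first passing to a subsequence so that this closure behaves well. Abelianness is what makes this set Galois stable. Since $\Gal(K(x_m)/K)$ is abelian, $\sigma$ commutes with every $\tau\in\Gal(\ov{K}/K)$ on $K(x_m)$. Hence
\[
\tau\bigl(x_m,\sigma(x_m)\bigr)=\bigl(\tau(x_m),\sigma(\tau(x_m))\bigr),
\]
and the Galois orbit of $(x_m,\sigma(x_m))$ is the graph of $\sigma$ restricted to the orbit of $x_m$. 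Consequently $\sigma$ acts on $K(x_m)$ as a well-defined element of the decomposition group, namely a Frobenius at $w_m$.

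The first step is to show that each pair $(x_m,\sigma(x_m))$, and every Galois conjugate of it, lies in the tube $T_X^{(1)}(\delta)$, with $\delta$ as in Lemma~\ref{lem_deltachoice}. In the normalized coordinates of $x_m$ in $K(x_m)$, the element $\sigma$ reduces to the $q$-Frobenius modulo the maximal ideal of $w_m$. Therefore
\[
|\sigma(s_{j,i})-s_{j,i}^q|_w<1,
\]
and Lemma~\ref{lem_deltachoice} upgrades this to $\leq\delta$. Using commutativity, the same bound holds at every conjugate embedding.

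The second step is height smallness and equidistribution. The height satisfies $h(\sigma(x_m))=h(x_m)$, so the pairs have small height for $\pi_1^*\ov{L}+\pi_2^*\ov{L}$. Let $V$ be an irreducible component of the Zariski closure of the pairs along a generic subsequence. Choosing the subsequence generic in $V$ and applying Yuan's non-archimedean equidistribution theorem at $v$, the Galois orbits equidistribute to $c_1(\ov{L}|_V)_v^{\dim V}$. By Lemma~\ref{lem_measure}, this measure charges every Shilov point $\rho_\eta$ of the special fiber of $V$. Since the tube is closed (Lemma~\ref{lem_tube}) and contains all the orbits, every $\rho_\eta$ lies in $T^{(1)}(\delta)$. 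Corollary~\ref{cor_shilovtube} then gives $\Supp(\wt V)\subseteq\Gamma_{\Phi_{q,\wt X}}$, after enlarging $q$ so that $\wt X$ is defined over $\mb F_q$; this is harmless, since one can pass to a power of $\sigma$.

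The third step gets the dimension, property (1). The relation $\dim\wt V=\dim V$ holds by flatness of the closure, while $\dim\Gamma_{\Phi_{q,\wt{X}}}=n$. Hence $\dim V\leq n$. On the other hand, $\pi_1(V)\supseteq$ the closure of a generic subsequence of $(x_m)$, which is $X$, so $\dim V=n$ and $\pi_1|_V$ is surjective. Combining this with the containment, $\wt V$ is a nonempty union of components of the irreducible-dimensional graph. I take $\mf C_X$ to be a finite union of such components $V$; by noetherianity finitely many suffice to cover the closure of a tail of the sequence. Surjectivity of $\pi_2$ follows by symmetry, since $\sigma(x_m)$ is also generic.

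For equality in (2), I would show that every component of $\Gamma_{\Phi_q}$ appears. This uses $\wt V\to\wt X$ being surjective, which follows from the properness of the closure together with the surjectivity of $\pi_1|_V$.

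The main obstacle is finiteness, property (3). My approach is to argue that if $\pi_1|_V$ had a positive-dimensional fibre $F$ over some point, then the closure of $F$ in the model would specialize into a positive-dimensional subset of $\wt V$ contained in a fibre of the first projection of $\Gamma_{\Phi_q}$. That fibre is finite, so this is a contradiction. This argument needs the specialization of fibres to be compatible, i.e.\ that the fibre dimension of $\mc V\to\mc X$ is upper semicontinuous into the special fibre. I would apply upper semicontinuity of fibre dimension of the proper map $\mc V\to\mc X$ over $\mc V$ to get this. The same argument applied to $\pi_2$ uses that Frobenius is finite and bijective.
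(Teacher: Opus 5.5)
Your proposal is correct and follows essentially the paper's route. The correspondence is the closure of the Frobenius pairs. Abelianity together with Lemma~\ref{lem_deltachoice} places all Galois conjugates in the closed tube $T_X^{(1)}(\delta)$, and Yuan's equidistribution combined with Lemma~\ref{lem_measure} and Corollary~\ref{cor_shilovtube} forces the special fiber into the Frobenius graph. Flatness then controls the dimension, and upper semicontinuity of fibre dimension gives finiteness.

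Your argument for equality in (2), surjectivity of $\wt V\to\wt X$ from properness, is slightly more careful than the paper's appeal to the Frobenius graph having a unique component. The hedge about enlarging $q$ is unnecessary, since $\wt X$ is already defined over $k_v=\mb F_q$. It should also be dropped, because replacing $\sigma$ by a power would change $q$ in the conclusion.
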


\begin{proof}
Set
\[
B:=\{(x_m,\sigma(x_m))\mid m\geq 1\},
\]
and let $V\subset X\times X$ be the reduced Zariski closure
of $\Gal(\ov{K}/K)\cdot B$.

Every positive-dimensional irreducible component $W$ of $V$
dominates both factors. Indeed, the points of $\Gal(\ov{K}/K)\cdot B$
lying on $W$ are Zariski dense in $W$, so infinitely many
indices $m$ occur. If either projection of $W$ were contained
in a proper closed subset $D\subsetneq X$, then, since $D$
is defined over $K$, infinitely many $x_m$ would belong to
$D$. This contradicts genericity.
In particular, $\dim W\geq n$.

Let $\mf{C}_X$ be the union, with reduced structure, of the
positive-dimensional irreducible components of $V$.
Choose $\delta\in(0,1)$ as in Lemma~\ref{lem_deltachoice}.

\medskip
\noindent\textbf{Claim 1.}
$\Gal(\ov{K}/K)\cdot B\subset T_X^{(1)}(\delta)$.

\smallskip

\noindent\textit{Proof of Claim 1.} For $\tau\in \Gal(\ov{K}/K)$, abelianity of $K(x_m)/K$ gives
\[
\tau(x_m,\sigma(x_m))
=
(\tau(x_m),\sigma(\tau(x_m))).
\]
Since $K(x_m)/K$ is Galois, $K(\tau(x_m))=K(x_m)$. Choose homogeneous coordinates $\tau(x_m)=[z_0:\dots:z_N]$ with $z_j\in K(x_m)$, $|z_j|_w\leq 1$ for all $j$, and $z_i=1$
for some $i$.
As $\sigma\in D_w$, it preserves $|\cdot|_w$, so
$(\tau(x_m),\sigma(\tau(x_m)))\in E_i$.
Moreover, $\overline{\sigma(z_j)}=\overline{z_j}^{\,q}$ in $\ov{k_v}$, and consequently
$|\sigma(z_j)-z_j^q|_w<1$.
The difference belongs to $K(x_m)$, so
Lemma~\ref{lem_deltachoice} gives $|\sigma(z_j)-z_j^q|_w\leq\delta$ for every $j\neq i$. This proves Claim~1. \bqed

\medskip
\noindent\textbf{Claim 2.}
For every irreducible component $W$ of $\mf{C}_X$, the
special fiber $\wt{W}$ of its schematic closure
$\mc{W}\subset\mc{X}\times_{O_K}\mc{X}$ satisfies
\[
\Supp(\wt{W})=\Supp(\Gamma_{\Phi_{q,\wt{X}}}).
\]
\noindent\textit{Proof of Claim 2.}  Set $\ov{H}:=\pi_1^*\ov{L}+ \pi_2^*\ov{L}\in \widehat{\Pic}(X\times X)_{\mr{nef}}$. The adelic line bundle $\ov{H}$ is nef and its underlying
line bundle is ample. Galois invariance of heights gives
\[\lim_{m\to\infty}h_{\ov{H}}\big((x_m,\sigma(x_m))\big)=\lim_{m\to\infty}h_{\ov{L}}(x_m)+\lim_{m\to\infty}h_{\ov{L}}(\sigma(x_m))=0.\]
We may therefore choose a generic sequence
$(\xi_m)_{m\geq 1}$ in $(\Gal(\ov{K}/K)\cdot B)\cap W(\ov{K})$
such that $\lim\limits_{m\to\infty}h_{\ov{H}}(\xi_m)=0$. Nefness and Zhang's inequality imply
\[
\mr{ess}(W,\ov{H}|_W)=h_{\ov{H}|_W}(W)=0.
\]

For a geometric point $\xi$, write $\xi_v$ for its image
in $W_{\mb{C}_v}^{\an}$.
By the equidistribution theorem
\cite[Theorem~3.1]{Yua08}, the measures
\[
\mu_{\xi_m,v}:=
\frac{1}{[K(\xi_m):K]}
\sum_{\xi\in G_K\cdot\xi_m}\delta_{\xi_v}
\]
converge weakly to
\[
\mu_{\ov{H}|_W,v}:=
\frac{1}{\deg_H(W)}c_1(\ov{H}|_W)_v^{\dim W}.
\]
Claim~1 and the closedness of $T_X^{(1)}(\delta)$ show that
this limiting measure is supported on
$W_{\mb{C}_v}^{\an}\cap T_X^{(1)}(\delta)$.

Let $\mc{W}$ be the Zariski closure of $W$ in $\mc{X}\times_{O_K}\mc{X}$ and $\wt{W}$ its special fiber at $v$. Each irreducible component $\eta$ of $\wt{W}$ corresponds to a Shilov point $\rho_{\eta}\in W_{\mb{C}_v}^{\an}$. Since $v$ is a model place with respect to $\ov{L}$, it is also a model place with respect to $\ov{H}|_W$ by definition.  By Lemma \ref{lem_measure}, the Chambert-Loir measure is given by
\[
c_1(\ov{H}|_W)_v^{\dim W}=\sum_{\eta}a_{\eta}\delta_{\rho_{\eta}},
\]
where $\eta$ runs over the irreducible components of $\wt{W}$ and $a_{\eta}>0$ are positive constants. In particular, this implies that $\rho_{\eta}\in T_X^{(1)}(\delta)$ for all $\eta$. By Corollary \ref{cor_shilovtube}, we obtain
\[
\wt{W}\subset \Gamma_{\Phi_q}\cap (\wt{X}\times \wt{X})=\Gamma_{\Phi_{q,\wt{X}}}.
\] 
Since $\wt{W}$ is equidimensional and
\[
\dim \wt{W}=\dim W \geq \dim X=\dim \Gamma_{\Phi_{q,\wt{X}}},
\]
it follows that $\wt{W}$ has a unique irreducible component, namely $\eta=\Gamma_{\Phi_{q,\wt{X}}}$. Consequently, $\Supp(\wt{\mf{C}_X})$ is equal to $\Gamma_{\Phi_{q,\wt{X}}}$. (Note that $\wt{\mf{C}_X}$ may be non-reduced, but it has only one irreducible component.)\bqed

\medskip

Now it remains to prove (1) and (3). Let $W$ be an irreducible component of $\mf{C}_X$ and $\pi_{i,W}: W\to X$ be projections to $i$-th factor for $i=1,2$. Denote by $\Pi_i: \mc{X}\times_{O_K}\mc{X}\to \mc{X}$ the projection onto the $i$-th factor. Since $\Supp(\wt{W})=\Gamma_{\Phi_{q,\wt{X}}}$, the morphism $\Pi_i|_{\mc{W}}:\mc{W}\to \mc{X}$ is finite on the special fiber $\wt{W}$ over $v$. By upper semicontinuity of fiber dimension, it follows that $\Pi_i|_{\mc{W}}$ is quasi-finite on the generic fiber. In other words, $\pi_i|_W: W\to X$ is quasi-finite, and hence finite by properness. Therefore $\dim W=n$. Applying this to every component of $\mf{C}_X$ proves the theorem.
\end{proof}

\begin{corollary}\label{cor_Ztube}
With the notation as above, the Berkovich analytification
\[
(\mf{C}_X)_{\mb{C}_v}^{\an}\subset (X\times X)_{\mb{C}_v}^{\an}
\]
is contained in $T_X^{(1)}(\delta)$ for some $\delta\in (0,1)$.
\end{corollary}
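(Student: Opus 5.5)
The plan is to show that every point of $(\mf{C}_X)^{\an}_{\mb{C}_v}$ lies in $T_X^{(1)}(\delta)$, where $\delta$ is the constant from Lemma~\ref{lem_deltachoice}, by relating points of the tube to their reduction. The cleanest route goes through the model. Let $\mc{C}$ be the schematic closure of $\mf{C}_X$ in $\mc{X}\times_{O_K}\mc{X}\subset\mb{P}^N_{O_K}\times\mb{P}^N_{O_K}$. By Theorem~\ref{thm_constructZ}(2), its special fiber at $v$ is set-theoretically $\Gamma_{\Phi_{q,\wt{X}}}$. So for every $\xi\in(\mf{C}_X)^{\an}_{\mb{C}_v}$ we have $\red(\xi)\in\Gamma_{\Phi_q}$, since the reduction map of $(X\times X)^{\an}$ factors through the special fiber of $\mc{C}$.

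The next step is to convert ``reduction lies on the Frobenius graph'' into the quantitative bound $|t_{j,i}-s_{j,i}^q|_\xi\leq\delta'$ for some uniform $\delta'<1$. Cover by the charts $E_i$; the point $\xi$ lies in some $E_i$ because its reduction lies in the chart $\wt{U_i}$. The functions $g_{j,i}:=t_{j,i}-s_{j,i}^q$ vanish on $\Gamma_{\Phi_q}\cap\wt{U_i}$. Since $\Supp$ of the special fiber of $\mc{C}$ equals this graph, some power $g_{j,i}^M$ lies in $\varpi\cdot\mc{O}(\mc{C}\cap U_i^{\mathrm{int}})$, where $U_i^{\mathrm{int}}$ is the integral affine chart and $\varpi$ is a uniformizer of $O_{K_v}$. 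Indeed, the ideal of the reduced special fiber is the radical of $(\varpi)$ modulo the ideal of $\mc{C}$, and the coordinate ring is noetherian, so a single $M$ works for all finitely many $(i,j)$. Since $|\cdot|_\xi\leq 1$ on integral functions over $E_i$, we get $|g_{j,i}|_\xi^M\leq|\varpi|_v$, that is,
\[
|g_{j,i}|_\xi\leq|\varpi|_v^{1/M}=:\delta'<1 .
\]
Hence $\xi\in T_i^{(1)}(\delta')\subset T^{(1)}(\delta')$. By Lemma~\ref{lem_tube}, membership is chart-independent, so $(\mf{C}_X)^{\an}_{\mb{C}_v}\subset T_X^{(1)}(\delta')$.

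The main obstacle is the uniformity of the exponent $M$, and checking that $E_i\cap\mf{C}_X^{\an}$ corresponds exactly to points with integral coordinates on the model chart. Both are handled by working with the finitely generated $O_{K_v}$-algebras of the affine charts of $\mc{C}$. In those rings the nilradical of $A/\varpi A$ is finitely generated and hence nilpotent, which gives a single $M$. One should also note that $\mc{C}$ is flat over $O_K$, being a schematic closure, so the reduction of any $\xi$ is well defined on it.
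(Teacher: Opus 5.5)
Your argument is correct, but it gets uniformity in a different way from the paper.

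\textbf{The paper's route.} It first shows that the sets $F_i=(\mf{C}_X)^{\an}_{\mb{C}_v}\cap E_i$ cover. It checks this on rigid points, normalizing coordinates and using that the reduction lies on the Frobenius graph, and then invokes density of rigid points. It then notes that $|t_{j,i}-s_{j,i}^q|_\xi<1$ at every $\xi\in F_i$. A uniform $\delta$ follows from compactness of $F_i$ and continuity of $\xi\mapsto|t_{j,i}-s_{j,i}^q|_\xi$.

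\textbf{Your route.} You get the covering directly for all points. The reduction lies in $\Gamma_{\Phi_q}$, every point of which lies in some $\wt{U_i}$, and $\red(\xi)\in\wt{U_i}$ if and only if $\xi\in E_i$. You get uniformity algebraically. The image of $g_{j,i}$ is in the nilradical of the noetherian ring $A_i/\varpi A_i$, so $g_{j,i}^M\in\varpi A_i$. This gives the explicit bound $\delta'=|\varpi|_v^{1/M}$. Here $M$ measures the non-reducedness of $\wt{\mf{C}_X}$, and $\delta'=|\varpi|_v$ if the special fiber is reduced.

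\textbf{What each buys.} Your approach avoids compactness of Berkovich polydiscs and density of rigid points, and it produces an explicit constant. The paper's approach is softer: it only needs the pointwise strict inequality coming from the reduction.

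\textbf{Two small remarks.} First, your opening sentence says you will use the $\delta$ of Lemma~\ref{lem_deltachoice}, but what you actually prove is containment in $T_X^{(1)}(\delta')$. This is harmless, since the statement only asks for some $\delta\in(0,1)$. Second, what puts $\red(\xi)$ in the special fiber of $\mc{C}$ is properness and separatedness of $\mc{C}$, via the valuative criterion over $\mc{H}(\xi)^\circ$. After base change to $O_{\mb{C}_v}$, that special fiber is $\wt{\mf{C}_X}\otimes_{k_v}\ov{k_v}$, with the same support. Flatness is not what does this work.
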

\begin{proof}
For each $i$, set
\[
F_i:=(\mf{C}_X)_{\mb{C}_v}^{\an}\cap E_i.
\]
We claim that the sets $F_i$ cover
$(\mf{C}_X)_{\mb{C}_v}^{\an}$. Each $F_i$ is closed. Since rigid points are dense, it suffices
to show that every rigid point belongs to some $F_i$.

Let $w$ denote the valuation on $\mb{C}_v$ extending $v$. Let $W$ be an irreducible component of $(\mf{C}_X)_{\mb{C}_v}$, and let $\xi=(y,z)\in W(\mb{C}_v)$ be a rigid point.
Write $y=[y_0:\dots:y_N]$ and $z=[z_0:\dots:z_N]$ with
\[
\max_{0\leq j\leq N}|y_j|_w
=
\max_{0\leq j\leq N}|z_j|_w
=1.
\]
By Theorem~\ref{thm_constructZ}, every point of
$(\mf{C}_X)_{\mb{C}_v}^{\an}$ reduces to the Frobenius graph.
In particular, $\red(\xi)\in\Gamma_{\Phi_q}(\ov{k_v})$. So
\[
[\overline{z_0}:\dots:\overline{z_N}]
=
[\overline{y_0}^{\,q}:\dots:\overline{y_N}^{\,q}].
\]
Thus there exists $i\in\{0,\dots,N\}$ such that
$|y_i|_w=|z_i|_w=1$. Hence $\xi\in E_i$, and therefore $\xi\in F_i$. This proves the claim.

Using the notation of Section~\ref{sec_tube}, since the reduction of $\mathfrak{C}_X$ is the graph of Frobenius, we have
\[
|t_{j,i}-s_{j,i}^q|_\xi<1
\]
for every $\xi\in F_i$ and $j\neq i$. By compactness of $F_i$, each continuous function $\xi\mapsto |t_{j,i}-s_{j,i}^q|_\xi$ attains a maximum strictly less than $1$ whenever $F_i$ is nonempty. We may therefore choose $\delta\in(0,1)$ such that $(\mf{C}_X)_{\mb{C}_v}^{\an}\subset T_X^{(1)}(\delta)$, as required.
\end{proof}

\subsubsection*{Construction of an equivalence relation $\mf{R}_X$}

We now use $\mf{C}_X$ to construct an equivalence relation
$\mf{R}_X\subset X\times X$ in the sense of
Definition~\ref{def_relation}.
Set $\mf{C}_X^1:=\mf{C}_X$ and
$\mf{C}_X^{-1}:=\trans\mf{C}_X$, and define
\[
\Omega:=
\left\{
\mf{C}_X^{\varepsilon_m}\circ\cdots\circ
\mf{C}_X^{\varepsilon_1}
\;\middle|\;
\begin{array}{l}
m\geq 0,\quad \varepsilon_i\in\{1,-1\},\quad
\displaystyle\sum_{i=1}^m\varepsilon_i=0,\\
\displaystyle\sum_{i=1}^j\varepsilon_i\leq 0
\quad(1\leq j\leq m)
\end{array}
\right\}
\subset\mc{C}(X,X).
\]
For $m=0$, the correspondence is the diagonal $\Delta_X$.

\begin{theorem}\label{thm_relation}
Let
\[
\mf{R}_X:=
\overline{\bigcup_{\gamma\in\Omega}\Supp(\gamma)}
^{\mathrm{Zar}}
\subset X\times X,
\]
with its reduced induced structure.
Then $\mf{R}_X$ is a bi-finite correspondence in
$\mc{C}(X,X)$ satisfying:
\begin{enumerate}
\renewcommand{\labelenumi}{(\theenumi)}
\item $\Delta_X\subseteq\mf{R}_X$;
\item $\mf{R}_X=\trans\mf{R}_X$;
\item
$\Supp(\mf{R}_X\circ\mf{R}_X)=\Supp(\mf{R}_X)$;
\item
$\Supp(\mf{C}_X\circ\mf{R}_X\circ\trans\mf{C}_X)
\subseteq\Supp(\mf{R}_X)$.
\end{enumerate}
\end{theorem}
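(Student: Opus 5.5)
The plan is to realize $\mf{R}_X$ as the stabilization of an increasing chain of closed subsets of pure dimension $n$. Properties (1)--(4) should then follow formally from closure properties of the set $\Omega$ of ``Dyck words'' in $\mf{C}_X$ and $\trans\mf{C}_X$.

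\textbf{Formal properties of $\Omega$.} First I record three facts about $\Omega$.
\begin{enumerate}
\renewcommand{\labelenumi}{(\roman{enumi})}
\item $\Omega$ contains $\Delta_X$ (the empty word).
\item $\Omega$ is closed under concatenation, since prefix sums of a concatenation stay $\leq 0$ and the total sum stays $0$.
\item $\Omega$ is closed under conjugation $\gamma\mapsto \mf{C}_X\circ\gamma\circ\trans\mf{C}_X$. Indeed, the word then begins with $\varepsilon_1=-1$ and ends with $+1$, so all prefix sums are at most $0$.
\end{enumerate}
Transposition reverses the word and flips signs. The total sum is still $0$, but the prefix condition becomes ``suffix sums $\leq 0$'', which is equivalent to ``prefix sums $\geq 0$''. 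So $\trans\gamma\notin\Omega$ in general. To get (2), I would instead replace $\mf{R}_X$ by the closure of $\bigcup_{\gamma\in\Omega}\bigl(\Supp(\gamma)\cup\Supp(\trans\gamma)\bigr)$. Alternatively, I would show that $\trans\gamma$ is contained in a composition of elements of $\Omega$ via the identity $\Delta\subset\Supp(\trans\mf{C}_X\circ\mf{C}_X)$. This identity holds because $\mf{C}_X$ is finite surjective over each factor: every $x$ has some $y$ with $(x,y)\in\mf{C}_X$, which gives $(x,x)$ in the composite.

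Once $\mf{R}_X$ is known to be a finite union of $n$-dimensional irreducible components, the properties follow from (i)--(iii):
\begin{itemize}
\item (1) comes from the empty word;
\item (3) comes from concatenation together with the fact that $\Supp$ of a composition of bi-finite correspondences equals the set-theoretic composition of supports, which is continuous for Zariski closures of finitely many components;
\item (4) comes from conjugation-closure.
\end{itemize}

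\textbf{Bi-finiteness of each component.} Next I show that every irreducible component $W$ of every $\gamma\in\Omega$ is $n$-dimensional and bi-finite, and that its special fiber at $v$ is supported on the diagonal $\Delta_{\wt X}$. Bi-finiteness and dimension $n$ follow from properties (1)--(3) of bi-finite correspondences and Theorem~\ref{thm_constructZ}(3). For the reduction, Corollary~\ref{cor_Ztube} gives $(\mf{C}_X)^{\an}\subset T_X^{(1)}(\delta)$. Iterating Lemma~\ref{lem_comp} along the word, with part~(2) handling each descent step and part~(1) each ascent, shows that $\gamma^{\an}\subset T_X^{(r)}(\delta)$, where $r$ is the total sum of the word, namely $r=0$. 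The Dyck condition is exactly what keeps every intermediate exponent nonnegative. Corollary~\ref{cor_shilovtube} then gives $\Supp(\wt W)\subseteq\Gamma_{\Phi_q^0}=\Delta_{\wt X}$. By dimension count, $\Supp(\wt W)=\Delta_{\wt X}$.

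\textbf{Main obstacle: finiteness.} The hard step is showing that only finitely many distinct components $W$ occur as $\gamma$ ranges over $\Omega$. This is what makes the Zariski closure $n$-dimensional rather than all of $X\times X$. My plan is a degree bound. Since $\wt W=m_W\cdot\Delta_{\wt X}$ as cycles, flatness of the closure $\mc W$ over $O_K$ gives
\[
\deg(\pi_{1}|_W)=m_W
\quad\text{and}\quad
(W\cdot \pi_1^*L^{a}\cdot\pi_2^*L^{n-a})=m_W\,(L^n)
\]
for all $a$. So all such $W$ lie in finitely many components of the Chow variety once $m_W$ is bounded. To bound $m_W$, I would use the tube estimate more sharply. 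For a rigid point $x$ whose reduction is a smooth point of $\wt X$, every $y$ with $(x,y)\in W$ lies in the residue polydisc of $x$. I would try to show that $W$, restricted over the residue polydisc of a smooth point, is a finite étale-like union of graphs of analytic maps close to the identity. The Frobenius-tube structure of $\mf{C}_X$ (the analytic ``contraction'' $y\mapsto y^q$ on reductions) should force $m_W$ to be at most a constant depending only on $\mf{C}_X$, for instance $\deg(\pi_1|_{\mf{C}_X})$ raised to some power independent of the word length.

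If this direct bound fails, the fallback is to consider the chain $R_k$ formed from words of length $\leq 2k$. This chain is increasing, each $R_k$ is of pure dimension $n$ with components reducing to the diagonal, and I would try to show stabilization by noetherianity applied to the ascending chain of finite subsets of the (countable) set of such components, using the degree bound above.
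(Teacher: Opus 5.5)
Your proposal has a genuine gap at the step you flag yourself: you never show that the Zariski closure $\mf{R}_X$ has dimension $n$, and the route you propose cannot work. Bounding $m_W$, and hence all bidegrees, only places the components $W$ in finitely many components of the Chow variety. Those are positive-dimensional families, so nothing prevents infinitely many $W$ from occurring with a Zariski-dense union. Your fallback fails for the same reason: ascending chains of closed subsets of pure dimension $n$ need not stabilize, and noetherianity says nothing about them.

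Here is a concrete example. Take $X=\mathbb{P}^1$ and $v\mid p$, and consider the graphs of $x\mapsto ax$ for $a\in 1+p\mathbb{Z}$. They all have bidegree $(1,1)$, reduce to the diagonal with multiplicity one, and lie in the tube $T^{(0)}(|p|_v)$. Yet there are infinitely many of them, so their union is dense in $\mathbb{P}^1\times\mathbb{P}^1$. So degree, multiplicity and tube information cannot give finiteness. What rules this example out is that these graphs are not small for the height.

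The missing ingredient is arithmetic, and with it the paper never needs finiteness as an input.

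\begin{enumerate}
\item Let $W$ be an irreducible component of the closure itself, a priori of any dimension.
\item By Lemma~\ref{lem_gammaess}, every component of every $\gamma\in\Omega$ has essential minimum $0$ for $\ov{H}=\pi_1^*\ov{L}+\pi_2^*\ov{L}$. This comes from the height comparison along $\mf{C}_X$ in Lemma~\ref{lem_htcompZ}. Hence $W$ carries a generic sequence of small points lying on such components, and Zhang's inequality gives $h_{\ov{H}}(W)=0$.
\item Yuan's equidistribution at $v$ then shows that $c_1(\ov{H}|_W)_v^{\dim W}$ is supported in the closed tube $T^{(0)}_X(\delta)$. It is essential here that $\delta$ is uniform over all of $\Omega$ (Proposition~\ref{prop_gamma}), which ultimately comes from the almost-unramified hypothesis.
\item Because $v$ is a model place, this measure is a positive combination of Dirac masses at the Shilov points of all components of $\wt{W}$ (Lemma~\ref{lem_measure}).
\item Corollary~\ref{cor_shilovtube} then gives $\wt{W}\subseteq\Delta_{\wt{X}}$, hence $\dim W=n$.
\item Finiteness is now automatic. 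Each component of each $\gamma$ is an $n$-dimensional irreducible subset of $\mf{R}_X$, so it equals one of the finitely many components of $\mf{R}_X$.
\end{enumerate}

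A minor point: $\Omega$ is in fact closed under transposition. Reversing and negating the word sends the prefix sums $s_j$ to $s_{m-j}\leq 0$. So property (2) needs no modification of $\mf{R}_X$ and no detour through $\trans\mf{C}_X\circ\mf{C}_X$.
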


We first establish the following auxiliary results.

\begin{lemma}\label{lem_gammaess}
Let $\gamma\in\Omega$, and let $W$ be an irreducible
component of $\gamma$. Then
\[
\mr{ess}(W,\pi_1^*\ov{L}+\pi_2^*\ov{L})
=
h_{\pi_1^*\ov{L}+\pi_2^*\ov{L}}(W)=0.
\]
\end{lemma}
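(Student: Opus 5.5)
The plan is to show that every component $W$ of any $\gamma\in\Omega$ carries a Zariski dense set of points of small height. Write $\ov{H}:=\pi_1^*\ov{L}+\pi_2^*\ov{L}$, which is nef with ample underlying bundle. Zhang's inequality for nef adelic bundles then gives
\[0\le h_{\ov{H}}(W)\le \mr{ess}(W,\ov{H}).\]
So it suffices to prove $\mr{ess}(W,\ov{H})=0$, i.e.\ to produce, for every $\varepsilon>0$, a Zariski dense set of points of $W$ with $h_{\ov{H}}<\varepsilon$. Each of the $m$ factors of $\gamma$ is $\mf{C}_X$ or $\trans\mf{C}_X$. Since $\ov{H}$ is symmetric in the two factors, a component of $\trans\mf{C}_X$ has the same essential minimum as the corresponding component of $\mf{C}_X$. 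For components of $\mf{C}_X$ itself, the vanishing was already established in Claim 2 of the proof of Theorem~\ref{thm_constructZ}.

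The key steps, in order, are as follows.

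First, I would record a pointwise fact: on each component $W'$ of $\mf{C}_X$, points of small $\pi_1^*\ov{L}$-height have small $\ov{H}$-height. Claim 2 gives $h_{\ov{H}|_{W'}}(W')=0$, and I would apply Lemma~\ref{lem_htcomp1} on $W'$ with $\ov{L}_1=\pi_2^*\ov{L}|_{W'}$ and $\ov{L}_2=\pi_1^*\ov{L}|_{W'}$. Both underlying bundles are ample, since the projections $W'\to X$ are finite by Theorem~\ref{thm_constructZ}(3). The lemma yields a constant $C$ and, for each $\varepsilon>0$, a dense open $U_\varepsilon\subset W'$ on which
\[h_{\pi_2^*\ov{L}}\le C\,h_{\pi_1^*\ov{L}}+\varepsilon.\]
By symmetry the same holds with the roles of $\pi_1$ and $\pi_2$ exchanged.

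Second, I would chain points through the composition. A component $W$ of $\gamma=\mf{C}_X^{\varepsilon_m}\circ\cdots\circ\mf{C}_X^{\varepsilon_1}$ is the image of a component $Z$ of the iterated fiber product $W_1\times_X\cdots\times_X W_m$ of components $W_i$. All maps from $Z$ to the $W_i$ and to $X$ are finite and surjective, because each component of the fiber product dominates its factors by bi-finiteness. The points of $Z$ are tuples $(y_0,\dots,y_m)$ with consecutive pairs in the $W_i$. Using the $m$ inequalities from the first step, I would obtain
\[h_{\ov{L}}(y_j)\le C^m h_{\ov{L}}(y_0)+O(\varepsilon)\]
for all $j$, valid off the preimages of the proper closed sets $W_i\setminus U_\varepsilon$. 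These preimages are proper closed subsets of $Z$, since the maps to the $W_i$ are finite surjective.

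Third, I need a Zariski dense set in $Z$ whose $y_0$-coordinate has small height. Here I would use that $X$ has essential minimum $0$ for $\ov{L}$, which follows from the generic small sequence $(x_m)_{m\ge1}$. The set $\{h_{\ov{L}}(x)<\varepsilon\}$ is Zariski dense in $X$, and its preimage under the finite surjection $Z\to X$, $z\mapsto y_0$, is Zariski dense in $Z$. Each such point has all coordinates of height $O(\varepsilon)$, so its image in $W$, namely $(y_0,y_m)$, has $\ov{H}$-height $O(\varepsilon)$. These images are dense in $W$, giving $\mr{ess}(W,\ov{H})=0$.

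The main obstacle is the density statement in the third step. One has to check that the preimage of a dense set under a finite surjective map from an irreducible $Z$ is dense, which holds because the image of any proper closed subset is proper. One also has to make sure the exceptional closed sets from Lemma~\ref{lem_htcomp1} are removed before taking closures. The constants $C^m$ are harmless, since $m$ is fixed for a given $\gamma$.
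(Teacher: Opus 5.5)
Your proposal is correct and follows essentially the same route as the paper. Both get a uniform height comparison on the components of $\mf{C}_X$ from Lemma~\ref{lem_htcomp1} (the paper packages this as Lemma~\ref{lem_htcompZ}), chain it along $\gamma$ to obtain the bound $C^m h_{\ov{L}}(y)+O(\varepsilon)$, use $\mr{ess}(X,\ov{L})=0$ together with density under the finite surjection to $X$, and conclude with Zhang's inequality. The one cosmetic difference is where the bad loci are removed: you remove them on a component of the iterated fiber product, while the paper builds nested opens $U_1\subset\cdots\subset U_m\subset X$ so that any chain starting in $U_1$ stays in the good open set.
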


The proof of Lemma~\ref{lem_gammaess} is given at the end
of this subsection.

\begin{proposition}\label{prop_gamma}
There exists $\delta\in(0,1)$ such that, for every
$\gamma\in\Omega$ and every irreducible component $W$
of $\gamma$, $W_{\mb{C}_v}^{\an}\subset T_X^{(0)}(\delta)$.
\end{proposition}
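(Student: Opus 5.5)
The plan is to combine the rigid-point tube calculus of Lemma~\ref{lem_comp} with the Shilov-point argument already used in the proof of Theorem~\ref{thm_constructZ}, and then upgrade to a uniform $\delta$ by compactness as in Corollary~\ref{cor_Ztube}. First fix $\delta_0\in(0,1)$ from Corollary~\ref{cor_Ztube}, so that every rigid point of $(\mf{C}_X)^{\an}_{\mb{C}_v}$ lies in $T_X^{(1)}(\delta_0)$. Note that $\trans\mf{C}_X$ consists of pairs $(z,y)$ with $(y,z)\in T^{(1)}(\delta_0)$.

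\textbf{Step 1 (rigid points).} Let $\gamma=\mf{C}_X^{\varepsilon_m}\circ\cdots\circ\mf{C}_X^{\varepsilon_1}\in\Omega$ and let $(x,z)$ be a $\mb{C}_v$-point of $\Supp(\gamma)$. Because all projections are finite and surjective, it lies on a chain $x=y_0,y_1,\dots,y_m=z$ with $(y_{j-1},y_j)\in\Supp(\mf{C}_X^{\varepsilon_j})$. The prefix condition $\sum_{i\le j}\varepsilon_i\le 0$ means the walk starts by going backward (transpose steps) and always has height $-h_j\le 0$, returning to $0$.

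By induction on $j$ I will show the following: if $h_j:=-\sum_{i\le j}\varepsilon_i\ge0$, then the pair $(y_j,x)$ lies in $T^{(h_j)}(\delta_0)$.

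\begin{itemize}
\item For a backward step (from $y_{j-1}$ to $y_j$ with $(y_j,y_{j-1})\in T^{(1)}$), Lemma~\ref{lem_comp}(1) composes $(y_j,y_{j-1})\in T^{(1)}$ with $(y_{j-1},x)\in T^{(h_{j-1})}$ to give $(y_j,x)\in T^{(h_{j-1}+1)}$.
\item For a forward step, $(y_{j-1},y_j)\in T^{(1)}$ and $(y_{j-1},x)\in T^{(h_{j-1})}$ with $h_{j-1}\ge1$. Lemma~\ref{lem_comp}(2) with $r_1=1\le r_2=h_{j-1}$ gives $(y_j,x)\in T^{(h_{j-1}-1)}$.
\end{itemize}

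Here the prefix condition is exactly what ensures $h_{j-1}\ge1$ at each forward step. At $j=m$ we get $(z,x)\in T^{(0)}(\delta_0)$. Since $T^{(0)}$ is symmetric, this gives $(x,z)\in T^{(0)}(\delta_0)$.

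\textbf{Step 2 (passing to $W^{\an}$).} Rigid points are dense in $W^{\an}_{\mb{C}_v}$ and $T^{(0)}_X(\delta_0)$ is closed by Lemma~\ref{lem_tube}. Hence $W^{\an}_{\mb{C}_v}\subset T^{(0)}_X(\delta_0)$ with the single constant $\delta_0$, uniformly in $\gamma$.

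\textbf{Main obstacle.} The delicate point is the rigid-point bookkeeping in Step~1. Lemma~\ref{lem_comp} requires the representatives to be normalized in a common chart $E_i$. That lemma handles this internally, but one must check that the points $y_j$ obtained from the fiber product over $\mb{C}_v$ are honest rigid points of the components of $\mf{C}_X$. This holds because the $y_j$ are $\mb{C}_v$-points of $\mf{C}_X^{\pm1}$ and Corollary~\ref{cor_Ztube} applies to all rigid points of $(\mf{C}_X)^{\an}_{\mb{C}_v}$.

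If this fails, the fallback is to argue via Lemma~\ref{lem_gammaess} exactly as in Claim~2 of Theorem~\ref{thm_constructZ}: equidistribution would place the Shilov points in the tube, and the compactness argument of Corollary~\ref{cor_Ztube} would then follow. That route, however, gives a $\delta$ depending on $\gamma$, so the direct chain argument is preferred.
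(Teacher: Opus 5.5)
Your argument is correct and uses the same ingredients as the paper: $\delta$ from Corollary~\ref{cor_Ztube}, reduction to rigid points via density and closedness of the tube (Lemma~\ref{lem_tube}), and Lemma~\ref{lem_comp}(1),(2) applied along a chain in $X(\mb{C}_v)$. The only difference is bookkeeping. The paper inducts on $m$, either splitting $\gamma=\gamma_2\circ\gamma_1$ at an intermediate zero of the prefix sums or peeling off $\mf{C}_X\circ\gamma'\circ\trans\mf{C}_X$, whereas your single pass with the invariant $(y_j,x)\in T^{(h_j)}(\delta_0)$ handles all cases at once, so the fallback you mention is unnecessary.
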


\begin{proof}
Choose $\delta\in(0,1)$ as in Corollary~\ref{cor_Ztube}.
Write
\[
\gamma=\mf{C}_X^{\varepsilon_m}\circ\cdots\circ
\mf{C}_X^{\varepsilon_1}.
\]
Since rigid points are dense and the tube is closed,
it suffices to prove the assertion for rigid points. We argue by induction on $m$.
The case $m=0$ is immediate, since $\gamma=\Delta_X$.

Suppose first that
$\sum\limits_{j=1}^i\varepsilon_j=0$ for some $0<i<m$.
Then $\gamma=\gamma_2\circ\gamma_1$, where
\[
\gamma_1:=
\mf{C}_X^{\varepsilon_i}\circ\cdots\circ
\mf{C}_X^{\varepsilon_1},
\qquad
\gamma_2:=
\mf{C}_X^{\varepsilon_m}\circ\cdots\circ
\mf{C}_X^{\varepsilon_{i+1}}
\]
both belong to $\Omega$.
For $(z_1,z_3)\in W(\mb{C}_v)$, choose $z_2$ such that
$(z_1,z_2)\in\gamma_1$ and $(z_2,z_3)\in\gamma_2$.
Both pairs lie in $T_X^{(0)}(\delta)$ by induction, so
Lemma~\ref{lem_comp}(1) gives
$(z_1,z_3)\in T_X^{(0)}(\delta)$.

It remains to consider the case
$\sum\limits_{j=1}^i\varepsilon_j<0$ for every $0<i<m$.
Then $\varepsilon_1=-1$, $\varepsilon_m=1$, and
\[
\gamma=\mf{C}_X\circ\gamma'\circ\trans\mf{C}_X,
\qquad
\gamma':=
\mf{C}_X^{\varepsilon_{m-1}}\circ\cdots\circ
\mf{C}_X^{\varepsilon_2}\in\Omega.
\]
For $(z_1,z_4)\in W(\mb{C}_v)$, choose $z_2,z_3$ such that
\[
(z_2,z_1),(z_3,z_4)\in\mf{C}_X,
\qquad
(z_2,z_3)\in\gamma'.
\]
Corollary~\ref{cor_Ztube} and the induction hypothesis give
\[
(z_2,z_1),(z_3,z_4)\in T_X^{(1)}(\delta),
\qquad
(z_2,z_3)\in T_X^{(0)}(\delta).
\]
Lemma~\ref{lem_comp}(1) first yields
$(z_2,z_4)\in T_X^{(1)}(\delta)$.
Applying Lemma~\ref{lem_comp}(2) to $(z_2,z_1)$ and
$(z_2,z_4)$ then gives $(z_1,z_4)\in T_X^{(0)}(\delta)$.
\end{proof}

\begin{proof}[Proof of Theorem~\ref{thm_relation}]
Set $\ov{H}:=\pi_1^*\ov{L}+\pi_2^*\ov{L}$ and
\[
T:=\bigcup_{\gamma\in\Omega}\Supp(\gamma)(\ov{K}).
\]
Let $W$ be an irreducible component of $\mf{R}_X$. For each $\gamma\in\Omega$, consider the irreducible components of $\gamma$ contained in $W$. The union of all these components
is Zariski dense in $W$. By Lemma~\ref{lem_gammaess}, each such component has
essential minimum zero. We may therefore choose a generic sequence
$(\xi_m)_{m\geq 1}$ in $A\cap W(\ov{K})$ with
$h_{\ov{H}}(\xi_m)\to 0$. Nefness and Zhang's inequality give $h_{\ov{H}}(W)=0$.

By the equidistribution theorem \cite[Theorem 3.1]{Yua08}, the measures
\[
\mu_{\xi_m,v}:=\frac{1}{\deg(\xi_m)}\sum_{\xi\in \Gal(\ov{K}/K)\cdot \xi_m}\delta_{\xi}
\]
on $W_{\mb{C}_v}^{\an}$ converge weakly to the normalized Chambert–Loir measure
\[
\mu_{\ov{H}|_W,v}:=\frac{1}{\deg_H(W)}\, c_1(\ov{H}|_W)_v^{\dim W}.
\]
By Proposition \ref{prop_gamma}, the support of $\mu_{\xi_m,v}$ is contained in the compact closed subset $W_{\mb{C}_v}^{\an}\cap T_X^{(0)}(\delta)$ for some $\delta\in (0,1)$ and all $m\geq 1$. Therefore, the Chambert–Loir measure $c_1(\ov{H}|_W)_v^{\dim W}$ is also supported on $W_{\mb{C}_v}^{\an}\cap T_X^{(0)}(\delta)$.

Let $\mc{W}$ be the Zariski closure of $W$ in $\mc{X}\times_{\Spe O_K} \mc{X}$ and set $\wt{W}$ to be the special fiber of $\mc{W}$ at $v$. Since $v$ is a model place with respect to $\ov{L}$, it is also a model place with respect to $\ov{H}|_W$. By Lemma \ref{lem_measure}, the Chambert–Loir measure can be written as
\[
c_1(\ov{H}|_W)_v^{\dim W}=\sum_{\eta} a_{\eta}\,\delta_{\rho_{\eta}},
\]
where $\eta$ runs over the irreducible components of the special fiber $\wt{W}$, and $a_{\eta}>0$. This implies that $\rho_{\eta}\in T_X^{(0)}(\delta)$ for all $\eta$. By Corollary \ref{cor_shilovtube}, we deduce
\[
\wt{W}\subset \Delta_{\mb{P}^N_{k_v}\times \mb{P}^N_{k_v}} \cap (\wt{X}\times \wt{X})=\Delta_{\wt{X}}.
\]
Since $\wt{W}$ is equidimensional and
\[
\dim \wt{W}=\dim W \geq \dim X=\dim \Delta_{\wt{X}},
\]
it follows that $\dim W=n$. Thus every component of $\mf{R}_X$ has dimension $n$.
Any component of a correspondence in $\Omega$ contained
in $W$ must consequently equal $W$. Moreover, $\Supp(\wt{\mf{R}_X})=\Delta_{\wt{X}}$.

Finally, note that $\Omega$ satisfies the following properties:
\begin{enumerate}
\renewcommand{\labelenumi}{(\theenumi)}
    \item $\Delta_X\in \Omega$;
    \item $\gamma\in \Omega$ if and only if $\trans\gamma\in \Omega$;
    \item if $\gamma_1,\gamma_2\in \Omega$, then $\gamma_2\circ \gamma_1\in \Omega$;
    \item if $\gamma\in \Omega$, then $\mf{C}_X\circ \gamma\circ \trans \mf{C}_X\in \Omega$.
\end{enumerate}
It follows that $\mf{R}_X$ also satisfies all the desired properties.     
\end{proof}

\subsubsection*{Proof of technical lemmas}
We begin with a height comparison obtained from
Lemma~\ref{lem_htcomp1}.

\begin{lemma}\label{lem_htcompZ}
Under the same assumptions as Theorem \ref{thm_constructZ}, there exists a constant $C>0$ such that for any $\epsilon>0$, there exists an open subscheme $U_{\epsilon}\subset X$ such that for all $(y,z)\in \big((U\times U)\cap \mf{C}_X\big)(\ov{K})$, we have
\[
h_{\ov{L}}(y)\leq C\,h_{\ov{L}}(z)+\epsilon,\qquad
h_{\ov{L}}(z)\leq C\,h_{\ov{L}}(y)+\epsilon.
\]
\end{lemma}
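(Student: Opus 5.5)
The plan is to apply Lemma~\ref{lem_htcomp1} on each irreducible component $W$ of $\mf{C}_X$, with the two nef adelic line bundles $\ov{L}_1:=\pi_1^*\ov{L}|_W$ and $\ov{L}_2:=\pi_2^*\ov{L}|_W$. By Theorem~\ref{thm_constructZ}(3), both projections $\pi_{i,W}:W\to X$ are finite and surjective. Hence the underlying line bundles $\pi_{i,W}^*L$ are ample on $W$, and the $\ov{L}_i$ are nef adelic line bundles with ample underlying bundles.

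The hypothesis of Lemma~\ref{lem_htcomp1} is $h_{\ov{L}_1+\ov{L}_2}(W)=0$. This was established in the proof of Claim~2 of Theorem~\ref{thm_constructZ}. There, the generic sequence of Galois conjugates of the points $(x_m,\sigma(x_m))$ lying on $W$ has $\ov{H}$-heights tending to $0$, where $\ov{H}=\pi_1^*\ov{L}+\pi_2^*\ov{L}$. Zhang's inequality together with nefness then gives $h_{\ov{H}|_W}(W)=0$.

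Applying Lemma~\ref{lem_htcomp1} twice, once with the roles of $\ov{L}_1$ and $\ov{L}_2$ swapped, gives a constant $C_W$ with the following property. For every $\epsilon>0$ there is a nonempty open $U_{W,\epsilon}\subset W$ on which both
\[
h_{\ov{L}}(y)\le C_W h_{\ov{L}}(z)+\epsilon\quad\text{and}\quad h_{\ov{L}}(z)\le C_W h_{\ov{L}}(y)+\epsilon
\]
hold for $(y,z)\in U_{W,\epsilon}(\ov{K})$. We take $C:=\max_W C_W$ over the finitely many components; enlarging $C$ is harmless because heights of $\ov{L}$ are nonnegative by nefness.

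The remaining step is to convert the open sets $U_{W,\epsilon}\subset W$ into an open set of the form $(U\times U)\cap\mf{C}_X$ with $U\subset X$. Let $Z_W:=W\setminus U_{W,\epsilon}$, a proper closed subset of $W$, so $\dim Z_W<n$. Since $\pi_{1,W}$ and $\pi_{2,W}$ are finite, the images $\pi_1(Z_W)$ and $\pi_2(Z_W)$ are proper closed subsets of $X$. Set
\[
U_\epsilon:=X\setminus\bigcup_W\bigl(\pi_1(Z_W)\cup\pi_2(Z_W)\bigr).
\]
This is open and nonempty. Any $(y,z)\in\bigl((U_\epsilon\times U_\epsilon)\cap\mf{C}_X\bigr)(\ov{K})$ lies on some component $W$ and is not in $Z_W$, so it lies in $U_{W,\epsilon}$ and both inequalities hold.

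The only mildly delicate point is the applicability of Lemma~\ref{lem_htcomp1}. That lemma is stated for a projective variety with its own $n$-dimensional intersection numbers, and the constant it produces depends only on the underlying line bundles $\pi_{i,W}^*L$, which is fixed data. We also need $W$ to be a (possibly non-normal) projective variety over $K$. The proof of Lemma~\ref{lem_htcomp1} uses only Yuan's bigness inequality and nefness, so it applies verbatim on $W$. If necessary, we can pass to the normalization of $W$, which changes neither heights nor the finiteness of the projections.
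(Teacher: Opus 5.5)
Your proposal is correct and follows essentially the same route as the paper. Both arguments apply Lemma~\ref{lem_htcomp1} on each irreducible component $W$ of $\mf{C}_X$, using $h_{\pi_1^*\ov{L}+\pi_2^*\ov{L}}(W)=0$ (from the small generic sequence and Zhang's inequality), take $C=\max_W C_W$, and shrink to an open $U_\epsilon\subset X$ with $(U_\epsilon\times U_\epsilon)\cap W\subset U_{W,\epsilon}$; your explicit choice $U_\epsilon=X\setminus\bigcup_W\bigl(\pi_1(Z_W)\cup\pi_2(Z_W)\bigr)$ and the remark on nonnegativity of heights spell out steps the paper leaves implicit.
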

\begin{proof}
Let $W$ be an irreducible component of $\mf{C}_X$. Set $\ov{L}_1:=\pi_1^*\ov{L}|_W$ and $\ov{L}_2:=\pi_2^*\ov{L}|_W$. Then $\ov{L}_1$ and $\ov{L}_2$ are nef adelic line bundles, and $L_1$ and $L_2$ are ample.

There exists a subsequence $(y_m)_{m\geq 1}$ of $(x_m)_{m\geq 1}$ such that
\[
\left\{\big(y_m,\sigma(y_m)\big)\mid m\geq 1\right\}
\]
is Zariski dense in $W(\ov{K})$, and
\[
\lim_{m\to\infty} h_{\ov{L}_1+\ov{L}_2}\big((y_m,\sigma(y_m))\big)=0.
\]
It follows that the essential minimum $\mr{ess}(W,\ov{L}_1+\ov{L}_2)=0$, and hence $h_{\ov{L}_1+\ov{L}_2}(W)=0$ by Zhang's inequality.

Define
\[
C_W:=n\cdot\max\left\{\frac{L_1^{n-1}\cdot L_2}{L_1^n},\frac{L_2^{n-1}\cdot L_1}{L_2^n}\right\}+1.
\]
By Lemma \ref{lem_htcomp1}, for any $\epsilon>0$, there exists an open subscheme $U_{W,\epsilon}\subset W$ such that
\[
h_{\ov{L}_1}(x)\leq C_W\, h_{\ov{L}_2}(x)+\epsilon,\qquad
h_{\ov{L}_2}(x)\leq C_W\,h_{\ov{L}_1}(x)+\epsilon,\quad \forall x\in U_{W,\epsilon}(\ov{K}).
\]
We choose $C:=\max_W C_W$ and an open subscheme $U_{\epsilon}\subset X$ such that
\[
(U_{\epsilon}\times U_{\epsilon})\cap W \subset U_{W,\epsilon}
\]
for every irreducible component $W\subset \mf{C}_X$. Then for any $(y,z)\in \big((U_{\epsilon}\times U_{\epsilon})\cap W\big)(\ov{K})$, we have
\[
h_{\ov{L}}(y)=h_{\ov{L}_1}\big((y,z)\big)\leq C_W\, h_{\ov{L}_2}\big((y,z)\big)+\epsilon
\leq C\, h_{\ov{L}}(z)+\epsilon,
\]
and similarly $h_{\ov{L}}(z)\leq C\, h_{\ov{L}}(y)+\epsilon$.    
\end{proof}
\vspace{1em}

\begin{proof}[Proof of Lemma \ref{lem_gammaess}]
Suppose that $\gamma=\mf{C}_X^{\varepsilon_1}\circ \cdots\circ \mf{C}_X^{\varepsilon_m}$. For any point $x=(y,z)\in W(\ov{K})$, there exist points $z_1,\dots, z_{m+1}\in X(\ov{K})$ such that $z_1=y$, $z_{m+1}=z$, and $(z_i,z_{i+1})\in \mf{C}_X^{\varepsilon_i}(\ov{K})$ for $1\leq i\leq m$.

Let $C>1$ be the constant given by Lemma \ref{lem_htcompZ}. By Lemma \ref{lem_htcompZ}, for any $c>0$, there exists an open subscheme $U\subset X$ such that for all $1\leq i\leq m$, if
\[
(z_i,z_{i+1})\in \big((U\times U)\cap \mf{C}_X^{\varepsilon_i}\big)(\ov{K}),
\]
then $h_{\ov{L}}(z_{i+1})\leq C\, h_{\ov{L}}(z_i)+c$.

Choose an open subset $U_m\subset U$ such that
\[
\pi_2\big(\pi_1^{-1}(U_m)\cap \mf{C}_X^{\varepsilon_m}\big)\subset U.
\]
Then $z_m\in U_m$ implies $z_{m+1}\in U$. Similarly, choose an open subset $U_{m-1}\subset U_m$ such that
\[
\pi_2\big(\pi_1^{-1}(U_{m-1})\cap \mf{C}_X^{\varepsilon_{m-1}}\big)\subset U_m.
\]
Proceeding inductively, we obtain a sequence of open subsets
\[
U_1\subset U_2\subset \cdots \subset U_m\subset U_{m+1}:=U,
\]
such that for each $1\leq i\leq m$, $z_i\in U_i$ implies $z_{i+1}\in U_{i+1}$. In particular, if $y=z_1\in U_1(\ov{K})$, then
\[
\begin{aligned}
h_{\ov{L}}(z)
&\leq C\, h_{\ov{L}}(z_m)+c\\
&\leq C^2\, h_{\ov{L}}(z_{m-1})+(C+1)c\\
&\leq \cdots \\
&\leq C^m\, h_{\ov{L}}(z_1)+(C^{m-1}+C^{m-2}+\cdots+C+1)c\\
&\leq C^m\, h_{\ov{L}}(y)+mC^m c.
\end{aligned}
\]

Since $h_{\pi_1^*\ov{L}+\pi_2^*\ov{L}}(y,z)=h_{\ov{L}}(y)+h_{\ov{L}}(z)$ and $\mr{ess}(X,\ov{L})=0$, it follows that
\[
\mr{ess}(W,\pi_1^*\ov{L}+\pi_2^*\ov{L})\leq (1+C^m)\mr{ess}(X,\ov{L})+mC^m c = mC^m c.
\]
Letting $c\to 0$, we obtain $\mr{ess}(W,\pi_1^*\ov{L}+\pi_2^*\ov{L})=0$, and $h_{\pi_1^*\ov{L}+\pi_2^*\ov{L}}(W)=0$ follows immediately.    
\end{proof}

\medskip

The same argument gives a height comparison on $\mf{R}_X$. It will be used in the next subsection.
\begin{lemma}\label{lem_htcompR}
Under the same assumptions as Theorem \ref{thm_constructZ}, there exists a constant $C>0$ such that for any $\epsilon>0$, there exists an open subscheme $U_{\epsilon}\subset X$ such that for all $(y,z)\in \big((U_{\epsilon}\times U_{\epsilon})\cap \mf{R}_X\big)(\ov{K})$, we have
\[
h_{\ov{L}}(y)\leq C\,h_{\ov{L}}(z)+\epsilon,\qquad
h_{\ov{L}}(z)\leq C\,h_{\ov{L}}(y)+\epsilon.
\]
\end{lemma}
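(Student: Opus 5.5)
The plan is to mimic the proof of Lemma~\ref{lem_htcompZ}, replacing $\mf{C}_X$ by $\mf{R}_X$. The needed inputs are that every irreducible component of $\mf{R}_X$ has height zero and that both projections are finite surjective. Both were established in the proof of Theorem~\ref{thm_relation}.

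\textbf{Step 1: each component dominates both factors and has height zero.} Let $W$ be an irreducible component of $\mf{R}_X$. In the proof of Theorem~\ref{thm_relation} we showed:
\begin{enumerate}
\item[(i)] $\dim W=n$;
\item[(ii)] $W$ coincides with an irreducible component of some $\gamma\in\Omega$;
\item[(iii)] $h_{\ov{H}}(W)=0$ for $\ov{H}=\pi_1^*\ov{L}+\pi_2^*\ov{L}$.
\end{enumerate}
Alternatively, (iii) follows directly from Lemma~\ref{lem_gammaess} and (ii). Since $\mf{R}_X$ is bi-finite, the projections $\pi_{i,W}:W\to X$ are finite. As $\dim W=n$, they are also surjective. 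Hence $\ov{L}_i:=\pi_i^*\ov{L}|_W$ are nef adelic line bundles whose underlying bundles $L_i$ are ample, being finite pullbacks of an ample bundle.

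\textbf{Step 2: apply Lemma~\ref{lem_htcomp1} on each component.} The hypothesis $h_{\ov{L}_1+\ov{L}_2}(W)=0$ is (iii). Lemma~\ref{lem_htcomp1} then gives a constant $C_W$, depending only on $L_1,L_2$. For every $\epsilon>0$ it also gives a nonempty open $U_{W,\epsilon}\subset W$ on which $h_{\ov{L}_1}\le C_Wh_{\ov{L}_2}+\epsilon$. Applying the same lemma with the roles of $L_1$ and $L_2$ exchanged gives the symmetric inequality, and we intersect the two open sets. Since $\mf{R}_X$ has finitely many components, we set $C:=\max_W C_W$.

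\textbf{Step 3: descend the open sets to $X$.} This is the only point needing care. For each $W$, the complement $W\setminus U_{W,\epsilon}$ is a proper closed subset. Because $\pi_{1,W}$ is finite, its image $\pi_1(W\setminus U_{W,\epsilon})$ is a proper closed subset of $X$, for dimension reasons. Let $U_\epsilon$ be the complement in $X$ of the union of these images over all components $W$.

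Now take $(y,z)\in (U_\epsilon\times U_\epsilon)\cap W$. Then $y\notin\pi_1(W\setminus U_{W,\epsilon})$, so $(y,z)\in U_{W,\epsilon}$. The two inequalities follow from $h_{\ov{L}}(y)=h_{\ov{L}_1}(y,z)$ and $h_{\ov{L}}(z)=h_{\ov{L}_2}(y,z)$.

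The only substantive issue is Step 1. One must ensure that each component of $\mf{R}_X$, a Zariski closure of possibly infinitely many correspondences, has height zero and is $n$-dimensional. This is exactly what the proof of Theorem~\ref{thm_relation} provides, via the generic small sequence and Zhang's inequality.
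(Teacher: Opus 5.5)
Your proposal is correct and follows the paper's proof. The paper also reduces to the argument of Lemma~\ref{lem_htcompZ}: it uses Lemma~\ref{lem_gammaess} to get $h_{\ov{H}}(W)=0$ on each component $W$ of $\mf{R}_X$, then applies Lemma~\ref{lem_htcomp1} componentwise with $C=\max_W C_W$. Your Step~3 makes explicit the choice of $U_\epsilon$ (removing $\pi_1(W\setminus U_{W,\epsilon})$ for each $W$), which the paper leaves implicit.
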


\begin{proof}
The proof is identical to that of Lemma \ref{lem_htcompZ}. Let $\ov{H}:=\pi_1^*\ov{L}+\pi_2^*\ov{L}$, 
Note that $\mf{R}_X$ is a finite union of $\gamma\in \Omega$. For every irreducible component $W$ of $\mf{R}_X$, we have $\mr{ess}(W,\ov{H})=h_{\ov{H}}(W)=0$ by Lemma \ref{lem_gammaess}. The rest of the proof follows from the same argument as Lemma \ref{lem_htcompZ}.
\end{proof}

\subsection{Proof of Theorem \ref{thm_main1}}
The main goal of this subsection is to prove the following theorem.

\begin{theorem}[Theorem \ref{thm_main1}]\label{thm_main}
Let $K$ be a number field, and let $X$ be a normal projective variety over $K$. Let $\overline{L}$ be a nef adelic line bundle on $X$ with $L$ ample. Assume that there exist a finite set $S$ of places of $K$ and a generic sequence $(x_m)_{m \geq 1}$ in $X(\overline{K})$ such that $(x_m)_{m\geq 1}$ is abelian, almost unramified at every $v \notin S$, and satisfies $\lim\limits_{m \to \infty} h_{\overline{L}}(x_m) = 0$.

Then there exist a normal projective variety $Y$ over $K$, a finite surjective morphism $\pi : X \to Y$, a polarized endomorphism $f : Y \to Y$, and an $f$-admissible adelic line bundle $\overline{M}_f \in \widehat{\mathrm{Pic}}(Y)_{\mathbb{Q}}$ such that
\[
\overline{L} \leq \pi^* \overline{M}_f.
\]
Moreover, the endomorphism $f$ is an exceptional map.
\end{theorem}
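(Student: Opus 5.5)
The plan follows the sketch in the introduction: build two commuting endomorphisms on a common quotient from two different places, then use Theorem~\ref{thm_commuting} and the admissible-metric machinery.

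\textbf{Step 1 (two places, two correspondences).} I would choose two finite places $v_1,v_2\notin S$ with residue characteristics $p_1\neq p_2$. Both should be model places for $\ov{L}$ and for all the models used. For each $v_i$, Theorem~\ref{thm_constructZ} gives a bi-finite correspondence $\mf{C}^{(i)}:=\mf{C}_X^{(v_i)}$ whose reduction at $v_i$ is the Frobenius graph $\Gamma_{\Phi_{q_i,\wt{X}}}$. Theorem~\ref{thm_relation} then gives an equivalence relation $\mf{R}^{(i)}$.

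\textbf{Step 2 (common relation and descent).} I would enlarge the construction so that a single relation $\mf{R}$ is stable under both correspondences. Let $\mf{R}$ be the Zariski closure of all compositions built from $\mf{C}^{(1)},\mf{C}^{(2)}$ and their transposes with total $v_i$-degree zero for each $i$. The support of each such word should be tube-controlled at both places. To see this, I would argue as in Proposition~\ref{prop_gamma} and Lemma~\ref{lem_gammaess}, using Lemma~\ref{lem_comp}. The key input is that $\mf{C}^{(j)}$ reduces at $v_i$ to a bi-finite correspondence of degree-zero type, i.e.\ it lies in a tube $T^{(0)}$ at $v_i$. The reason is that $\sigma_j$ acts on the abelian extension compatibly with the decomposition at $v_i$: since $\sigma_j$ commutes with the Galois action, the points $(x_m,\sigma_j x_m)$ reduce at $v_i$ to pairs related by a Galois element, and they lie in $T^{(0)}(\delta)$ up to Galois conjugation.

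Granting this, equidistribution and Lemma~\ref{lem_measure} show that $\mf{R}$ has pure dimension $n$, bi-finite projections, and reduction the diagonal. It therefore satisfies Definition~\ref{def_relation} together with the stability condition of Lemma~\ref{lem_descent}(1) for both $\mf{C}^{(i)}$. Let $Y:=X/\mf{R}$ with quotient map $\pi:X\to Y$. Lemma~\ref{lem_descent} produces endomorphisms $f_1,f_2$ of $Y$. By the analogue of Remark~\ref{rmk_bifin_corr}, they commute, since each correspondence is stable under conjugation by the other in the word monoid.

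\textbf{Step 3 (polarization degrees).} Each $\mf{C}^{(i)}$ is numerically $q_i$-polarized. I would prove this by comparing degrees on the special fiber, where it is Frobenius: intersection numbers $\pi_2^*D\cdot(\cdot)$ are constant in flat families, and rigidity of the numerical class then gives the polarization. Lemma~\ref{lem_descent}(2) then makes $f_i$ a $q_i$-polarized endomorphism. Since $q_1^a\neq q_2^b$, Theorem~\ref{thm_commuting} shows that $f:=f_1$ is exceptional.

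\textbf{Step 4 (comparison of heights).} Let $M:=\langle L\rangle_{X/Y}$, suitably adjusted so that $f^*M\equiv q_1 M$. Let $\ov{M}_f$ be its admissible extension from Theorem~\ref{thm_admiss}. By Lemma~\ref{lem_dpnef}, it then suffices to show
\[
\langle\ov{L}\rangle_{X/Y}\leq\ov{M}_f.
\]
I would obtain this by comparing $q_1^{-k}(f^k)^*\langle\ov{L}\rangle$ with $\langle\ov{L}\rangle$. The height bounds along $\mf{C}^{(1)}$ (Lemma~\ref{lem_htcompZ} and Lemma~\ref{lem_htcompR}), together with Lemma~\ref{lem_dphtcomp}, should show that the difference is nef. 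The main obstacle here is that those lemmas give only multiplicative constants $C$, not $q_1$. The fix I would try first is to show that $\mf{C}^{(1)}$ is $\ov{L}$-isometric up to a nef error at every place: at $v_1$ via the Frobenius reduction, and at the other places via the symmetry between $x_m$ and $\sigma x_m$. This is the genuinely delicate step, together with the two-place compatibility in Step~2.
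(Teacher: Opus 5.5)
There is a genuine gap, and it lies in what you call the key input of Step~2.

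\textbf{Step 2 rests on a false claim.} You assert that $\mf{C}^{(j)}$ lies in a tube $T^{(0)}(\delta)$ at $v_i$ for $i\neq j$. This cannot hold. Suppose the $v_i$-analytification of a component $W$ of $\mf{C}^{(j)}$ lay in $T^{(0)}(\delta)$.
\begin{itemize}
\item[(a)] Corollary~\ref{cor_shilovtube} would force the reduction of $W$ at $v_i$ to be supported on the diagonal.
\item[(b)] The specialization argument of Lemma~\ref{lem_fpolarized} would then give $\pi_{2,W}^*H\equiv\pi_{1,W}^*H$.
\item[(c)] Combined with the numerical $q_j$-polarization $\pi_{2,W}^*H\equiv q_j\,\pi_{1,W}^*H$, this yields $(q_j-1)\pi_{1,W}^*H\equiv 0$, which is impossible for $H$ ample.
\end{itemize}
Arithmetically, $\sigma_j$ acts on $K(x_m)$ as a Frobenius at $v_j$. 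Nothing puts it in the inertia group at $w_i$, so $x_m$ and $\sigma_j(x_m)$ generally reduce to different points at $w_i$; think of $\zeta\mapsto\zeta^{q_2}$ on roots of unity reduced at $v_1$. Saying the two points are ``related by a Galois element'' imposes no constraint. Consequently, words mixing $\mf{C}^{(1)}$ and $\mf{C}^{(2)}$ are controlled at neither place. Nothing in your argument shows that the common relation $\mf{R}$ is equidimensional of dimension $n$ or bi-finite, so you never obtain a $Y$ carrying two commuting endomorphisms.

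\textbf{Step 4 is also left open.} You correctly note that Lemmas~\ref{lem_htcompZ}--\ref{lem_htcompR} give a constant $C$ rather than $q_1$, but ``isometric at every place'' is not an argument. Also, no adjustment of $M$ is needed: Theorem~\ref{thm_admiss} gives an $f$-admissible extension of every $M\in\Pic(Y)_{\mb{Q}}$.

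\textbf{The missing idea.} A single place already makes the images of the sequence periodic. Take one place $v$, set $Y=X/\mf{R}_X^{(v)}$, and let $f$ be the descended endomorphism. For almost all $m$ one has $f(\pi(x_m))=\pi(\sigma(x_m))=\sigma(\pi(x_m))$. Since $\sigma$ has finite order $r$ on $K(x_m)$, it follows that $f^r(\pi(x_m))=\pi(x_m)$ (Lemma~\ref{lem_prep}). This removes both of your difficulties.

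\emph{Heights.} Put $\ov{M}=\langle\ov{L}\rangle_{X/Y}$.
\begin{itemize}
\item[(a)] $h_{\ov{M}_f}(\pi(x_m))=0$ by Corollary~\ref{cor_admiss}, and $h_{\ov{M}}(\pi(x_m))\to 0$ by Lemmas~\ref{lem_htcompR} and~\ref{lem_dphtcomp}.
\item[(b)] Zhang's inequality then gives $h_{\ov{M}+\ov{M}_f}(Y)=0$.
\item[(c)] The arithmetic Hodge index theorem \cite[Theorem~1.3]{YZ17} shows that $\ov{M}_f-\ov{M}$ is pulled back from $\Spe K$ with arithmetic degree $0$.
\item[(d)] Lemma~\ref{lem_dpnef} now gives $\ov{L}\leq\pi^*\ov{M}_f$.
\end{itemize}

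\emph{Exceptionality.} Let $B$ be the Galois orbit of the periodic points $\pi(x_m)$. It satisfies $f(B)=B$, and it is abelian, almost unramified and $\ov{M}_f$-small. Run the construction at a second place $v'$ with $\gcd(q,q')=1$, but on $Y$ rather than on $X$. Since $f$ is defined over $K$ and $f(B)=B$, the map $f\times f$ preserves $\mf{C}_Y^{(v')}$ and $\mf{R}_Y^{(v')}$. By Remark~\ref{rmk_bifin_corr}, $f$ descends to $Y'=Y/\mf{R}_Y^{(v')}$ and commutes there with the $q'$-polarized endomorphism $g$. Theorems~\ref{thm_commuting} and~\ref{thm_semiconj} then finish the proof. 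This sequential construction avoids any two-place compatibility on $X$.
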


We first fix some notation and conventions. Replacing $L$ by a suitable multiple, there exists an arithmetic model $(\mathcal X,\mathcal L)$ of $(X,L)$ over $\Spe O_K$. Enlarging $S$ if necessary, we may assume that every finite place $v\notin S$ is a model place for $\ov{L}$. Equivalently, writing $\mathcal V=\Spe O_{K,S}$, the adelic line bundle $\ov{L}$ is induced over $\mathcal V$ by $(\mathcal X_{\mathcal V},\mathcal L_{\mathcal V})$, and
$\mathcal L|_{\mathcal X_v}$ is ample for every $v\notin S$.

For the remainder of this subsection, we fix a finite place
$v\notin S$. By the assumption of Theorem \ref{thm_main}, there exist a generic sequence $(x_m)_{m \geq 1}$ in $X(\overline{K})$ such that $(x_m)_{m\geq 1}$ is abelian, almost unramified at $v$, and satisfies $\lim\limits_{m \to \infty} h_{\overline{L}}(x_m) = 0$. Fix an embedding $\ov{K}\hookrightarrow \mb{C}_v$, which induces a valuation $w$ of $\ov{K}$ extending $v$. Let $\phi_q\in \Gal(\overline{k_v}/k_v)$ be the Frobenius automorphism, and choose a Frobenius element $\sigma\in D_w\subset \Gal(\overline K/K)$ lifting $\phi_q$, where $q=|k_v|$.

\subsubsection*{Construction of $Y$ and $f$} 

By Theorem \ref{thm_constructZ} and Theorem \ref{thm_relation}, we obtain bi-finite correspondences $\mf{C}_X^{(v)}$ and $\mf{R}_X^{(v)}$ in $X\times X$. Let $Y:=X/\mf{R}_X^{(v)}$ be the geometric quotient. Then $Y$ is a normal projective variety over $K$ and the quotient map $\pi: X\to Y$ is finite surjective. Moreover, for any $x,y\in X(\ov{K})$, $\pi(x)=\pi(y)$ if and only if $(x,y)\in \mf{R}_X^{(v)}(\ov{K})$. By our construction, $\Supp (\mf{C}_X^{(v)}\circ \mf{R}_X^{(v)}\circ \trans\mf{C}_X^{(v)})=\Supp \mf{R}_X^{(v)}$. It follows from Lemma \ref{lem_descent} that $\mf{C}_X^{(v)}$ descends to a surjective endomorphism $f: Y\to Y$. To complete the proof, it remains to verify the properties of $f$. 

\begin{lemma}\label{lem_fpolarized}
With notations as above, the morphism $f$ is polarized and $\deg(f)=q^n$.
\end{lemma}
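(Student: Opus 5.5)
Lemma~\ref{lem_descent}(2) already contains the polarization statement, so the plan has two parts: check that $\mf{C}_X^{(v)}$ is numerically $q$-polarized, and then read off $\deg(f)=q^n$ from the reduction at $v$.

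\textbf{Polarization.} By Lemma~\ref{lem_descent}(2), it suffices to show that $\mf{C}_X^{(v)}$ is numerically $q$-polarized with respect to the ample class $L$. That is, for every irreducible component $W$ of $\mf{C}_X^{(v)}$ I want
\[
\pi_{2,W}^*L\equiv q\,\pi_{1,W}^*L .
\]
I would test this against curves by specializing to $v$. Let $\mc{W}$ be the closure of $W$ in $\mc{X}\times_{O_K}\mc{X}$. By Theorem~\ref{thm_constructZ}(2), the special fiber $\wt{W}$ is supported on $\Gamma_{\Phi_{q,\wt{X}}}$. On that graph, with the embedding defined over $\mb{F}_q$, we have $\Phi_q^*\mc{O}(1)=\mc{O}(q)$, so
\[
\pi_2^*\mc{L}|_{\Gamma}\simeq q\,\pi_1^*\mc{L}|_{\Gamma},
\]
possibly after replacing $\mc{L}$ by the hyperplane bundle of the fixed embedding $\mc{X}\subset\mb{P}^N_{O_K}$, which is legitimate since I may take $L=\mc{O}(1)|_X$.

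Next, let $D:=\pi_2^*L-q\pi_1^*L$ on $W$, extended to $\mc{D}$ on the model. For a curve $C\subset W$ with closure $\mc{C}$, flatness over the local ring at $v$ gives
\[
\deg(D|_C)=\deg(\mc{D}|_{\wt{\mc{C}}}),
\]
and $\wt{\mc{C}}$ is a $1$-cycle supported on $\Gamma_{\Phi_{q,\wt{X}}}$, where $\mc{D}$ is numerically trivial. Hence $D\equiv 0$ on $W$. The one subtlety is that $\wt{W}$ may be non-reduced. This does not matter, because degrees of $1$-cycles only see the support with multiplicities, and $\mc{D}$ is trivial on the reduced graph.

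\textbf{Degree.} Let $\deg$ denote the degree of the first projection, counted over the components of $\mf{C}_X$ with multiplicity. By Lemma~\ref{lem_descent}, $(\pi\times\pi)(\mf{C}_X)=\Gamma_f$, and $f$ is $q$-polarized. Since $X$ and $Y$ are $n$-dimensional, I would conclude that $\deg(f)=q^n$, because $(f^*M)^n=q^n M^n$ for an ample $M$ polarizing $f$.

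To make the polarization degree match exactly $q$, it helps that Lemma~\ref{lem_polardes} propagates the same $d=q$. The numerical $q$-polarization of $\mf{C}_X$ gives $f^*H\equiv qH$ for some ample $H$ on $Y$, and then
\[
\deg f\cdot H^n=(f^*H)^n=q^nH^n,
\]
so $\deg(f)=q^n$.

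The main obstacle is the specialization step: making rigorous that numerical triviality on the (possibly non-reduced) special fiber lifts to numerical triviality on the generic fiber. I would handle it with the flat closure of curves in $\mc{W}$ and the constancy of intersection numbers in flat families over the DVR $O_{K,v}$.
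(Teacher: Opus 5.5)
Your proposal is correct and follows essentially the same route as the paper. The paper likewise shows that $\pi_{2,W}^*H\equiv q\,\pi_{1,W}^*H$ on each component $W$ of $\mf{C}_X^{(v)}$ by specializing to the special fiber at $v$, which is supported on the Frobenius graph. Where you test this by hand against flat closures of curves, the paper cites Fulton's specialization theorem. The paper then applies Lemma~\ref{lem_descent}(2) and uses $(f^*H)^n=q^nH^n$ to conclude $\deg(f)=q^n$, exactly as you do.
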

\begin{proof}
Choose a relatively ample line bundle $\mc{H}$ on $\mc{X}$ with generic fiber $H\in \Pic(X)$. Let $W\subset \mf{C}_X^{(v)}$ be an irreducible component and denote $\pi_{1,W},\pi_{2,W}:W\to X$ the projections. Let $\mc{W}$ be the Zarsiki closure of $W$ in $\mc{X}\times_{O_K}\mc{X}$ and $\wt{W}$ be the special fiber of $\mc{W}$ at $v$.

By Theorem \ref{thm_constructZ}, $\Supp(\wt{W})=\Gamma_{\Phi_{q,\wt{X}}}$. It follows that
\[\big((\pi_{2,\mc{W}})^*\mc{H}-q(\pi_{1,\mc{W}})^*\mc{H}\big)|_{\wt{W}}\equiv 0\]
in $N^1(\wt{W})$. This implies $(\pi_{2,W})^*H\equiv q^i(\pi_{1,W})^*H$ in $N^1(W)$ by \cite[Theorem 10.2]{Ful98}. In other words, the correspondence $\mf{C}_X$ is numerically $q$-polarized. By Lemma \ref{lem_descent}, the morphism $f$ induced by $\mf{C}_X^{(v)}$ is $q$-polarized, and consequently $\deg(f)=q^n$.
\end{proof}

\begin{lemma}\label{lem_prep}
With notations as above, for all but finitely many $m\geq 1$, $f\big(\pi(x_m)\big)=\sigma\big(\pi(x_m)\big)$. In particular, $\pi(x_m)\in\mr{Per}(f)$ for all but finitely many $m\geq 1$.    
\end{lemma}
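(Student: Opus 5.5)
The plan is to read off the identity $f(\pi(x_m))=\sigma(\pi(x_m))$ from the construction of $\mf{C}_X^{(v)}$. By construction, $\mf{C}_X^{(v)}$ is the union of the positive-dimensional irreducible components of the Zariski closure $V$ of $\Gal(\ov{K}/K)\cdot B$, where $B=\{(x_m,\sigma(x_m))\}$. The zero-dimensional components of $V$ are finite in number. Since the sequence is generic, each index $m$ gives a distinct point (or at worst finitely many repeats), so only finitely many $m$ have $(x_m,\sigma(x_m))$ lying solely on a zero-dimensional component. Hence for all but finitely many $m$ we get $(x_m,\sigma(x_m))\in\Supp(\mf{C}_X^{(v)})$. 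This bookkeeping is the only mildly delicate point: one must check that a point of $V$ not lying on a positive-dimensional component is an isolated point of $V$, and that only finitely many $m$ land there.

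Next we apply Lemma~\ref{lem_descent}. There $f$ is characterized by $(\pi\times\pi)(\Supp\mf{C}_X^{(v)})=\Gamma_f$, so $(\pi(x_m),\pi(\sigma(x_m)))\in\Gamma_f$, i.e.\ $f(\pi(x_m))=\pi(\sigma(x_m))$. Since $\pi$ is defined over $K$, we have $\pi(\sigma(x_m))=\sigma(\pi(x_m))$, which proves the first assertion.

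For periodicity, set $y_m:=\pi(x_m)$. Because $f$ is defined over $K$, it commutes with $\sigma$, so $f^j(y_m)=\sigma^j(y_m)$ for all $j\ge1$ by induction. The point $y_m$ lies in $Y(K(x_m))$ with $K(x_m)/K$ a finite extension, and $\sigma$ acts on it through the finite group $\Gal(K(x_m)^{\mathrm{gal}}/K)$; here $K(x_m)/K$ is already abelian, hence Galois. So $\sigma^N$ fixes $y_m$ for $N$ equal to the order of $\sigma$ in that group, giving $f^N(y_m)=y_m$. Thus $y_m\in\mr{Per}(f)$. No step seems a serious obstacle; the main care is in the finiteness argument of the first paragraph.
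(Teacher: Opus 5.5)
Your proposal is correct and follows the paper's proof. You first get $(x_m,\sigma(x_m))\in\mf{C}_X^{(v)}$ for all but finitely many $m$; the paper simply cites the construction for this, and your genericity argument justifies it. Then Lemma~\ref{lem_descent} together with the $K$-rationality of $\pi$ gives $f(\pi(x_m))=\sigma(\pi(x_m))$, and iterating with $f\circ\sigma=\sigma\circ f$ and the finite order of $\sigma$ on $K(x_m)$ gives periodicity.
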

\begin{proof}
 By construction of $\mf{C}_X^{(v)}$, we have
\[
\big(x_m,\sigma(x_m)\big)\in \mf{C}_X^{(v)}(\ov{K})
\]
for all but finitely many $m$. Hence
\[
f(\pi(x_m))=\pi(\sigma(x_m))=\sigma(\pi(x_m)).
\]

Suppose that $\sigma|_{K(x_m)}\in \Gal(K(x_m)/K)$ has order $r$, then
\[
\pi(x_m)
=\pi(\sigma^r(x_m))
=\sigma^{r-1}\big(\pi(\sigma(x_m))\big)
=\sigma^{r-1}\big(f(\pi(x_m))\big)
=\cdots
= f^r(\pi(x_m)).
\]
Thus $\pi(x_m)\in \mr{Per}(f)$ for all but finitely many $m\geq 1$.
\end{proof}

\subsubsection*{Construction of the $f$-admissible adelic line bundle $\ov{M}_f$} 

We begin with a lemma that will play a key role in the construction of $\overline{M}_f$.

\begin{lemma}\label{lem_htdp}
With notations as above, let $\langle \ov{L}\rangle_{X/Y}$ be the Deligne pairing of $\ov{L}$ with respect to $\pi:X\to Y$, then
\[\lim_{m\to\infty}h_{\langle \ov{L}\rangle_{X/Y}}\big(\pi(x_m)\big)=0.\]
\end{lemma}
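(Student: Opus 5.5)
We will deduce the statement from Lemma~\ref{lem_dphtcomp} combined with the height comparison on $\mf{R}_X^{(v)}$ from Lemma~\ref{lem_htcompR}. By Lemma~\ref{lem_dphtcomp}, for every $m$,
\[
h_{\langle \ov{L}\rangle_{X/Y}}\big(\pi(x_m)\big)\leq \deg(\pi)\max_{x\in \pi^{-1}(\pi(x_m))}h_{\ov{L}}(x).
\]
Since $\langle\ov{L}\rangle_{X/Y}$ is nef (Deligne pairing of a nef bundle), the left side is nonnegative. It therefore suffices to show that
\[
\max_{x\in\pi^{-1}(\pi(x_m))}h_{\ov{L}}(x)\longrightarrow 0 .
\]

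For the fiber, recall that $\pi(x)=\pi(x_m)$ if and only if $(x_m,x)\in\mf{R}_X^{(v)}(\ov{K})$. Fix $\epsilon>0$ and let $C$ and $U_\epsilon$ be as in Lemma~\ref{lem_htcompR}. We want every fiber point to lie in $U_\epsilon$ for large $m$. Let $Z:=X\setminus U_\epsilon$ and set
\[
Z':=\pi_1\big(\pi_2^{-1}(Z)\cap\mf{R}_X^{(v)}\big),
\]
which is a proper closed subset of $X$ because $\mf{R}_X^{(v)}$ is bi-finite. Since $\mf{R}_X^{(v)}$ is defined over $K$, the set $Z\cup Z'$ is a proper closed subset defined over $K$. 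Genericity of $(x_m)$ then gives $x_m\notin Z\cup Z'$ for all but finitely many $m$. For such $m$, both $x_m$ and every $x$ in its $\mf{R}_X^{(v)}$-class lie in $U_\epsilon$. Lemma~\ref{lem_htcompR} then yields
\[
h_{\ov{L}}(x)\le C\,h_{\ov{L}}(x_m)+\epsilon .
\]

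Taking the maximum over the fiber and applying Lemma~\ref{lem_dphtcomp}, we get
\[
\limsup_{m\to\infty} h_{\langle \ov{L}\rangle_{X/Y}}(\pi(x_m))\le \deg(\pi)\,\epsilon .
\]
Since $\epsilon>0$ is arbitrary, the limit is $0$.

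The only delicate point is making sure the comparison of Lemma~\ref{lem_htcompR} applies to the whole fiber rather than just to $x_m$. This is handled by the bi-finiteness argument producing $Z'$, together with the fact that the exceptional loci are defined over $K$, so that genericity of the sequence can be applied.
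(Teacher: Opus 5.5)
Your proof is correct and follows essentially the same route as the paper. Your exceptional set $Z'=\pi_1\bigl(\pi_2^{-1}(X\setminus U_\epsilon)\cap\mf{R}_X^{(v)}\bigr)$ is exactly the paper's $V_\epsilon=\pi^{-1}\bigl(\pi(X\setminus U_\epsilon)\bigr)$, after which you combine Lemma~\ref{lem_htcompR} and Lemma~\ref{lem_dphtcomp} as the paper does; your remark that nefness of $\langle\ov{L}\rangle_{X/Y}$ makes the heights nonnegative supplies the lower bound the paper leaves implicit when passing to the limit.
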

\begin{proof}
 By Lemma \ref{lem_htcompR}, there exists a constant $C>0$, such that for any $\epsilon>0$, there exists an open subscheme $U_{\epsilon}\subset X$ such that 
 \[h_{\ov{L}}(z)\leq C\,h_{\ov{L}}(y)+\epsilon,\quad \forall (y,z)\in \big((U_{\epsilon}\times U_{\epsilon})\cap \mf{R}_X\big)(\ov{K}).\]
Consider the proper closed subset $V_{\epsilon}:=\pi^{-1}\big(\pi(X\setminus U_{\epsilon})\big)\supset X\setminus U_{\epsilon}$. For every $x\notin V_{\epsilon}(\ov{K})$ and $z\in \pi^{-1}\big(\pi(x)\big)$, we have $z\in U_{\epsilon}(\ov{K})$.

Since $(x_m)_{m\geq 1}$ is generic, $x_m\notin V_{\epsilon}(\ov{K})$ for $m$ sufficiently large. For $z\in \pi^{-1}\big(\pi(x_m)\big)$, we have $z\in U_{\epsilon}(\ov{K})$ and $\pi(x_m)=\pi(z)$ which implies $(x_m,z)\in \mf{R}_X^{(v)}(\ov{K})$. Then $(x_m,z)\in\big((U_{\epsilon}\times U_{\epsilon})\cap  \mf{R}_X^{(v)}\big)(\ov{K})$. In particular,
 \[h_{\ov{L}}(z)\leq C\,h_{\ov{L}}(x_m)+\epsilon.\]
 By Lemma \ref{lem_dphtcomp},
 \[h_{\langle \ov{L}\rangle_{X/Y}}\big(\pi(x_m)\big)\leq \deg(\pi)\max_{z\in \pi^{-1}(\pi(x_m))}h_{\ov{L}}(z)\leq C\,\deg(\pi)h_{\ov{L}}(x_m)+\deg(\pi)\epsilon.\]
 Let $m\to\infty$ and $\epsilon\to 0$, we obtain $\lim\limits_{m\to\infty}h_{\langle \ov{L}\rangle_{X/Y}}\big(\pi(x_m)\big)=0$.
\end{proof}

\begin{lemma}\label{lem_lbcomp}
With notations as above, there exists a $f$-admissible adelic line bundle $\ov{M}_f\in \widehat{\Pic}(Y)_{\mr{nef},\mb{Q}}$ such that $\ov{L}\leq \pi^*\ov{M}_f$.    
\end{lemma}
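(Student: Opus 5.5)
The natural candidate is $\ov{M}_f:=\bigl(\langle \ov{L}\rangle_{X/Y}\bigr)$ corrected to be $f$-admissible: set $M:=\langle L\rangle_{X/Y}\in\Pic(Y)$, which is ample since $L$ is ample and $\pi$ is finite, and let $\ov{M}_f$ be its $f$-admissible extension from Theorem \ref{thm_admiss}. Since $f$ is polarized by Lemma \ref{lem_fpolarized}, we may also replace $M$ by a class in the $q$-eigenspace of $f^*$; I would rather keep $M$ general and use that the admissible section exists for all of $\Pic(Y)_{\mb{Q}}$, and that $\ov{M}_f$ is nef. By Lemma \ref{lem_dpnef}, $\pi^*\langle\ov{L}\rangle_{X/Y}-\ov{L}$ is nef, so it suffices to prove $\langle\ov{L}\rangle_{X/Y}\leq\ov{M}_f$ on $Y$, i.e.\ that $\ov{N}:=\ov{M}_f-\langle\ov{L}\rangle_{X/Y}$ is nef. (Possibly one has to scale $\ov{M}_f$ by a constant, which is harmless.)

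Both adelic bundles have the same underlying line bundle $M$, so $\ov{N}$ is a "vertical" adelic line bundle, i.e.\ a collection of continuous functions $g_v$ on $Y_v^{\an}$ (zero at almost all places). The key input is the sequence $y_m:=\pi(x_m)$: by Lemma \ref{lem_htdp}, $h_{\langle\ov{L}\rangle}(y_m)\to0$, and by Lemma \ref{lem_prep} almost all $y_m$ are $f$-periodic, so by Corollary \ref{cor_admiss}(2), $h_{\ov{M}_f}(y_m)=0$. Hence $h_{\ov{N}}(y_m)\to0$, and $(y_m)$ is generic in $Y$ since $\pi$ is finite. Then, since $\ov{M}_f$ has height zero on $Y$ (preperiodic points are dense and $\ov{M}_f$ is nef with essential minimum $0$) and $\langle\ov{L}\rangle$ is nef with essential minimum $0$ (along $(y_m)$), I would apply Yuan's equidistribution to $(y_m)$ with respect to both $\ov{M}_f$ and $\langle\ov{L}\rangle_{X/Y}$: both give the same limiting measure at every place $v$, and combined with the Hodge-index/equality criterion for vertical bundles (Yuan--Zhang: if $\ov{M}_1,\ov{M}_2$ nef with same underlying bundle, both of height zero and sharing a small generic sequence, then $\ov{M}_1-\ov{M}_2$ is constant on each place), one gets $\ov{N}$ equal to a pullback of a constant from $\Spe O_K$, in particular nef after adding a constant.

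The main obstacle is exactly this rigidity step: showing $\ov{M}_f$ and $\langle\ov{L}\rangle_{X/Y}$ differ only by constants. The first attempt is the arithmetic Hodge index theorem of Yuan--Zhang: $h_{\ov{M}_f}(Y)=0$, $h_{\langle\ov{L}\rangle}(Y)=0$ (Zhang's inequality plus Lemma \ref{lem_htdp}), and the generic sequence with $h\to0$ for both forces $\ov{N}^2\cdot\ov{M}^{n-1}=0$, giving $\ov{N}$ numerically trivial, hence constant. If that fails to be directly available, the fallback is to compare directly the local functions $g_v$ via equidistribution at each place, showing $g_v$ is constant on $\mathrm{supp}$ of the equilibrium measure and using a maximum principle. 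Once $\ov{N}$ is a constant $c$, note $c\geq 0$ since heights along $y_m$ both tend to $0$; so $\ov{N}$ is nef and $\ov{L}\leq\pi^*\ov{M}_f$.
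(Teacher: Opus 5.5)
Your proposal is correct and follows the paper's proof essentially step for step: the same candidate $\ov{M}_f$ extending $\langle L\rangle_{X/Y}$, the reduction via Lemma~\ref{lem_dpnef}, and the vanishing of all mixed intersections of $\langle\ov{L}\rangle_{X/Y}$ and $\ov{M}_f$, obtained from Zhang's inequality applied to their sum along the common small generic sequence $\pi(x_m)$. Then Yuan--Zhang's arithmetic Hodge index theorem shows the difference is a constant, and the limit along $\pi(x_m)$ forces that constant to be $0$.

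One detail needs adding. The equality case of the Hodge index theorem requires arithmetically positive auxiliary bundles, and $\langle\ov{L}\rangle_{X/Y}$ has height zero, so it is not one. As the paper does, intersect instead with $(\langle\ov{L}\rangle_{X/Y}+\ov{M}_f+\psi^*\ov{E})^{n-1}$, where $\psi:Y\to\Spec K$ is the structure morphism and $\ov{E}\in\widehat{\Pic}(K)$ has positive degree. This leaves $\ov{N}^2\cdot(\cdot)^{n-1}$ unchanged because $\ov{N}$ has trivial underlying line bundle.
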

\begin{proof}
 Choose $\ov{M}:=\langle \ov{L}\rangle_{X/Y}\in \widehat{\Pic}(Y)_{\mr{nef}}$. Then $M=\langle L\rangle_{X/Y}\in \mr{Pic}(Y)$ is an ample line bundle. By Theorem \ref{thm_admiss}, there exists a $f$-admissible adelic line bundle $\ov{M}_f\in \widehat{\Pic}(Y)_{\mb{Q},\mr{nef}}$ extending $M$. 

We claim $\ov{M}_f-\ov{M}\in \psi^*\widehat{\Pic}(K)$, where $\psi: Y\to K$ is the structural morphism. By Lemma \ref{lem_htdp},
\[\lim_{m\to\infty}h_{\ov{M}+\ov{M}_f}(\pi(x_m))=0.\]
By Zhang's inequality $h_{\ov{M}+\ov{M}_f}(Y)=0$ and then $\ov{M}^i\cdot\ov{M}_f^{n+1-i}=0$ for all $0\leq i\leq n+1$. In particular, $(\ov{M}-\ov{M}_f)^2\cdot (\ov{M}+\ov{M}_f)^{n-1}=0$.

Choose an adelic line bundle $\ov{N}\in \widehat{\Pic}(K)$ with $\widehat{\deg}(\ov{N})=1$. Since $\ov{M}_f$ and $\ov{M}$ has the same underlying line bundle, we have
\[(\ov{M}-\ov{M}_f)^2\cdot (\ov{M}+\ov{M}_f+\psi^*\ov{N})^{n-1}=(\ov{M}-\ov{M}_f)^2\cdot (\ov{M}+\ov{M}_f)^{n-1}=0.    \]

Applying \cite[Theorem 1.3]{YZ17}, $\ov{M}_f-\ov{M}\in \psi^*\widehat{\Pic}(K)$. Suppose $\ov{M}_f-\ov{M}=\psi^*\ov{E}$ for some $\ov{E}\in \widehat{\Pic}(K)$, then $h_{\ov{M}_f-\ov{M}}(x_m)=\widehat{\deg}(\ov{E})$. Let $m\to\infty$, we have $\widehat{\deg}(\ov{E})=0$. Thus $\ov{M}_f-\ov{M}$ is nef. Combine this with Lemma \ref{lem_dpnef}, we obtain
\[\pi^*\ov{M}_f-\ov{L}=\pi^*(\ov{M}_f-\ov{M})+(\pi^*\ov{M}-\ov{L})\]
is nef.
\end{proof}

\subsubsection*{Exceptionality of $f$}

\begin{lemma}\label{lem_fexp}
With notations as above, the polarized endomorphism $f:Y\to Y$ is an exceptional map.
\end{lemma}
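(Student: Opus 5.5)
The strategy is to produce a second polarized endomorphism on $Y$ that commutes with $f$ and has a multiplicatively independent degree, and then apply Theorem~\ref{thm_commuting}. The construction of $f$ depended on a choice of finite place $v\notin S$; this dependence is what we will exploit. Since $S$ is finite, we can pick two finite places $v_1,v_2\notin S$ whose residue characteristics $p_1\neq p_2$ are distinct. Write $q_i=|k_{v_i}|$, a power of $p_i$. Then $q_1^a\neq q_2^b$ for all $a,b\geq1$.

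For each $i$ we obtain correspondences $\mf{C}_X^{(v_i)}$ and $\mf{R}_X^{(v_i)}$ and quotients $\pi_i:X\to Y_i:=X/\mf{R}_X^{(v_i)}$, carrying $q_i$-polarized endomorphisms $f_i$ by Lemma~\ref{lem_fpolarized}. To put both maps on one variety, set $\mf{R}$ to be the Zariski closure of the union of all supports of compositions of elements of $\Omega^{(v_1)}$ and $\Omega^{(v_2)}$, and work on $Y':=X/\mf{R}$.

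The next step is to show that $\mf{R}$ is still a bi-finite equivalence relation. The tool is the height argument of Lemma~\ref{lem_gammaess}: every component of a finite composition has essential minimum $0$, since the height comparison of Lemma~\ref{lem_htcompZ} holds for both $\mf{C}^{(v_i)}$. Then equidistribution at $v_1$ (Proposition~\ref{prop_gamma}-type tube arguments) is used. The issue is that $\mf{C}^{(v_2)}$ is not known to lie in a tube at $v_1$. What we need instead is that at $v_1$ the pairs $(x_m,\sigma_2(x_m))$ reduce into a finite union of graphs. My first attempt is this: because $K(x_m)/K$ is abelian and almost unramified at $v_1$, $\sigma_2$ commutes with the Frobenius $\sigma_1$ on $K(x_m)$. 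Hence the reduction at $v_1$ of $\mf{C}^{(v_2)}$ commutes with Frobenius, which should force its components to be bounded in number (bi-finiteness). Bounding the dimension of components of $\mf{R}$ by this mixed argument is the step I expect to be the main obstacle.

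Suppose instead that the joint relation is hard to control. The alternative is to take $Y$ itself, which is $Y_1$ for $v=v_1$, and show that $\mf{C}^{(v_2)}$ satisfies Lemma~\ref{lem_descent}(1) with respect to $\mf{R}^{(v_1)}$, so that it descends to $Y$. For this, use Lemma~\ref{lem_prep}: $\pi(x_m)$ is $f$-periodic with $f(\pi(x_m))=\sigma_1(\pi(x_m))$. Since Galois actions on abelian points commute, $\sigma_2$ sends $\mf{R}^{(v_1)}$-classes of the generic points $x_m$ to classes, because $\sigma_2$ commutes with $\sigma_1$ and preserves $\Gal$-orbits. Zariski density of the $x_m$ then gives $\Supp(\mf{C}^{(v_2)}\circ\mf{R}^{(v_1)}\circ\trans\mf{C}^{(v_2)})\subseteq\Supp\mf{R}^{(v_1)}$ and $(\mf{C}^{(v_2)}\times\mf{C}^{(v_2)})$-stability. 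This yields $g:Y\to Y$ with $\pi\circ$ graph compatibility. By Lemma~\ref{lem_descent}(2) and the numerical $q_2$-polarization (proved as in Lemma~\ref{lem_fpolarized}), $g$ is $q_2$-polarized.

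Commutation follows as in Remark~\ref{rmk_bifin_corr}: on the dense set $\pi(x_m)$ we have $f\circ g(\pi(x_m))=\sigma_1\sigma_2(\pi(x_m))=\sigma_2\sigma_1(\pi(x_m))=g\circ f(\pi(x_m))$, using abelianity, so $f\circ g=g\circ f$ on a Zariski dense set and hence everywhere. Since $q_1^a\neq q_2^b$, Theorem~\ref{thm_commuting} applied over $\ov{K}$ shows $f$ is exceptional.
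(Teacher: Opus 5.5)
\textbf{There is a genuine gap, at the step that carries the whole proof: producing the second endomorphism.}

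Your first route leaves bi-finiteness of the joint relation open, and the heuristic does not close it. In general $\sigma_2$ does not lie in the decomposition group at $v_1$. So the pairs $(x_m,\sigma_2(x_m))$ have no controlled $v_1$-adic reduction, and the tube argument of Proposition~\ref{prop_gamma} cannot be run on words involving $\mf{C}^{(v_2)}$. Commuting with Frobenius on the special fibre gives no such control.

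Your second route rests on the claim that $\mf{C}^{(v_2)}$ satisfies Lemma~\ref{lem_descent}(1) with respect to $\mf{R}^{(v_1)}$. The argument you give does not prove this.
\begin{itemize}
\item What you actually establish is $\pi(\sigma_2(x_m))=\sigma_2(\pi(x_m))$. This holds for any Galois element simply because $\pi$ is defined over $K$. It concerns only the special branch $x_m\mapsto\sigma_2(x_m)$.
\item The condition $\Supp(\mf{C}^{(v_2)}\circ\mf{R}^{(v_1)}\circ\trans\mf{C}^{(v_2)})\subseteq\Supp(\mf{R}^{(v_1)})$ concerns all chains $(c,a)\in\mf{C}^{(v_2)}$, $(c,d)\in\mf{R}^{(v_1)}$, $(d,b)\in\mf{C}^{(v_2)}$.
\item Density of the special pairs in $\mf{C}^{(v_2)}$ does not make chains built from special pairs dense in this composition. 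The fibre of $\mf{C}^{(v_2)}$ over $x_m$ may contain points other than $\sigma_2(x_m)$. The $\mf{R}^{(v_1)}$-class of $x_m$ contains points $d$ outside the sequence, for which $(d,\sigma_2(d))\in\mf{C}^{(v_2)}$ is not known.
\item Already when $\mf{R}^{(v_1)}=\Delta_X$, your claim says $\mf{C}^{(v_2)}\circ\trans\mf{C}^{(v_2)}\subseteq\Delta_X$, i.e.\ that $\mf{C}^{(v_2)}$ is the graph of a morphism. Nothing in your argument gives that.
\end{itemize}
The relation $\mf{R}^{(v_1)}$ was built to absorb the multivaluedness of $\mf{C}^{(v_1)}$, not that of $\mf{C}^{(v_2)}$.

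\textbf{The missing idea is to avoid descending a multivalued correspondence through an unrelated relation, as the paper does.}
\begin{itemize}
\item The paper performs the second construction on $Y$, not on $X$, using the set $B:=\Gal(\ov K/K)\cdot\{\pi(x_m)\mid \pi(x_m)\in\mr{Per}(f)\}$.
\item This set is generic, abelian, of zero $\ov{M}_f$-height, and almost unramified at a second place $v'$. That place is chosen to be a model place for $\ov{M}_f$ with $\gcd(q,q')=1$.
\item Crucially, $f(B)=B$ by Lemma~\ref{lem_prep}.
\item Theorems~\ref{thm_constructZ} and~\ref{thm_relation} applied on $Y$ give $\mf{C}_Y^{(v')}$, $\mf{R}_Y^{(v')}$, and a further quotient $\pi':Y\to Y'$ carrying a $q'$-polarized $g$.
\item Since $f$ is a morphism and $f(B)=B$, the map $f\times f$ preserves $\mf{C}_Y^{(v')}$ and $\mf{R}_Y^{(v')}$. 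By Remark~\ref{rmk_bifin_corr}, it is $f$ (a single-valued map) that descends, to some $f_{Y'}$ commuting with $g$.
\item Theorem~\ref{thm_commuting} makes $f_{Y'}$ exceptional. Theorem~\ref{thm_semiconj} then transfers exceptionality back to $f$.
\end{itemize}
So the two ingredients you are missing are: pass to a further quotient rather than insisting that $g$ live on $Y$, and use the invariance of exceptionality under finite semiconjugacy. Your choice of places with distinct residue characteristics, and your commutation computation on the points $\pi(x_m)$, would be fine once a commuting $g$ actually exists.
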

\begin{proof}
By the proof of Lemma \ref{lem_lbcomp}, the underlying line bundle $M$ of $\overline{M}_f$ is ample. Choose another place $v' \notin S$ such that $v'$ is a model place for $\overline{M}_f$, and such that $\gcd(q,q')=1$, where $q' := |k_{v'}|$. Let $w'$ be a valuation of $\overline{K}$ extending $v'$, and let $\sigma' \in D_{w'} \subset \mathrm{Gal}(\overline{K}/K)$ be a Frobenius element lifting $\phi_{q'}\in \Gal(\ov{k_{v'}}/k_{v'})$.

Consider the set
\[
B := \mathrm{Gal}(\overline{K}/K)\cdot \left\{ \pi(x_m) \;\middle|\; \pi(x_m) \in \mathrm{Per}(f),\ m \ge 1 \right\}\subset Y(\overline{K}) .
\]
By Lemma \ref{lem_prep}, the set $B$ is infinite and satisfies $f(B)=B$. Reindexing $B$ as a sequence $(y_m)_{m \ge 1}$, we obtain a generic sequence such that $\lim\limits_{m\to\infty}h_{\overline{M}_f}(y_m)=0$. Since $(x_m)_{m\geq 1}$ is abelian and almost unramifed at $v'$, $(y_m)_{m\geq1}$ is also abelian and almost unramified at $v'$.

Set
\[
V' := \overline{\left\{ (y_m,\sigma'(y_m)) \mid m \ge 1 \right\}}^{\mathrm{Zar}} \subset Y \times Y,
\]
and let $\mf{C}_Y^{(v')}$ be the union of all positive-dimensional irreducible components of $V'$.  Then $\mf{C}_Y^{(v')}$ is a bi-finite correspondence of dimension $n$ by Theorem~\ref{thm_constructZ}. By Theorem~\ref{thm_relation}, it induces a bi-finite correspondence $\mf{R}_Y^{(v')} \subset Y \times Y$. Then we obtain a normal projective variety $Y'=Y/\mf{R}_Y^{(v')}$ defined over $K$, a finite surjective morphism $\pi' : Y \to Y'$, and a surjective endomorphism $g : Y' \to Y'$. By Lemma \ref{lem_fpolarized}, $g$ is $q'$-polarized and  $\deg(g) = (q')^n$. 

Since $f(B)=B$, we have $(f \times f)(\mf{C}_Y^{(v')}) = \mf{C}_Y^{(v')}$ and $(f \times f)(\mf{R}_Y^{(v')}) = \mf{R}_Y^{(v')}$. By Lemma~\ref{lem_descent} and Remark \ref{rmk_bifin_corr}, the morphism $f:Y\to Y$ descends to a polarized endomorphism $f_{Y'} : Y' \to Y'$ satisfying
\[
\pi' \circ f = f_{Y'}\circ \pi', \qquad f_{Y'} \circ g = g \circ f_{Y'}.
\]
Moreover, $\deg(f_{Y'}) = \deg(f) = q^n$. Since $\gcd(q,q')=1$, it follows from Theorem \ref{thm_commuting} that $f_{Y'}$ is exceptional. Then $f$ is also exceptional by Theorem \ref{thm_semiconj}.
\end{proof}

Theorem \ref{thm_main} now follows from Lemma \ref{lem_fpolarized}, Lemma \ref{lem_prep}, Lemma \ref{lem_lbcomp}, and Lemma \ref{lem_fexp}.

\section{Abelian backward orbits}\label{sec_backward_orbit}
The main goal of this section is to prove Theorem \ref{thm_vaeq1}. We prove Theorem \ref{thm_Kexp1} in Section \ref{sec_step4} and complete the proof of Theorem \ref{thm_vaeq1} in Section \ref{sec_proofvaeq}. In Section \ref{sec_1dim}, we discuss the one-dimensional case and Conjecture \ref{conj_P1}. Throughout this section, $K$ is a number field and $X$ is a normal projective variety over $K$.

\subsection{Complex multiplication and virtual abelianity}\label{sec_step4}
Let $f:X\to X$ be a polarized endomorphism of $X$ and $\alpha\in X(K)$ be a non-exceptional point for $f$. We study the relation between complex multiplication and the virtual abelianity of $K_{\infty}(f,\alpha)/K$. Our main result in this subsection is the following generalization of \cite[Theorem D]{FOZ24}.

\begin{theorem}[Theorem \ref{thm_Kexp1}]\label{thm_Kexp}
Let $K$ be a number field, let $X$ be a normal projective variety
over $K$, and let $f:X\to X$ be a $K$-exceptional map.
Suppose that $\alpha\in X(K)$ is preperiodic and non-exceptional
for $f$. Then $K_{\infty}(f,\alpha)/K$ is virtually abelian
if and only if $f$ is an exceptional map with CM.
\end{theorem}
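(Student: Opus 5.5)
The plan is to transfer the problem to the AT-variety $Y$ from a $K$-exceptional presentation $h:Y\to X^\circ$, $h\circ\psi=f^l\circ h$, with $\psi$ a $d$-lift over $A$ and torus $T$, all defined over $K$ after a finite extension of $K$ (allowed, since virtual abelianity is insensitive to finite extensions). The first reduction compares $K_\infty(f,\alpha)$ with the field generated by the backward $\psi$-orbit of a point. Since $\alpha$ is non-exceptional and preperiodic, replace $\alpha$ by a periodic point $f^j(\alpha)$; the backward orbit of $\alpha$ differs from that of $f^j(\alpha)$ only by finitely many finite extensions. Then choose $y_0\in h^{-1}(\alpha)\subset Y$, which is $\psi$-periodic after enlarging $K$; this uses $f^{-1}(X^\circ)=X^\circ$ and $\alpha\in X^\circ$, the latter because the complement of $X^\circ$ is totally invariant, hence exceptional. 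The fields $K(h^{-1}(f^{-nl}(\alpha)))$ and $K(\psi^{-n}(h^{-1}(\alpha)))$ agree up to bounded finite extensions, since $h$ is finite and $h\circ\psi=f^l\circ h$. So, up to finite index, $K_\infty(f,\alpha)$ is virtually abelian if and only if the field $K(\psi^{-\infty}(y_0))$ is. Translating base points in $A$ and in the torsor, I may assume $y_0$ is fixed by $\psi$, and $\psi^{-n}(y_0)$ is then a torsor under the finite group $\ker\psi^n$. That kernel sits in an exact sequence $1\to T[d^n]\to\ker\psi^n\to\ker\psi_A^n\to 0$, where I use Lemma~\ref{lem_ATmap} and Lemma~\ref{lem_ATcomp} to describe $\psi^n$ fiberwise.

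For the direction \emph{CM $\Rightarrow$ virtually abelian}, I would use Frobenius lifts and Chebotarev as indicated in the introduction. For a CM abelian variety, after a finite extension, for almost all $v$ the Frobenius at $v$ acts on $A$ through an endomorphism $\pi_v$ lying in a commutative CM algebra. By Lemma~\ref{lem_ATlift}-type arguments I would lift this to an endomorphism $\Pi_v$ of $Y$ that commutes with $\psi$ and reduces to Frobenius; the lift exists after passing to a power of Frobenius and to a power of $\psi$, since $\Pi_v^*L_i\simeq L_i^{\otimes q}$ can be arranged using the CM action on $\operatorname{Pic}$. Then $\mathrm{Frob}_v$ acts on $\psi^{-\infty}(y_0)$ as $\Pi_v$ (and on the torus part as $t\mapsto t^{q}$), so all these Frobenius actions lie in a commutative group of automorphisms of the inverse system $\{\psi^{-n}(y_0)\}$. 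This group is built from $T$-translations and the commutative ring $\End(A)\otimes\Z_\ell$ restricted to CM, modulo twisting. Chebotarev density then shows that the image of Galois in $\varprojlim\operatorname{Aut}(\psi^{-n}(y_0))$ lands, up to finite index, in this abelian group.

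For the converse, \emph{virtually abelian $\Rightarrow$ CM}, I would pass to the quotient $A$. The field $K(\psi^{-\infty}(y_0))$ contains $K(\psi_A^{-\infty}(a_0))$ with $a_0=\pi(y_0)$ fixed, and this contains, up to finite index, the field of $\psi_A$-power torsion $A[\psi_A^\infty]$. Since $\psi_A$ is $d$-polarized, $\psi_A^m$ for a suitable $m$ kills into and contains $\ell$-primary torsion for a prime $\ell\mid d$, so the image of $\rho_\ell$ on $V_\ell(A)$ (or on its $\psi_A$-adic part, which has full rank because $\psi_A^*$ of the polarization is $d$ times it) would be virtually abelian. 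By Zarhin's theorem \cite[Theorem~1]{Zar87} (semisimplicity, and commutant equal to $\End\otimes\Q_\ell$), an abelian open subgroup of the image forces $\End^0(A_{\ov K})$ to contain a commutative semisimple subalgebra of dimension $2\dim A$, i.e. $A$ has CM; then $f$ is exceptional with CM. The delicate point is that only the $\psi_A$-part of torsion, not the full $\ell$-adic module, is obviously controlled. I would handle it by noting that $\psi_A\psi_A^\dagger=d$ (Rosati), so $\ker\psi_A^n$ and $A[d^n]$ are commensurable enough after composing.

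The main obstacle is the CM $\Rightarrow$ abelian step. Constructing Frobenius-compatible lifts on a nontrivial torsor $Y$, where $Y$ has no group structure, requires controlling how Frobenius acts on the line bundles $L_i$ and on torus coordinates simultaneously. For this I would first try to reduce to $Y$ being a genuine torsor with the $L_i\in\Pic^0$-twisted eigenbundles for the CM action, replacing $Y$ by an isogenous AT-variety via Proposition~\ref{prop_etaleAT}.
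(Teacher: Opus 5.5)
Your converse direction (virtually abelian $\Rightarrow$ CM) has a genuine gap. The assertion that the $\psi_A$-adic part of $V_\ell(A)$ has full rank is false. If $E$ has CM by $O_F$ and $p=\pi\bar\pi$ is an odd prime split in $F$, then $[\pi]^*\mathcal O(0)\simeq\mathcal O(0)^{\otimes p}$, so $[\pi]$ is a $p$-lift of the AT-variety $E$. Yet $\ker\pi^n\simeq\mathbb Z/p^n$, and $\varprojlim_n\ker\pi^n$ has rank one inside $T_p(E)$.

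The Rosati relation does not repair this. $\psi_A^\dagger\psi_A=[d]$ only gives $\ker\psi_A^n\subseteq A[d^n]$ with index $d^{n\dim A}$, which is unbounded. So you control Galois only on a proper submodule of $V_\ell(A)$, and the commutant argument you sketch (which is Faltings' theorem, not \cite{Zar87}) has nothing to act on.

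The idea you are missing is geometric. The set $D=\bigcup_n\ker\psi_A^n\subseteq A(K^{\mathrm{ab}})_{\mathrm{tors}}$ satisfies $\psi_A^{-1}(D)=D$. Since $\psi_A$ is \'etale and polarized, Lemma \ref{lem_invar_empty} shows $D$ is Zariski dense in $A$. Hence its image in each $K$-simple isogeny factor $A_i$ is infinite. Zarhin's theorem ($A_i(K^{\mathrm{ab}})_{\mathrm{tors}}$ is infinite if and only if $A_i$ has CM) then gives CM factor by factor, with no rank statement about a Tate module.

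In the forward direction (CM $\Rightarrow$ virtually abelian), your outline of Frobenius lifts commuting with $\psi$ followed by Chebotarev is the paper's. However, the step you defer as the main obstacle is the actual content, and your workaround would break the argument. If only a power of Frobenius lifts, Chebotarev only tells you that $\sigma_w^k$ acts through a lift. A profinite group in which every element has a central power (e.g.\ an infinite product of copies of $Q_8$) need not be virtually abelian. Nor is the group generated by $T$-translations and the lifts commutative, since $\Pi_v(t\cdot y)=t^{q}\cdot\Pi_v(y)$.

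The paper lifts Frobenius itself:
\begin{itemize}
\item By Lemma \ref{lem_CMfrob}, $\phi_A^{(v)}$ is defined over $K$ and commutes with all of $\End_K(A)$, in particular with $\psi_A$. This comes from reduction and injectivity of specialization, not merely from lying in a CM algebra.
\item Set $M=(\phi_A^{(v)})^*L_i\otimes L_i^{\otimes(-q_v)}$. It has trivial reduction, so $\lambda_M=0$ and $M\in\Pic^0(A)$. Also $\psi_A^*M\simeq M^{\otimes d}$ places $M$ in the finite group $\ker(\psi_A^\vee-[d])$, on which reduction is injective for $v$ prime to its order. Thus $M\simeq\mathcal O_A$ (Lemma \ref{lem_Frob_linebundle}).
\item Lemma \ref{lem_ATlift} lifts $\phi_A^{(v)}$ to $Y$. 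After checking that $\phi_A^{(v)}$ fixes the torsion point $\pi(\alpha)$, a torus translation makes the lift fix $\alpha$.
\item Lemma \ref{lem_ATcomp} then gives commutation with $\psi$ and identifies the reduction with the Frobenius of $\mathcal Y_v$.
\item Since the lift is $K$-rational, each Frobenius element is central in $\Gal(K_n/K)$ (Lemma \ref{lem_Chebo}). Chebotarev therefore yields an abelian extension outright, and no isogeny change via Proposition \ref{prop_etaleAT} is involved.
\end{itemize}

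Your initial reduction is also overstated. First, $f^{-\infty}(f^j(\alpha))$ contains the backward orbits of all points of $f^{-j}(f^j(\alpha))$, not just that of $\alpha$. Second, a compositum of infinitely many extensions of degree at most $\deg h$ need not be a finite extension.

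For the forward direction, use the exact identity $h^{-1}(f^{-n}(\alpha))=\psi^{-n}(h^{-1}(\alpha))$ and take the compositum over all $\beta\in h^{-1}(\alpha)$, as the paper does, rather than a single $y_0$. For the converse, periodicity is not needed. However, passing from $K_\infty(f,\alpha)$ to $K(\psi^{-\infty}(\beta))$ requires an argument beyond bounded degree; the paper invokes Proposition \ref{prop_VA_CM2} for such a $\beta$ without spelling this transfer out.
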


We first establish the corresponding statements for AT-varieties.
The implication from complex multiplication to virtual abelianity
is proved using Frobenius lifts and the Chebotarev density theorem.
The converse relies on Zarhin's result \cite[Theorem~1]{Zar87}:
for a $K$-simple abelian variety $A$, the set
$A(K^{\ab})_{\mr{tors}}$ is infinite if and only if $A$ has
complex multiplication over $K$. We then apply these results to exceptional maps at the end
of the subsection.

We begin with the following lemma, which will be used repeatedly throughout this subsection.
\begin{lemma}\label{lem_va}
Let $L/K$ be a Galois extension of fields. Then $L/K$ is virtually abelian if and only if there exists a finite Galois extension $K'/K$ such that $LK'/K'$ is abelian. 
\end{lemma}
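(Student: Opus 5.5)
The lemma is a statement in Galois theory, so the plan is to prove the two directions separately by translating "virtually abelian" into statements about subgroups of $G:=\Gal(L/K)$. Throughout, $L/K$ may be infinite, so $G$ is a profinite group. Here "virtually abelian" means that $G$ has an abelian subgroup of finite index. The only delicate point is that this subgroup need not be closed, normal, or open.

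For the direction ($\Leftarrow$), suppose $K'/K$ is finite Galois and $LK'/K'$ is abelian. Restriction gives an injective homomorphism $\Gal(LK'/K')\to\Gal(L/L\cap K')$, so $\Gal(L/L\cap K')$ is abelian. Since $L\cap K'$ is finite over $K$, this subgroup has finite index in $\Gal(L/K)$. Hence $L/K$ is virtually abelian.

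For the direction ($\Rightarrow$), let $H\subseteq G$ be an abelian subgroup of finite index. I would upgrade $H$ in three steps.
\begin{enumerate}
\item Replace $H$ by its closure $\overline H$. The closure of an abelian subgroup is still abelian, since commutation is a closed condition in a Hausdorff topological group. It still has finite index, because $[G:\overline H]\le[G:H]$.
\item Replace $\overline H$ by its normal core $N:=\bigcap_{g\in G} g\overline H g^{-1}$. This is a finite intersection, because $\overline H$ has only finitely many conjugates. So $N$ is closed, normal, abelian, and of finite index.
\item Conclude that $N$ is open, since a closed subgroup of finite index in a profinite group is open: its complement is a finite union of closed cosets.
\end{enumerate}

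By infinite Galois theory, $N=\Gal(L/K_0)$ for a finite Galois extension $K_0/K$ with $K_0\subseteq L$. Take $K':=K_0$. Then $LK'=L$ and $\Gal(L/K')=N$ is abelian, which gives the required extension.

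The only step that needs care is passing from an arbitrary abstract finite-index abelian subgroup to an open one. Steps 1 and 2 above handle this. If one worried that the definition allowed non-closed subgroups, the closure argument in step 1 removes that issue, so I do not expect a real obstacle.
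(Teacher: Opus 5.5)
Your proof is correct and is the standard infinite-Galois-theory argument the paper has in mind; the paper gives no details and only cites Milne's notes. The ($\Rightarrow$) direction (closure, then normal core, then openness) is complete as written.

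There is one slip in ($\Leftarrow$). Injectivity of the restriction map $\Gal(LK'/K')\to\Gal(L/L\cap K')$ does not make the target abelian; what you actually need is surjectivity. The map is in fact an isomorphism, since its image is compact, hence closed, and has fixed field $L\cap K'$. Alternatively, you can bypass this entirely: $\Gal(LK'/K')$ is an abelian subgroup of index $[K':K]$ in $\Gal(LK'/K)$. Its image under the surjection $\Gal(LK'/K)\twoheadrightarrow\Gal(L/K)$ is then an abelian subgroup of finite index.
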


\begin{proof}
This is a standard consequence of the fundamental theorem of infinite Galois theory; see \cite[Chapter~7]{Mil22}.
\end{proof}

We next formulate the Chebotarev criterion used in the forward
implication. To state it, we first specify the integral models on
which the reductions of endomorphisms will be considered.

\begin{definition}
Let $(Y,A,\pi)$ be an AT-variety over $K$ and $S$ be a finite set of finite places of $K$. A \emph{good model} of $(Y,A,\pi)$ over $\Spe O_{K,S}$ consists of:
\begin{itemize}
    \item an abelian scheme $\mc{A}$ over $\Spe O_{K,S}$;
    \item a torus scheme $\mathcal T\to\Spe O_{K,S}$;
    \item a torsor $\Pi:\mathcal Y\to\mathcal A$ under $\mathcal T\times_{\Spe O_K}\mathcal A$, together with identifications of their generic fibers with $A$, $T$, and $Y$, compatible with the projection and torus action.
\end{itemize}    
\end{definition}

\begin{lemma}\label{lem_Chebo}
Let $(Y,A,\pi)$ be an AT-variety over a number field $K$, and let
$\psi:Y\to Y$ be a $d$-lift. Suppose that
$\alpha\in Y(K)$ is fixed by $\psi$.
Assume that there exist a finite set $S$ of finite places of $K$
and a good model $(\mc{Y},\mc{A},\Pi)$ of $(Y,A,\pi)$ over
$\Spe O_{K,S}$ such that, for every $v\notin S$, there is an endomorphism $\phi^{(v)}:Y\to Y$ defined over $K$ satisfying:
\begin{enumerate}
\renewcommand{\labelenumi}{(\theenumi)}
\item $\phi^{(v)}$ extends to an $O_{K_v}$-endomorphism of
$\mc{Y}_{O_{K_v}}$ whose special fiber is the Frobenius
$\Phi_{q_v,\mc{Y}_v}$, where $q_v:=|k_v|$ and $k_v$ is the
residue field at $v$;
\item $\phi^{(v)}\circ\psi=\psi\circ\phi^{(v)}$;
\item $\phi^{(v)}(\alpha)=\alpha$.
\end{enumerate}
Then $K_{\infty}(\psi,\alpha)/K$ is an abelian extension.
\end{lemma}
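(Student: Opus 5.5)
The approach is to show that for every $n\geq 1$ the Galois group $\Gal(K_n(\psi,\alpha)/K)$ is abelian, which gives abelianity of $K_\infty(\psi,\alpha)/K$. Fix $n$, set $F:=K_n(\psi,\alpha)$, and write $\Sigma_n:=\psi^{-n}(\alpha)\subset Y(\ov K)$, a finite set. The plan is to show that every Frobenius element of $\Gal(F/K)$ at an unramified place acts on $\Sigma_n$ as one fixed permutation determined by $\phi^{(v)}$, and that all these permutations commute. Chebotarev then shows that $\Gal(F/K)$, which acts faithfully on $\Sigma_n$, is generated by pairwise commuting permutations, hence is abelian.

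\textbf{Step 1: $\phi^{(v)}$ permutes $\Sigma_n$ and computes Frobenius.} By (2) and (3), $\phi^{(v)}$ maps $\Sigma_n$ into itself: if $\psi^n(\beta)=\alpha$, then $\psi^n(\phi^{(v)}(\beta))=\phi^{(v)}(\psi^n(\beta))=\phi^{(v)}(\alpha)=\alpha$. Since $\psi$ is an isogeny of AT-varieties, it is finite étale, and so is $\psi^n$. Enlarging $S$, I will assume $\psi$ extends to a finite étale endomorphism of $\mc{Y}$ and $\alpha$ extends to an $O_{K,S}$-point. Then $\psi^{-n}(\alpha)$ extends to a finite étale $O_{K,S}$-scheme, so $F/K$ is unramified outside $S$, and for $v\notin S$ reduction gives a bijection $\Sigma_n\to \ov{\Sigma_n}\subset\mc{Y}_v(\ov{k_v})$.

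Let $v\notin S$ and let $w\mid v$ be a place of $F$ with Frobenius $\mr{Frob}_w\in\Gal(F/K)$. For $\beta\in\Sigma_n$, the reduction of $\mr{Frob}_w(\beta)$ is $\Phi_{q_v}(\ov\beta)$. By (1), the reduction of $\phi^{(v)}(\beta)$ is also $\Phi_{q_v}(\ov\beta)$. Both points lie in $\Sigma_n$ and reduction is injective on $\Sigma_n$, so
\[
\mr{Frob}_w|_{\Sigma_n}=\phi^{(v)}|_{\Sigma_n}.
\]
In particular the Frobenius conjugacy class at $v$ acts on $\Sigma_n$ as the single permutation $\phi^{(v)}|_{\Sigma_n}$, independently of the choice of $w$.

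\textbf{Step 2: the permutations commute.} The main obstacle is showing that $\phi^{(v)}$ and $\phi^{(v')}$ commute on $\Sigma_n$ for distinct $v,v'\notin S$. I would argue via Galois equivariance. Each $\phi^{(v')}$ is defined over $K$, so it commutes with the action of $\Gal(\ov K/K)$ on $\Sigma_n$, in particular with $\mr{Frob}_w$. Since $\mr{Frob}_w|_{\Sigma_n}=\phi^{(v)}|_{\Sigma_n}$ by Step 1, we get
\[
\phi^{(v)}\circ\phi^{(v')}=\phi^{(v')}\circ\phi^{(v)}\quad\text{on }\Sigma_n.
\]
This argument does not need any structure of AT-varieties beyond Step 1.

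\textbf{Step 3: conclusion.} By Chebotarev, every element of $\Gal(F/K)$ is a Frobenius $\mr{Frob}_w$ for some place $w$ of $F$ lying over some $v\notin S$. The restriction map $\Gal(F/K)\to\mr{Sym}(\Sigma_n)$ is injective, since $F$ is generated over $K$ by $\Sigma_n$. By Steps 1 and 2 its image consists of the pairwise commuting permutations $\phi^{(v)}|_{\Sigma_n}$. Hence $\Gal(F/K)$ is abelian for every $n$, and $K_\infty(\psi,\alpha)/K$ is abelian.

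The only genuinely delicate point is the unramifiedness and injectivity of reduction in Step 1. This should follow from $\psi$ being étale on the good model after enlarging $S$: the multiplication $[d]_T$ is étale over $O_K[1/d]$, and $\psi_A$ is étale away from primes dividing its degree. So I would also put into $S$ all places dividing $d\deg\psi$.
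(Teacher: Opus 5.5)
Your proposal is correct and follows essentially the paper's argument. You identify each Frobenius at $w$ with $\phi^{(v)}$ on $\psi^{-n}(\alpha)$ via injectivity of reduction, use that $\phi^{(v)}$ is defined over $K$ to get commutation with the Galois action, and conclude by Chebotarev and faithfulness. The differences are cosmetic: you obtain a uniform bad set and integrality of the points by spreading out $\psi$ as a finite \'etale map (the paper uses an $n$-dependent $S_n$), and you phrase the conclusion as pairwise commutation of Frobenius permutations rather than centrality of each Frobenius in $\Gal(K_n/K)$.
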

\begin{proof}
Fix $n\geq 1$, and set
\[
B_n:=\psi^{-n}(\alpha), \qquad
K_n:=K(B_n), \qquad
G_n:=\operatorname{Gal}(K_n/K).
\]
Since $B_n$ is finite and Galois-stable, $K_n/K$ is a finite
Galois extension, and $G_n$ acts faithfully on $B_n$.
For every $v\notin S$ and $\beta\in B_n$, we have
\[
\psi^n\bigl(\phi^{(v)}(\beta)\bigr)
=\phi^{(v)}\bigl(\psi^n(\beta)\bigr)
=\phi^{(v)}(\alpha)
=\alpha.
\]
Thus $\phi^{(v)}$ preserves $B_n$.
There is a finite set $S_n\supseteq S$ of finite places of $K$
such that, for every $v\notin S_n$, the extension $K_n/K$ is
unramified at $v$, and all points of $B_n$ admit pairwise
distinct reductions at every place of $K_n$ above $v$. Fix $v\notin S_n$ and a place $w$ of $K_n$ above $v$.
Let $\sigma_w\in G_n$ be the arithmetic Frobenius element at
$w$. Write
\[
\operatorname{red}_w:B_n\longrightarrow\mc{Y}_v(k_w)
\]
for the reduction map, where $k_w$ is the residue field at $w$. By the choice of $S_n$, this map is well-defined and injective. Both $\phi^{(v)}(\beta)$ and $\sigma_w(\beta)$ belong to $B_n$, so the injectivity of reduction gives $\phi^{(v)}(\beta)=\sigma_w(\beta)$ for every $\beta\in B_n$. Since $\phi^{(v)}$ is defined over $K$, for every $\tau\in G_n$
and $\beta\in B_n$ we obtain
\[
\tau\bigl(\sigma_w(\beta)\bigr)
=\tau\bigl(\phi^{(v)}(\beta)\bigr)
=\phi^{(v)}\bigl(\tau(\beta)\bigr)
=\sigma_w\bigl(\tau(\beta)\bigr).
\]
The faithfulness of the action of $G_n$ on $B_n$ therefore
implies that $\sigma_w\in Z(G_n)$.
By the Chebotarev density theorem, every conjugacy class of
$G_n$ occurs as a Frobenius class at a place outside $S_n$.
Since all these Frobenius elements are central, $G_n=Z(G_n)$.
Hence $K_n/K$ is abelian.
Finally, since $\psi(\alpha)=\alpha$, we have
$B_n\subseteq B_{n+1}$ and hence $K_n\subseteq K_{n+1}$.
Therefore
\[
\operatorname{Gal}\bigl(K_{\infty}(\psi,\alpha)/K\bigr)
\simeq \varprojlim_n G_n
\]
is abelian.
\end{proof}

We now construct the Frobenius lifts needed for
Lemma~\ref{lem_Chebo}, starting with the CM abelian base.
\begin{lemma}\label{lem_CMfrob}
Let $K$ be a number field, and let $A$ be an abelian variety
with complex multiplication over $K$. Then there exist a finite extension
$K'/K$ and a finite set $S$ of finite places of $K'$ such that:
\begin{enumerate}
\renewcommand{\labelenumi}{(\theenumi)}
\item $A_{K'}$ has good reduction at every $v\notin S$;
\item for every $v\notin S$, there exists
$\phi^{(v)}\in\mr{End}_{K'}(A_{K'})$ whose reduction at $v$
is the Frobenius $\Phi_{q_v,A_v}$, where $q_v=|k_v|$ and
$k_v$ is the residue field at $v$.
Moreover, $\phi^{(v)}$ commutes with every element of
$\mr{End}_{K'}(A_{K'})$.
\end{enumerate}
\end{lemma}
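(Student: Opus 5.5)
We will use the Shimura--Taniyama theory of abelian varieties with complex multiplication. Since $A$ has CM over $K$, after a finite extension we may assume the following. All endomorphisms of $A_{\ov K}$ are defined over $K$. The field $K$ contains the reflex fields of the simple factors. The algebra $E:=\mr{End}^0_K(A)$ contains a commutative semisimple subalgebra $F=\prod_i F_i$ of dimension $2\dim A$; each $F_i$ is a CM field acting on an isotypic piece.

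\textbf{First reduction.} Write $A$ up to isogeny as $\prod_i B_i^{n_i}$ with $B_i$ simple CM and pairwise non-isogenous. Then $\mr{End}^0(A)\simeq\prod_i M_{n_i}(F_i)$, with $F_i=\mr{End}^0(B_i)$ a CM field of degree $2\dim B_i$. Choose $K'$ so large that every endomorphism is defined over $K'$, and so that $A_{K'}$ has good reduction outside a finite set $S$. This is possible by Serre--Tate: CM abelian varieties have potentially good reduction everywhere, so after a finite extension we may even take $S$ to be only the places where the model is not fixed.

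\textbf{Main step.} For $v\notin S$ we need an endomorphism $\phi^{(v)}$ lifting Frobenius. By the Shimura--Taniyama formula, applied to each $B_i$ over $K'$, the Frobenius endomorphism $\mr{Frob}_v$ of the reduction $(B_i)_v$ lies in the image of $\mr{End}(B_i)\hookrightarrow\mr{End}((B_i)_v)$. Concretely, it equals the reduction of the element $\pi_{i,v}:=N_{\Phi_i}(N_{K'/K_i^*}(\mf p_v))$, a generator of the reflex-norm ideal. This generator lies in $O_{F_i}\cap\mr{End}(B_i)$, and its existence uses that $K'$ contains the field of definition and that the Hecke character is trivial on a suitable congruence subgroup. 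The last condition may force enlarging $K'$ once more, to kill the finite-order part of the Grössencharacter.

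Transport this through the isogeny $A\to\prod B_i^{n_i}$ by taking $\phi^{(v)}:=\mr{diag}(\pi_{i,v})$ on each factor. The element $\phi^{(v)}$ then lies in $\mr{End}^0(A)$ and reduces to $\Phi_{q_v}$. It is integral, hence in $\mr{End}_{K'}(A_{K'})$: its reduction is an honest endomorphism, reduction $\mr{End}(A)\to\mr{End}(A_v)$ is injective, and an element of $\mr{End}^0(A)$ whose reduction is integral is itself integral, by the Néron mapping property on torsion. Alternatively, avoid the isogeny entirely and argue directly on $A$. The Frobenius $\Phi_{q_v,A_v}$ commutes with the reduction of every endomorphism and lies in the center of $\mr{End}^0(A_v)$ restricted to the image. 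By Tate's theorem, the centralizer computation shows that it lies in $\mr{End}(A)\otimes\mb Q$ when $A$ has CM. I expect this integrality and liftability to be the main obstacle, so I would rely on Shimura--Taniyama as the primary tool.

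\textbf{Commutation.} Each $\pi_{i,v}$ lies in the center $F_i$ of $M_{n_i}(F_i)$. Hence $\phi^{(v)}$ is central in $\mr{End}^0_{K'}(A_{K'})$ and commutes with every endomorphism. Alternatively, for any $u\in\mr{End}_{K'}(A)$, the commutator $[\phi^{(v)},u]$ reduces to $[\Phi_{q_v},u_v]=0$, because Frobenius commutes with every morphism defined over $k_v$. Injectivity of reduction on endomorphisms then gives $[\phi^{(v)},u]=0$.
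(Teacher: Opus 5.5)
Your overall strategy matches the paper's: realize Frobenius via Shimura--Taniyama on a CM variety isogenous to $A$, transfer it to $A$ using injectivity of reduction on torsion prime to the residue characteristic, and get commutation from injectivity of specialization. Your second commutation argument is exactly the paper's.

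The packaging of the transfer differs:
\begin{itemize}
\item The paper takes a single $B$ with an $O_E$-action and an isogeny $u\colon B\to A$ (Milne, Remark~3.9). Milne's Theorem~8.1 then gives an honest $\phi_B^{(v)}\in\mathrm{End}(B)$. This descends to $A$ because $\phi_B^{(v)}$ fixes the $K'$-rational group $B[m]\supseteq\ker u$ pointwise.
\item You split $A$ into isotypic simple factors, transport $\mathrm{diag}(\pi_{i,v})$ to an element of $\mathrm{End}^0(A)$, and prove integrality on $A$.
\end{itemize}

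Three minor inaccuracies. The reflex-norm ideal determines $\pi_{i,v}$ only up to a unit, so ``a generator'' is not the right description. No further extension to ``kill the finite-order part of the Gr\"ossencharacter'' is needed: Shimura--Taniyama gives the Frobenius lift at every good place once the CM is defined over the base. And $\pi_{i,v}\in\mathrm{End}(B_i)$ requires $O_{F_i}\subseteq\mathrm{End}(B_i)$, which is precisely why the paper passes to $B$.

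\textbf{The integrality step needs repair.} As stated, ``an element of $\mathrm{End}^0(A)$ whose reduction is integral is itself integral'' is false. Take an elliptic curve $E$ with CM by $\mathbb{Z}+pO_F$, where $p$ splits in $F$.
\begin{itemize}
\item By Deuring's reduction theorem, the reduction at a place above $p$ has endomorphism ring $O_F$. So elements of $O_F\setminus(\mathbb{Z}+pO_F)$ have integral reductions.
\item The Frobenius lift itself lies outside $\mathbb{Z}+pO_F$. It generates $\mathfrak p^f$ for one prime $\mathfrak p\mid p$, and $\mathfrak p^f$ is coprime to $\bar{\mathfrak p}$.
\end{itemize}
So your argument as written would prove the lemma at places where it fails.

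The fix is to note that the denominator is uniform in $v$:
\begin{itemize}
\item Let $\iota'\circ\iota=[M]$, and let $c$ annihilate every finite group $O_{F_i}/\mathrm{End}(B_i)$. Then $\psi:=Mc\,\phi^{(v)}\in\mathrm{End}(A)$ for all $v$, with $N:=Mc$ independent of $v$.
\item Put the places dividing $N$ into $S$.
\item For $v\nmid N$ and $P\in A[N]$, the point $\psi(P)\in A[N]$ reduces to $N\Phi_{q_v}(\bar P)=\Phi_{q_v}(N\bar P)=0$.
\item Reduction is injective on $A[N]$, so $\psi$ kills $A[N]$. Hence $\phi^{(v)}=\psi/N$ is a genuine endomorphism.
\end{itemize}
This is the same torsion argument the paper uses to show $\phi_B^{(v)}$ preserves $\ker u$. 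What it uses is injectivity of reduction on prime-to-$p$ torsion, not the N\'eron mapping property.
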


\begin{proof}
Since $A$ has CM over $K$, we may choose an étale subalgebra
$E\subset\mr{End}^0_K(A)$ of dimension $2\dim A$.
By \cite[Remark~3.9]{Mil06}, there exist a CM abelian variety
$B$ equipped with an action of the ring of integers $O_E$
and an isogeny $u:B\to A$ over $\ov K$.

Choose a positive integer $m$ annihilating $\ker u$, and
a finite extension $K'/K$ over which $B$, $u$, the
$O_E$-action, all conjugates of $E$, and all points of $B[m]$
are defined.
For simplicity of notation, replace $K$ by $K'$ and
$A$ by $A_{K'}$.
Choose a finite set $S$ of finite places of $K$ such that,
for every $v\notin S$, we have $v\nmid m$, both $A$ and $B$
have good reduction at $v$, and the rational prime below $v$
is unramified in $E$.
Let $\mc A$ and $\mc B$ be abelian schemes over
$\Spe O_{K,S}$ with generic fibers $A$ and $B$, respectively.

Fix $v\notin S$.
By \cite[Theorem~8.1(a), Remark~8.6(a)]{Mil06}, there exists
an endomorphism $\phi_B^{(v)}$ of $B$ whose reduction is
the Frobenius $\Phi_{q_v,\mc B_v}$.
Since $B[m]\subseteq B(K)$, every point of $B[m]$ has
$k_v$-rational reduction and is therefore fixed by Frobenius $\Phi_{q_v,\mc{B}_v}$
after reduction. Moreover, by \cite[Proposition~6.10]{Mil06}, the reduction map
\[
\mr{red}_{v,\mc B}:B[m]\longrightarrow\mc B_v(k_v)
\]
is injective.
It follows that $\phi_B^{(v)}$ fixes $B[m]$ pointwise.
In particular, it preserves $\ker u$ and hence descends to an endomorphism $\phi^{(v)}$ of $A=B/\ker u$.

Reducing the identity
\[
\phi^{(v)}\circ u=u\circ\phi_B^{(v)}
\]
at $v$, and using the compatibility of Frobenius with the
reduction of $u$, we conclude that the reduction of
$\phi^{(v)}$ is the Frobenius $\Phi_{q_v,\mc A_v}$.

Finally, let $h\in\mr{End}_K(A)$. Its reduction is defined over $k_v$ and therefore commutes
with $\Phi_{q_v,\mc A_v}$.
The injectivity of specialization on homomorphisms then gives $\phi^{(v)}\circ h=h\circ\phi^{(v)}$. This completes the proof.
\end{proof}

To lift these endomorphisms further to the torus torsor, we need
to control their pullbacks on the line bundles defining it.
The next lemma supplies the required isomorphisms.

\begin{lemma}\label{lem_Frob_linebundle}
Let $A$ be an abelian variety over a number field $K$, and let $\psi\in\operatorname{End}_K(A)$ be a $d$-lift, with $d\geq 2$. Let $S$ be a finite set of finite places of $K$ satisfying
\begin{enumerate}
\renewcommand{\labelenumi}{(\theenumi)}
    \item $A$ has good reduction at all $v\notin S$;
    \item for all $v\notin S$, there exists an endomorphism $\phi^{(v)}\in\operatorname{End}_K(A)$ commuting with $\psi$ such that the reduction of $\phi^{(v)}$ is the Frobenius $\Phi_{q_v,A_v}$, where $q_v:=|k_v|$ and $k_v$ is the residue field at $v$.
\end{enumerate}
Then, up to enlarging $S$, we have $(\phi^{(v)})^*L\simeq L^{\otimes q_v}$ for every $v\notin S$ and every $L\in\operatorname{Pic}(A)$ satisfying $\psi^*L\simeq L^{\otimes d}$.
\end{lemma}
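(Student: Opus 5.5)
The plan is to show that the class $(\phi^{(v)})^*L\otimes L^{\otimes(-q_v)}$ is trivial by first controlling its Néron--Severi part and then its $\mathrm{Pic}^0$ part, using that it is killed by a fixed nonzero endomorphism of $\operatorname{Pic}^0(A)$ whose kernel is a finite set independent of $v$.

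\textbf{Numerical part via reduction.} Fix $L$ with $\psi^*L\simeq L^{\otimes d}$ and set
\[
D_v:=(\phi^{(v)})^*L\otimes L^{\otimes(-q_v)}.
\]
Extend $L$ to a line bundle $\mathcal L$ on the abelian scheme $\mathcal A$ over $\Spe O_{K,S}$. Since $\phi^{(v)}$ extends over $O_{K_v}$ and its special fiber is $\Phi_{q_v,A_v}$, the special fiber of $D_v$ is $\Phi_{q_v}^*\mathcal L_v\otimes\mathcal L_v^{-q_v}$. This special fiber is trivial, because pullback by the relative $q_v$-Frobenius of a line bundle defined over $k_v$ is its $q_v$-th power. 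Specialization of Néron--Severi groups is injective modulo torsion (\cite[Theorem 10.2]{Ful98} for numerical classes), so $D_v\equiv 0$. Hence $D_v^{\otimes a}\in\operatorname{Pic}^0(A)$ for a fixed integer $a$; one can take $a$ to be the exponent of the torsion of $\mathrm{NS}(A_{\ov K})$, which is uniform in $v$.

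\textbf{Killing by a fixed endomorphism.} Since $\phi^{(v)}$ commutes with $\psi$,
\[
\psi^*D_v\simeq(\phi^{(v)})^*\psi^*L\otimes\psi^*L^{-q_v}\simeq D_v^{\otimes d}.
\]
Thus $D_v^{\otimes a}$ lies in the kernel of the isogeny
\[
\operatorname{Pic}^0(A)\to\operatorname{Pic}^0(A),\qquad P\mapsto\psi^*P\otimes P^{-d}.
\]
This map is an isogeny because $\psi^*$ acts on $T_0\operatorname{Pic}^0=H^1(A,\mathcal O_A)$ with eigenvalues of modulus $\sqrt d\ne d$ (as in the proof of Theorem \ref{thm:commonL}, using \cite[Lemma~2.3(1)]{NZ}). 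Its kernel $F$ is a finite group scheme, independent of $v$ and $L$. So $D_v$ lies in the finite subgroup $F'\subset\operatorname{Pic}(A_{\ov K})$ of classes $M$ with $M^{\otimes a}\in F$; this subgroup is again independent of $v$.

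\textbf{Rigidity on a finite group and enlarging $S$.} Enlarge $S$ so that every element of $F'$ is defined over a finite extension unramified outside $S$. Enlarge it further so that reduction is injective on $F'$ at all $v\notin S$; this is possible since $F'$ is finite and, after extending $K$, consists of torsion points of $\operatorname{Pic}^0$ together with finitely many components, all with good reduction. For $v\notin S$, the reduction of $D_v$ is trivial by the first step, and reduction is injective on $F'\ni D_v$. Hence $D_v\simeq\mathcal O_A$ over $\ov K$, and therefore over $K$, since $\operatorname{Pic}(A)\to\operatorname{Pic}(A_{\ov K})$ is injective for $A(K)\ne\varnothing$. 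This choice of $S$ is independent of $L$, which gives the claim simultaneously for all $L$.

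\textbf{Main obstacle.} The delicate step is the uniformity in $L$ and $v$. One must make sure that the finite set $F'$, and hence the enlargement of $S$, depends only on $(A,\psi)$. One must also verify injectivity of specialization on the torsion classes, including the non-$\operatorname{Pic}^0$ torsion part of $\mathrm{NS}$, at places of residue characteristic dividing $|F'|$. If that causes trouble, the fallback is simply to exclude the finitely many primes dividing $a\cdot|F|$.
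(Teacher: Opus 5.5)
Your proposal is correct and follows essentially the same route as the paper: show that $D_v$ lies in $\operatorname{Pic}^0(A)$ because its reduction is trivial, note that $\psi^*D_v\simeq D_v^{\otimes d}$ places it in the finite kernel of $\psi^\vee-[d]$, and conclude by injectivity of reduction on $A^\vee[N]$ for $v\nmid N$. The only difference is the first step: the paper instead argues that $\lambda_{D_v}\colon A\to A^\vee$ reduces to zero and hence vanishes. Also, $\mathrm{NS}(A_{\ov K})$ of an abelian variety is torsion-free, since it embeds in $\operatorname{Hom}(A,A^\vee)$, so your exponent $a$ equals $1$ and the concern about non-$\operatorname{Pic}^0$ torsion classes in your last paragraph does not arise.
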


\begin{proof}
The endomorphism $\psi^\vee-[d]$ of $A^\vee$ is an isogeny. Indeed, after choosing an embedding $K\hookrightarrow\mathbb C$,
the eigenvalues of the analytic representation of $\psi^\vee$
all have absolute value $\sqrt d$.
Since $d\geq 2$, none of them equals $d$, so
$\psi^\vee-[d]$ is an isogeny.

Choose a positive integer $N$ annihilating
$\ker(\psi^\vee-[d])$, and enlarge $S$ to include all places
dividing $N$.
Fix $v\notin S$ and $L\in\operatorname{Pic}(A)$ satisfying
$\psi^*L\simeq L^{\otimes d}$, and set
\[
M:=(\phi^{(v)})^*L\otimes L^{\otimes(-q_v)}.
\]
Since $\phi^{(v)}$ reduces to the Frobenius $\Phi_{q_v,A_v}$, the reduction of $M$ is trivial.
Consequently, the homomorphism
\[
\lambda_M:A\longrightarrow A^\vee,
\qquad
x\longmapsto t_x^*M\otimes M^{-1},
\]
has zero reduction.
The injectivity of specialization on homomorphisms of abelian
varieties implies that $\lambda_M=0$, and hence
$M\in\operatorname{Pic}^0(A)$.

Since $\phi^{(v)}$ commutes with $\psi$, we have
\[
\psi^*M
\simeq
(\phi^{(v)})^*(\psi^*L)
\otimes(\psi^*L)^{\otimes(-q_v)}
\simeq M^{\otimes d}.
\]
Viewing $M$ as a point of $A^\vee$, we obtain
\[
M\in\ker(\psi^\vee-[d])\subseteq A^\vee[N].
\]
Since $v\nmid N$, reduction is injective on $A^\vee[N]$.
The triviality of the reduction of $M$ therefore implies
$M\simeq\mathcal O_A$. Thus $(\phi^{(v)})^*L\simeq L^{\otimes q_v}$ as required.
\end{proof}

We can now combine the preceding lemmas to prove the implication
from complex multiplication to virtual abelianity for AT-varieties.

\begin{proposition}\label{prop_VA_CM1}
Let $(Y,A,\pi)$ be an AT-variety over a number field $K$. Let $\psi:Y\to Y$ be a $d$-lift, and let $\alpha\in Y(K)$ be a preperiodic point for $\psi$.
Suppose $A$ has complex multiplication over $\ov K$, then $K_{\infty}(\psi,\alpha)/K$ is virtually abelian.
\end{proposition}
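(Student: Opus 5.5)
The plan is to reduce to the situation of Lemma~\ref{lem_Chebo} and then build the required Frobenius lifts on $Y$ out of Lemma~\ref{lem_CMfrob}, Lemma~\ref{lem_Frob_linebundle} and Lemma~\ref{lem_ATlift}.

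\textbf{Reductions.} By Lemma~\ref{lem_va}, we may replace $K$ by any finite extension $K'$. Since $\alpha$ is preperiodic, some $\beta:=\psi^{k}(\alpha)$ is periodic of period $p$. The field $K_\infty(\psi,\alpha)$ is contained in $K_\infty(\psi^p,\beta)$ composed with a finite extension. Indeed, $\psi^{-n}(\alpha)\subseteq\psi^{-(n+k)}(\beta)$, and the backward orbit of $\beta$ under $\psi$ consists of finitely many backward orbits under $\psi^p$ of points of the finite set $\psi^{-j}(\beta)$, $0\le j<p$. So it suffices to treat finitely many pairs $(\psi^p,\beta')$ with $\beta'$ fixed, after enlarging $K$ to contain these points. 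Note that $\psi^p$ is a $d^p$-lift. After a further finite extension we may also assume the following: $T$ is split, so $Y=L_1^\times\times_A\cdots\times_A L_r^\times$; $A$ has CM over $K$; $\psi_A$ is a group isogeny, after translating the origin of $A$ to $\pi(\beta)$, which is fixed by $\psi_A$; and every object is defined over $K$. Finally, Lemma~\ref{lem_CMfrob} supplies $S$ and the Frobenius lifts $\phi_A^{(v)}\in\End_K(A)$, which commute with $\psi_A$.

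\textbf{Line bundles and lifting.} Since $\psi$ lies over $\psi_A$ with $\psi_T=[d]$, \cite[Lemma 2.3]{Bet25} gives $\psi_A^*L_i\simeq L_i^{\otimes d}$. Lemma~\ref{lem_Frob_linebundle}, after enlarging $S$, then gives $(\phi_A^{(v)})^*L_i\simeq L_i^{\otimes q_v}$. As in Lemma~\ref{lem_ATlift}, this produces a lift $\phi^{(v)}:Y\to Y$ over $\phi_A^{(v)}$ with $\phi^{(v)}_T=[q_v]$. The lift is unique only up to translation by $T(K)$ (Lemma~\ref{lem_ATcomp}), and the main obstacle is to normalize it so that conditions (1)--(3) of Lemma~\ref{lem_Chebo} hold simultaneously.

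\textbf{Normalization.} For (3), $\phi^{(v)}(\beta)$ lies in the fiber over $\phi_A^{(v)}(0)=0$, so translating by the unique $t\in T(K)$ gives $\phi^{(v)}(\beta)=\beta$. For (2), the maps $\phi^{(v)}\circ\psi$ and $\psi\circ\phi^{(v)}$ have the same $A$- and $T$-components, so by Lemma~\ref{lem_ATcomp} they differ by some $t_0\in T(K)$. Evaluating at $\beta$, which both maps fix, gives $t_0=1$.

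For (1), choose a good model $\mc{Y}$ over $O_{K,S}$, possible after enlarging $S$. The torsor isomorphisms extend over $O_{K_v}$ up to units, so the normalized $\phi^{(v)}$ extends to the model. Its reduction lies over $\Phi_{q_v,A_v}$ with torus part $[q_v]$, hence by Lemma~\ref{lem_ATcomp} over $k_v$ it equals $t\cdot\Phi_{q_v,\mc{Y}_v}$ for some $t\in T(k_v)$. Both $\phi^{(v)}$ and Frobenius fix the $k_v$-point $\mathrm{red}(\beta)$, so $t=1$. The delicate part here is checking that the good model can be chosen uniformly in $v$ and that the torsor isomorphisms really extend integrally; this is where enlarging $S$ is used.

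With (1)--(3) in place, Lemma~\ref{lem_Chebo} shows that $K_\infty(\psi^p,\beta')/K$ is abelian for each of the finitely many pairs. Hence $K_\infty(\psi,\alpha)/K$ is virtually abelian.
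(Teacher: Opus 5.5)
Your proposal is correct and follows the paper's proof essentially step for step: you reduce to a $\psi$-fixed base point, take the CM Frobenius lifts of Lemma~\ref{lem_CMfrob}, and obtain $(\phi_A^{(v)})^*L_i\simeq L_i^{\otimes q_v}$ from Lemma~\ref{lem_Frob_linebundle}; you then lift and normalize by a $T(K)$-translation, and use Lemma~\ref{lem_ATcomp} for commutation, for the integral extension and for identifying the reduction with Frobenius, before invoking Lemma~\ref{lem_Chebo}. Translating the origin of $A$ to $\pi(\beta)$ is a harmless substitute for the paper's torsion-plus-injective-reduction argument for $\phi_A^{(v)}(\pi(\alpha))=\pi(\alpha)$. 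One small slip is in your reduction: points of $\psi^{-j}(\beta)$ are generally not $\psi^p$-fixed, so either pass to the cycle points via $(\psi^p)^{-\infty}(\gamma)\subseteq(\psi^p)^{-\infty}(\psi^p(\gamma))$, or note that $\psi^{-n}(\beta)=\psi^{pm-n}\bigl((\psi^p)^{-m}(\beta)\bigr)$ for $pm\geq n$; the latter gives $K(\psi^{-\infty}(\beta))=K\bigl((\psi^p)^{-\infty}(\beta)\bigr)$, so the single fixed point $\beta$ suffices, as in the paper.
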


\begin{proof}
Throughout the proof, we may replace $K$ by a finite extension,
since virtual abelianity is invariant under finite extensions
of the base field. 

\textbf{Step 1.} We first do some reductions. After replacing $K$ by a finite extension, we may assume that $A$ has complex multiplication over $K$. Choose $i\geq 0$ such that $\beta:=\psi^i(\alpha)$ is periodic of period $j$. Then
\[
K\bigl(\psi^{-\infty}(\alpha)\bigr)
\subseteq
K\bigl(\psi^{-\infty}(\beta)\bigr)
=
K\bigl((\psi^j)^{-\infty}(\beta)\bigr).
\]
Thus, replacing $\psi$, $d$, and $\alpha$ by $\psi^j$, $d^j$,
and $\beta$, respectively, we may assume that $\psi(\alpha)=\alpha$.

After replacing $K$ by a finite extension, we may assume that
$A$ has complex multiplication over $K$ and that the associated
torus $T$ is split. By Lemma~\ref{lem_CMfrob}, after a further
finite extension, there is a finite set $S$ of finite places
of $K$ such that $A$ has good reduction outside $S$ and, for
every $v\notin S$, there is an endomorphism $\phi_A^{(v)}\in\operatorname{End}_K(A)$ whose reduction at $v$ is the Frobenius
$\Phi_{q_v,\mc A_v}$, where $q_v:=|k_v|$.
Moreover, $\phi_A^{(v)}$ commutes with every element of
$\operatorname{End}_K(A)$.

Fix an isomorphism $T\simeq\mathbb G_{m,K}^r$, and let
$(A,L_1,\dots,L_r)$ be the tuple corresponding to $Y\to A$.
Since $\psi$ is $d$-polarized, we have $\psi_A^*L_i\simeq L_i^{\otimes d}$ for $1\leq i\leq r$. After enlarging $S$, choose an abelian scheme $\mc A$ over
$\Spe O_{K,S}$ and extensions $\mc L_i$ of $L_i$ such that
\[
\mc Y
=
\mc L_1^\times
\times_{\mc A}\cdots
\times_{\mc A}\mc L_r^\times
\]
defines a good model $(\mc Y,\mc A,\Pi)$ of $(Y,A,\pi)$.
We may also assume that $\psi$ extend over $O_{K,S}$, and that
$\alpha$ extends to a section $\widetilde\alpha\in\mc Y(O_{K,S})$.

\medskip

\textbf{Step 2.} Applying Lemma~\ref{lem_Frob_linebundle} to $\psi_A$ and
enlarging $S$ if necessary, we obtain $(\phi_A^{(v)})^*L_i\simeq L_i^{\otimes q_v}$ for $1\leq i\leq r$ and every $v\notin S$. Set $a:=\pi(\alpha)$. Since $\psi_A(a)=a$ and
$\psi_A-[1]$ is an isogeny, we may choose an integer $N\geq 1$
such that
\[
a\in\ker(\psi_A-[1])\subseteq A[N].
\]
Enlarge $S$ to include all places dividing $N$.
For every $v\notin S$, the points $\phi_A^{(v)}(a)$ and $a$
have the same reduction, since $a\in A(K)$ and
$\phi_A^{(v)}$ reduces to Frobenius. Both points lie in $A[N]$,
so injectivity of reduction on $A[N]$ gives $\phi_A^{(v)}(a)=a$.

By Lemma~\ref{lem_ATlift}, each $\phi_A^{(v)}$ lifts to a
$K$-morphism
\[
\phi^{(v)}:Y\longrightarrow Y
\]
inducing $[q_v]_T$ on $T$.
Since $\phi^{(v)}(\alpha)$ and $\alpha$ lie in the same fiber,
composing $\phi^{(v)}$ with a translation by an element of
$T(K)$ allows us to assume that $\phi^{(v)}(\alpha)=\alpha$.

\medskip

\textbf{Step 3.} We verify that $\phi^{(v)}$ extends over $\Spe O_{K_v}$. Denote $R:=O_{K_v}$ and write $\phi_{\mc{A}_R}^{(v)}$ for the
extension of $\phi_A^{(v)}$ to $\mc{A}_R$. By \cite[Tag~0BD7]{stacks-project}, the restriction map
\[
\operatorname{Pic}(\mc A_R)
\longrightarrow
\operatorname{Pic}(A_{K_v})
\]
is injective. The isomorphisms $(\phi_A^{(v)})^*L_i\simeq L_i^{\otimes q_v}$
therefore give isomorphisms
\[
\Theta_i:(\phi_{\mc{A}_R}^{(v)})^*\mc L_i
\xrightarrow{\sim}\mc L_i^{\otimes q_v}
\]
over $\mc A_R$. Set $\widetilde a:=\Pi(\widetilde\alpha)$. The section $\wt{\alpha}$ determines a generator $e_i$ of the free rank-one $R$-module $\widetilde a^*\mc L_i$ for each $i$. Since $\phi_{\mc{A}_R}^{(v)}$ fixes $\widetilde a$, pulling back $\Theta_i$ along $\widetilde a$ gives an isomorphism
\[\wt{a}^*\Theta_i: \wt{a}^*\mc{L}_i=\wt{a}^*(\phi_{\mc{A}_R}^{(v)})^*\mc L_i
\xrightarrow{\sim} \wt{a}^* \mc{L}_i^{\otimes q_v}.\]
Thus $(\widetilde a^*\Theta_i)(e_i)=u_i e_i^{\otimes q_v}$ for some $u_i\in R^\times$. Replacing $\Theta_i$ by $u_i^{-1}\Theta_i$, we may assume that $(\widetilde a^*\Theta_i)((e_i)=e_i^{\otimes q_v}$.

These normalized isomorphisms define an $R$-morphism
$\mc Y_R\to\mc Y_R$ lies over $\phi_{\mc{A}_R}^{(v)}$, inducing
$[q_v]$ on the torus, and fixing $\widetilde\alpha$.
Its generic fiber agrees with $\phi^{(v)}_{K_v}$ by
Lemma~\ref{lem_ATcomp}, since both have the same base map
and torus map and fix $\alpha$. Thus $\phi^{(v)}$ extends over $O_{K_v}$ and admits a
reduction $\phi^{(v)}_{\mc Y_v}$ on $\mc Y_v$.

\medskip

\textbf{Step 4.} Since $\phi_A^{(v)}$ commutes with $\psi_A$, the maps
$\phi^{(v)}\circ\psi$ and $\psi\circ\phi^{(v)}$ have the same
base map and the same torus map. Both fix $\alpha$, so
Lemma~\ref{lem_ATcomp} gives
\[
\phi^{(v)}\circ\psi=\psi\circ\phi^{(v)}.
\]
Similarly, $\phi^{(v)}_{\mc{Y}_v}$ and
$\Phi_{q_v,\mc Y_v}$ have the same base map and the same
torus map, and both fix
$\overline\alpha\in\mc Y_v(k_v)$.
The same uniqueness argument therefore gives $\phi^{(v)}_{\mc{Y}_v}=\Phi_{q_v,\mc Y_v}$. Thus, by Lemma~\ref{lem_Chebo}, $K(\psi^{-\infty}(\alpha))/K$ is abelian, which completes the proof.
\end{proof}

For the converse, we construct torsion points on the abelian base defined over $K^{\ab}$ and apply Zarhin's theorem.

\begin{proposition}\label{prop_VA_CM2}
Let $(Y,A,\pi)$ be an AT-variety over a number field $K$. Let $\psi:Y\to Y$ be a $d$-lift, and let $\alpha\in Y(K)$ be a preperiodic point for $\psi$.
Suppose $K_{\infty}(\psi,\alpha)/K$ is virtually abelian, then $A$ has complex multiplication over $\ov K$.    
\end{proposition}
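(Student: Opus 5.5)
We will derive the statement from Zarhin's theorem \cite[Theorem~1]{Zar87} by producing infinitely many torsion points of $A$, lying on each simple factor, that are defined over an abelian extension of a finite extension of $K$.

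\textbf{Reductions.} By Lemma~\ref{lem_va}, after replacing $K$ by a finite extension we may assume that $K_\infty(\psi,\alpha)/K$ is abelian. As in Step~1 of Proposition~\ref{prop_VA_CM1}, we may also arrange that:
\begin{itemize}
\item $\psi(\alpha)=\alpha$, after passing to an iterate; this only shrinks the field, since $K_\infty(\psi^j,\beta)\supseteq K_\infty(\psi,\alpha)$ for $\beta=\psi^i(\alpha)$, but we need the larger field to be abelian, so instead we note $\psi^{-\infty}(\beta)$ and $\psi^{-\infty}(\alpha)$ eventually agree up to finitely many points. More precisely, $\psi^{-n-i}(\beta)\supseteq\psi^{-n}(\alpha)$ and conversely $\psi^{i}(\psi^{-n-i}(\alpha))$; we instead work directly with $\alpha$ and its image $a:=\pi(\alpha)$.
\item $T$ is split and $A$ is isogenous over $K$ to a product of $K$-simple abelian varieties with absolutely simple factors.
\end{itemize}

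\textbf{Constructing the torsion points.} Set $a_0:=\psi_A^i(a)$, which is periodic. Projecting, $\pi(\psi^{-n}(\alpha))=\psi_A^{-n}(a)$, since $\pi$ is surjective on fibers and $\psi$ lies over $\psi_A$. Hence all points of $\psi_A^{-\infty}(a)$ are defined over $K^{\ab}$. Since $\psi_A$ is an isogeny composed with a translation after fixing a base point, the differences $b-b'$ for $b,b'\in\psi_A^{-n}(a)$ range over $\ker(\psi_A^n)$, and these differences are again $K^{\ab}$-rational. So $\bigcup_n\ker(\psi_A^n)\subseteq A(K^{\ab})_{\mathrm{tors}}$. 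This set is infinite, because $\deg\psi_A=d^{\dim A}\geq 2$ when $\dim A>0$.

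\textbf{Reaching every simple factor.} The main obstacle is getting infinitely many such points on each $K$-simple factor. Take an isogeny $A\to\prod_j A_j$ and choose $N$ with $\psi_A^{N}$ preserving each isotypic part, after a finite extension. The images of $\ker(\psi_A^n)$ in each $A_j$ are infinite, since the restriction of $\psi_A$ to each invariant abelian subvariety is still $d$-polarized and hence has degree greater than one. Images of $K^{\ab}$-points stay $K^{\ab}$-rational, and finite extensions of $K$ are harmless because $K'^{\ab}\supseteq K^{\ab}$.

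\textbf{Conclusion.} Zarhin's theorem then gives CM for each $A_j$ over $K$, so $A$ has CM over $\ov K$.

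The delicate point is the invariance of the simple factors under $\psi_A$. I would handle it by taking the Poincaré decomposition over $\ov K$: isotypic components are preserved by any endomorphism up to the finite permutation action on components, which a power of $\psi_A$ kills.
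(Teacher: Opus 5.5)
Your construction of $D:=\bigcup_{n}\ker(\psi_A^n)\subseteq A(K^{\mathrm{ab}})_{\mathrm{tors}}$ matches the paper's: you use $\pi(\psi^{-n}(\alpha))=\psi_A^{-n}(a)$ and take differences of points in this coset. You are also right to abandon the reduction to $\psi(\alpha)=\alpha$, since preperiodicity is not needed in this direction.

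\textbf{The gap.} It lies in the step you yourself flag as the main obstacle: showing that $D$ has infinite image in \emph{every} simple factor $A_j$. You propose that some power of $\psi_A$ preserves the simple factors, and this is false.

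Isotypic components are preserved by every endomorphism outright, with no permutation or power needed. But inside an isotypic component $S^k$ with $k\geq 2$, the simple factors can be mixed by every power. For example, let $A=E\times E$ with $E$ an elliptic curve without CM, and let $\psi_A$ be given by $M=\begin{pmatrix}1&2\\-2&1\end{pmatrix}$.
\begin{itemize}
\item Since $M^{\mathsf T}M=5I$, the map $M$ is $5$-polarized, for a suitable line bundle numerically equivalent to the product polarization.
\item $M^n$ has eigenvalues $(1\pm 2i)^n\notin\mathbb{Q}$, so no power of $M$ preserves any elliptic curve in $A$, whichever decomposition you choose.
\end{itemize}
Consequently your justification (``the restriction of $\psi_A$ to each invariant abelian subvariety is $d$-polarized, hence of degree $>1$'') only shows that $D$ meets each isotypic component in an infinite set. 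It says nothing about the image of $D$ under projection to a non-invariant simple factor. Degree $>1$ cannot suffice by itself: for $[d]\times\mathrm{id}$ on $S\times S$, all the iterated kernels project to $0$ in the second factor.

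\textbf{The paper's fix.} The missing idea is Zariski density of $D$, obtained from Lemma~\ref{lem_invar_empty}. Since $\psi_A^{-1}(D)=D$ and $\psi_A$ is étale, hence open, the closure $Z$ of $D$ satisfies $\psi_A^{-1}(Z)=Z$, and the lemma forces $Z=A_{\overline K}$. Then the image of $A(K^{\mathrm{ab}})_{\mathrm{tors}}$ in each factor of a $K$-isogeny $A\to\prod_j A_j$ is Zariski dense, hence infinite. Zarhin's theorem then applies to every factor, and no invariance question arises at all.

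\textbf{Repairing your route.} You can instead work with isotypic components. Each $K$-isotypic component $B\sim_K S^k$ is $\psi_A$-invariant, and $\psi_A|_B$ is $d$-polarized, so $D\cap B\supseteq\bigcup_n\ker\bigl((\psi_A|_B)^n\bigr)$ is infinite. Choose a $K$-isogeny $B\to S^k$. Its image of $D\cap B$ is infinite, so by pigeonhole some coordinate projection has infinite image in $S(K^{\mathrm{ab}})_{\mathrm{tors}}$. Zarhin's theorem gives CM for $S$, hence for every simple factor of $B$, since all of them are isogenous to $S$.
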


\begin{proof}
After replacing $K$ by a finite extension, we may assume that
\[
K(\psi^{-\infty}(\alpha))\subseteq K^{\ab}.
\]
Put $a:=\pi(\alpha)$. For every $n\geq 1$, the map $\psi^n$
is surjective on each torus fiber onto the corresponding
target fiber. Therefore
\[
C_n:=\psi_A^{-n}(a)
=
\pi\bigl(\psi^{-n}(\alpha)\bigr)
\subseteq A(K^{\ab}).
\]
Since $C_n$ is a coset of $\ker(\psi_A^n)$, we have
\[
\ker(\psi_A^n)
=
\{u-v:u,v\in C_n\}
\subseteq A(K^{\ab})_{\mr{tors}}.
\]
Consequently,
\[
D:=\bigcup_{n\geq 1}\ker(\psi_A^n)
\subseteq A(K^{\ab})_{\mr{tors}}.
\]

Let $Z$ be the Zariski closure of $D$ in $A_{\ov K}$.
Since $\psi_A^{-1}(D)=D$ and $\psi_A$ is \'etale, hence open,
we have $\psi_A^{-1}(Z)=Z$. Applying Lemma~\ref{lem_invar_empty} to the polarized
\'etale endomorphism $\psi_A$, we obtain $Z=A_{\ov K}$.
Thus $A(K^{\ab})_{\mr{tors}}$ is Zariski dense in $A$.

Choose a $K$-isogeny
\[
A\longrightarrow A_1\times\cdots\times A_m,
\]
where each $A_i$ is $K$-simple.
The image of $A(K^{\ab})_{\mr{tors}}$ in each $A_i$ is
Zariski dense, so $A_i(K^{\ab})_{\mr{tors}}$ is infinite.
By Zarhin's theorem \cite{Zar87}, each $A_i$ has complex
multiplication over $K$.
Since being of CM type is preserved under products and
isogenies, $A$ has complex multiplication over $\ov K$.
\end{proof}

Propositions~\ref{prop_VA_CM1} and~\ref{prop_VA_CM2} establish
the desired equivalence for AT-varieties. We now prove Theorem~\ref{thm_Kexp}.

\begin{proof}[Proof of Theorem \ref{thm_Kexp}]
Since $f$ is a $K$-exceptional map, there exist an AT-variety
$(Y,A,\pi)$ over $K$, a finite surjective morphism $h:Y\to X^{\circ}\subset X$, and a
$d$-lift $\psi:Y\to Y$ such that 
\[
h\circ\psi=f^l\circ h
\]
for some $l\geq 1$. Replacing $f$ by $f^l$, we may assume that
$h\circ\psi=f\circ h$. Moreover, both $X^{\circ}$ and $X\setminus X^{\circ}$
are totally invariant under $f$. If $\alpha\notin X^{\circ}(K)$, then $f^{-\infty}(\alpha)\subseteq X\setminus X^{\circ}$, contradicting the non-exceptionality of $\alpha$. Hence $\alpha\in X^{\circ}(K)$.

Put $B:=h^{-1}(\alpha)$, which is a nonempty finite set. Every $\beta\in B$ is preperiodic for $\psi$. After replacing $K$ by a finite extension, we may assume that
$B\subseteq Y(K)$.

Suppose first that $f$ is exceptional with CM, meaning that
$A$ has complex multiplication over $\overline K$.
After a further finite extension of $K$, we may assume that $A$ has complex multiplication over $K$. By Proposition~\ref{prop_VA_CM1}, the extension $K\bigl(\psi^{-\infty}(\beta)\bigr)/K$ is virtually abelian for every $\beta\in B$.
Since $B$ is finite, after replacing $K$ by a further finite
extension, we may assume that
\[
K\bigl(\psi^{-\infty}(\beta)\bigr)\subseteq K^{\mathrm{ab}}
\]
for every $\beta\in B$. Their finite compositum $E:=K\bigl(\psi^{-\infty}(B)\bigr)$ is therefore abelian over $K$. Since
\[
h^{-1}\bigl(f^{-\infty}(\alpha)\bigr)=\bigcup_{\beta\in B}\psi^{-\infty}(\beta),
\]
it follows that $K(f^{-\infty}(\alpha))/K$ is abelian.

Conversely, suppose that $K(f^{-\infty}(\alpha))/K$ is virtually abelian.
Choose a point $\beta\in B\subseteq Y(K)$ that is non-exceptional
for $\psi$. Then Proposition \ref{prop_VA_CM2} implies that $A$ has complex
multiplication over $\overline K$. Thus $f$ is an exceptional map with CM.
\end{proof}

\subsection{Proof of Theorem \ref{thm_vaeq1}}\label{sec_proofvaeq}
In this subsection, we prove Theorem \ref{thm_vaeq1}. We fix some settings first. Let $X$ be a normal projective variety over a number field $K$ of dimension $n$, and let $f: X \to X$ be a polarized endomorphism such that $f^*L \simeq L^{\otimes d}$ for some ample line bundle $L$ and integer $d \geq 2$.

Replacing $L$ by a suitable multiple, we may choose an arithmetic model $(\mc{X},\mathcal{L})$ of $(X, L)$ over $\Spe  O_K$ .Choose a finite set $S$ of finite places of $K$ such that, over the open affine subscheme $\mathcal{V}:=\Spe O_{K,S}\subseteq\Spe O_K$, the morphism $f$ extends to $F_{\mathcal{V}}:
\mc{X}_{\mathcal{V}}\longrightarrow\mc{X}_{\mathcal{V}}$, and the isomorphism $f^*L\simeq L^{\otimes d}$ extends to an isomorphism $F_{\mathcal{V}}^*\mathcal{L}_{\mathcal{V}}
\simeq \mathcal{L}_{\mathcal{V}}^{\otimes d}$. For each finite place $v\notin S$, let
$f_v:\mc{X}_v\to\mc{X}_v$ denote the induced endomorphism
of the special fiber.

Define the \emph{critical locus}
\[
\Crit(F_{\mathcal{V}})
:=
\left\{
x \in \mc{X}_{\mathcal{V}}
\ \middle|\
F_{\mathcal{V}}: \mc{X}_{\mathcal{V}}\to  \mc{X}_{\mathcal{V}} \text{  or  } \pi_{\mathcal{V}} : \mc{X}_{\mathcal{V}} \to \mathcal{V}
\text{ is not smooth at } x
\right\},
\]
which is a proper closed subset of $\mc{X}_{\mathcal{V}}$.

Let $\Crit(f) \subset X$ denote the generic fiber of $\Crit(F_{\mathcal{V}})$, and let $\Crit(f_v)$ denote its special fiber over $v \notin S$. For any closed point $x \in X \setminus f(\Crit(f))$ (resp. $x \in \mc{X}_v \setminus f_v(\Crit(f_v))$), the fiber $f^{-1}(x)$ (resp. $f_v^{-1}(x)$) consists of $d^n$ distinct points.

\bigskip

Recall that for each finite place $v\notin S$, there is a reduction map
\[
\mathrm{red}_{v,\mc{X}} : X(\overline{K}) \to \mc{X}_v(\overline{k}_v),
\]
where $k_v$ is the residue field at $v$. We will use the following lemma.

\begin{lemma}\label{lem_unrami}
Let $x \in X(\overline{K})$ and $y=f(x)$. Let $v \notin S$ be a finite place. If $\mathrm{red}_{v,\mc{X}}(x) \notin \Crit(f_v)$,
then the extension $K(x)/K(y)$ is unramified at every place $w$ of $K(y)$ lying above $v$.
\end{lemma}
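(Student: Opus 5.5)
The plan is to reduce the statement to a local étaleness criterion on the integral model, followed by the standard fact that an étale morphism of finite type over a discrete valuation ring induces unramified residue field extensions at the relevant points. Fix a place $w$ of $K(y)$ above $v$ and a place $w'$ of $K(x)$ above $w$. Choose a finite extension $L\supseteq K(x)$ and a place $u$ of $L$ above $w'$. Let $R:=O_{L,u}$ denote the local ring (or its completion). Since $\mc{X}_{\mathcal V}$ is proper over $\mathcal V$, the valuative criterion extends $x$ to a section $\tilde x\in \mc{X}_{\mathcal V}(R)$ whose closed point is $\bar x:=\mathrm{red}_{v,\mc{X}}(x)$. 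Then $\tilde y:=F_{\mathcal V}\circ\tilde x$ is the corresponding extension of $y$, with closed point $\bar y=f_v(\bar x)$.

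By hypothesis, $\bar x\notin\Crit(f_v)$, and $\Crit(f_v)$ is the special fiber of $\Crit(F_{\mathcal V})$. Hence at the point $\bar x\in\mc{X}_{\mathcal V}$, both $F_{\mathcal V}$ and $\pi_{\mathcal V}$ are smooth. The map $F_{\mathcal V}$ is also quasi-finite, since it is a finite endomorphism of a projective scheme over $\mathcal V$ whose special fibers are finite because $f_v^*\mathcal L_v\simeq\mathcal L_v^{\otimes d}$ with $\mathcal L_v$ ample. A smooth quasi-finite morphism is étale, so $F_{\mathcal V}$ is étale at $\bar x$, and hence on an open neighbourhood $U$ of $\bar x$. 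The image of $\tilde x$ lands in $U$ because its closed point does.

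Next, base change to the integral closure. Let $\mathcal O$ be the normalization of $O_{K(y),w}$ (localized), and consider $\tilde y$ as a section over $O_{K(y),w}$. The fiber product $U\times_{\mc{X}_{\mathcal V},\tilde y}\Spec O_{K(y),w}$ is étale over $\Spec O_{K(y),w}$. The point $x$ gives a point of its generic fiber with residue field $K(x)$, and its closure passes through the closed point lying over $\bar x$. The local ring there is therefore an étale local $O_{K(y),w}$-algebra, essentially of finite type, which dominates $O_{K(x),w'}$. An étale local extension of DVRs has ramification index $1$ and a separable residue field extension. Therefore $K(x)/K(y)$ is unramified at $w'$. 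Since $w'$ over $w$ was arbitrary, $K(x)/K(y)$ is unramified at every place above $w$. Equivalently, one can argue via the diagonal: the section $\tilde x$ identifies $\Spec O_{K(x),w'}$ with a component of the quasi-finite étale $O_{K(y),w}$-scheme $F_{\mathcal V}^{-1}(\tilde y)$ near $\bar x$. By Zariski's main theorem, such a component is the spectrum of a localization of a finite étale algebra, and that localization is exactly the local ring of $K(x)$ at $w'$.

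The main obstacle is this last identification. One must check that the component of the étale fiber through $\bar x$ really has generic residue field $K(x)$ rather than a larger field, and that it is normal, so that its local ring is $O_{K(x),w'}$. Normality follows from the étale base change of a regular one-dimensional scheme, and the generic point corresponds to $x$ as a point of $f^{-1}(y)$ over $K(y)$. The smoothness of $\pi_{\mathcal V}$ at $\bar x$ is used only to guarantee flatness and regularity, so that "smooth of relative dimension $0$" applies cleanly.
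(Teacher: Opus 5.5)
Your argument is correct and is essentially the paper's: extend $x$ and $y$ integrally by properness, base change $F$ along $\tilde y$, use that $F$ is \'etale at the reduction of $x$, and identify the local ring of the resulting \'etale $O_{K(y),w}$-scheme at the image of the closed point with the valuation ring of $K(x)$ at $w'$. The paper passes to completions and uses henselianity to make this local ring finite \'etale; you stay with Zariski local rings and use normality, the identification of the fraction field with $K(x)$, and the fact that a DVR is maximal for domination. Note that it is $O_{K(x),w'}$ that dominates the local ring of the fiber, not the other way around as you wrote.
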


\begin{proof}
Let $w_1$ be a place of $K(y)$ above $v$, and let $w_2$ be a
place of $K(x)$ above $w_1$. Set
\[
\mathcal X':=\mathcal X_{\mathcal V}
  \times_{\mathcal V}\operatorname{Spec}O_{K_v},
\]
and let $F':\mathcal X'\to\mathcal X'$ be the induced morphism.
By properness, $x$ and $y$ extend uniquely to morphisms
\[
\widetilde x:\operatorname{Spec}O_{K(x)_{w_2}}\to\mathcal X',
\qquad
\widetilde y:\operatorname{Spec}O_{K(y)_{w_1}}\to\mathcal X',
\]
compatible with $F'$.
By hypothesis, $F'$ is étale at the reduction
of $x$ at $w_2$. Consider the base change diagram
\[\begin{tikzcd}
\mathcal{Z} \arrow[d] \arrow[r] & { \operatorname{Spec}O_{K(y)_{w_1}},} \arrow[d, "\widetilde y"] \\
\mathcal{X}' \arrow[r, "F'"]    & \mathcal{X}'.                                             
\end{tikzcd}\]
Let $z$ be the image of the closed point under the induced
morphism $\operatorname{Spec}O_{K(x)_{w_2}}\longrightarrow\mathcal Z$. 

We claim that $\mO_{\mc{Z},z}=O_{K(x)_{w_2}}$. In fact, by hypothesis and \cite[Proposition 17.8.2]{EGAIV4}, $\mathcal Z\to\operatorname{Spec}O_{K(y)_{w_1}}$ is étale at $z$. Since $O_{K(y)_{w_1}}$ is henselian, \cite[Tag~04GH]{stacks-project} implies that $\mathcal O_{\mathcal Z,z}$ is finite étale over $O_{K(y)_{w_1}}$. Then $\mathcal O_{\mathcal Z,z}$ is a DVR, whose fraction field is $K(x)_{w_2}$. Consequently, $\mathcal O_{\mathcal Z,z}=O_{K(x)_{w_2}}$. Thus $O_{K(x)_{w_2}}$ is finite étale over $O_{K(y)_{w_1}}$, so $w_2$ is unramified over $w_1$.
\end{proof}

\begin{lemma}\label{lem_unramifiedset}
With the notation as above, assume that $\alpha\in X(K)$ is a non-exceptional point and $K_{\infty}(f,\alpha)/K$ is an abelian extension. Then for all but finitely many finite places $v$ of $K$, there exists a set $\mc{P}^{(v)}\subset  f^{-\infty}(\alpha)$ which is abelian, almost unramified at $v$, and Zariski dense in $X$. Moreover,  $f\big(\mc{P}^{(v)}\big)\subset \mc{P}^{(v)}\cup \{f(\alpha)\}$.
\end{lemma}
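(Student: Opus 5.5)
The plan is to take $\mc{P}^{(v)}$ to be the set of $\beta\in f^{-\infty}(\alpha)$ whose entire forward orbit up to $\alpha$ avoids the critical locus at $v$. Concretely, for a finite place $v\notin S$ and a point $\beta\in f^{-n}(\alpha)$, I call $\beta$ \emph{$v$-good} if $\mathrm{red}_{v,\mc{X}}(f^i(\beta))\notin\Crit(f_v)$ for $0\le i\le n-1$, and I set $\mc{P}^{(v)}$ to be the set of $v$-good points of $f^{-\infty}(\alpha)\setminus\{\alpha\}$.

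The first two properties are then formal. Each $\beta\in\mc{P}^{(v)}$ lies in $K_\infty(f,\alpha)$, so $K(\beta)/K$ is abelian. Applying Lemma \ref{lem_unrami} successively along the chain $K\subseteq K(f^{n-1}(\beta))\subseteq\cdots\subseteq K(\beta)$ shows that $K(\beta)/K$ is unramified above $v$; hence every ramification index equals $1$, which gives almost unramifiedness at $v$. For the invariance, if $\beta\in\mc{P}^{(v)}$ then $f(\beta)$ is either $\alpha$ itself (which I allow by including $\alpha$ in the statement's exceptional set) or again $v$-good by the definition. The statement's allowance of $f(\alpha)$ in fact suggests the authors use a slightly different indexing, so I would adjust the definition so that $f(\mc{P}^{(v)})\subset\mc{P}^{(v)}\cup\{f(\alpha)\}$ holds exactly. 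One way is to also include $\alpha$ in $\mc{P}^{(v)}$ when its reduction is good, and then $f(\alpha)$ is the only possible escape.

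The main obstacle is Zariski density, which I intend to obtain on the special fiber. Since $\alpha$ is non-exceptional and the exceptional set $\mc{E}_f$ is defined over $K$ and spreads out, I expect that for all but finitely many $v$ the reduction $\bar\alpha\in\mc{X}_v(k_v)$ is non-exceptional for $f_v$. Justifying this rigorously is the delicate point: I would either use Dinh–Sibony's characterization of $\mc{E}_f$ as the largest proper totally invariant set and argue that totally invariant proper subsets of reductions specialize from characteristic-zero ones for almost all $v$, or, more simply, argue directly by counting, as follows.

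Consider the tree of preimages over $\alpha$ in $X$. A branch reaching a critical reduction at level $i$ accounts for a proportion of the $d^{ni}$ preimages controlled by the degree of $\Crit(f_v)$ relative to equidistribution. I would compare this with the measure $\nu$ of Proposition \ref{prop:normal-equilibrium}(2) and its $v$-adic analogue. Alternatively, I would use the following argument. Suppose the Zariski closure $Z$ of $\mc{P}^{(v)}$ is proper. The set of bad preimages at each level is contained in $f^{-n}$ of the finitely many generic-fiber components meeting the critical reduction; more precisely, if $\beta$ is bad then some $f^i(\beta)$ lies in the closed subset $\Crit(f)$ up to reduction. Pulling back, and using that $f^{-\infty}(\alpha)$ is Zariski dense and that $f$ is étale off $\Crit(f)$, I would show by Lemma \ref{lem_invar_empty} that $f^{-1}(Z')\subset Z'$ for a suitable closed set $Z'$ built from $Z$ and the bad locus, which forces $Z'=X$.

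In summary, the core step I expect to require the most care is showing that for almost all $v$, good reduction points are Zariski dense among preimages. I would first try the density argument in $X_{\ov{K}}$ using Lemma \ref{lem_invar_empty}, and fall back on a reduction-mod-$v$ non-exceptionality statement if that fails.
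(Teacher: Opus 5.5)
Your treatment of abelianity, ramification via Lemma \ref{lem_unrami}, and the invariance is fine. But the Zariski density of $\mc{P}^{(v)}$, which is the whole content of the lemma, is not established, and for the set you define it is false in general. There is a genuine gap.

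\textbf{A counterexample to your definition.} Take $K=\mathbb{Q}$, $X=\mathbb{P}^1$, $f(x)=x^2-2$ and $\alpha=-2$. Then $\alpha$ is non-exceptional, and $f^{-\infty}(\alpha)$ consists of the numbers $\zeta+\zeta^{-1}$ with $\zeta$ a $2$-power root of unity, so $K_\infty(f,\alpha)/K$ is abelian. However, $f^{-1}(\alpha)=\{0\}$, and $0$ is a critical point whose reduction lies in $\Crit(f_v)$ for every $v\notin S$. Every $\beta\in f^{-n}(\alpha)$ with $n\geq 1$ has $f^{n-1}(\beta)=0$ and so fails your $v$-goodness condition. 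Your $\mc{P}^{(v)}$ is therefore empty (or $\{\alpha\}$ after your adjustment).

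\textbf{Your fallback routes do not repair this.} The condition $\mathrm{red}_{v,\mc{X}}(f^i(\beta))\in\Crit(f_v)$ cuts out a union of $v$-adic residue tubes in $X$, not a Zariski closed subset. So there is no closed $Z'$ ``built from $Z$ and the bad locus'' to which Lemma \ref{lem_invar_empty} could be applied. Likewise, non-exceptionality of $\bar\alpha$ for $f_v$ is unjustified (every point over a finite field is preperiodic), and it would not yield density in the generic fibre anyway.

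\textbf{What is missing: re-rooting the tree.} The $\{f(\alpha)\}$ in the statement is the hint you misread; it signals re-rooting, not including $\alpha$.
Choose a divisor $W\supset\Crit(f)$ and an integer $N$ with $2d^{n-N}\deg_L(W)<d-1$. Using density of $f^{-\infty}(\alpha)$, pick $\alpha'\in f^{-\infty}(\alpha)$ outside $\bigcup_{i=1}^N f^i(W)$. Enlarge $S$ so that $\Crit(f_v)\subset\mc{W}_v$ and $\mathrm{red}_v(\alpha')\notin\bigcup_{i=1}^N f_v^i(\mc{W}_v)$. Then let $\mc{P}^{(v)}$ consist of two parts:
\begin{itemize}
\item the points $\beta\in f^{-i}(\alpha')$ whose reduced forward chain down to $\mathrm{red}_v(\alpha')$ avoids $Z:=f_v(\mc{W}_v)$;
\item the finite chain $f(\alpha'),\dots,\alpha$.
\end{itemize}
This finite chain is where the escape point $f(\alpha)$ comes from. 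It is also why you only get ramification indices bounded by those of $K(\alpha')$ rather than $e=1$, which is exactly what ``almost unramified'' allows.

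\textbf{What is missing: a quantitative count.} The first $N$ levels over $\mathrm{red}_v(\alpha')$ have full size $d^{ni}$. Each later level loses at most $d^{(i+1)(n-1)}\deg_L(W)$ points to $Z$. Hence there are at least $\tfrac12 d^{ni}$ distinct good reductions at level $i$, so at least $\tfrac12 d^{ni}$ points of $\mc{P}^{(v)}\cap f^{-i}(\alpha')$. On the other hand, a proper hypersurface $Y\subset X$ contains at most $d^{i(n-1)}\deg_L(Y)$ points of $f^{-i}(\alpha')$. Comparing $d^{ni}$ with $d^{i(n-1)}$ for large $i$ gives density; this comparison is the step your proposal lacks.
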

\begin{proof}
Choose a closed subset $W \subset X$ of codimension $1$ such that $\Crit(f) \subset W$, and let $\mc{W} \subset \mc{X}$ be the Zariski closure of $W$. Set $D := \deg_L(W) = W \cdot L^{n - 1}$. Choose an integer $N \geq 1$ such that $2 d^{n-N} D < d - 1$.

Since $\alpha$ is non-exceptional, the backward orbit $f^{-\infty}(\alpha)$ is Zariski dense in $X$. We may therefore choose $\alpha' \in f^{-\infty}(\alpha)$ such that $\alpha' \notin \bigcup\limits_{i=1}^N f^i(W)$. Enlarging $S$ if necessary, for all $v \notin S$,
\[
\Crit(f_v)\subset \mc{W}_v,\ \text{ and}\ \ \ \mr{red}_v(\alpha') \notin \bigcup_{i=1}^N f_v^i(\mc{W}_v).
\]
Fix a finite place $v\notin S$,  write $\overline{\beta} := \mathrm{red}_{v,\mc{X}}(\beta)$ for $\beta \in X(\overline{K})$ for simplicity. Set
$Z := f_v(\mc{W}_v) \subsetneq \mc{X}_v$. For each $i \geq 0$, define
\[
\mathcal{P}_i :=
\left\{
\beta \in f^{-i}(\alpha')
\ \middle|\ 
\overline{\beta}, f_v(\overline{\beta}), \dots, f_v^i(\overline{\beta}) \notin Z
\right\},
\]
and
\[\mc{P}^{(v)}:=\{f(\alpha'),f^2(\alpha'),\dots, \alpha\}\cup\bigcup_{i\geq 1}\mc{P}_i .\]
Since $\mc{P}^{(v)}\subset f^{-\infty}(\alpha)$, $K(\beta)/K$ is abelian for all $\beta\in \mc{P}^{(v)}$. Moreover, if $\beta\neq \alpha$, $f(\beta)\in \mc{P}^{(v)}$. 

\medskip

\noindent\textbf{Claim 1.} The set $\mc{P}^{(v)}$ is almost unramified at $v$.

\noindent\textit{Proof of Claim 1.} Choose an integer $M>0$ such that the ramification index $e(v'/v)\leq M$ for all place $v'$ of $K(\alpha')$ above $v$. Let $\beta\in \mc{P}^{(v)}$ be a point.

If $\beta\notin \mc{P}_i$ for all $i\geq 0$, then $\beta=f^j(\alpha')$ for some $j>0$, which implies $K(\beta)\subset K(\alpha')$. It follows that $e(w/v)\leq M$ for all place $w$ of $K(\beta)$ above $v$.

If $\beta\in \mc{P}_i$ for some $i\geq 0$. Let $w$ be a place of $K(\beta)$ above $v$. For $0 \leq j \leq i-1$, the condition $f_v^{j+1}(\overline{\beta}) \notin Z = f_v(\mc{W}_v)$ implies that $f_v^j(\overline{\beta}) \notin \Crit(f_v)$. Let $w_j$ denote the restriction of $w$ to $K(f^j(\beta))$. Then each extension
\[
K(f^j(\beta))_{w_j} / K(f^{j+1}(\beta))_{w_{j+1}}
\]
is unramified by Lemma \ref{lem_unrami}, and hence $K(\beta)_{w_0} / K(\alpha')_{w_i}$ is unramified. Thus $e(w/v)=e(w_0/w_i)e(w_i/v)\leq M$. \bqed

\bigskip

\noindent \textbf{Claim 2.} For all $i\geq 0$, $|\mc{P}_i|\geq \frac{1}{2}d^{ni}$.

\medskip

\noindent\textit{Proof of Claim 2.} Let $\widetilde{\mathcal{P}}_i$ be the image of $\mc{P}_i$ under reduction. Then 
\[1 \leq |\widetilde{\mathcal{P}}_i| \leq |\mathcal{P}_i| \leq d^{ni}.\]
For $0 \leq i \leq N-1$, by the choice of $v$, we have $f_v^{-i}(\overline{\alpha'}) \cap Z = \varnothing$, and hence $f_v^{-i}(\overline{\alpha'})$ consists of $d^{ni}$ distinct points. In particular,
\[
|\mathcal{P}_{i}| = |\widetilde{\mathcal{P}}_{i}| = d^{ni},\qquad 0\leq i\leq N-1.
\]
Now let $i \geq N$. We estimate the number of points lying in $Z$. First,
\[
\deg_{\mathcal{L}_v}(Z)
\leq d^{n-1} \deg_{\mathcal{L}_v}(\mc{W}_v)
= d^{n-1} D.
\]
Hence
\[
\deg\big(f_v^i|_Z:Z\to f_v^i(Z)\big)\leq d^{(n-1)i} \deg_{\mathcal{L}_v}(Z)\leq d^{(i+1)(n-1)} D.
\]
It follows that among the points in $f_v^{-i}(\overline{\alpha'})$, at most $d^{(i+1)(n-1)}D$ lie in $Z$. On the other hand, for each $\overline{\beta} \in \widetilde{\mathcal{P}}_{i-1}$, since $\overline{\beta} \notin Z\supset f_v\big(\Crit(f_v)\big)$, the fiber $f_v^{-1}(\overline{\beta})$ consists of $d^n$ distinct points. Therefore,
\[
|\widetilde{\mathcal{P}}_i|\geq d^n |\widetilde{\mathcal{P}}_{i-1}|- d^{(i+1)(n-1)} D.
\]
It follows that
\[
\frac{|\widetilde{\mathcal{P}}_i|}{d^{ni}}
\geq \frac{|\widetilde{\mathcal{P}}_{N-1}|}{d^{n(N-1)}}- \sum_{j=N}^i d^{n-1-j} D
> 1 - \frac{d^{n-1-N}D}{1 - \frac{1}{d}}> \frac{1}{2},
\]
where the last inequality follows from the choice of $N$. Thus $|\mc{P}_i|\geq |\wt{\mc{P}}_i|\geq \frac{1}{2}d^{ni}$ for all $i\geq 0$. \bqed

\medskip Finally, we show that $\mathcal{P}^{(v)}$ is Zariski dense in $X$. Assume that $\mathcal{P}^{(v)}$ is contained in a proper closed subset $Y \subset X$. Enlarging $Y$ if necessary, we may assume $\mathrm{codim}(Y,X)=1$. Then
\[
|\mc{P}_i|\leq |f^{-i}(\alpha') \cap Y|
\leq \deg\big(f^i|_Y:Y\to f^i(Y)\big)
\leq d^{i(n-1)} \deg_L(Y).
\]
For $i$ sufficiently large, 
\[|\mathcal{P}_i|>\tfrac{1}{2} d^{ni}>d^{(n-1)i} \deg_L(Y).\] 
Contradiction. This completes the proof.
\end{proof}

The following theorem follows from the same strategy as Theorem \ref{thm_main}.
\begin{theorem}\label{thm_backorb}
Let $K$ be a number field, $X$ be a normal projective variety over $K$, and $f:X \to X$ be a polarized endomorphism. Assume $\alpha\in X(K)$ is a non-exceptional point and $K_{\infty}(f,\alpha)/K$ is an abelian extension. Then $f$ is exceptional and $\alpha\in \mr{Prep}(f)$.
\end{theorem}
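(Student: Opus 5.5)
Two places $v\neq v'$ outside a finite exceptional set are fixed, with $\gcd(q_v,q_{v'})=1$. The input for Theorem~\ref{thm_main} comes from Lemma~\ref{lem_unramifiedset}: for each such $v$ it gives a Zariski dense set $\mc{P}^{(v)}\subset f^{-\infty}(\alpha)$ that is abelian and almost unramified at $v$. Enumerating it yields a sequence $(x_m)$, but this need not be generic, so I would first pass to a generic subsequence. Since $\mc{P}^{(v)}$ is countable and dense, a standard diagonal argument over the countably many proper subvarieties defined over $K$ (or taking points of $f^{-i}$-levels avoiding any fixed proper closed subset, as in Claim~2 of that lemma) produces a generic sequence. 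Backward orbit points have $h_{\ov{L}_f}=0$ for the $f$-admissible $\ov{L}_f$, which is nef with $L$ ample. After enlarging $S$, $v$ is a model place for $\ov{L}_f$.

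Next I would run the constructions of Section~\ref{sec_main} at $v$. Theorem~\ref{thm_constructZ} and Theorem~\ref{thm_relation} give $\mf{C}_X^{(v)}$ and $\mf{R}_X^{(v)}$, hence $\pi:X\to Y$ and a $q_v$-polarized $g:Y\to Y$ by Lemma~\ref{lem_fpolarized}. The key extra input is to show that $f$ descends and commutes with $g$. Because $f(\mc{P}^{(v)})\subset\mc{P}^{(v)}\cup\{f(\alpha)\}$ and $\sigma$ commutes with $f$ (as $f$ is defined over $K$), the set $\Gal(\ov K/K)\cdot\{(x_m,\sigma x_m)\}$ is mapped into itself by $f\times f$ up to finitely many points. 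So $(f\times f)(\mf{C}_X)\subseteq\mf{C}_X$, and equality holds by dimension and finiteness. The same then holds for all of $\Omega$, hence for $\mf{R}_X$. Remark~\ref{rmk_bifin_corr} then gives $f_Y$ on $Y$ with $\pi\circ f=f_Y\circ\pi$ and $f_Y\circ g=g\circ f_Y$. Polarization of $f_Y$ with degree $d$ follows from Lemma~\ref{lem_polardes} applied to $\Gamma_f$. I expect the main obstacle to be the multiplicative independence of $d$ and $q_v$. I would first choose $v$ with $p_v\nmid d$; this suffices, since $d^a=q_v^b$ would force $p_v\mid d$. Theorem~\ref{thm_commuting} then shows $f_Y$ is exceptional, and Theorem~\ref{thm_semiconj} shows $f$ is exceptional.

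For preperiodicity, Lemma~\ref{lem_prep} gives that $\pi(x_m)$ is $g$-periodic for almost all $m$, with $g(\pi x_m)=\sigma(\pi x_m)$. I would exploit this at $\alpha$ itself. Since $\alpha\in X(K)$ and $\sigma$ fixes $K$, I would show that $\pi(\alpha)$ is $g$-fixed. The set of $y$ with $g(y)=\sigma(y)$ is closed on $\ov K$-points along the backward orbit, because $\mf{C}_X$ is a closed set containing $(\beta,\sigma\beta)$ for a dense subset and $f\times f$-invariant. Pushing forward by $f^i$ gives $g(\pi f^i\beta)=\sigma(\pi f^i\beta)$, hence $(\pi\alpha,\pi\alpha)\in\Gamma_g$. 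Thus $\pi(\alpha)$ lies in $\mr{Fix}(g)$, which is finite since $g$ is polarized. The same holds for every $f_Y^k(\pi\alpha)$, because $f_Y^k(\pi\alpha)\in Y(K)$ and $f_Y$ commutes with $g$. So the $f_Y$-orbit of $\pi(\alpha)$ lies in the finite set $\mr{Fix}(g)$ and is preperiodic. Since $\pi$ is finite, $\alpha\in\mr{Prep}(f)$.
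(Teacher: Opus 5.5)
\textbf{Gap: the $(f\times f)$-invariance of $\mf{C}_X$.} Everything after this step depends on it: the descent of $f$ to $f_Y$, the commutation $f_Y\circ g=g\circ f_Y$, and your fixed-point argument.

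You build $\mf{C}_X$ by applying Theorem~\ref{thm_constructZ} to a generic \emph{subsequence} $(x_m)$ of $\mc{P}^{(v)}$ obtained by a diagonal argument. Such a subsequence is not forward-invariant. The point $f(x_m)$ lies in $\mc{P}^{(v)}\cup\{f(\alpha)\}$, but it need not be one of the $x_{m'}$. So $\Gal(\ov K/K)\cdot\{(x_m,\sigma x_m)\}$ is not mapped into itself by $f\times f$, even up to finitely many points.

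This matters. The Zariski closure of all pairs $(\beta,\sigma\beta)$ with $\beta\in\mc{P}^{(v)}$ may have several dominant $K$-irreducible components $W_1,\dots,W_k$. A diagonal argument inside $W_1$ then gives a sequence that is generic in $X$ but whose correspondence is only $W_1$, while $(f\times f)(W_1)$ could be $W_2$. In that case $\mf{R}_X$ need not be invariant and $f$ need not descend.

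The fix is what the paper does. Define $\mf{C}_X^{(v)}$ from the whole set, as the closure of $\{(\beta,\sigma\beta):\beta\in\mc{P}^{(v)}\}$, keeping its dominant components. This is invariant because $f(\mc{P}^{(v)})\subset\mc{P}^{(v)}\cup\{f(\alpha)\}$ and $f\circ\sigma=\sigma\circ f$. Then choose a generic sequence whose pairs are dense in every component, and rerun the proof of Theorem~\ref{thm_constructZ}.

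\textbf{Smaller points.}
\begin{itemize}
\item Your claim that equality $(f\times f)(\mf{C}_X)=\mf{C}_X$ holds ``by dimension and finiteness'' is unjustified: you only get a self-map of the finite set of components. Inclusion is enough, though. Lemma~\ref{lem_descent} only needs $(f\times f)(\Supp\mf{R}_X)\subseteq\Supp\mf{R}_X$. Commutation follows because $(f_Y(y),f_Y(g(y)))\in\Gamma_g$ for all $y$.
\item Backward-orbit points do not have $h_{\ov L_f}=0$; that would presuppose $\alpha\in\mr{Prep}(f)$. Rather, $h_{\ov L_f}(\beta)=d^{-i}h_{\ov L_f}(\alpha)$ for $\beta\in f^{-i}(\alpha)$. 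This still tends to $0$ along a generic sequence, since the levels must go to infinity.
\item The second place $v'$ is never used; $p_v\nmid d$ suffices, as you note.
\end{itemize}

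\textbf{Your preperiodicity argument is correct and differs from the paper's,} once invariance is secured.
\begin{itemize}
\item The paper uses \cite[Theorem~1.6]{YZ17} to get $\mr{Prep}(f_Y)=\mr{Prep}(g)$. It combines this with the periodicity of $\pi(x_m)$ from Lemma~\ref{lem_prep}, showing $x_m\in\mr{Prep}(f)$ and hence $\alpha\in\mr{Prep}(f)$.
\item You use instead that $\alpha$ is $\sigma$-fixed. Pushing $(\beta,\sigma\beta)\in\mf{C}_X$ forward by $(f\times f)^i$, with $f^i(\beta)=\alpha$, gives $(\alpha,\alpha)\in\mf{C}_X$. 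Hence $g(\pi(\alpha))=\pi(\alpha)$.
\item Commutation then places the whole $f_Y$-orbit of $\pi(\alpha)$ in $\mr{Fix}(g)$. This set is finite because $g$ is polarized: a positive-dimensional component $C$ would give $(L|_C)^{\otimes(q_v-1)}\simeq\mathcal{O}_C$ with $L|_C$ ample.
\end{itemize}
Your route is purely geometric and avoids both the Yuan--Zhang arithmetic input and Lemma~\ref{lem_prep}. The paper's route instead reuses the machinery already set up for Theorem~\ref{thm_main}.
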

\begin{proof} 
We first prove the following claim.

\noindent\textbf{Claim 1.} For all but finitely many finite places $v$ of $K$, there exists a bi-finite correspondence $\mf{C}_X^{(v)}\subset X\times X$ satisfying:
\begin{enumerate}
\renewcommand{\labelenumi}{(\theenumi)}
    \item $(f\times f)(\mf{C}_X^{(v)})=\mf{C}_X^{(v)}$;
    \item the reduction of $\mf{C}_X^{(v)}$ at $v$ has support $\Gamma_{\Phi_{q,\mc{X}_v}}\subset \mc{X}_v\times_{k_v}\mc{X}_v$.
\end{enumerate}
\noindent\textit{Proof of Claim 1.} There exists an invariant adelic line bundle $\overline{L}_f \in \widehat{\mathrm{Pic}}(X)_{\mathrm{nef}}$ such that $f^*\overline{L}_f \simeq d\,\overline{L}_f$. Enlarging $S$ if necessary, we may assume that every finite place $v\notin S$ is a model place for $\ov{L}$. By assumption and Lemma \ref{lem_unramifiedset}, we may choose $v\notin S$ with a set $\mc{P}^{(v)}\subset  f^{-\infty}(\alpha)$ which is abelian, almost unramified at $v$, and Zariski dense in $X$. Moreover, $f(\beta)\in \mc{P}^{(v)}$ for any $\beta\in \mc{P}^{(v)}\setminus \{\alpha\}$.

Fix an embedding $\overline{K} \hookrightarrow \mathbb{C}_v$, inducing a valuation $w$ on $\overline{K}$ extending $v$. Choose a Frobenius element $\sigma\in D_w\subset \Gal(\ov{K}/K)$. Consider the Zariski closure
\[
\mf{C}_X^{(v)} := \overline{\left\{ (\beta, \sigma(\beta)) \in (X \times X)(\overline{K}) \mid \beta \in \mathcal{P}^{(v)} \right\}}^{\mathrm{Zar}}
\subset X \times X.
\]
Since $f(\mathcal{P}^{(v)}) \subset \mathcal{P}^{(v)} \cup \{f(\alpha)\}$, we have $(f \times f)(\mf{C}_X^{(v)}) = \mf{C}_X^{(v)}$. As $\mathcal{P}^{(v)}$ is Zariski dense in $X$, every irreducible component of $\mf{C}_X^{(v)}$ dominates both factors.

Choose a sequence $(x_m)_{m \ge 1}$ in $\mathcal{P}^{(v)}$ such that the sequence $\big( (x_m, \sigma(x_m)) \big)_{m\geq1}$ is generic in $\mf{C}_X^{(v)}$. Then $(x_m)_{m\geq1}$ is also generic in $X$. We have 
\[\mf{C}_X^{(v)}=\Zar{\{(x_m,\sigma(x_m))\mid m\geq 1\}}.\]
By the choice of $\mc{P}^{(v)}$, the sequence $(x_m)_{m\geq1}$ is abelian, almost unramified at $v$, and satisfies $\lim\limits_{m \to \infty} h_{\overline{L}_f}(x_m) = 0$. By the proof of Theorem \ref{thm_constructZ}, $\mf{C}_X^{(v)}\subset X\times X$ is the desired bi-finite correspondence.\bqed

Now, applying Theorem \ref{thm_relation} to $\mathfrak{C}_X^{(v)}$, we obtain an equivalence relation $\mf{R}_X^{(v)}\subset X\times X$. Moreover, since $(f \times f)(\mf{C}_X^{(v)}) = \mf{C}_X^{(v)}$, the construction of $\mathfrak{R}_X^{(v)}$ implies that $(f \times f)(\mf{R}_X^{(v)}) = \mf{R}_X^{(v)}$.

By the same argument as in the proof of Theorem \ref{thm_main}, we obtain a normal projective variety $Y$ over $K$ of dimension $\dim X$, a finite surjective morphism $\pi: X \to Y$, and a polarized endomorphism $g: Y \to Y$ such that $\pi(x_m) \in \mathrm{Per}(g)$ for all but finitely many $m$. Moreover, we have $\deg(g) = q^{n}$.

By Lemma \ref{lem_descent} and Remark \ref{rmk_bifin_corr}, $f$ descends to a polarized endomorphism $f_Y : Y \to Y$ satisfying
\[
\pi \circ f = f_Y \circ \pi, 
\qquad
g \circ f_Y = f_Y \circ g.
\]
Moreover, we have $\deg(f_Y) = \deg(f) = d^{n}$. By the choice of the finite place $v$, we have $\gcd(d,q)=1$. It follows from Theorem \ref{thm_commuting} that $f_Y$ is exceptional. Hence $f$ is also exceptional by Theorem \ref{thm_semiconj}. Moreover, since $g(\mathrm{Prep}(f_Y)) \subset \mathrm{Prep}(f_Y)$, we deduce from \cite[Theorem 1.6]{YZ17} that
\[
\mathrm{Prep}(f_Y) = \mathrm{Prep}(g).
\]
Note that $ \mathrm{Prep}(f)=\pi^{-1}\big(\mathrm{Prep}(f_Y)\big)$ and $\pi(x_m) \in \mathrm{Prep}(g)=\mathrm{Prep}(f_Y)$ for all but finitely many $m$, it follows that $x_m \in \mathrm{Prep}(f)$ for all but finitely many $m$. As each $x_m$ lies in the backward orbit $f^{-\infty}(\alpha)$, we conclude that $\alpha \in \mathrm{Prep}(f)$.  
\end{proof}

Now, we are able to prove the main theorem of this paper.

\begin{theorem}[Theorem \ref{thm_vaeq1}]\label{thm_vaeq}
Let $K$ be a number field, $X$ be a normal projective variety over $K$, and $f: X\to X$ be a polarized endomorphism. Assume $\alpha\in X(K)$ is a non-exceptional point, then $K_{\infty}(f,\alpha)/K$ is virtually abelian if and only if $f$ is an exceptional map with CM and $\alpha\in \mr{Prep}(f)$.
\end{theorem}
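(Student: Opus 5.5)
The plan is to reduce both directions to Theorem~\ref{thm_backorb} and Theorem~\ref{thm_Kexp}, using Lemma~\ref{lem_va} to pass between virtual abelianity and abelianity after a finite extension of the base.

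\emph{Forward direction.} Assume $K_\infty(f,\alpha)/K$ is virtually abelian. By Lemma~\ref{lem_va} there is a finite Galois extension $K'/K$ such that $K_\infty(f,\alpha)K'/K'$ is abelian. Since $K_\infty(f_{K'},\alpha)=K'(f^{-\infty}(\alpha))=K_\infty(f,\alpha)K'$, the extension $K_\infty(f_{K'},\alpha)/K'$ is abelian. Non-exceptionality of $\alpha$ is geometric, since the exceptional set and Zariski density of the backward orbit do not change under base field extension, so it survives the base change. Theorem~\ref{thm_backorb} applied over $K'$ then gives that $f_{K'}$ is exceptional and $\alpha\in\mathrm{Prep}(f)$. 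Exceptionality is by definition a condition over $\ov K$, so $f$ is exceptional.

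\emph{Obtaining CM.} It remains to show that $f$ has CM. The exceptional presentation $(Y,A,\pi,h,\psi,l)$ is defined over $\ov K$, hence over some finite extension $K''\supseteq K'$, so $f_{K''}$ is $K''$-exceptional. The extension $K_\infty(f_{K''},\alpha)/K''$ is still virtually abelian, being a finite-index restriction of a virtually abelian group (via Lemma~\ref{lem_va}). Moreover $\alpha$ is preperiodic and non-exceptional. Theorem~\ref{thm_Kexp} over $K''$ therefore shows that $f$ is exceptional with CM.

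\emph{Converse.} Assume $f$ is exceptional with CM and $\alpha\in\mathrm{Prep}(f)$. The presentation, including a CM abelian base, is again defined over a finite extension $K''$, so $f_{K''}$ is $K''$-exceptional. Theorem~\ref{thm_Kexp} gives that $K_\infty(f_{K''},\alpha)/K''$ is virtually abelian. Since $K_\infty(f_{K''},\alpha)=K_\infty(f,\alpha)K''$, Lemma~\ref{lem_va} yields a finite $K'''/K''$ over which this compositum is abelian. Replacing $K'''$ by its Galois closure over $K$, we may take $K'''/K$ finite Galois with $K_\infty(f,\alpha)K'''/K'''$ abelian. Lemma~\ref{lem_va} then shows that $K_\infty(f,\alpha)/K$ is virtually abelian.

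The main obstacle is the bookkeeping around the CM condition. One must check that "exceptional with CM" in Definition~\ref{def:exceptional-intro} is insensitive to which presentation is descended to $K''$, since Theorem~\ref{thm_Kexp} concludes CM only for the presentation it is given. I would handle this by noting that the conclusion of Theorem~\ref{thm_Kexp} is stated intrinsically as "$f$ is an exceptional map with CM", so any $K''$-model of any $\ov K$-presentation suffices.
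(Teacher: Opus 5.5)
Your proposal is correct and follows essentially the same route as the paper. Forward: Lemma~\ref{lem_va} gives abelianity over a finite extension, Theorem~\ref{thm_backorb} gives exceptionality and preperiodicity, and descending the presentation to a finite extension and applying Theorem~\ref{thm_Kexp} gives CM. Converse: descend a CM presentation and apply Theorem~\ref{thm_Kexp} together with Lemma~\ref{lem_va}. Your extra bookkeeping (the compositum identity, the Galois closure, and descending the CM presentation specifically) just makes explicit what the paper leaves implicit.
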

\begin{proof}
Assume $K_{\infty}(f,\alpha)/K$ is virtually abelian. Lemma \ref{lem_va} implies that there exists a finite extension $K'$ of $K$ such that $K'_{\infty}(f_{K'},\alpha)/K'$ is abelian. By Theorem \ref{thm_backorb}, $f_{K'}$ is an exceptional map and $\alpha\in \mr{Prep}(f)$. There exists a finite extension $K''$ of $K'$ such that $f_{K''}$ is a $K''$-exceptional map. Then $f_{K''}$ is an exceptional map with CM by Theorem \ref{thm_Kexp}.

Conversely, assume $f$ is an exceptional map with CM and $\alpha\in \mr{Prep}(f)$. Then we may choose a finite extension $K'$ of $K$ such that $f_{K'}$ is a $K'$-exceptional map with CM. By Theorem \ref{thm_Kexp}, $K'_{\infty}(f_{K'},\alpha)/K'$ is virtually abelian. So $K_{\infty}(f,\alpha)$ is virtually abelian.
\end{proof}

\subsection*{One-dimensional case}\label{sec_1dim}
Let $K$ be a number field and let $f:\mathbb{P}^1_K \to \mathbb{P}^1_K$ be a rational map of degree $d \geq 2$. For $\alpha \in \mathbb{P}^1(K)$, define $K_n^{\ab}(f,\alpha)$ to be the extension of $K$ generated by all points in $f^{-n}(\alpha)\cap \mathbb{P}^1(K^{\ab})$. Then we have a tower
\[
K = K_0^{\ab}(f,\alpha) \subset K_1^{\ab}(f,\alpha) \subset \cdots,
\]
and set
\[
K_{\infty}^{\ab}(f,\alpha) := \bigcup_{n=1}^{\infty} K_n^{\ab}(f,\alpha).
\]
If $\alpha$ is a non-exceptional point and $K_{\infty}(f,\alpha)/K$ is abelian, then $K_{\infty}^{\ab}(f,\alpha)/K$ is an infinite extension.

Recall that in the proof of Theorem \ref{thm_backorb}, a key step is applying Lemma \ref{lem_unramifiedset}, which constructs a set $\mc{P}^{(v)}\subset f^{-\infty}(\alpha)$ that is abelian, almost unramified at $v$, and Zariski dense in $X$. In the one-dimensional case, \cite[Theorem A]{FOZ24} provides an alternative approach to obtain such a result. Combining this with the classification of exceptional maps on $\mathbb{P}^1$, we obtain the following result, which answers Conjecture \ref{conj_P1}.

\begin{theorem}\label{thm_onedim}
Let $K$ be a number field and let $f \in K(x)$ be a rational map of degree $d \geq 2$, and let $\alpha \in \mathbb{P}^1(K)$. Assume that $f$ is not a Lattès map. Then $K_{\infty}^{\ab}(f,\alpha)/K$ is an infinite extension if and only if the pair $(f,\alpha)$ is $K^{\ab}$-conjugate to one of the following:
\begin{enumerate}
\renewcommand{\labelenumi}{(\theenumi)}
    \item $(x^{\pm d}, \zeta)$, where $\zeta$ is a root of unity in $\overline{K}$;
    \item $(\pm T_d(x),  \zeta + \zeta^{-1})$, where $\zeta$ is a root of unity in $\overline{K}$.
\end{enumerate}
\end{theorem}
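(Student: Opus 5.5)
The plan is to prove the two implications separately. The "if" direction is a direct check on the models. The "only if" direction follows the strategy of Theorem~\ref{thm_backorb}, with \cite[Theorem~A]{FOZ24} used in place of Lemma~\ref{lem_unramifiedset}, followed by the classification of one-dimensional exceptional maps.

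\textbf{The ``if'' direction.} Since $K_\infty^{\ab}$ is insensitive to $K^{\ab}$-conjugation over $K$ up to a finite abelian extension, we may work directly with the models. For $(x^{\pm d},\zeta)$, every preimage is a root of unity, so $f^{-\infty}(\zeta)\subseteq K^{\ab}$. This backward orbit is infinite and generates infinitely many roots of unity, so $K_\infty^{\ab}(f,\alpha)/K$ is infinite. For $\pm T_d$ we use the semiconjugacy $z\mapsto z+z^{-1}$: preimages of $\zeta+\zeta^{-1}$ are of the form $\xi+\xi^{-1}$ with $\xi$ a root of unity, and infinitely many of these are distinct. Alternatively, this direction follows from Theorem~\ref{thm_Kexp}, since $A$ is a point and hence the CM condition is automatic.

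\textbf{The ``only if'' direction.} Assume $K_\infty^{\ab}(f,\alpha)/K$ is infinite.
\begin{enumerate}
\item Then $\alpha$ is non-exceptional; otherwise its backward orbit is finite.
\item The set $P:=f^{-\infty}(\alpha)\cap\mathbb P^1(K^{\ab})$ is infinite, hence Zariski dense in $\mathbb P^1$. It is also stable under $f$, up to finitely many points in the forward orbit of $\alpha$.
\item By \cite[Theorem~A]{FOZ24}, for all but finitely many $v$ we can extract an infinite subset $\mc P^{(v)}\subseteq P$ that is abelian, almost unramified at $v$, and forward invariant in the sense of Lemma~\ref{lem_unramifiedset}. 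This is where the one-dimensional input replaces the counting argument, which required the full abelian hypothesis.
\item Points of $f^{-\infty}(\alpha)$ have $\hat h_f\to0$, since $\hat h_f(\beta)=d^{-n}\hat h_f(\alpha)$ for $\beta\in f^{-n}(\alpha)$.
\item We run the proof of Theorem~\ref{thm_backorb} verbatim. Choose $v$ with $\gcd(q,d)=1$ and build $\mf C^{(v)}$, $\mf R^{(v)}$, the quotient $Y\cong\mathbb P^1$, and a commuting pair $f_Y,g$ of multiplicatively independent degrees.
\item Theorem~\ref{thm_commuting} and Theorem~\ref{thm_semiconj} then give that $f$ is exceptional. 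The argument via \cite[Theorem~1.6]{YZ17} gives $\alpha\in\mr{Prep}(f)$.
\end{enumerate}

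\textbf{Classification step.} We use that an exceptional map on $\mathbb P^1$ in the sense of Definition~\ref{def:exceptional-intro} is one of: a power map, a signed Chebyshev map, or a Lattès map. A one-dimensional AT-variety is either an elliptic curve (giving Lattès maps, excluded by hypothesis) or $\mathbb G_m$ with $\psi(t)=ct^{\pm d}$, whose quotients by finite groups give $x^{\pm d}$ and $\pm T_d$ after conjugation. Conjugate over $\ov K$ so that $f$ is one of these models.

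\textbf{Main obstacle.} The hardest step is descending the conjugacy to $K^{\ab}$ and pinning down $\alpha$. The plan is as follows.
\begin{itemize}
\item The conjugating M\"obius map is determined by the finitely many exceptional or critical points. For $x^{\pm d}$ these are $\{0,\infty\}$. For $T_d$ they are $\infty$ and the critical values $\pm2$. These points are $K$-rational up to quadratic extensions.
\item The remaining freedom is scaling by a $(d\mp1)$-th root of the leading coefficient, which lies in a Kummer extension, hence in $K^{\ab}$ after adjoining roots of unity.
\item Preperiodic points of $x^{\pm d}$ are $0$, $\infty$ and the roots of unity. Preperiodic points of $\pm T_d$ are $\infty$ and the values $\zeta+\zeta^{-1}$.
\item Exceptional points are excluded because $\alpha$ is non-exceptional. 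Note that $0$ and $\infty$ are totally invariant for $x^{\pm d}$, and $\infty$ is totally invariant for $\pm T_d$.
\item This forces $\alpha$ into the stated form.
\end{itemize}
In the signed cases we must also check that the sign ambiguity is absorbed by a $K^{\ab}$-conjugation.
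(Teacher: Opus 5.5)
Your derivation of exceptionality and preperiodicity is the same as the paper's: you use \cite[Theorem~A]{FOZ24} in place of Lemma~\ref{lem_unramifiedset} and then rerun the proof of Theorem~\ref{thm_backorb}. Your ``if'' direction is also fine. The gap is in the descent of the conjugacy to $K^{\ab}$ in the power-map case. After placing the exceptional set $\{0,\infty\}$ over a quadratic extension $K'$, you are left with $cx^{\pm d}$ with $c\in K'^{*}$. The remaining rescaling requires a $(d\mp1)$-th root of an element of $K'$.

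A Kummer extension $K(\mu_n,c^{1/n})$ is abelian over $K(\mu_n)$, but in general not over $K$. So that root need not lie in $K^{\ab}$. Concretely, take $f=2x^4$ over $\mathbb Q$. Any conjugacy to $x^4$ must preserve $\{0,\infty\}$, so it has the form $x\mapsto\lambda x$ or $x\mapsto\lambda/x$ with $\lambda^3=2^{\pm1}$. Such a $\lambda$ generates a non-Galois cubic field, hence $\lambda\notin\mathbb Q^{\ab}$. Your descent step does not use $\alpha$ at all, so it would ``prove'' the false claim that every map $\ov K$-conjugate to $x^{\pm d}$ is $K^{\ab}$-conjugate to it. The bullet ``this forces $\alpha$ into the stated form'' then rests on a conjugacy that you have not shown to be defined over $K^{\ab}$.

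The missing idea is to use the hypothesis once more. There are infinitely many $\beta\in f^{-\infty}(\alpha)\cap\mathbb P^1(K^{\ab})$. Let $\varphi\in\mathrm{PGL}_2(\ov K)$ conjugate $f$ to the model $g$. Since $\alpha$ is preperiodic and non-exceptional, each $\varphi(\beta)$ is a root of unity or of the form $\zeta+\zeta^{-1}$, so it lies in $K^{\ab}$. The paper picks three distinct such $\beta_i$. The unique M\"obius map with $\beta_i\mapsto\varphi(\beta_i)$ is then defined over $K^{\ab}$, so it equals $\varphi$, and $\varphi\in\mathrm{PGL}_2(K^{\ab})$.

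Inside your own normalization a single point suffices. If $x\mapsto\lambda x$ (or $\lambda/x$) conjugates $cx^{\pm d}$ to $x^{\pm d}$, and $\alpha'\in K'\setminus\{0,\infty\}$ is the image of $\alpha$, then $\lambda\alpha'$ (or $\lambda/\alpha'$) is a root of unity. Hence $\lambda\in K^{\ab}$. The three-point argument also treats all four models uniformly, which avoids your critical-value bookkeeping for $\pm T_d$. That bookkeeping already fails for $T_2=x^2-2$, whose only finite critical value is $-2$; you would need the postcritical set $\{\pm2\}$ instead.
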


\begin{proof}
If $(f,\alpha)$ is $K^{\ab}$-conjugate to $(x^{\pm d},\zeta)$ or $(\pm T_d(x), \zeta + \zeta^{-1})$, where $\zeta$ is a root of unity in $\overline{K}$, then it is clear that $f^{-\infty}(\alpha)\subset \mb{P}^1(K^{\ab})$ and $\alpha$ is not an exceptional point for $f$. Therefore $K_{\infty}^{\ab}(f,\alpha)/K$ is an infinite extension. It remains to prove the converse.

Assume that $K_{\infty}^{\ab}(f,\alpha)/K$ is infinite. By \cite[Theorem A]{FOZ24}, this extension ramifies at only finitely many places of $K$. It follows that for all but finitely many finite places $v$ of $K$, the set
\[
\mathcal{P}^{(v)} := \left\{ \beta \in f^{-\infty}(\alpha) \;\middle|\; K(\beta)/K \text{ is abelian and unramified at } v \right\}
\]
is infinite. In particular, $\mathcal{P}^{(v)}$ is Zariski dense in $\mathbb{P}^1_K$. By the same argument of the proof of Theorem \ref{thm_backorb}, it follows that $f$ is exceptional and $\alpha \in \mathrm{Prep}(f)$.

Since $f$ is not a Lattès map, it is $\ov{K}$-conjugate to either $x^{\pm d}$ or $\pm T_d(x)$. Let $\varphi \in \mathrm{PGL}_2(\ov{K})$ be such that $\varphi \circ f\circ\varphi^{-1} = g$, where $g$ is one of these maps. Choose $\beta \in f^{-\infty}(\alpha)$ such that $\varphi(\beta) \neq 0,\infty$. Then $\varphi(\beta) \in \mathrm{Prep}(g)$, and hence
\[
\varphi(\beta) =
\begin{cases}
\zeta, & \text{if } g(x)=x^{\pm d},\\
\zeta + \zeta^{-1}, & \text{if } g(x)=\pm T_d(x),
\end{cases}
\]
for some root of unity $\zeta \in \overline{K}$. In particular, $\varphi(\beta) \in \mathbb{P}^1(K^{\ab})$ for all $\beta \in f^{-\infty}(\alpha)$.

Choose three distinct points $\beta_1,\beta_2,\beta_3 \in f^{-\infty}(\alpha)\cap  \mathbb{P}^1(K^{\ab})$, and set $\gamma_i := \varphi(\beta_i)$. There exists $\sigma \in \mathrm{PGL}_2(K^{\ab})$ such that $\sigma(\beta_i) = \gamma_i$ for $i=1,2,3$. On the other hand, $\varphi$ is the unique element of $\mathrm{PGL}_2(\ov{K})$ satisfying $\varphi(\beta_i)=\gamma_i$ for $i=1,2,3$. Hence $\sigma = \varphi$, and therefore $\varphi \in \mathrm{PGL}_2(K^{\ab})$.

Consequently, $(f,\alpha)$ is $K^{\ab}$-conjugate to $(x^{\pm d},\zeta)$ or $(\pm T_d(x), \zeta+\zeta^{-1})$, where $\zeta$ is a root of unity in $\overline{K}$.
\end{proof}

\bibliography{reference}
\bibliographystyle{alpha}

\noindent \small{Address: \textit{Institute for Theoretical Sciences, Westlake University, Hangzhou 310030, China}}

\noindent \small{Email: \texttt{jizhuchao@westlake.edu.cn}}

\noindent \small{Address: \textit{School of Mathematical Sciences, Peking University, Beijing 100871, China}}

\noindent \small{Email: \texttt{soyo999@pku.edu.cn}}

\noindent \small{Address: \textit{Beijing International Center for Mathematical Research, Peking University, Beijing 100871, China}}

\noindent \small{Email: \texttt{xiejunyi@bicmr.pku.edu.cn}}
\end{document}